\documentclass[10pt]{amsart}
\usepackage{amsmath}
\usepackage{amssymb}
\usepackage{amsfonts}
\usepackage{amsthm}
\usepackage{enumerate}
\usepackage[all]{xy}
\usepackage[latin1]{inputenc}
\usepackage{graphicx}
\usepackage{latexsym}
\usepackage{accents}
\usepackage{mathrsfs}
\usepackage{indentfirst}
\usepackage{fancyhdr}
\usepackage{color}
\usepackage{graphicx}
\usepackage{epstopdf}
\input xy

\usepackage{tikz}

\makeatletter
    \newtheoremstyle{definition}
        {5pt}
        {3pt}
        {}
        {0pt}
        {\scshape}
        {.}
        {5pt}
        {\thmname{#1} \thmnumber{#2} \thmnote{[#3]}} 

\newtheoremstyle{theorems}
        {5pt}
        {3pt}
        {\itshape}
        {0pt}
        {\scshape}
        {.}
        {5pt}
        {\thmname{#1} \thmnumber{#2}\thmnote{[#3]}} 

\swapnumbers

\theoremstyle{theorems}

\newtheorem{Theorem}{Theorem}[section]
\newtheorem{Lemma}[Theorem]{Lemma}
\newtheorem{Definition-Lemma}[Theorem]{Definition-Lemma}

\newtheorem{Definition}[Theorem]{Definition}

\newtheorem{Proposition}[Theorem]{Proposition}
\newtheorem{Example}[Theorem]{Example}
\newtheorem{Remark}[Theorem]{Remark}
\newtheorem{Conjecture}[Theorem]{Conjecture}

\newcommand{\huang}[1]{\textcolor{red}{Huang: #1}}

\begin{document}

\title [On quantum cluster algebras from unpunctured triangulated surfaces]
 {On quantum cluster algebras from unpunctured triangulated surfaces:\\ arbitrary coefficients and quantization}

\author{Min Huang $\;\;\;\;\;\;$}
\address{Min Huang
\newline D\'{e}partement de math\'{e}matiques, Universit\'{e} de Sherbrooke, Sherbrooke, Qu\'{e}bec,
J1K 2R1, Canada}
\email{minhuang1989@hotmail.com}

\date{version of \today}

\keywords{quantum cluster algebra, unpunctured surface, positivity conjecture, quantum Laurent expansion.}

\subjclass[2010]{13F60, 05E15, 05E40}

\thanks{}

\maketitle

\vspace{-25pt}

\begin{abstract}

We study quantum cluster algebras from unpunctured surfaces with arbitrary coefficients and quantization. We first give a new proof of the Laurent expansion formulas for commutative cluster algebras from unpunctured surfaces, we then give the quantum Laurent expansion formulas for the quantum cluster algebras. Particularly, this gives a combinatorial proof of the positivity for such class of quantum cluster algebras.

\end{abstract}

\smallskip

\tableofcontents

\section{Introduction}

Cluster algebras are commutative algebras that were introduced by Fomin and Zelevinsky around the year 2000. The quantum cluster algebras were later introduced in \cite{BZ}. The theory of cluster algebras is related to numerous other fields including Lie theory, representation theory of algebras, the periodicity issue, Teichm$\ddot{u}$ller theory and mathematical physics.

\medskip

A cluster algebra is a subalgebra of rational function field with a distinguish set of generators, called cluster variables. Different cluster variables are related by a iterated procedure called mutations. By construction, cluster variables are rational functions. In \cite{fz1}, Fomin and Zelevinsky proved that they are Laurent polynomials of initial cluster variables. It was proved that the coefficients of these Laurent polynomials are non-negative, known as positivity of the coefficients, see \cite{LS,GHKK}.

\medskip

The original motivation of Fomin and Zelevinsky is to develop a combinatorial approach to the canonical bases in quantum groups (see \cite{L,K}) and the total positivity in algebraic groups. They conjectured that the cluster structure should serve as an algebraic framework for the study of the ``dual canonical bases" in various coordinate rings and their $q$-deformations. Particularly, they conjectured all cluster monomials belong to the dual canonical bases, this was proved recently by \cite{KKKO,Q}. Generally it can be very hard to write the dual canonical bases explicitly. From this point of view,  find an explicit expansion formula for the (quantum) cluster monomials represents a noteworthy step in research in cluster
theory.

\medskip

Much research approached to this way. For acyclic skew-symmetric cluster algebras, \cite{CC,CK,CK1} gave a Laurent expansion formula, know as \emph{Caldero-Chapoton map}, where the coefficients are Euler-Poincar$\acute{e}$ characteristics of appropriate Grassmannians of quiver representations. \cite{FK,P,PP,PP1} generalized this approach to the cluster algebras which admit a categorification by a 2-Calabi-Yau category; by \cite{A}, \cite{L-F}. In these papers, they named the Laurent expansion formula as \emph{cluster character}. \cite{DE,HL,CHL} generalized the cluster character to the acyclic skew-symmetrizable and acyclic totally sign-skew-symmetrizable cases. However, since all of the above formulas are in terms of Euler-Poincar$\acute{e}$ characteristics (which can be negative), they do not immediately imply the positivity of the coefficients. Moreover, Euler-Poincar$\acute{e}$ characteristics can be very hard to compute generally.

\medskip

In the other direction, much work have been conducted on the cluster algebras from surfaces, introduced by \cite{FST}. For cluster algebras from unpunctured surfaces, \cite{ST} gave an explicit Laurent expansion formula in terms of $T$-paths for the boundary coefficients cases; \cite{S} generalized the result in \cite{ST} to the principal coefficients cases; \cite{MS} gave the Laurent expansion formula in terms of perfect matchings, which turned out to be a very useful tool. \cite{MSW} generalized \cite{MS} to the cluster algebras from surfaces with arbitrary coefficients. More precisely, in these cases, the cluster variables and the arcs in the surface $(\mathcal O,M)$ are one to one correspondence, and seeds and the triangulations are one to one correspondence. Fix a triangulation $T$. For each arc $\gamma$, we can associate a graph $G_{T,\gamma}$, see \cite{MS,CS}. A perfect matching $P$ is a collection of edges in $G_{T,\gamma}$ such that each vertex is incident to exactly one edge in $P$. Then each perfect matching gives a term of the Laurent expansion formula of $x_{\gamma}$ (the cluster variable associate with $\gamma$) respect to $x^{T}$ (the cluster associate with $T$). In terms of perfect matchings of angles, \cite{Y} gave a cluster expansion formula for cluster algebras from surfaces with principal coefficients.

\medskip

Some works on the quantum case. \cite{R} generalized the Caldero-Chapoton map to the quantum case. \cite{R1} gave the explicit quantum Laurent expansion formula for the quantum cluster algebras of type $A$. Recently, \cite{CL} gave an explicit quantum Laurent expansion formula for the quantum cluster algebras of type $A$ and the Kronecker type.

\medskip

The aim of this paper is to give a quantum Laurent expansion formula for quantum cluster algebras from unpunctured surfaces with arbitrary coefficients and quantization. For these quantum cluster algebras, the quantum cluster variables and the arcs in the surface $(\mathcal O,M)$ are one to one correspondence, and quantum seeds and the triangulations are one to one correspondence. The idea is to associate each perfect matching a $q$-power, which aim to be the $q$-coefficients, where $q$ is the quantum parameter. Fix a triangulation $T$ and an arc $\gamma$, a valuation map on the set of perfect matchings of $G_{T,\gamma}$ is constructed in Theorem \ref{mainthm}. It turned out that the valuation map successfully acts the roles of $q$-coefficients, see Theorem \ref{expansion}, where the quantum Laurent expansion formula is given. As corollary, we show the coefficients appear in the quantum Laurent polynomials are non-negative.

\medskip

The paper is organized as follows. We recall some backgrounds on cluster algebras and cluster algebras from unpunctured surfaces in Section \ref{pre}, and quantum cluster algebras and  quantum cluster algebras from unpunctured surfaces in Section \ref{pre2}. The main results Theorems \ref{partition bi}, \ref{expansion-comm} for cluster algebras and Theorems \ref{mainthm}, \ref{expansion} for quantum cluster algebras are stated in Section \ref{cle} and Section \ref{qle}, respectively. To prove Theorems \ref{partition bi}, \ref{mainthm}, some preparations are made in Section \ref{compare}. We prove Theorem \ref{partition bi} in Section \ref{main1} and Theorem \ref{mainthm} in Section \ref{main2}.

\medskip

Throughout this paper, denote by $E(G)$ the edges set of a graph $G$ and denote by $|S|$ the cardinality of a set $S$.

\section{Preliminaries on cluster algebras}\label{pre}

\subsection{Commutative cluster algebras}

In this subsection, we recall the definitions of cluster algebras and geometric type cluster algebras in \cite{fz1}.

\medskip

A triple $(\mathbb P,\oplus,\cdot)$ is called a \emph{semifield} if $(\mathbb P,\cdot)$ is an abelian multiplicative group and $(\mathbb P,\oplus)$ is a commutative semigroup such that $``\oplus"$ is distributive with respect to $``\cdot"$. A \emph{Tropical semifield} ${\rm Trop}(u_1,\cdots,u_l)$ is a semifield freely generated by $u_1,\cdots,u_l$ as abelian groups with $\oplus$ defined by $\prod_ju_j^{a_j}\oplus \prod_ju_j^{b_j}=\prod_ju_j^{min(a_j,b_j)}$. Let $(\mathbb P,\oplus,\cdot)$ be a semifield. The group ring $\mathbb {ZP}$ will be used as \emph{ground ring}. Give an integer $n$, let $\mathcal F$ be the rational functions field in $n$ independent variables, with coefficients in $\mathbb {QP}$.

\medskip

A seed $t$ in $\mathcal F$ consists a trip $(x(t),y(t),B(t))$, where

\begin{enumerate}[$(1)$]

  \item $x(t)=\{x_1(t),\cdots,x_n(t)\}$ such that $\mathcal F$ is freely generated by $x(t)$ over $\mathbb {QP}$.

  \item $y(t)=\{y_1(t),\cdots,y_n(t)\}\subseteq \mathbb P$.

  \item $B(t)=(b_{ij})$ is an $n\times n$ skew-symmetrizable integer matrix.

\end{enumerate}

\medskip

Given a seed $t$ in $\mathcal F$, for any $k\in [1,n]$, we define the \emph{mutation} of $t$ at the $k$-th direction to be the new seed $t'=\mu_k(t)=(x(t'),y(t'),B(t'))$, where

\begin{enumerate}[$(1)$]

  \item \[\begin{array}{ccl} x_i(t') &=&
         \left\{\begin{array}{ll}
              x_i(t), &\mbox{if $i\neq k$},  \\
             \frac{y_k(t)\prod x_i(t)^{[b_{ik}]_{+}}+\prod x_i(t)^{[-b_{ik}]_{+}}}{(y_k(t)\oplus 1)x_k(t)}, &\mbox{otherwise}.
         \end{array}\right.
        \end{array}\]

  \item \[\begin{array}{ccl} y_i(t') &=&
         \left\{\begin{array}{ll}
              y^{-1}_k(t), &\mbox{if $i=k$},  \\
              y_j(t)y_k(t)^{[b_{kj}]_{+}}(1\oplus y_k(t))^{-b_{kj}}, &\mbox{otherwise}.
         \end{array}\right.
        \end{array}\]

  \item $B(t')=(b'_{ij})$ is determined by $B(t)=(b_{ij})$:

  \[\begin{array}{ccl} b'_{ij} &=&
         \left\{\begin{array}{ll}
              -b_{ij}, &\mbox{if $i=k$ or $j=k$},  \\
              b_{ij}+[b_{ik}]_{+}[b_{kj}]_{+}-[-b_{ik}]_{+}[-b_{kj}]_{+}, &\mbox{otherwise}.
         \end{array}\right.
        \end{array}\]

\end{enumerate}
where $[a]_{+}=max(a,0)$.

\medskip

A \emph{cluster algebra} $\mathcal A$ (of rank $n$) over $\mathbb P$ is defined as following:
\begin{enumerate}[$(1)$]
  \item Choose an initial seed $t_0=(x(t_0),y(t_0),B(t_0))$.
  \item All the seeds $t$ are obtained from $t_0$ by iterated mutations at directions $k\in[1,n]$.
  \item $\mathcal A=\mathbb{ZP}[x_i(t)]_{t,i\in [1,n]}$.
  \item $x(t)$ is called a \emph{cluster} for any $t$.
  \item $x_i(t)$ is called a \emph{cluster variable} for any $i\in [1,n]$ and $t$.
  \item A monomial in $x(t)$ is called a \emph{cluster monomial} for any $t$.
  \item $y(t)$ is called a \emph{coefficient tuple} for any $t$.
  \item $B(t)$ is called an \emph{exchange matrix} for any $t$.
\end{enumerate}
In particular, when $\mathbb P={\rm Trop}(u_1,\cdots,u_l)$, we say $\mathcal A$ is of \emph{geometric type}.

\medskip

When $\mathbb P={\rm Trop}(u_1,\cdots,u_l)$, denote $m=n+l$. For a seed $t$ in $\mathcal F$, $y_j(t)=\prod u_i^{a_{ij}}$ for some integers $a_{ij}$. Thus we can write $t$ as $\widetilde x(t),\widetilde B(t)$, where

\begin{enumerate}[$(1)$]

  \item $\widetilde x(t)=\{x_1(t),\cdots,x_n(t),x_{n+1}(t)=u_1,\cdots,x_m(t)=u_l\}$.

  \item $\widetilde B(t)=(b_{ij})$ is an $m\times n$ with $b_{ij}=a_{i-n,j}$ for $i\in [n+1,m]$.

\end{enumerate}

In this case, the mutation of $t$ at direction $k$ is $t'=\mu_k(t)=(\widetilde x(t'),\widetilde B(t'))$, where

\begin{enumerate}[$(1)$]

  \item \[\begin{array}{ccl} x_i(t') &=&
         \left\{\begin{array}{ll}
              x_i(t), &\mbox{if $i\neq k$},  \\
             \frac{\prod x_i(t)^{[b_{ik}]_{+}}+\prod x_i(t)^{[-b_{ik}]_{+}}}{x_k(t)}, &\mbox{otherwise}.
         \end{array}\right.
        \end{array}\]

  \item $\widetilde B(t')=(b'_{ij})$ is determined by $\widetilde B(t)=(b_{ij})$:

  \[\begin{array}{ccl} b'_{ij} &=&
         \left\{\begin{array}{ll}
              -b_{ij}, &\mbox{if $i=k$ or $j=k$},  \\
              b_{ij}+[b_{ik}]_{+}[b_{kj}]_{+}-[-b_{ik}]_{+}[-b_{kj}]_{+}, &\mbox{otherwise}.
         \end{array}\right.
        \end{array}\]

\end{enumerate}

\medskip

\subsection{Cluster algebras from unpunctured surfaces} In this subsection, we recall the cluster algebras associated with unpunctured surfaces by \cite{FST}. Let $\mathcal O$ be a connected oriented Riemann surface with boundary. Fix a non-empty set $M$ of marked points in the closure of $\mathcal O$ with at least one marked point on each boundary component. We call the pair $(\mathcal O, M)$ a \emph{bordered surface with marked points}. Marked points in the interior of $\mathcal O$ are called \emph{punctures}.

\medskip

In this paper, we consider the case that $(\mathcal O,M)$ without punctures, and we simply refer $(\mathcal O, M)$, or $\mathcal O$ where there is no chance of confusion, as an \emph{unpunctured surface}.

\medskip

%

An \emph{arc} $\gamma$ in $(\mathcal O, M)$ is a curve (up to isotopy) in $\mathcal O$ such that: the endpoints are in $M$; $\gamma$ does not cross itself, except its endpoints may coincide; except for the endpoints, $\gamma$ is disjoint from $M$ and from the boundary of $\mathcal O$; and $\gamma$ does not cut
out a monogon or a bigon.

\medskip

For two arcs $\gamma, \gamma'$ in $(\mathcal O, M)$, the \emph{crossing number} $N(\gamma,\gamma')$ of $\gamma$ and $\gamma'$ is the minimum of the numbers of crossings of arcs $\alpha$ and $\alpha'$, where $\alpha$ is isotopic to $\gamma$ and $\alpha'$ is isotopic to $\gamma'$. We call $\gamma$ and $\gamma'$ are \emph{compatible} if the crossing number of $\gamma$ and $\gamma'$ is 0. A \emph{triangulation} is a maximal collection of compatible arcs. Given a triangulation $T$ and a non-boundary arc $\tau$ of $T$, there exists a unique arc $\tau'$ such that $(T\setminus \{\tau\})\cup \{\tau'\}$ is a new triangulation. Denote $(T\setminus \{\tau\})\cup \{\tau'\}$ by $\mu_{\tau}(T)$. For an arc $\gamma$ and a triangulation $T$, the \emph{crossing number} $N(\gamma,T)$ of $\gamma$ and $T$ is defined as $\sum_{\tau\in T}N(\gamma,\tau)$. We call a triangulation $T$ an \emph{indexed triangulation} if the order of the arcs in $T$ is fixed.

\medskip
\medskip

For two non-boundary arcs $\tau$, $\tau'$ in an indexed triangulation $T=\{\tau_1,\cdots,\tau_n,\cdots,\tau_l\}$ and a triangle $\Delta$ of $T$, we may assume $\tau_1,\cdots,\tau_n$ are the non-boundary arcs, define
\[\begin{array}{ccl} b^{T,\Delta}_{\tau\tau'}=
         \left\{\begin{array}{ll}
              1, &\mbox{if $\tau$ and $\tau'$ are sides of $\Delta$ with $\tau'$ following $\tau$ in the clockwise order},  \\
              -1, &\mbox{if $\tau$ and $\tau'$ are sides of $\Delta$ with $\tau$ following $\tau'$ in the clockwise order}, \\
              0, &\mbox{otherwise.}
         \end{array}\right.
 \end{array}\]
and $b^T_{\tau\tau'}=\sum_{\Delta}b^{T,\Delta}_{\tau\tau'}$.
We say the $n\times n$ matrix $B^T=(b^T_{ij})$ with $b^T_{ij}=b^T_{\tau_i\tau_j}$, the \emph{signed adjacency matrix} of $T$.

\medskip

We say that a cluster algebra $\mathcal A$ is \emph{coming from $(\mathcal O, M)$} if there exists a triangulation $T$ such that $B^{T}$ is an exchange matrix of $\mathcal A$.

\medskip

Throughout this paper, let $\mathcal O$ be an unpunctured surface and $T=\{\tau_1,\cdots,\tau_n,\cdots,\tau_l\}$ be an indexed triangulation, and let $\gamma$ be an oriented arc in $\mathcal O$. When an arc $\tau\in T$ is fixed, denote $\mu_{\tau}(T)$ by $T'$. We always assume $\tau_1,\cdots,\tau_n$ are the non-boundary arcs in $T$.

\medskip

\subsection{Snake graphs and Perfect matchings}

In this subsection, we recall the construction of the snake graph $G_{T,\gamma}$ and the perfect matching of $G_{T,\gamma}$. For more details, see \cite[Section 4]{MSW}, \cite{CS}.

\medskip

Let $p_0$ be the starting point of $\gamma$, and let $p_{d+1}$ be its endpoint. Assume $\gamma$ crosses $T$ at $p_1,\cdots,p_d$ in order.
Let $\tau_{i_j}$ be the arc in $T$ containing $p_j$. Let $\Delta_{j-1}$ and $\Delta_{j}$ be the two ideal triangles in $T$ on either side of $\tau_{i_j}$.

\medskip

For each $p_j$, we associate a \emph{tile} $G(p_j)$ as follows. Define $\Delta_1^j$ and $\Delta_2^j$ to be two triangles with edges labeled as in $\Delta_{j-1}$ and $\Delta_{j}$, further, the orientations of $\Delta_1^j$ and $\Delta_2^j$ both agree with those of $\Delta_{j-1}$ and $\Delta_{j}$ if $j$ is odd; the orientations of $\Delta_1^j$ and $\Delta_2^j$ both disagree with those of $\Delta_{j-1}$ and $\Delta_{j}$ otherwise. We glue $\Delta_1^j$ and $\Delta_2^j$ at the edge labeled $\tau_{i_j}$, so that the orientations of $\Delta_1^j$ and $\Delta_2^j$ both either agree or disagree with those of $\Delta_{j-1}$ and $\Delta_{j}$. We say the edge labeled $\tau_{i_j}$ the \emph{diagonal} of $G(p_j)$.

\medskip

The two arcs $\tau_{i_j}$ and $\tau_{i_{j+1}}$ form two edges of the triangle $\Delta_j$. Denote the third edge of $\Delta_j$ by $\tau_{[\gamma_j]}$. After glue the tiles $G(p_j)$ and $G(p_{j+1})$ at the edge labeled $\tau_{[\gamma_j]}$ for $1\leq j<d-1$ step by step, we obtain a graph, denote as $\overline{G_{T,\gamma}}$. Let $G_{T,\gamma}$ be the graph obtained from $\overline{G_{T,\gamma}}$ by removing the diagonal of each tile.

\medskip

In particular, when $\gamma\in T$, let $G_{T,\gamma}$ be the graph with one only edge labeled $\gamma$.

\medskip

\begin{Definition}(\cite[Definition 4.6]{MSW})\label{perctvec}
A \emph{perfect matching} of a graph $G$ is a subset $P$ of the edges of $G$ such that each vertex of $G$ is incident to exactly one edge of $P$. Denote the set of all perfect matchings of $G$ by $\mathcal P(G)$.

\end{Definition}

\medskip

According to the definition, we have the following observation.

\medskip

\begin{Lemma}\label{inone}

Let $G_i$ and $G_{i+1}$ be two consecutive tiles of $G_{T,\gamma}$ sharing same edge $a$. If $b$ (respectively $c$) is an edge of $G_i$ (respectively $G_{i+1}$) which is incident to $a$, then $b$ and $c$ can not in a perfect matching of $G$ at the same time.

\end{Lemma}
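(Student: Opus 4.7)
The plan is to split on whether the edges $b$ and $c$ share a vertex. Writing $a = uv$, each of $b$ and $c$, being an edge of a $4$-cycle ($G_i$ or $G_{i+1}$) incident to $a$, meets $a$ at precisely one of the two endpoints $u,v$. If $b$ and $c$ both meet $a$ at the same endpoint $w\in\{u,v\}$, then $w$ is incident to both $b$ and $c$; since every vertex appears in at most one edge of a perfect matching, $b$ and $c$ cannot both lie in a perfect matching of $G$. So the real work is in the remaining ``cross'' case where $b$ meets $a$ at $u$ and $c$ meets $a$ at $v$.

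In that case, I would argue by a parity count on the sub-snake $S = G_1 \cup G_2 \cup \cdots \cup G_i$. From the tile-by-tile construction one obtains $|V(S)| = 2i+2$: the first tile contributes $4$ vertices, and each subsequent gluing adds exactly $2$ new vertices after the $2$-for-$2$ identification along $\tau_{[\gamma_j]}$. Moreover, since vertex identifications in $G_{T,\gamma}$ occur only between consecutive tiles, the set of tiles containing any given vertex of $G_{T,\gamma}$ is a contiguous interval; hence $V(S)\cap V(G_j) \subseteq V(G_i)\cap V(G_{i+1}) = \{u,v\}$ for every $j>i$.

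Suppose for contradiction that a perfect matching $M$ of $G_{T,\gamma}$ contains both $b$ and $c$. Call an edge of $M$ \emph{inner} if both endpoints lie in $V(S)$ and \emph{crossing} otherwise; by the contiguity observation, every crossing edge is incident to either $u$ or $v$. Now $b$ is an inner edge covering $u$, so no crossing of $M$ can meet $u$; and $c$, whose other endpoint lies in $G_{i+1}\setminus G_i$ and therefore (by the same observation) outside $V(S)$, is itself a crossing edge at $v$, so no other edge of $M$ meets $v$ either. Thus $M$ has exactly one crossing edge, covering a single vertex of $V(S)$, while the remaining $|V(S)|-1 = 2i+1$ vertices of $V(S)$ must be covered by inner edges. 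Since inner edges cover vertices in pairs, this forces $2i+1$ to be even, a contradiction. The main subtlety will be the two structural inputs $|V(S)|=2i+2$ and the contiguity of tile ranges at a vertex, both of which follow directly from how $G_{T,\gamma}$ is assembled one tile at a time.
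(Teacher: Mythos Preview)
Your proof is correct and takes a genuinely different route from the paper's. The paper argues by induction on $i$: in the non-incident case it shows that if $b,c\in P$ then one is forced to find another forbidden pair $b',c'$ in the tiles $G_{i-1}$ (or $G_{i-2},G_{i-1}$), contradicting the induction hypothesis; this requires a short case analysis on whether $G_{i-1}$ sits to the left of or below $G_i$. Your argument instead is a one-shot parity count on the prefix $S=G_1\cup\cdots\cup G_i$: with $b$ an inner edge at $u$ and $c$ the unique crossing edge at $v$, the remaining $2i+1$ vertices of $V(S)$ cannot be matched by inner edges. Your approach is cleaner and avoids the geometric case split, at the cost of invoking two global structural facts about snake graphs (the vertex count $|V(S)|=2i+2$ and the contiguity of the tile range at each vertex); the paper's induction, by contrast, only ever looks two tiles back and so needs no such global statements. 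Both arguments ultimately exploit the same monotone ``right/up'' staircase structure of $G_{T,\gamma}$.
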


\begin{proof}

Without loss of generality, we may assume that $G_{i+1}$ is on the right of $G_i$. In case $b$ and $c$ are incident, then they can not in a perfect matching according to the definition. In case $b$ and $c$ are not incident, we have the following possibilities, see the Figures below.

\centerline{\begin{tikzpicture}
\draw[-] (1,0) -- (2,0);
\draw[-] (1,-1) -- (2,-1);
\draw[-] (2,0) -- (3,0);
\draw[-] (2,-1) -- (3,-1);
\draw[-] (3,0) -- (3,-1);
\node[right] at (1.8,-0.5) {$a$};
\node[right] at (1.2,-0.5) {$G_{i}$};
\node[above] at (1.5,-0.05) {$b$};
\node[right] at (2.1,-0.5) {$G_{i+1}$};;
\node[below] at (2.5,-0.95) {$c$};
\draw [-] (1, 0) -- (1,-1);
\draw [-] (2, 0) -- (2,-1);
\draw [fill] (1,0) circle [radius=.05];
\draw [fill] (2,0) circle [radius=.05];
\draw [fill] (1,-1) circle [radius=.05];
\draw [fill] (2,-1) circle [radius=.05];
\draw [fill] (3,0) circle [radius=.05];
\draw [fill] (3,-1) circle [radius=.05];
\draw[-] (5,0) -- (6,0);
\draw[-] (5,-1) -- (6,-1);
\draw[-] (6,0) -- (7,0);
\draw[-] (6,-1) -- (7,-1);
\draw[-] (7,0) -- (7,-1);
\node[right] at (5.8,-0.5) {$a$};
\node[right] at (5.2,-0.5) {$G_{i}$};
\node[above] at (6.5,-0.05) {$b$};
\node[right] at (6.1,-0.5) {$G_{i+1}$};;
\node[below] at (5.5,-0.95) {$c$};
\draw [-] (5, 0) -- (5,-1);
\draw [-] (6, 0) -- (6,-1);
\draw [fill] (5,0) circle [radius=.05];
\draw [fill] (6,0) circle [radius=.05];
\draw [fill] (5,-1) circle [radius=.05];
\draw [fill] (6,-1) circle [radius=.05];
\draw [fill] (7,0) circle [radius=.05];
\draw [fill] (7,-1) circle [radius=.05];
\end{tikzpicture}}

We shall only prove the first case since the second case can be proved dually. Clearly, our result holds when $i=1$. Suppose our result holds for $i<k$. When $i=k$, if $G_{i-1}$ is on the left of $G_{i}$, then $b,c\in P$ for a perfect matching $P$ implies $b,c'\in P$, see the figure below, this contradicts our induction hypothesis. If $G_{i-1}$ is below to $G_{i}$, then $b,c\in P$ implies $c'\in P$, see the figure below. Thus no edge in $P$ is incident to the vertex $o$ if $i-1=1$ or $G_{i-2}$ is on the left of $G_{i-1}$, and hence $i>2$ and $G_{i-2}$ is below to $G_{i-1}$. Therefore, $b,c\in P$ for a perfect matching $P$ implies $b',c'\in P$, see the figure below. This contradicts to our induction hypothesis. The proof of the lemma is complete.
\end{proof}

\centerline{\begin{tikzpicture}
\draw[-] (0,-1) -- (1,-1);
\draw[-] (0,-2) -- (1,-2);
\draw[-] (0,-1) -- (0,-2);
\draw[-] (1,-1) -- (2,-1);
\draw[-] (1,-2) -- (2,-2);
\draw[-] (2,-1) -- (3,-1);
\draw[-] (2,-2) -- (3,-2);
\draw[-] (3,-1) -- (3,-2);
\node[right] at (1.8,-1.5) {$a$};
\node[right] at (1.2,-1.5) {$G_{i}$};
\node[above] at (1.5,-1.05) {$b$};
\node[right] at (0,-1.5) {$G_{i-1}$};
\node[right] at (2.1,-1.5) {$G_{i+1}$};
\node[below] at (2.5,-1.95) {$c$};
\node[below] at (0.5,-1.95) {$c'$};
\draw [-] (1, -1) -- (1,-2);
\draw [-] (2, -1) -- (2,-2);
\draw [fill] (0,-1) circle [radius=.05];
\draw [fill] (0,-2) circle [radius=.05];
\draw [fill] (1,-1) circle [radius=.05];
\draw [fill] (2,-1) circle [radius=.05];
\draw [fill] (1,-2) circle [radius=.05];
\draw [fill] (2,-2) circle [radius=.05];
\draw [fill] (3,-1) circle [radius=.05];
\draw [fill] (3,-2) circle [radius=.05];
\draw[-] (4,0) -- (5,0);
\draw[-] (4,-1) -- (5,-1);
\draw[-] (4,-1) -- (4,-2);
\draw[-] (5,-1) -- (5,-2);
\draw[-] (4,-2) -- (5,-2);
\draw[-] (4,-2) -- (4,-3);
\draw[-] (5,-2) -- (5,-3);
\draw[-] (4,-3) -- (5,-3);
\draw[-] (5,0) -- (6,0);
\draw[-] (5,-1) -- (6,-1);
\draw[-] (6,0) -- (6,-1);
\node[left] at (4.05,-1.5) {$c'$};
\node[right] at (4.8,-0.5) {$a$};
\node[right] at (4.95,-2.5) {$b'$};
\node[right] at (4.2,-0.5) {$G_{i}$};
\node[above] at (4.5,-0.05) {$b$};
\node[right] at (5.1,-0.5) {$G_{i+1}$};
\node[right] at (4,-1.5) {$G_{i-1}$};
\node[right] at (4,-2.5) {$G_{i-2}$};
\node[below] at (5.5,-0.95) {$c$};
\node[right] at (5,-2.1) {$o$};
\draw [-] (4, 0) -- (4,-1);
\draw [-] (5, 0) -- (5,-1);
\draw [fill] (4,-3) circle [radius=.05];
\draw [fill] (5,-3) circle [radius=.05];
\draw [fill] (4,-2) circle [radius=.05];
\draw [fill] (5,-2) circle [radius=.05];
\draw [fill] (4,0) circle [radius=.05];
\draw [fill] (5,0) circle [radius=.05];
\draw [fill] (4,-1) circle [radius=.05];
\draw [fill] (5,-1) circle [radius=.05];
\draw [fill] (6,0) circle [radius=.05];
\draw [fill] (6,-1) circle [radius=.05];
\end{tikzpicture}}

\begin{Definition} (\cite[Definition 4.7]{MSW})
Let $a_1$ and $a_2$ be the two edges of $\overline{G_{T,\gamma}}$ which lie in the counterclockwise direction from the diagonal of $G(p_1)$. Then the \emph{minimal matching} $P_{-}(G_{T,\gamma})$ is defined as the unique matching which contains only boundary edges and does not contain edges $a_1$ or $a_2$. The \emph{maximal matching} $P_{+}(G_{T,\gamma})$ is the other matching with only boundary edges.

\end{Definition}

\medskip

According to the definitions, we have the following observation.

\medskip

\begin{Lemma}\label{max-min}

Let $a$ be an edge of the tile $G(p_j)$. If $a$ is in the maximal/minimal perfect matching of $G_{T,\gamma}$, then $a$ lies in the counterclockwise/clockwise direction from the diagonal of $G(p_j)$ when $j$ is odd and lies in the clockwise/counterclockwise direction from the diagonal of $G(p_j)$ when $j$ is even.

\end{Lemma}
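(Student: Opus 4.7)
The plan is to argue by induction on the tile index $j$. The base case $j=1$ is immediate from the definitions of $P_{-}$ and $P_{+}$: since $P_{-}(G_{T,\gamma})$ is the unique matching of boundary edges avoiding $a_1, a_2$ (the two counterclockwise edges from the diagonal of $G(p_1)$), the two edges of $P_{-}$ lying in $G(p_1)$ must be the remaining boundary edges, i.e., the two edges clockwise from its diagonal. Dually, $P_{+}$ contains $a_1$ and $a_2$. As $j=1$ is odd, this matches the asserted pattern.

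For the inductive step, suppose the claim holds at $G(p_j)$, and let $a = \tau_{[\gamma_j]}$ be the edge shared with $G(p_{j+1})$. By the induction hypothesis, the two edges of $P_{+}$ inside $G(p_j)$ lie both counterclockwise (if $j$ is odd) or both clockwise (if $j$ is even) from the diagonal of $G(p_j)$; exactly one of them, say $b$, is incident to $a$. By Lemma \ref{inone}, no edge of $G(p_{j+1})$ incident to $a$ can lie in $P_{+}$, forcing the two edges of $P_{+}$ contained in $G(p_{j+1})$ to be the two edges of $G(p_{j+1})$ not incident to $a$. The same argument with ``counterclockwise'' and ``clockwise'' interchanged handles the minimal matching $P_{-}$.

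It remains to verify that the two edges of $G(p_{j+1})$ not incident to $a$ are exactly the edges on the clockwise side (resp.\ counterclockwise side) of the diagonal of $G(p_{j+1})$ when $j$ is odd (resp.\ even). This is the main technical obstacle, reducing to a local geometric check: by construction, the orientations of $\Delta_1^{j+1}, \Delta_2^{j+1}$ in the abstract plane are opposite to those of $\Delta_1^j, \Delta_2^j$ with respect to the surface orientation, so the sense of ``counterclockwise from the diagonal'' flips between consecutive tiles. A short case analysis on how $G(p_j)$ and $G(p_{j+1})$ are glued along $a$ then closes the induction.
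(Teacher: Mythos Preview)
Your overall strategy---forward induction on the tile index, with the base case read off from the definition of $P_{\pm}$---is the same as the paper's. The gap is in the inductive step. You assert that $G(p_j)$ contains exactly two edges of $P_{+}$, with exactly one of them incident to the gluing edge $a$. Neither claim holds in general. Already in a straight snake graph with three tiles, the first tile $G(p_1)$ meets $P_{+}$ in a single edge (the outer left edge), and that edge is \emph{opposite} to the gluing edge $a$, hence not incident to it. Your appeal to Lemma~\ref{inone} then has no force: there is no $b\in P_{+}\cap E(G(p_j))$ touching $a$ with which to rule out the edges of $G(p_{j+1})$ adjacent to $a$. (Relatedly, ``the two edges of $G(p_{j+1})$ not incident to $a$'' is a miscount: in a square, only the edge opposite $a$ fails to meet $a$.) More generally an interior tile can contribute $0$, $1$, or $2$ edges to $P_{+}$, so the hypothesis you feed into the next step is not available.

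The paper sidesteps this by running the induction backward rather than forward. Starting from a given edge $a\in P_{+}\cap E(G(p_k))$, one uses the perfect-matching condition (together with Lemma~\ref{inone}) to locate a specific edge of $P_{+}$ in $G(p_{k-1})$ or $G(p_{k-2})$, and then invokes the induction hypothesis on that earlier tile to determine on which side of its diagonal that edge lies; the local gluing geometry then forces the side of $a$ in $G(p_k)$. This avoids ever needing to know the full set $P_{+}\cap E(G(p_j))$.
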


\begin{proof}

We shall only prove the case of maximal perfect matching. We prove the lemma by induction on $j$. When $j=1$, it follows by the definition of $P_{+}$. Assume it holds for $j<k$. We consider the case $j=k$. We shall prove the case $G(p_{k-1})$ is on the left of $G(p_k)$ since the case $G(p_{k-1})$ below to $G(p_k)$ can be proved similarly. When $G(p_{k-1})$ is on the left of $G(p_k)$, $a$ can not be the left edge of $G(p_k)$. In case $a$ is the upper edge or lower edge, by Lemma \ref{inone}, either the left edge of $G(p_{k-1})$ in $P_{+}$ or the upper edge of $G(p_{k-2})$ in $P_{+}$, as shown in the Figure below. Thus $a$ satisfies the required condition by induction hypothesis. In case $a$ is the right edge, then the upper edge of $G(p_{k-1})$ in $P_{+}$, as shown in Figure below. Thus $a$ satisfies the required condition by induction hypothesis. Our result follows.
\end{proof}

\centerline{\begin{tikzpicture}
\draw[-] (1,0) -- (2,0);
\draw[-] (1,-1) -- (2,-1);
\draw [line width=1.6] (2,0) -- (3,0);
\draw[-] (2,-1) -- (3,-1);
\draw[-] (3,0) -- (3,-1);
\node[right] at (1,-0.5) {$k-1$};
\node[right] at (2.3,-0.5) {$k$};
\node[above] at (2.5,-0.05) {$a$};
\draw [line width=1.6] (1,0) -- (1,-1);
\draw [-] (2, 0) -- (2,-1);
\draw [fill] (1,0) circle [radius=.05];
\draw [fill] (2,0) circle [radius=.05];
\draw [fill] (1,-1) circle [radius=.05];
\draw [fill] (2,-1) circle [radius=.05];
\draw [fill] (3,0) circle [radius=.05];
\draw [fill] (3,-1) circle [radius=.05];
\draw [line width=1.6] (6,0) -- (7,0);
\draw [line width=1.6] (4,0) -- (5,0);
\draw[-] (4,-1) -- (5,-1);
\draw[-] (5,0) -- (6,0);
\draw[-] (6,-1) -- (7,-1);
\draw[-] (7,0) -- (7,-1);
\draw[-] (5,-1) -- (6,-1);
\draw[-] (6,0) -- (6,-1);
\node[right] at (4,-0.5) {$k-2$};
\node[above] at (6.5,-0.05) {$a$};
\node[right] at (5,-0.5) {$k-1$};
\node[right] at (6.3,-0.5) {$k$};
\draw [-] (4, 0) -- (4,-1);
\draw [-] (5, 0) -- (5,-1);
\draw [fill] (4,0) circle [radius=.05];
\draw [fill] (5,0) circle [radius=.05];
\draw [fill] (4,-1) circle [radius=.05];
\draw [fill] (5,-1) circle [radius=.05];
\draw [fill] (6,0) circle [radius=.05];
\draw [fill] (6,-1) circle [radius=.05];
\draw [fill] (7,0) circle [radius=.05];
\draw [fill] (7,-1) circle [radius=.05];
\end{tikzpicture}}

\medskip

\centerline{\begin{tikzpicture}
\draw[-] (1,0) -- (1,-1);
\draw[-] (1,-1) -- (2,-1);
\draw [line width=1.6] (3,0) -- (3,-1);
\draw[-] (2,-1) -- (3,-1);
\draw[-] (2,0) -- (3,0);
\node[right] at (1,-0.5) {$k-1$};
\node[right] at (2.3,-0.5) {$k$};
\node[right] at (2.95,-0.5) {$a$};
\draw [line width=1.6] (1,0) -- (2,0);
\draw [-] (2, 0) -- (2,-1);
\draw [fill] (1,0) circle [radius=.05];
\draw [fill] (2,0) circle [radius=.05];
\draw [fill] (1,-1) circle [radius=.05];
\draw [fill] (2,-1) circle [radius=.05];
\draw [fill] (3,0) circle [radius=.05];
\draw [fill] (3,-1) circle [radius=.05];
\end{tikzpicture}}

\begin{Definition}

Let $P$ be a perfect matching of $G_{T,\gamma}$.

\begin{enumerate}[$(1)$]

  \item \cite{MSW1} We call $P$ can \emph{twist} on a tile $G(p)$ if $G(p)$ has two edges belong to $P$, in such case we define the \emph{twist} $\mu_pP$ of $P$ on $G(p)$ to be the perfect matching obtained from $P$ by replacing the edges in $G(p)$ by the remaining two edges.

  \item In case $P$ can do twist on $G(p)$ with diagonal labeled by $\tau\in T$, we call the pair of the edges of $G(p)$ lying in $P$ a \emph{$\tau$-mutable edges pair in $P$}, any other edge in $P$ is called \emph{non-$\tau$-mutable edge in $P$}.

\end{enumerate}

\end{Definition}

\medskip

The following fact is easy but important.

\medskip

\begin{Lemma}\label{transitive}

For any two perfect matchings $P,Q\in \mathcal P(G_{T,\gamma})$, $Q$ can be obtained from $P$ by a sequence of twists.

\end{Lemma}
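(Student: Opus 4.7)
Plan. The proof goes by induction on $|P \triangle Q|$. Let $D := P \triangle Q$. Because both $P$ and $Q$ are perfect matchings of $G_{T,\gamma}$, every vertex of the subgraph defined by $D$ has degree $0$ or $2$, so $D$ decomposes as a disjoint union of simple cycles, each alternating between edges of $P$ and edges of $Q$. Since $G_{T,\gamma}$ is a snake graph---a sequence of tiles glued along single edges---its only simple cycles are the outer boundaries of intervals $G(p_i)\cup G(p_{i+1})\cup\cdots\cup G(p_j)$ of consecutive tiles; hence every cycle in $D$ encloses such an interval.

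Choose a cycle $C\subseteq D$ enclosing tiles $G(p_i),\ldots,G(p_j)$ and focus on the leftmost tile $G(p_i)$. If $j=i$ then $C$ is the four-cycle around $G(p_i)$, forcing $P|_{G(p_i)}$ and $Q|_{G(p_i)}$ to be the two distinct opposite-edge pairs of $G(p_i)$; a single twist on $G(p_i)$ produces a matching $P^{\ast}$ with $P^{\ast}\triangle Q=D\setminus C$, reducing $|P\triangle Q|$ by $4$. Otherwise $j>i$, and $G(p_i)$ contributes exactly three edges to $C$, namely all its edges except $e_s$, the one shared with $G(p_{i+1})$. These three edges alternate in $P$ and $Q$ along $C$, so either $P|_{G(p_i)}$ is already an opposite-edge pair of $G(p_i)$, or it is the single edge opposite $e_s$.

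In the first alternative, twist $P$ directly on $G(p_i)$; this alters $D$ by symmetric difference with the four-cycle of $G(p_i)$, and since three of those four edges already lie in $D$, $|D|$ drops by $2$ or $4$ (with the extra $2$ when $e_s \in D$). In the second alternative, the two endpoints of $e_s$ must be matched in $P$ by edges outside $G(p_i)$. Here Lemma \ref{inone}, applied to the pair $(G(p_{i-1}),G(p_i))$, rules out the possibility that either of these endpoints is matched inside $G(p_{i-1})$, so both must be matched by the two edges of $G(p_{i+1})$ incident to $e_s$. These two edges are opposite in $G(p_{i+1})$, hence $P|_{G(p_{i+1})}$ is itself a pair; one twist on $G(p_{i+1})$ then makes $P|_{G(p_i)}$ a pair in turn, and a subsequent twist on $G(p_i)$ alters $P$ by the six-cycle around $G(p_i)\cup G(p_{i+1})$. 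A direct count shows this six-cycle meets $D$ in at least five of its six edges, so $|D|$ drops by at least $4$.

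The main obstacle is the second alternative: one must verify both that a twist on $G(p_{i+1})$ is available in $P$ and that the ensuing bookkeeping strictly reduces $|D|$. Lemma \ref{inone} is the decisive ingredient, as it is precisely what eliminates the pathological configurations in which neither $G(p_i)$ nor $G(p_{i+1})$ would admit a twist. Once this reduction step is in place, the induction on $|P\triangle Q|$ closes and $Q$ can be obtained from $P$ by the accumulated sequence of twists.
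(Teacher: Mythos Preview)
Your strategy---induct on $|P\triangle Q|$, decompose the symmetric difference into boundary cycles of tile-intervals, and twist near an end of such a cycle---is sound in outline and is genuinely different from the paper's proof (which inducts on the number of tiles and peels off the first tile $G(p_1)$ of the whole snake). However, your ``second alternative'' has a real gap. After establishing $P|_{G(p_i)}=\{b_1\}$, you assert that both endpoints of $e_s$ are matched in $P$ by the two edges of $G(p_{i+1})$ adjacent to $e_s$, invoking Lemma~\ref{inone} only for the pair $(G(p_{i-1}),G(p_i))$. That rules out $G(p_{i-1})$ but ignores $G(p_{i+2})$: when the snake bends at $G(p_{i+1})$, one vertex of $e_s$ lies in $G(p_i)$, $G(p_{i+1})$ and $G(p_{i+2})$ simultaneously, and can perfectly well be matched by an edge of $G(p_{i+2})$.

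A three-tile snake already exhibits the failure. Place $G_2$ to the right of $G_1$ and $G_3$ above $G_2$; the outer boundary of $G_1\cup G_2\cup G_3$ is an $8$-cycle $C$. Take $P,Q$ to be the two alternating $4$-edge halves of $C$ (these are the unique perfect matchings with $P\triangle Q=C$), choosing $P$ to be the one containing the left edge of $G_1$. Then $P|_{G_1}$ is exactly that single left edge, so you are in your second alternative with $i=1$; but the upper-left vertex of $G_2$ (the vertex common to all three tiles) is matched in $P$ by the left edge of $G_3$, not by any edge of $G_2$. Consequently $P\cap E(G_2)$ is a single edge and $P$ cannot twist on $G(p_{i+1})=G_2$; the only tile on which $P$ can twist is $G_3$. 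Your prescribed pair of twists on $G(p_{i+1})$ and then $G(p_i)$ is therefore unavailable. The induction can be repaired---for instance by also examining the other end $G(p_j)$ of $C$, or by adopting the paper's simpler device of working at the first tile of the entire snake rather than of a cycle in $P\triangle Q$---but as written the argument does not go through.
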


\begin{proof}

We prove this lemma by the induction of the number of tiles $d$ of $G_{T,\gamma}$. If $d=0$, then $Q=P$. If $d=1$, then either $Q=P$ or $Q=\mu_{p_1}P$. Thus the lemma holds for $d=0$ and $d=1$. Assume that the lemma holds for $d<k$. When $d=k$, without loss of generality, we may assume that $G(p_2)$ is on the right of $G(p_1)$. Denote the edges of $G(P_1)$ by $b_1,b_2,b_3$ and $b_4$, see the figure below. Thus for any $R\in \mathcal P(G_{T,\gamma})$, $b_1\in R$ or $b_2,b_4\in R$. In case $b_1\in P\cap Q$, then $P\setminus \{b_1\}$ and $Q\setminus \{b_1\}$ are perfect matchings of graph $G(p_2)\cup \cdots \cup G(p_d)$. By the induction hypothesis, $Q\setminus \{b_1\}$ can be obtained from $P\setminus \{b_1\}$ by a sequence of twists, thus $Q$ can be obtained from $P$ by a sequence of twists. In case $b_2,b_4\in P\cap Q$, then $P\setminus \{b_2,b_4\}$ and $Q\setminus \{b_2,b_4\}$ are perfect matchings of graph $G(p_3)\cup \cdots \cup G(p_d)$. By the assumption of induction, $Q\setminus \{b_2,b_4\}$ can be obtained from $P\setminus \{b_2,b_4\}$ by a sequence of twists, thus $Q$ can be obtained from $P$ by a sequence of twists. In case $b_1\in P$ and $b_2,b_4\in Q$, then $b_1\in P\cap\mu_{p_1}Q$. Thus $\mu_{p_1}Q$ can be obtained from $P$ by a sequence of twists, and hence $Q$ can be obtained from $P$ by a sequence of twists. In case $b_1\in Q$ and $b_2,b_4\in P$, then $b_1\in \mu_{p_1}P\cap Q$. Thus $Q$ can be obtained from $\mu_{p_1}P$ by a sequence of twists, and hence $Q$ can be obtained from $P$ by a sequence of twists. The proof of the Lemma is complete.
\end{proof}

\centerline{\begin{tikzpicture}
\draw[-] (1,0) -- (2,0) -- (3,0);
\draw[-] (1,-1) -- (2,-1) -- (3,-1);
\node[above] at (1.5,-0.05) {$b_2$};
\node[left] at (1.05,-0.5) {$b_1$};
\node[below] at (1.5,-0.95) {$b_4$};
\node[right] at (1.7,-0.5) {$b_3$};
\node[right] at (2.2,-0.5) {$p_2$};
\node[left] at (1.8,-0.5) {$p_1$};
\draw [-] (1, 0) -- (1,-1);
\draw [-] (2, 0) -- (2,-1);
\draw [-] (3, 0) -- (3,-1);
\draw [fill] (1,0) circle [radius=.05];
\draw [fill] (2,0) circle [radius=.05];
\draw [fill] (1,-1) circle [radius=.05];
\draw [fill] (2,-1) circle [radius=.05];
\draw [fill] (3,0) circle [radius=.05];
\draw [fill] (3,-1) circle [radius=.05];
\end{tikzpicture}}

\section{Preliminaries on quantum cluster algebras}\label{pre2}

\subsection{Quantum cluster algebras}

In this subsection, we recall the definition of quantum cluster algebras by \cite{BZ}. We follow the convention in \cite{Q}. Fix two integers $n\leq m$. Let $\widetilde B$ be an $m\times n$ integer matrix. Let $\Lambda$ be an $m\times m$ skew-symmetric integer matrix. We call $(\widetilde B,\Lambda)$ \emph{compatible} if $(\widetilde B)^t \Lambda=(D\;\;0)$ for some diagonal matrix $D$ with positive entries, where $(\widetilde B)^t$ is the transpose of $\widetilde B$. Note that in this case, the upper $n\times n$ submatrix of $\widetilde B$ is skew-symmetrizable, and $\widetilde B$ is full rank.

\medskip

Let $q$ be the quantum parameter. A \emph{quantum seed} $t$ consists a compatible pair $(\widetilde B,\Lambda)$ and a collection of indeterminate $X_i(t),i\in [1,m]$, called \emph{quantum cluster variables}. Let $\{e_i\}$ be the standard basis of $\mathbb Z^m$ and $X(t)^{e_i}=X_i(t)$. Define the corresponding \emph{quantum torus} $\mathcal T(t)$ to be the algebra which is freely generated by $X(t)^{\vec{a}},\vec{a}\in \mathbb Z^m$ as $\mathbb Z[q^{\pm 1/2}]$-module, with multiplication on these elements defined by
$$X(t)^{\vec{a}}X(t)^{\vec{b}}=q^{\Lambda(t)(\vec{a},\vec{b})/2}X(t)^{\vec{a}+\vec{b}},$$
where $\Lambda(t)(,)$ means the bilinear form on $\mathbb Z^m$ such that
$$\lambda(t)(e_i,e_j)=\lambda(t)_{ij}.$$

For any $k\in [1,n]$, we define the \emph{mutation} of $t$ at the $k$-th direction to be the new seed $t'=\mu_k(t)=((X_i(t')_{i\in [1,m]}), \widetilde B(t'),\lambda(t'))$, where
\begin{enumerate}[$(1)$]

\item $X_i(t')=X_i(t)$ for $i\neq t$,

\item $X_k(t')=X(t)^{-e_k+\sum_i[b_{ik}]_{+}e_i}+X(t)^{-e_k+\sum_i[-b_{ik}]_{+}e_i}$.

\item $\widetilde{B}(t')=(b'_{ij})$ is determined by $\widetilde B(t)=(b_{ij})$:
\[\begin{array}{ccl} b'_{ij} &=&
         \left\{\begin{array}{ll}
              -b_{ij}, &\mbox{if $i=k$ or $j=k$},  \\
              b_{ij}+[b_{ik}]_{+}[b_{kj}]_{+}-[-b_{ik}]_{+}[-b_{kj}]_{+}, &\mbox{otherwise}.
         \end{array}\right.
 \end{array}\]

\item $\Lambda(t')$ is skew-symmetric and satisfies:
\[\begin{array}{ccl} \Lambda(t')_{ij} &=&
         \left\{\begin{array}{ll}
              \Lambda(t)_{ij}, &\mbox{if $i,j\neq k$},  \\
              \Lambda(t)(e_i,-e_k+\sum_{l}[b_{lk}]_{+}e_l), &\mbox{if $i\neq k=j$},
         \end{array}\right.
 \end{array}\]

\end{enumerate}
where $[a]_{+}=max(a,0)$.

\medskip

One can check easily that $(\widetilde B(t'), \Lambda(t'))$ is compatible since $\widetilde B^t(t)\Lambda(t)=\widetilde B^t(t')\Lambda(t')$.

\medskip

The quantum torus $\mathcal T(t')$ for the new seed $t'$ is defined similarly.

\medskip

We define a \emph{quantum cluster algebra} $\mathcal A_q$ as following:
\begin{enumerate}[$(1)$]

  \item Choose an initial seed $t_0=((X_1,\cdots,X_m), B,\Lambda)$.

  \item All the seeds $t$ are obtained from $t_0$ by iterated mutations at directions $k\in [1,n]$.

  \item $\mathcal A_q=\mathbb Z[q^{\pm1/2}]\langle X_i(t)\rangle_{t,i\in [1,m]}$.

  \item $X_{n+1},\cdots, X_m$ are called \emph{frozen variables} or \emph{coefficients}.

  \item A quantum cluster variable in $t$ is called a \emph{quantum cluster variable} of $\mathcal A_q$.

  \item $X(t)^{\vec{a}}$ for some $t$ and $\vec{a}\in \mathbb N^m$ is called a \emph{quantum cluster monomial}.

\end{enumerate}

\medskip

\begin{Theorem}(Quantum Laurent Phenomenon,\cite{BZ})
Let $\mathcal A_q$ be a quantum cluster algebra and $t$ be a seed. For any quantum cluster variable $X$, we have $X\in \mathcal T(t)$.

\end{Theorem}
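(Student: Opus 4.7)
The plan is to adapt Fomin--Zelevinsky's original proof of the classical Laurent phenomenon, following the strategy of Berenstein--Zelevinsky \cite{BZ}. For a fixed seed $t$, introduce the \emph{upper quantum cluster algebra}
$$\mathcal U_q(t) := \mathcal T(t) \cap \bigcap_{k=1}^{n} \mathcal T(\mu_k(t)),$$
consisting of those elements that remain Laurent after any single mutation from $t$. The mutation formula $X_k(\mu_k(t)) = X(t)^{-e_k+\sum_i[b_{ik}]_{+}e_i}+X(t)^{-e_k+\sum_i[-b_{ik}]_{+}e_i}$ is manifestly Laurent in $\mathcal T(t)$, and the remaining variables $X_i(t)$ for $i \neq k$ are unchanged, so every quantum cluster variable adjacent to $t$ lies in $\mathcal U_q(t)$. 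The theorem is then reduced to induction on the length of a mutation sequence $t = t_0, t_1, \ldots, t_N$ carrying $t$ to the seed containing $X$, so it suffices to show that $\mathcal U_q$ is mutation invariant.

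The key step is the \emph{quantum Caterpillar Lemma}: for each $k \in [1,n]$, one has $\mathcal U_q(t) = \mathcal U_q(\mu_k(t))$. Granted this, an easy induction yields $X \in \mathcal U_q(t_0) \subseteq \mathcal T(t)$, proving the theorem. To establish the Caterpillar Lemma, I would write down the Laurent expansions of the quantum cluster variables $X_l(\mu_l \mu_k(t))$ and $X_k(\mu_k \mu_l \mu_k(t))$ (for $l \neq k$) inside each of the tori $\mathcal T(\mu_j(t))$, and verify that no denominators arise beyond the allowed ones. This is a bounded-depth computation since any one-step neighbour of $\mu_k(t)$ is reachable from $t$ in at most three mutations.

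The main obstacle is non-commutativity. In the classical setting the compatibility of nested exchange binomials follows by manipulating commutative Laurent polynomials, but here every product $X(t)^{\vec{a}} X(t)^{\vec{b}}$ carries a $q^{\Lambda(t)(\vec{a},\vec{b})/2}$ twist, and care is needed to ensure that the $q$-powers arising on different sides of a nested exchange relation match. The critical input is the compatibility condition $\widetilde B(t)^{t} \Lambda(t) = (D\;\;0)$, together with its preservation under mutation noted in the excerpt: this guarantees that the two monomials on the right-hand side of the exchange relation $q$-commute with each generator $X_j(t)$ by the \emph{same} power of $q$. Consequently the twist factors can be pulled out uniformly from each summand of a nested exchange expression, reducing the verification of the Caterpillar Lemma to the classical (commutative) identities after tracking a global $q$-prefactor. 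Once this bookkeeping is carried out, the induction closes and every quantum cluster variable is expressed as an element of $\mathcal T(t)$.
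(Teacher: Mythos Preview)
The paper does not give its own proof of this statement: it is quoted as a known result from Berenstein--Zelevinsky \cite{BZ}, with no argument supplied. Your proposal is a faithful outline of the original Berenstein--Zelevinsky proof, so in that sense it matches the cited source rather than anything in this paper. The strategy via the upper bound $\mathcal U_q(t)$ and its mutation invariance is correct, and your remark that the compatibility condition $\widetilde B(t)^t\Lambda(t)=(D\;\;0)$ is exactly what makes the $q$-twists behave uniformly across the two exchange monomials is the right technical point.

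One small terminological quibble: what you call the ``Caterpillar Lemma'' is not quite the same as the statement $\mathcal U_q(t)=\mathcal U_q(\mu_k(t))$. In the Fomin--Zelevinsky and Berenstein--Zelevinsky papers, the Caterpillar Lemma refers to a specific depth-three computation along a caterpillar-shaped piece of the exchange tree, which is then used as the inductive engine; the equality of upper bounds under a single mutation is a consequence. But this is a matter of naming, not of substance, and your sketch captures the essential architecture of the argument.
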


\medskip

\begin{Conjecture}(Positivity Conjecture, \cite{BZ})
Let $\mathcal A_q$ be a quantum cluster algebra and $t$ be a seed. For any quantum cluster variable $X$ of $\mathcal A_q$,
$$X\in \mathbb N[q^{\pm1/2}]\langle X(t)^{\vec{a}}\mid \vec{a}\in \mathbb Z^m\rangle.$$

\end{Conjecture}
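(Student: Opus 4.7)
The statement is the general Positivity Conjecture; the paper's contribution is to establish it for quantum cluster algebras coming from an unpunctured surface, so my proof proposal is restricted to that class. The plan is to bootstrap the combinatorial perfect-matching expansion of \cite{MSW} by producing, for every perfect matching $P$ of the snake graph $G_{T,\gamma}$, a half-integer ``quantum weight'' $v(P)$, and then showing that
$$X_\gamma \; = \; \sum_{P \in \mathcal{P}(G_{T,\gamma})} q^{\,v(P)/2}\, X(t)^{\vec{w}(P)},$$
where $\vec{w}(P) \in \mathbb{Z}^m$ is the exponent vector coming from the commutative expansion. Once this is established, positivity is automatic: every summand is a non-negative power of $q^{1/2}$ times a pure monomial, so quantum cluster variables lie in $\mathbb{N}[q^{\pm 1/2}]\langle X(t)^{\vec a}\rangle$, and a standard argument propagates this to all quantum cluster monomials.

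First I would fix an indexed triangulation $T$ and an arc $\gamma$, normalize by declaring $v(P_{-}) = 0$ on the minimal matching, and then propagate the definition along twists. By Lemma \ref{transitive} any $P$ is reachable from $P_{-}$ by a sequence of twists $\mu_{p_{i_1}}, \ldots, \mu_{p_{i_s}}$; for each tile $G(p)$ with diagonal labeled $\tau \in T$, prescribe the increment $v(\mu_p P) - v(P)$ by the bilinear form $\Lambda(t)$ paired against the exponent vectors of the two sides of the corresponding quantum exchange. This mirrors the recursion one gets from $\mu_\tau$ in the quantum torus, so if it is well-defined it will automatically match the quantum exchange relation term-by-term.

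Second, I would prove well-definedness of $v(P)$, i.e., independence from the chosen sequence of twists. This is a cocycle condition on the graph whose vertices are $\mathcal{P}(G_{T,\gamma})$ and whose edges are twists. It suffices to check commuting twists on disjoint tiles and any local relations among twists on adjacent tiles; the former reduces to a symmetry of $\Lambda(t)$, and the latter relies on Lemma \ref{inone} and Lemma \ref{max-min} controlling which edges can simultaneously lie in a matching. Once the cocycle holds, the candidate formula is well-defined in $\mathcal{T}(t)$.

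Third, I would verify that the candidate formula indeed equals $X_\gamma$. The strategy is induction on $N(\gamma, T)$. For the base case $\gamma \in T$ the formula is tautological. For the inductive step, pick an arc $\tau_k \in T$ crossed by $\gamma$ and apply $\mu_k$; in the new triangulation $T' = \mu_{\tau_k}(T)$ the crossing number $N(\gamma, T')$ drops, so by the induction hypothesis we know the expansion of $X_\gamma$ in $\mathcal{T}(t')$. Rewriting $X_k(t')$ via the quantum mutation rule and matching perfect matchings of $G_{T,\gamma}$ with those of $G_{T',\gamma}$ (as in \cite{MSW1}) should convert the $\mathcal{T}(t')$-expansion into the claimed $\mathcal{T}(t)$-expansion. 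The main obstacle, and where Theorem \ref{mainthm} of the paper is doing all the real work, will be the bookkeeping in this last step: tracking how the $q$-powers $v(P)$ behave under the bijection between $\mathcal{P}(G_{T,\gamma})$ and $\mathcal{P}(G_{T',\gamma})$, and reconciling the noncommutative quantum torus multiplication with the sign-sensitive geometry of the snake graph. Once that is done, positivity follows by extracting the $q$-valuation from the formula and observing it is always $\geq$ the minimal one, so each coefficient is a monomial in $\mathbb{N}[q^{\pm 1/2}]$.
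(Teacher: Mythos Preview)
Your proposal follows essentially the same approach as the paper: define a valuation $v$ on perfect matchings via an initial condition plus a twist recursion (the paper's $\Omega(p,P)$), check well-definedness as a cocycle condition on the twist graph, and then prove the expansion formula by induction on the crossing number $N(\gamma,T)$, deducing positivity immediately from the shape of the formula.

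One point of divergence worth flagging: the correspondence between $\mathcal P(G_{T,\gamma})$ and $\mathcal P(G_{T',\gamma})$ is \emph{not} a bijection borrowed from \cite{MSW1} but a genuinely new \emph{partition} bijection (Theorem~\ref{partition bi}) that the paper builds from scratch in Sections~\ref{compare}--\ref{main1}; blocks on one side can map to blocks of different size on the other, and the compatibility of $v$ with this partition structure (Theorem~\ref{mainthm}(2)) is where most of the technical work lives. The paper also needs both normalizations $v(P_+)=0$ and $v(P_-)=0$ simultaneously, and proving $v_+=v_-$ is itself part of the induction; your single normalization at $P_-$ would not by itself suffice.
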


This conjecture was recently proved by \cite{D} in the skew-symmetric case.

\begin{Remark}

In this paper, to distinguish commutative cluster algebras and quantum cluster algebras, we use the notation $x$ to denote the commutative cluster variables and $X$ to denote the quantum cluster variables.

\end{Remark}

\subsection{Quantum cluster algebras from unpunctured surfaces}
We first fix some notations for the rest of this paper. Let $(\mathcal O, M)$ be an unpunctured surface and $T=\{\tau_1,\cdots,\tau_n,\cdots,\tau_l\}$ be an indexed triangulation. Let $B^T$ be the signed adjacency matrix of $T$. We say a quantum cluster algebra $\mathcal A_q$ is \emph{coming from} $\mathcal O$ if there is a quantum seed $t$ such that the upper $n\times n$-submatrix of $\widetilde B(t)$ is $B^T$. By \cite[Theorem 6.1]{BZ} and \cite{FST}, the quantum seeds/ quantum cluster variables of $\mathcal A_q$ and the triangulations/arcs of $\mathcal O$ are one to one correspondence. Let $\Sigma_q^T=(X^{T},\widetilde B^T, \Lambda^T)$ be the quantum seed of $\mathcal A_q$ associate with $T$, where $X^T=\{X^T_{\alpha}\mid \alpha\in T\}\cup \{X_i\mid i\in [n+1,m]\}$ and $X_i,i\in [n+1,m]$ are the coefficients. Let $X_{\gamma}$ be the quantum cluster variable associate with $\gamma$. Set $X_{\gamma}=1$ if $\gamma$ is a boundary arc. For $i\in [1,n]$, denote by $b^{T}_{\tau_i}$ the $i$-th column of $\widetilde B^T$, denote $(b^{T}_{\tau_i})_{+}=([b^{T}_{ji}]_{+})_j$ the positive part of $b^{T}_{\tau_i}$. Dually, denote $(b^{T}_{\tau_i})_{-}=([-b^{T}_{ji}]_{+})_j$. Clearly, $b^{T}_{\tau}=(b^{T}_{\tau})_{+}-(b^{T}_{\tau})_{-}$. Set $e_{\tau_i}=e_i\in \mathbb Z^m$ with the $i$-th coordinate 1 and others 0 for $i\in [1,n]$.

\medskip

We denote $q^{-\frac{1}{2}\sum_{i<j}\Lambda^T_{ij}}(X^{T}_{\tau_1})^{a_1}\cdots (X^{T}_{\tau_n})^{a_n}X_{n+1}^{a_{n+1}}\cdots X_{m}^{a_{m}}$ by $(X^T)^{\vec{a}}$ for any $\vec{a}=(a_1,\cdots,a_m)\in \mathbb Z^m$. Therefore the quantum cluster monomials of $\mathcal A_q$ are the forms $(X^T)^{\vec{a}}$ for indexed triangulation $T$ and $\vec{a}\in \mathbb N^m$.

\medskip

Since $B^T$ is skew-symmetric, we have $(\widetilde B^T)^t\Lambda=(d^TI\;\;0)$ for some positive integer $d^T$, where $I$ is the $n\times n$ identity matrix. Since $(\widetilde B^T)^t \Lambda^T=(\widetilde B^{T'})^t \Lambda^{T'}$, $d^T=d^{T'}$. Let $\gamma$ be an oriented arc in $\mathcal O$ crossing $T$ with points $p_1,\cdots, p_d$ in order. Assume that $p_1,\cdots,p_d$ belong to the arcs $\tau_{i_1},\cdots,\tau_{i_d}$, respectively in $T$. For any $s\in [1,d]$, denote by $m_{p_s}^{+}(\tau_{i_s},\gamma)$ and $m_{p_s}^{-}(\tau_{i_s},\gamma)$ the numbers of $\tau_{i_s}$ in $\{\tau_{i_t}\mid t>s\}$ and $\{\tau_{i_t}\mid t<s\}$, respectively.

\medskip

Let $P$ be a perfect matching of $G_{T,\gamma}$ with edges labeled $\tau_{j_1},\cdots, \tau_{j_r}$ in order. When $P$ can twist on $G(p_s)$, assume $\tau_{j_t},\tau_{j_{t+1}}$ are edges of $G(p_s)$. Denote by $n_{p_s}^{+}(\tau_{i_s},P)$ and $n_{p_s}^{-}(\tau_{i_s},P)$ the numbers of $\tau_{i_s}$ in $\{\tau_{j_u}\mid u>t+1\}$ and $\{\tau_{j_u}\mid u<t\}$, respectively.

\medskip

Fix $s\in [1,d]$. Assume $a_{1_s},a_{4_s},\tau_{i_s}$ and $a_{2_s},a_{3_s},\tau_{i_s}$ are two triangles in $T$ such that $a_{1_s},a_{3_s}$ are clockwise to $\tau_{i_s}$ and $a_{2_s},a_{4_s}$ are counterclockwise to $\tau_{i_s}$. Note that $a_{i_s},i\in 1,2,3,4$ may be boundary arcs.

\medskip

\begin{Definition}\label{omega}

Suppose that $P$ can twist on $G(p_s)$. If the edges labeled $a_{2_s},a_{4_s}$ of $G(p_s)$ are in $P$, define
$$\Omega(p_s, P)=[n_{p_s}^{+}(\tau_{i_s}, P)-m_{p_s}^{+}(\tau_{i_s}, \gamma)-n_{p_s}^{-}(\tau_{i_s}, P)+m_{p_s}^{-}(\tau_{i_s}, \gamma)]d^T,$$ otherwise, define $$\Omega(p_s, P)=-[n_{p_s}^{+}(\tau_{i_s}, P)-m_{p_s}^{+}(\tau_{i_s}, \gamma)-n_{p_s}^{-}(\tau_{i_s}, P)+m_{p_s}^{-}(\tau_{i_s}, \gamma)]d^T.$$
\end{Definition}

Clearly, we have $\Omega(p_s,\mu_{p_s}(P))=-\Omega(p_s,P)$ if $P$ can twist on $G(p_s)$.

\medskip

\section{Commutative Laurent expansions}\label{cle}

In this section, we give our main result on commutative cluster algebras. Precisely, we give a new proof of the Laurent expansion of a cluster variable with respect to arbitrary cluster of the cluster algebra coming from an unpunctured surface with arbitrary coefficients.

\medskip

Throughout this section, let $(\mathcal O,M)$ be an unpunctured surface, $T$ be an indexed triangulation of $\mathcal O$. Let $\mathcal A$ be a cluster algebra from $\mathcal O$ with semi-field $\mathbb P$. By \cite{FST}, the seeds/cluster variables of $\mathcal A$ and the triangulations/arcs of $\mathcal O$ are one to one correspondence. Let $\Sigma^T=(x^T,y^T,B^T)$ be the seed associated with $T$, where $x^{T}=\{x^{T}_{\alpha}\mid \alpha\in T\}$, $y^{T}=\{y^{T}_{\alpha}\mid \alpha\in T\}\subset \mathbb P$. Let $x_{\gamma}$ be the cluster variable associate with $\gamma$. Set $x_{\gamma}=1$ if $\gamma$ is a boundary arc. Fix $\tau\in T$, denote $\mu_\tau(T)=T'$.

\medskip

\begin{Definition-Lemma}\cite{MSW}

\begin{enumerate}[$(1)$]

  \item If $\gamma$ is an oriented arc and $\tau_{i_1},\cdots, \tau_{i_d}$ is the sequence of arcs in $T$ which $\gamma$ crosses, then we define the \emph{crossing monomial} of $\gamma$ with respect to $T$ to be $$c(\gamma, T)=\textstyle\prod_{j=1}^dx^T_{\tau_{i_j}}.$$

  \item Let $P\in \mathcal P(G_{T,\gamma})$. If the edges of $P$ are labeled $\tau_{j_1},\cdots, \tau_{j_r}$, then we define the \emph{weight} $w^T(P)$ of $P$ to be $x^T_{\tau_{j_1}}\cdots x^T_{\tau_{j_r}}$.

  \item Let $P\in \mathcal P(G_{T,\gamma})$. The set $(P_{-}(G_{T,\gamma})\cup P)\setminus (P_{-}(G_{T,\gamma})\cap P)$ is the set of boundary edges of a (possibly disconnected) subgraph $G_P$ of $G_{T,\gamma}$, which is a union of cycles. These cycles enclose a set of tiles $\bigcup_{j\in J}G(p_{i_j})$, where $J$ is a finite index set. We define the \emph{height monomial} $y^T(P)$ of $P$ by
      $$y^T(P)=\textstyle\prod_{k=1}^n (y^{T}_{\tau_k})^{m_k},$$
      where $m_k$ is the number of tiles in $\bigcup_{j\in J}G(p_{i_j})$ whose diagonal is labeled $\tau_k$.

\end{enumerate}

\end{Definition-Lemma}

\medskip

It should be noted that $x_{\alpha}=1$ if $\alpha$ is a boundary arc.

\medskip

\begin{Definition}\label{com-mon}

Let $Q\in \mathcal P(G_{T,\gamma})$. We define the cluster monomial $x^T(Q)$ associated with $Q$ to be
$$x^T(Q)=\frac{w^T(Q)\cdot y^T(Q)}{c(\gamma, T)\cdot \bigoplus_{P\in \mathcal P(G_{T,\gamma})}y^T(P)},$$
note that the operation $``\oplus"$ in $\bigoplus_{P\in \mathcal P(G_{T,\gamma})}y^T(P)$ is taken in $\mathbb P$.
\end{Definition}

\medskip

According to the definition of $y^T(P)$, we have the following crucial lemma.

\medskip

\begin{Lemma}\label{brick1}

Let $P\in \mathcal P(G_{T,\gamma})$. Suppose $P$ can twist on a tile $G(p)$ with diagonal labeled $a$. Assume the labels of the edges of $G(p)$ are $u_1,u_2,u_3,u_4$ with $u_1,u_3$ are clockwise to $a$ and $u_2,u_3$ are counterclockwise to $a$ in $T$. If the edges labeled $u_1,u_3$ of $G(p)$ are in $P$, then $\frac{y^T(\mu_pP)}{y^T(P)}=y^T_{a}$.

\end{Lemma}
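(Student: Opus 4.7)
The plan is to analyze the symmetric difference $P\triangle \mu_p P$ and then use a parity argument based on Lemma \ref{max-min} to pin down the sign of the height ratio. Since $P$ and $\mu_p P$ coincide off $G(p)$ and swap the pair $\{u_1,u_3\}$ for $\{u_2,u_4\}$ on $G(p)$, we have $P\triangle\mu_p P=E(G(p))=\{u_1,u_2,u_3,u_4\}$, and hence
\[
\mu_p P\triangle P_{-}\;=\;(P\triangle P_{-})\,\triangle\, E(G(p)).
\]
The right hand side is the effect of XOR-ing the $1$-cycle $P\triangle P_{-}$ (whose enclosed region is $G_P$) with the full boundary of the face $G(p)$ in the planar snake graph; this is a standard topological move that toggles whether $G(p)$ is enclosed and leaves every other tile unaffected. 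Writing $S(Q)$ for the set of tiles enclosed by $G_Q$, this gives $S(\mu_p P)=S(P)\triangle\{G(p)\}$, so only the multiplicity of $a$ in the height monomial $y^T$ can change, and by exactly $\pm 1$. Hence $y^T(\mu_p P)/y^T(P)=(y^T_a)^{\pm 1}$.

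To fix the sign as $+1$, it suffices to show $G(p)\notin S(P)$. Lemma \ref{max-min} says any edge of $P_{-}$ lying on a tile $G(p_j)$ is clockwise from the diagonal (in the tile's own orientation) when $j$ is odd and counterclockwise when $j$ is even. Combined with the convention in the tile construction that the tile orientation agrees with $T$ for odd $j$ and is reversed for even $j$, in both parities the edges of $P_{-}$ on a tile (when they form a pair) constitute the \emph{clockwise-in-$T$} pair. Thus the hypothesis $\{u_1,u_3\}\subset P$ asserts that $P$ has the same clockwise-in-$T$ parity on $G(p)$ that $P_{-}$ would.

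By Lemma \ref{transitive}, $P$ is obtained from $P_{-}$ by a finite sequence of twists, and I would prove by induction on the length of this sequence the invariant: whenever a matching $Q$ has exactly two edges on some tile $G(q)$, that pair is the clockwise-in-$T$ pair if and only if $G(q)\notin S(Q)$. The base case $Q=P_{-}$ is immediate from the analysis above. For the inductive step at a twist on a tile $G(q^{(s)})$, the local pair on $G(q^{(s)})$ swaps parity while $G(q^{(s)})$'s membership in $S$ simultaneously flips by the first paragraph, so the biconditional is preserved on $G(q^{(s)})$; tiles not incident to $G(q^{(s)})$ retain both their pair and their $S$-status. Applied to $P$ at $G(p)$, the invariant forces $G(p)\notin S(P)$, hence $G(p)\in S(\mu_p P)$, and the exponent of $y^T_a$ in $y^T$ rises by one, yielding $y^T(\mu_p P)/y^T(P)=y^T_a$.

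The main obstacle is the inductive step at neighbors of $G(q^{(s)})$ whose edge count changes under the twist: a shared edge may migrate into or out of the matching, taking a neighbor's pair count from $1$ to $2$ or from $2$ to $1$. In the $1$-to-$2$ case the invariant must be re-verified on the newly-formed pair, which requires a brief local case analysis around the shared edge, using that exactly one of the clockwise/counterclockwise pairs on $G(q^{(s)})$ contains that shared edge.
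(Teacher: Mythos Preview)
Your opening reduction is correct and matches the paper: $\mu_pP\triangle P_{-}=(P\triangle P_{-})\triangle E(G(p))$, so only the enclosure status of $G(p)$ can change and the ratio is $(y^T_a)^{\pm1}$.

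For the sign, your route diverges from the paper's and the gap you flag is real. Carrying the invariant ``clockwise-in-$T$ pair on $G(q)$ $\Leftrightarrow$ $G(q)\notin S(Q)$'' through a twist sequence runs into exactly the difficulty you name: when a neighbor $G(q')$ goes from one matched edge to two, the invariant was silent on $G(q')$ before the twist, so you must establish both the orientation of the new pair \emph{and} whether $G(q')\in S(Q)$ from scratch. That is the same local enclosure question you set out to answer for $G(p)$, merely displaced to another tile; the induction does not reduce it.

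The paper avoids the induction by comparing $P$ to $P_{-}$ directly at the single tile $G(p)$. The point it exploits is that every edge of $P_{-}$ is a \emph{boundary} edge of the snake graph. One then splits on whether $P_{-}$ meets $E(G(p))$. If it does, Lemma~\ref{max-min} forces any such edge to lie in $\{u_1,u_3\}=P\cap E(G(p))$; being a boundary edge of $G_{T,\gamma}$, it gives a one-step exit from the interior of $G(p)$ to the unbounded face that does not cross $P\triangle P_{-}$, so $G(p)$ is not enclosed for $P$ but is for $\mu_pP$ (the same edge now lies in $\mu_pP\triangle P_{-}$). If $P_{-}\cap E(G(p))=\emptyset$, then $G(p)$ is an interior ``straight'' tile (both neighbors collinear), and a short parity check finishes it. Either way the argument never leaves $G(p)$, which is why the paper needs no global invariant.
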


\begin{proof}

For any tile $G(p')\neq G(p)$, it is clear $(P_{-}(G_{T,\gamma})\cup \mu_pP)\setminus (P_{-}(G_{T,\gamma})\cap \mu_pP)$ enclose $G(p')$ if and only if $(P_{-}(G_{T,\gamma})\cup P)\setminus (P_{-}(G_{T,\gamma})\cap P)$ enclose $G(p')$.

When there is an edge of $G(p)$ belongs to $P_{-}(G_{T,\gamma})$. By Lemma \ref{max-min}, the edge labeled $u_1$ or $u_3$ is in $P$. Thus $(P_{-}(G_{T,\gamma})\cup \mu_pP)\setminus (P_{-}(G_{T,\gamma})\cap \mu_pP)$ enclose $G(p)$ but $(P_{-}(G_{T,\gamma})\cup P)\setminus (P_{-}(G_{T,\gamma})\cap P)$ not. Thus our result follows in this case.

When there is no edge of $G(p)$ belongs to $P_{-}(G_{T,\gamma})$. Assume $G(p)=G(p_i)$ for some $i$. Then either $G(p_{i-1})$ is left to $G(p_i)$ and $G(p_{i+1})$ is right to $G(p_i)$ or $G(p_{i-1})$ is below $G(p_i)$ and $G(p_{i+1})$ is above $G(p_i)$. We may assume the former case happens. Then $i>1$ and $i$ is odd. Thus $(P_{-}(G_{T,\gamma})\cup \mu_pP)\setminus (P_{-}(G_{T,\gamma})\cap \mu_pP)$ enclose $G(p)$ but $(P_{-}(G_{T,\gamma})\cup P)\setminus (P_{-}(G_{T,\gamma})\cap P)$ not. Thus our result follows in this case.
\end{proof}

\begin{Definition} Let $S,S'$ be two sets. A subset $\mathcal{S}$ of its power set is called \emph{a partition} of $S$ if $\cup_{R\in \mathcal S}R=S$ and $R\cap R'=\emptyset$ if $R\neq R'\in \mathcal S$. \emph{A partition map} from a set $S$ to a set $S'$ is a map from a partition of $S$ to a partition of $S'$. A partition map is called \emph{a partition bijection} if it is a bijection.

\end{Definition}

\medskip

\begin{Remark}

To give a partition bijection from $S$ to $S'$ is equivalent to associate a subset $\pi(s)\subset S'$ for each $s\in S$ which satisfies $\pi(s_1)=\pi(s_2)$ if $\pi(s_1)\cap \pi(s_2)\neq \emptyset$ for $s_1,s_2\in S$ and $\cup_{s\in S}\pi(s)=S'$.

\end{Remark}

\medskip

With the above preparations, we now state our first main result on commutative cluster algebras from unpunctured surfaces.

\medskip

\begin{Theorem}\label{partition bi}

Let $(\mathcal O,M)$ be an unpunctured surface and $T$ be an indexed triangulation. Let $\gamma$ be an oriented arc in $\mathcal O$. For any $\tau\in T$, there are partitions $\mathfrak P$ and $\mathfrak P'$ of $\mathcal P(G_{T,\gamma})$ and $\mathcal P(G_{T',\gamma})$, respectively, and a bijection $\pi: \mathfrak P\rightarrow \mathfrak P'$. Moreover, $\pi$ satisfies

\begin{enumerate}[$(1)$]

  \item $|S|=1$ or $|\pi(S)|=1$ for any $S\in \mathfrak P$.

  \item $\sum_{P\in S}x^T(P)=\sum_{P'\in \pi(S)}x^{T'}(P')$ for any $S\in \mathfrak P$.

\end{enumerate}

\end{Theorem}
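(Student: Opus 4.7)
The approach will be to construct the partition bijection $\pi$ explicitly by examining the local structure of $G_{T,\gamma}$ and $G_{T',\gamma}$ near the crossings of $\gamma$ with $\tau$ (resp.\ $\tau'$). Since $T$ and $T'$ differ only by the flip $\tau \leftrightarrow \tau'$, the snake graphs agree away from the tiles whose diagonal is labeled $\tau$ in $G_{T,\gamma}$, or $\tau'$ in $G_{T',\gamma}$. First I would identify the \emph{$\tau$-tiles} in $G_{T,\gamma}$ (those with diagonal $\tau$) and the \emph{$\tau'$-tiles} in $G_{T',\gamma}$, and work out geometrically how the flip transforms the local configuration at each such tile. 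This will involve a case analysis based on whether the two triangles adjacent to $\tau$ are distinct or coincide, and on the direction in which $\gamma$ enters and leaves the flipped quadrilateral.

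Next, I will define the partitions $\mathfrak{P}$ and $\mathfrak{P}'$ using the equivalence relation generated by twists restricted to $\tau$-tiles (resp.\ $\tau'$-tiles). Two matchings $P, Q \in \mathcal{P}(G_{T,\gamma})$ are declared equivalent if $Q$ can be obtained from $P$ by a sequence of twists on $\tau$-tiles only; by the same token-passing argument used in Lemma \ref{transitive}, this is a well-defined equivalence relation. The bijection $\pi$ will be given by a local rule: from a $\tau$-twist class $S$, pick a canonical representative $P_0$, carry over its edges outside the $\tau$-tiles to the identical region of $G_{T',\gamma}$, fill in the flipped region by the prescribed local dictionary between the two triangulations, and set $\pi(S)$ to be the $\tau'$-twist class of the resulting matching.

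Condition (i) should follow because, at each crossing site, at most one of the two triangulations admits a nontrivial twist on the corresponding tile: whenever the canonical representative of $S$ is $\tau$-mutable at some $\tau$-tile, the paired matching in $\pi(S)$ will be $\tau'$-immutable at the corresponding $\tau'$-tile, and vice versa, forcing at least one of the two classes to be a singleton. Condition (ii), the weight identity, will be reduced to the local exchange relation at $\tau$: using Lemma \ref{brick1} to track how $y^T(P)$ changes under each twist, together with careful accounting of how the crossing monomial $c(\gamma, T)$ and the tropical denominator $\bigoplus_{P} y^T(P)$ transform under the flip, the sum $\sum_{P\in S} x^T(P)$ should match $\sum_{P'\in \pi(S)} x^{T'}(P')$ by a single application of the binomial exchange relation at $\tau$.

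The main obstacle will be the combinatorial bookkeeping when $\gamma$ crosses $\tau$ multiple times. In that case $G_{T,\gamma}$ contains several $\tau$-tiles whose matchings are coupled through Lemma \ref{inone}, so the twist classes can be large and their global interactions intricate; moreover, the structure of $G_{T',\gamma}$ near each former $\tau$-crossing depends on whether $\gamma$ re-enters the flipped quadrilateral and in which direction, producing several subcases that must be handled uniformly. Once the local dictionary between $\tau$-tiles and $\tau'$-tiles is in place, however, the verification of condition (ii) should reduce, at each class, to a single instance of the binomial exchange relation at $\tau$, and the global bijection will follow by gluing these local pictures across the common non-$\tau$-tile portion of the two snake graphs.
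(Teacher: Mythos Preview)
Your proposed partition---orbits under twists at $\tau$-tiles---does not satisfy condition~(1) in general, and this is the main gap. The point is that a matching $P$ can simultaneously be twistable at some $\tau$-tiles \emph{and} correspond, after the flip, to a matching that is twistable at some $\tau'$-tiles. In the paper's language, $P\in\mathcal P^{\tau}_{\nu}(G_{T,\gamma})$ for a sequence $\nu\in\{-1,0,1\}^{n^{\tau}(T,\gamma)}$, where $\nu_l=-1$ marks a twistable $\tau$-tile and $\nu_l=+1$ corresponds (via Proposition~\ref{compare-loc}(2.b)) to a twistable $\tau'$-tile on the $T'$-side. When $\nu$ contains both signs, your $\tau$-twist class $S$ has size $2^{\#\{l:\nu_l=-1\}}$ while the corresponding $\tau'$-twist class has size $2^{\#\{l:\nu_l=+1\}}$, and both exceed~$1$. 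Your assertion that ``at most one of the two triangulations admits a nontrivial twist on the corresponding tile'' is true tile-by-tile but false globally.

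The paper's partition is strictly finer and rests on a combinatorial idea you do not have: a pairing of the $+1$ and $-1$ entries of $\nu$ (the \emph{$\nu$-pairs} of Section~\ref{main11}). Each $-1$ is matched with a nearby $+1$ whenever possible; the twist freedom at a paired $\tau$-tile is then \emph{locked} to the state of the matching at the corresponding $\tau'$-tile, rather than being free. Only the unpaired indices contribute to block size, so $|\pi(P)|=2^{\max\{0,\sum_l\nu_l\}}$ (Proposition~\ref{2^n}), and the sign of $\sum_l\nu_l$ decides which side carries the singletons.

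There is a second gap in the claim that $G_{T,\gamma}$ and $G_{T',\gamma}$ ``agree away from the $\tau$-tiles.'' They need not even have the same number of tiles; the local replacement at each crossing depends on how $\gamma$ enters and leaves the flipped quadrilateral, and when $a_1=a_3$ (or $a_2=a_4$) the interaction can span arbitrarily many tiles. The paper handles this by cutting $\gamma$ into subcurves $\gamma_1,\dots,\gamma_k$ along carefully chosen interior points (Section~\ref{compare}), doing a substantial case analysis on each piece to build local correspondences $\psi_{\gamma_i},\psi'_{\gamma_i}$, and then reassembling via the gluing bijection of Proposition~\ref{decompose}. Your ``local dictionary'' would have to reproduce all of this, and condition~(2) is then proved not by a single binomial exchange but via a separate invariance statement for the coefficient ratio (Theorem~\ref{F}).
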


\medskip

\begin{Remark}

The partitions $\mathfrak P, \mathfrak P'$ depend on $T, \gamma$ and $\tau$. To avoid the lengthy of the symbols, we do not write them as $\mathfrak P(T, \gamma, \tau), \mathfrak P'(T, \gamma, \tau)$.

\end{Remark}

\medskip

As corollary of Theorem \ref{partition bi}, we can give the commutative Laurent expansion.

\medskip

\begin{Theorem}\label{expansion-comm}

Let $(\mathcal O,M)$ be an unpuntured surface and $T$ be an indexed triangulation. If $\gamma$ is an oriented arc in $\mathcal O$, then the Laurent expansion of $x_{\gamma}$ with respect to the cluster $x^{T}$ is $$x_{\gamma}=\textstyle\sum_{P\in \mathcal P(G_{T,\gamma})} x^T(P).$$

\end{Theorem}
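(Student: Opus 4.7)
The plan is to show that $F(T) := \sum_{P \in \mathcal{P}(G_{T,\gamma})} x^T(P)$ is invariant under flips of the triangulation $T$, and then to reduce to a triangulation in which $\gamma$ itself appears as an arc, where the right-hand side collapses to a one-line check. All the substance is already packaged inside Theorem \ref{partition bi}; Theorem \ref{expansion-comm} should fall out as a short corollary.

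First I would read flip-invariance of $F$ straight off Theorem \ref{partition bi}. For any non-boundary $\tau \in T$, set $T' = \mu_\tau(T)$ and let $\pi : \mathfrak P \to \mathfrak P'$ be the partition bijection produced by that theorem. Summing the identity $\sum_{P \in S} x^T(P) = \sum_{P' \in \pi(S)} x^{T'}(P')$ over all $S \in \mathfrak P$, and using that $\mathfrak P$ and $\mathfrak P'$ disjointly cover $\mathcal P(G_{T,\gamma})$ and $\mathcal P(G_{T',\gamma})$ respectively, the double sums collapse to
\begin{equation*}
F(T) = \sum_{S \in \mathfrak P}\sum_{P \in S} x^T(P) = \sum_{S \in \mathfrak P}\sum_{P' \in \pi(S)} x^{T'}(P') = F(T').
\end{equation*}

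Next I would invoke the classical connectivity of the flip graph of triangulations of an unpunctured surface to produce a finite sequence of flips $T = T_0, T_1, \ldots, T_k = T^\ast$ with $\gamma \in T^\ast$ (such a $T^\ast$ exists because $\{\gamma\}$ can always be extended to a triangulation). Iterating the previous step along this sequence gives $F(T) = F(T^\ast)$.

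Finally I would check the base case directly from Definition \ref{com-mon}. When $\gamma \in T^\ast$, the graph $G_{T^\ast, \gamma}$ is a single edge labeled $\gamma$, so $\mathcal P(G_{T^\ast, \gamma}) = \{P_0\}$ with $P_0 = \{\gamma\}$; then $c(\gamma, T^\ast) = 1$ (empty product of crossings), $w^{T^\ast}(P_0) = x^{T^\ast}_\gamma = x_\gamma$, no tile is enclosed by $(P_-\cup P_0)\setminus(P_-\cap P_0) = \emptyset$ so $y^{T^\ast}(P_0) = 1$, and $\bigoplus_P y^{T^\ast}(P) = 1$. Hence $x^{T^\ast}(P_0) = x_\gamma$, giving $F(T^\ast) = x_\gamma$ and therefore $F(T) = x_\gamma$. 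There is no real obstacle beyond Theorem \ref{partition bi}; the only auxiliary fact invoked is flip-connectivity for unpunctured surfaces, which is standard in this literature.
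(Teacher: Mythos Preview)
Your proposal is correct and follows essentially the same approach as the paper: both derive the result as a short corollary of Theorem \ref{partition bi} by summing the blockwise identity over the partition to get flip-invariance of $\sum_P x^T(P)$, then reducing to the trivial case $\gamma\in T$. The only organizational difference is that the paper runs an induction on the crossing number $N(\gamma,T)$, at each step choosing a flip $\tau$ with $N(\gamma,\mu_\tau T)<N(\gamma,T)$, whereas you invoke general flip-connectivity to reach a triangulation containing $\gamma$; these are equivalent ways to arrive at the same base case.
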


\begin{proof}

We prove the Theorem by the induction of the crossing number $N(\gamma,T)$ of $\gamma$ and $T$. If $N(\gamma,T)=0$, then $\gamma\in T$, $P=\{\gamma\}$. Clearly, we have $x_{\gamma}=x^T_{\gamma}$. Assume $N(\gamma,T)>0$ and the result holds for the case the crossing number is less than $N(\gamma,T)$. We can choose an arc $\tau\in T$ such that $N(\gamma,T')<N(\gamma, T)$. By induction hypothesis, $x_{\gamma}=\sum_{P'\in \mathcal P(G_{T',\gamma})} x^{T'}(P').$ By Theorem \ref{partition bi}, $\sum_{P'\in \mathcal P(G_{T',\gamma})} x^{T'}(P')=\sum_{P\in \mathcal P(G_{T,\gamma})} x^{T}(P).$ Therefore, $$x_{\gamma}=\textstyle\sum_{P\in \mathcal P(G_{T,\gamma})} x^T(P).$$ Our result follows.
\end{proof}

\medskip

\section{Quantum Laurent expansions}\label{qle}

In this section, we give our main result on quantum cluster algebras. Precisely, we give the quantum Laurent expansion of a quantum cluster variable with respect to arbitrary quantum cluster of $\mathcal A_q$. As an application, we prove the positivity for such class of quantum cluster algebras.

\medskip

When specialize $q=1$, we get a commutative cluster algebra $\mathcal A_q\mid_{q=1}$. Let $P\in \mathcal P(G_{T,\gamma})$. For the element $x^T(P)$ associate with $P$ in Definition \ref{com-mon}, there clearly exists a unique $\vec{a}(P)\in \mathbb Z^m$ such that $(X^T)^{\vec{a}(P)}\mid_{q=1}=x^T(P)$. Denote $(X^T)^{\vec{a}(P)}$ by $X^T(P)$. Let $X_{\gamma}$ be the quantum cluster variable associate with $\gamma$.

\medskip

We now state our main result on quantum cluster algebras from unpunctured surfaces.

\medskip

\begin{Theorem}\label{mainthm}

Let $(\mathcal O,M)$ be a surface without punctures and $T$ be an indexed triangulation. Let $\gamma$ be an oriented arc in $\mathcal O$.

\begin{enumerate}[$(1)$]

  \item  There uniquely exists a \emph{valuation} map $v:\mathcal P(G_{T,\gamma})\rightarrow \mathbb Z$ such that

\begin{enumerate}[$(a)$]

  \item (initial conditions) $v(P_{+}(G_{T,\gamma}))=v(P_{-}(G_{T,\gamma}))=0$.

  \item (iterated relation) If $P\in \mathcal P(G_{T,\gamma}))$ can twist on a tile $G(p)$, then $$v(P)-v(\mu_pP)=\Omega(p,P).$$

\end{enumerate}

  \item Further, if $\tau\in T$ and $\pi$ is the partition bijection from $\mathcal P(G_{T,\gamma})$ to $\mathcal P(G_{\mu_\tau T, \gamma})$ given in Theorem \ref{partition bi}, then, for any $S\in \mathfrak P$,
      $$\textstyle\sum_{P\in S}q^{v(P)/2}X^T(P)=\textstyle\sum_{P'\in \pi(S)}q^{v(P')/2}X^{T'}(P').$$

\end{enumerate}

\end{Theorem}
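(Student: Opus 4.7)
Uniqueness is immediate from Lemma \ref{transitive}: every $P\in\mathcal P(G_{T,\gamma})$ is reachable from $P_-(G_{T,\gamma})$ by a sequence of twists, and the iterated relation together with $v(P_-)=0$ then forces the value of $v(P)$. For existence, I would define $v(P)$ by fixing any twist-path $P_-=P_0,P_1,\dots,P_k=P$ with $P_i=\mu_{p_{s_i}}P_{i-1}$ and setting $v(P)=-\sum_{i=1}^{k}\Omega(p_{s_i},P_{i-1})$; then I must verify independence of the path, and finally that $v(P_+)=0$.

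Well-definedness amounts to a cocycle condition: along any loop of twists the total $\Omega$-contribution vanishes. By a standard reduction, it suffices to check this on elementary loops. The trivial loop $P\to\mu_pP\to P$ contributes $\Omega(p,P)+\Omega(p,\mu_pP)=0$ by the antisymmetry noted right after Definition \ref{omega}. For a \emph{commuting} loop $P\to\mu_pP\to\mu_p\mu_{p'}P=\mu_{p'}\mu_pP\to\mu_{p'}P\to P$, the required identity reduces to
\[
\Omega(p,P)-\Omega(p,\mu_{p'}P)=\Omega(p',\mu_pP)-\Omega(p',P),
\]
and I would prove this by tracking how the counting functions $n^{\pm}_{p_s}(\tau_{i_s},\,\cdot\,)$ change when the pair of edges on $G(p')$ is swapped; the key point is that a twist on $G(p')$ only modifies the multiset of labels in the two slots occupied by $G(p')$'s edges in the matching ordering, so its effect on $n^{\pm}_{p}$ is governed entirely by whether $\tau_{i_p}$ occurs among the four edge-labels of $G(p')$, and the sign conventions (clockwise vs.\ counterclockwise to the diagonal) built into Definition \ref{omega} match exactly what is needed.

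To close Part (1), I would exhibit a specific canonical twist-path from $P_-$ to $P_+$: by Lemma \ref{max-min} these two matchings differ precisely by a full ``switch'' of clockwise/counterclockwise roles on every tile, so one can twist through tiles in snake-order. Then the contributions $\Omega(p_s,\cdot)$ along this canonical path telescope, because for each $\tau\in T$ the clockwise and counterclockwise $\tau$-edges appear symmetrically in pairs, forcing $v(P_+)=0$.

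\textbf{Part (2): The quantum identity on each block.}
By Theorem \ref{partition bi}, each block $S\in\mathfrak P$ satisfies $|S|=1$ or $|\pi(S)|=1$. I handle the two cases.

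If $|S|=|\pi(S)|=1$, write $S=\{P\}$, $\pi(S)=\{P'\}$. Theorem \ref{partition bi}(2) gives $x^T(P)=x^{T'}(P')$, so $\vec a(P)$ and $\vec a(P')$ describe the same $q=1$ element. To lift this to the quantum torus, one compares $X^T(P)$ after expressing $X^T_\tau$ (or rather the relevant factors) in the $T'$-variables by the quantum mutation, and the discrepancy is a power of $q^{1/2}$ controlled by the compatibility relation $(\widetilde B^T)^t\Lambda^T=(d^TI\ 0)$. The claim is that this $q$-discrepancy equals $q^{(v(P')-v(P))/2}$; I would verify it by a direct computation, using the explicit definition of $\Omega(p,P)$ to read off the $d^T$-multiple matching the shift in $\Lambda$-bilinear terms.

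If $|S|=1$ and $|\pi(S)|=\{P'_1,P'_2\}$ (the opposite case being dual), the matchings $P'_1,P'_2$ are related by a twist on a tile of $G_{T',\gamma}$ whose diagonal is the flipped arc, so the iterated relation on the $T'$-side gives $v(P'_1)-v(P'_2)=\Omega(p',P'_1)$. The identity to prove becomes a two-term quantum exchange, which I would match term-by-term to the defining mutation
\[
X_{\tau'}(T)=X^{T'\,-e_{\tau'}+\sum_i[b_{i\tau'}]_+e_i}+X^{T'\,-e_{\tau'}+\sum_i[-b_{i\tau'}]_+e_i}.
\]
Again the key bookkeeping is that the $q$-weights on the two summands differ by exactly the power of $q$ dictated by $\Omega(p',P'_1)$, which is precisely the design of Definition \ref{omega}.

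\textbf{Main obstacle.} The central technical difficulty is well-definedness of $v$: the counts $n^{\pm}_{p_s}(\tau_{i_s},P)$ refer to the global linear ordering of $P$'s edges along the snake $G_{T,\gamma}$, and a twist at a distant tile $G(p')$ can both reposition edges and alter the count of $\tau_{i_s}$-labels among them. Controlling this requires a careful case analysis of how adjacent and non-adjacent tiles interact under twists, using Lemma \ref{inone} to restrict which configurations of $\tau$-labels are possible among the four edges of any tile; this combinatorial analysis is what drives the proof and feeds directly into the matching of $q$-powers in Part (2).
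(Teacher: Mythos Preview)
Your proposal has two genuine gaps.

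\textbf{Gap in Part (2): blocks can be much larger than two.} You reduce to the cases $|S|=|\pi(S)|=1$ and $|S|=1$, $\pi(S)=\{P'_1,P'_2\}$ (plus the dual). But Theorem \ref{partition bi}(1) only asserts that \emph{one} side is a singleton; the other side can have $2^{d}$ elements for arbitrary $d\ge 0$ (this is Proposition \ref{2^n}: $|\pi(P)|=2^{\max\{0,\sum\nu_l\}}$). So the ``two-term quantum exchange'' picture is wrong in general: on the large side one must match a full $q$-binomial expansion of $(X^{T'}_{\tau'})^{d}$, not a single mutation identity. The paper handles this via Lemma \ref{induction}, which parametrizes $\pi'(P')$ by $\lambda\in\{0,1\}^{d}$ and shows the required identity is equivalent to $v'(P')=v(P(0,\dots,0))=v(P(1,\dots,1))$, using Lemma \ref{iterate} to compute the $q$-shifts $n(\lambda)$. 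Your two-element argument does not scale to this.

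\textbf{Gap in Part (1): the direct proof that $v(P_+)=0$ is not substantiated, and the paper avoids it entirely.} Your claim that the $\Omega$-contributions along a canonical $P_-\to P_+$ path ``telescope'' is asserted without mechanism; the quantities $n^{\pm}_{p_s}(\tau_{i_s},P)$ are global counts that shift in a non-obvious way as you twist through the snake, and there is no evident pairing that makes them cancel. The paper does \emph{not} attempt this. Instead it separates the problem: Theorem \ref{valumap} constructs $v_+$ and $v_-$ independently (each with a single initial condition), then Theorem \ref{mainpre} proves the analogue of Part (2) for $v_+$ and for $v_-$ separately, and only then is $v_+=v_-$ deduced by induction on the crossing number $N(\gamma,T)$, using the partition bijection to transport the equality $v'_+=v'_-$ from a triangulation with smaller crossing number. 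So in the paper's logic, Part (2) (for $v_\pm$) is an ingredient in the proof of Part (1), not a consequence of it.

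A smaller point: your ``standard reduction to elementary loops'' for well-definedness also needs care. The paper's Lemma \ref{reduction} is the substitute: it shows that in any twist-loop one can locate $t<t'$ with $p_{i_t}=p_{i_{t'}}$ and all intermediate twists at distance $>1$ from $p_{i_t}$, so that Lemma \ref{commuta} applies to slide $\mu_{p_{i_t}}$ through and cancel it against $\mu_{p_{i_{t'}}}$. This is a concrete combinatorial statement about snake graphs, not a formal group-presentation argument.
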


\medskip

As a corollary of Theorem \ref{mainthm}, we can give the quantum Laurent expansion.

\medskip

\begin{Theorem}\label{expansion}

Let $(\mathcal O,M)$ be an unpuntured surface and $T$ be an indexed triangulation. If $\gamma$ be an oriented arc in $\mathcal O$, then the quantum Laurent expansion of $X_{\gamma}$ with respect to the quantum cluster $X^{T}$ is $$X_{\gamma}=\textstyle\sum_{P\in \mathcal P(G_{T,\gamma})} q^{v(P)/2}X^T(P).$$

\end{Theorem}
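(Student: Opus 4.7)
The plan is to imitate the proof of the commutative version, Theorem \ref{expansion-comm}, with Theorem \ref{mainthm} playing the role that Theorem \ref{partition bi} played in the commutative case. The argument is induction on the crossing number $N(\gamma, T)$; all of the real content sits in Theorems \ref{partition bi} and \ref{mainthm}, and the deduction of Theorem \ref{expansion} should be essentially formal.

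\emph{Base case.} When $N(\gamma,T)=0$, the arc $\gamma$ either lies in $T$ or is a boundary segment. In either case $G_{T,\gamma}$ consists of a single edge labeled $\gamma$, which forces $\mathcal P(G_{T,\gamma})=\{P_0\}$ with $P_0=\{\gamma\}$. Since $P_0=P_{+}(G_{T,\gamma})=P_{-}(G_{T,\gamma})$, condition $(a)$ in Theorem \ref{mainthm}(1) gives $v(P_0)=0$. A direct check of Definition \ref{com-mon} shows $X^T(P_0)=X^T_\gamma=X_\gamma$, so the formula reads $X_\gamma=q^{0}X^T(P_0)$, as required.

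\emph{Inductive step.} Assume $N(\gamma, T)>0$ and that the theorem holds whenever the crossing number is strictly smaller. Exactly as in the commutative proof of Theorem \ref{expansion-comm}, one can choose $\tau\in T$ so that $N(\gamma,T')<N(\gamma,T)$, where $T'=\mu_\tau(T)$. By the inductive hypothesis,
\[
X_\gamma=\sum_{P'\in\mathcal P(G_{T',\gamma})}q^{v(P')/2}X^{T'}(P').
\]
Now apply Theorem \ref{mainthm}(2): if $\pi\colon\mathfrak P\to\mathfrak P'$ is the partition bijection of Theorem \ref{partition bi} associated with $(T,\gamma,\tau)$, then for every block $S\in\mathfrak P$ we have
\[
\sum_{P\in S}q^{v(P)/2}X^T(P)=\sum_{P'\in\pi(S)}q^{v(P')/2}X^{T'}(P').
\]
Because $\mathfrak P$ partitions $\mathcal P(G_{T,\gamma})$ and $\mathfrak P'$ partitions $\mathcal P(G_{T',\gamma})$, summing these block identities over $S\in\mathfrak P$ gives
\[
\sum_{P\in\mathcal P(G_{T,\gamma})}q^{v(P)/2}X^T(P)=\sum_{P'\in\mathcal P(G_{T',\gamma})}q^{v(P')/2}X^{T'}(P')=X_\gamma,
\]
completing the induction.

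In other words, the only step beyond bookkeeping is invoking Theorem \ref{mainthm}(2) in place of clause $(2)$ of Theorem \ref{partition bi}; the existence of a mutation $\tau$ strictly reducing $N(\gamma,-)$ is precisely the one used in the commutative case, and the initial conditions for $v$ handle the base case. Consequently I do not anticipate a substantive obstacle here: the hard work (constructing $\pi$ and proving its compatibility with the $q$-twisted weights) has been done in Theorems \ref{partition bi} and \ref{mainthm}, and Theorem \ref{expansion} is their formal corollary.
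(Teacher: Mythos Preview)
Your proposal is correct and follows essentially the same approach as the paper: induction on $N(\gamma,T)$, the base case handled by $v(P_\pm)=0$ on the single-edge graph, and the inductive step obtained by choosing $\tau$ with $N(\gamma,T')<N(\gamma,T)$ and summing the block identities from Theorem \ref{mainthm}(2) over the partition from Theorem \ref{partition bi}. The paper's proof is terser but identical in structure.
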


\begin{proof}

We prove the theorem by the induction of the crossing number $N(\gamma,T)$ of $\gamma$ and $T$. If $N(\gamma,T)=0$, then $\gamma\in T$, $P=\{\gamma\}$ and $v(P)=0$. Clearly, we have $X_{\gamma}=X_{\gamma}$. Assume $N(\gamma,T)>0$ and the result holds for the case the crossing number is less than $N(\gamma,T)$. We can choose an arc $\tau\in T$ such that $N(\gamma,T')<N(\gamma, T)$. By induction hypothesis, $X_{\gamma}=\sum_{P'\in \mathcal P(G_{T',\gamma})} q^{v(P')/2}X^{T'}(P').$ By Theorem \ref{partition bi} and Theorem \ref{mainthm} (2), $\sum_{P'\in \mathcal P(G_{T',\gamma})} q^{v(P')/2}X^T(P')=\sum_{P\in \mathcal P(G_{T,\gamma})} q^{v(P)/2}X^T(P).$ Therefore, $$X_{\gamma}=\textstyle\sum_{P\in \mathcal P(G_{T,\gamma})} q^{v(P)/2}X^T(P).$$ Our result follows.
\end{proof}

\medskip

The remainder question is: can we give an explicit formula for the valuation map $v$?

\medskip

As an immediately corollary of Theorem \ref{expansion}, we have the positivity of our case.

\medskip

\begin{Theorem}\label{positive}

Let $\mathcal O$ be a surface without punctures. The positivity holds for the quantum cluster algebra $\mathcal A_q(\mathcal O)$.

\end{Theorem}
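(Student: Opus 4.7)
My plan is to deduce positivity as a direct corollary of the quantum Laurent expansion in Theorem \ref{expansion}, so the work is essentially to verify that every summand produced by that expansion already lies in $\mathbb{N}[q^{\pm 1/2}]\langle X(t)^{\vec{a}} \mid \vec{a} \in \mathbb{Z}^m\rangle$, uniformly over all choices of quantum seed $t$.

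First I would fix an arbitrary quantum cluster variable $X$ and an arbitrary quantum seed $t$ of $\mathcal{A}_q(\mathcal{O})$. By the bijective correspondence between quantum seeds and indexed triangulations, and between quantum cluster variables and oriented arcs (recalled just after the definition of $\mathcal{A}_q$ from $\mathcal{O}$), I may realize $t = \Sigma_q^T$ for some indexed triangulation $T$ of $\mathcal{O}$, and $X = X_\gamma$ for some oriented arc $\gamma$. Next I would invoke Theorem \ref{expansion} to write
\[
X = X_\gamma = \sum_{P \in \mathcal{P}(G_{T,\gamma})} q^{v(P)/2}\, X^T(P).
\]
By construction $X^T(P) = (X^T)^{\vec{a}(P)}$ for some $\vec{a}(P) \in \mathbb{Z}^m$, i.e.\ an element of the normalized quantum Laurent basis attached to $t$. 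Each coefficient $q^{v(P)/2}$ is a single power of $q^{1/2}$ with unit coefficient, hence manifestly lies in $\mathbb{N}[q^{\pm 1/2}]$.

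To complete the argument I would collect terms: distinct matchings $P, P'$ might produce the same exponent vector $\vec{a}(P) = \vec{a}(P')$, in which case the contribution to that Laurent monomial is $q^{v(P)/2} + q^{v(P')/2}$, still a sum of monomials with positive coefficients, hence in $\mathbb{N}[q^{\pm 1/2}]$. Summing over $P$ thus yields an expression for $X$ in the required form, which is exactly the content of the positivity conjecture for $\mathcal{A}_q(\mathcal{O})$.

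The key point is that there is essentially no obstacle at this stage: all the delicate work has been done in constructing the valuation $v$ (Theorem \ref{mainthm}) and in establishing the closed quantum Laurent formula (Theorem \ref{expansion}), whose compatibility with mutation of $T$ already forces every coefficient to be a single $q^{1/2}$-monomial. The only tiny detail to articulate is that the $\mathbb{N}[q^{\pm 1/2}]$-cone is closed under addition, so that collecting repeated monomials $(X^T)^{\vec{a}}$ preserves positivity; this makes the deduction of Theorem \ref{positive} immediate.
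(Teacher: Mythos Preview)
Your proposal is correct and follows essentially the same approach as the paper: the paper's proof is a single line observing that $q^{v(P)/2}\in\mathbb N[q^{\pm 1/2}]$ for every $P$ and then invoking Theorem \ref{expansion}. Your write-up simply spells out a few more details (the passage from an arbitrary seed to a triangulation, and the closure of the positive cone under addition when collecting like monomials), but the argument is identical.
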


\begin{proof}

Since $q^{v(P)}\in \mathbb N[q^{\pm 1}]$ for any $P\in \mathcal P(G_{T,\gamma})$, by Theorem \ref{expansion}, our result follows.
\end{proof}

\medskip

\begin{Example}

Let $(\mathcal O, M)$, $T$ and $\gamma$ be as shown in the following figure. Assume $\mathcal A_q$ with principal coefficients at $T$ and principal quantization, i.e. $\widetilde B^T=((B^T)^t\;\;I_2)^t$ and $\Lambda^T=\left(\begin{array}{cc}
0 & -I_2\\
I_2 & B^T
\end{array}\right)$, where $I_2$ is the $2\times 2$ identity matrix.

\medskip

\centerline{\includegraphics{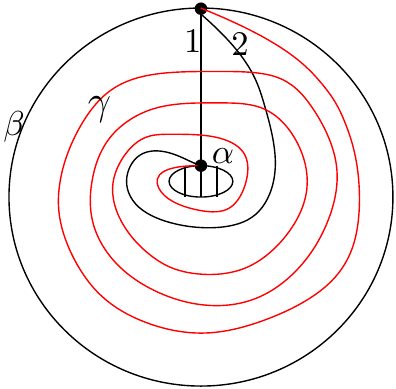}}

The arcs are labeled $1,2,\alpha,\beta$, as shown. Thus, $G_{T,\gamma}$ is the following.

\centerline{\begin{tikzpicture}
\draw[-] (0,0) -- (1,0);
\draw[-] (0,0) -- (0,-1);
\draw[-] (0,-1) -- (1,-1);
\draw[-] (1,0) -- (2,0);
\draw[-] (1,-1) -- (2,-1);
\draw[-] (2,0) -- (3,0);
\draw[-] (2,-1) -- (3,-1);
\draw[-] (3,0) -- (3,-1);
\draw[-] (3,0) -- (4,0);
\draw[-] (3,-1) -- (4,-1);
\draw[-] (4,0) -- (4,-1);
\draw[-] (4,0) -- (5,0);
\draw[-] (4,-1) -- (5,-1);
\draw[-] (5,0) -- (5,-1);
\draw[dashed] (1,0) -- (2,-1);
\draw[dashed] (0,0) -- (1,-1);
\draw[dashed] (2,0) -- (3,-1);
\draw[dashed] (3,0) -- (4,-1);
\draw[dashed] (4,0) -- (5,-1);
\node[above] at (0.5,-0.05) {$2$};
\node[above] at (1.5,-0.05) {$1$};
\node[above] at (2.5,-0.05) {$2$};
\node[above] at (3.5,-0.05) {$1$};
\node[above] at (4.5,-0.05) {$2$};
\node[below] at (4.5,-0.95) {$2$};
\node[below] at (0.5,-0.95) {$2$};
\node[below] at (1.5,-0.95) {$1$};
\node[below] at (2.5,-0.95) {$2$};
\node[below] at (3.5,-0.95) {$1$};
\node[right] at (1.4,-0.45) {$2$};
\node[right] at (3.4,-0.45) {$2$};
\node[right] at (2.4,-0.5) {$1$};
\node[right] at (4.4,-0.5) {$1$};
\node[left] at (0.8,-0.5) {$1$};
\node[left] at (0.05,-0.5) {$\alpha$};
\node[left] at (1.25,-0.5) {$\beta$};
\node[left] at (2.25,-0.5) {$\alpha$};
\node[left] at (3.25,-0.5) {$\beta$};
\node[left] at (4.25,-0.5) {$\alpha$};
\node[right] at (4.95,-0.5) {$\beta$};
\draw [-] (1, 0) -- (1,-1);
\draw [-] (2, 0) -- (2,-1);
\draw [fill] (0,0) circle [radius=.05];
\draw [fill] (0,-1) circle [radius=.05];
\draw [fill] (1,0) circle [radius=.05];
\draw [fill] (2,0) circle [radius=.05];
\draw [fill] (1,-1) circle [radius=.05];
\draw [fill] (2,-1) circle [radius=.05];
\draw [fill] (3,0) circle [radius=.05];
\draw [fill] (3,-1) circle [radius=.05];
\draw [fill] (4,0) circle [radius=.05];
\draw [fill] (4,-1) circle [radius=.05];
\draw [fill] (5,0) circle [radius=.05];
\draw [fill] (5,-1) circle [radius=.05];
\end{tikzpicture}}

\centerline{\includegraphics{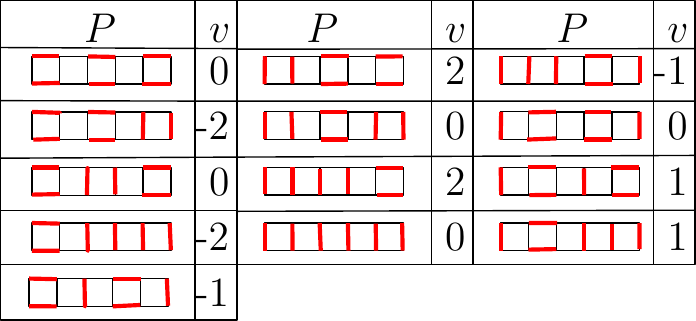}}

\medskip

Therefore, $$
\begin{array}{rcl} X_{\gamma}

& = & q^{0}x^{-3e_1+4e_2+3e_3+2e_4}+q^{2/2}x^{-3e_1+2e_2+2e_3+2e_4}+q^{-1/2}x^{-e_1-2e_2+e_4}  \vspace{2pt}  \\

& + & q^{-2/2}x^{-3e_1+2e_2+2e_3+2e_4}+q^{0/2}x^{-3e_1+e_3+2e_4}+q^{0/2}x^{e_1-2e_2} \vspace{2pt}  \\

& + & q^{0/2}x^{-3e_1+2e_2+2e_3+2e_4}+q^{2/2}x^{-3e_1+e_3+2e_4}+q^{1/2}x^{-e_1+e_3+e_4} \vspace{2pt}  \\

& + & q^{-2/2}x^{-3e_1+e_3+2e_4}+q^{0/2}x^{-3e_1-2e_2+2e_4}+q^{1/2}x^{-e_1-2e_2+e_4} \vspace{2pt}  \\

& + & q^{-1/2}x^{-e_1+e_3+e_4} \vspace{2pt}  \\

& = & x^{-3e_1+4e_2+3e_3+2e_4}+ +x^{e_1-2e_2} + x^{-3e_1-2e_2+2e_4}\vspace{2pt}\\

& + & (q^{1/2}+q^{-1/2})x^{-e_1-2e_2+e_4} + (q^{1/2}+q^{-1/2})x^{-e_1+e_3+e_4} \vspace{2pt}  \\

& + & (q+1+q^{-1})x^{-3e_1+e_3+2e_4} + (q+1+q^{-1})x^{-3e_1+2e_2+2e_3+2e_4}.
\end{array}$$

\end{Example}

\medskip

\section{Comparison of $\mathcal P(G_{T,\rho})$ and $\mathcal P(G_{T',\rho})$}\label{compare}

Let $T$ be an indexed triangulation of $\mathcal O$ and $\gamma$ be an oriented arc in $\mathcal O$. Assume $T$ can do flip at $\tau$. Denote by $\tau'$ the arc obtained from $T$ by flip at $\tau$. Let $p_0$ be the starting point of $\gamma$, and let $p_{d+1}$ be its endpoint. Assume $\gamma$ crosses $T$ at $p_1,\cdots, p_d$ in order. Thus, $p_j,j\in [1,d]$ divide $\gamma$ into some segments. Assume $p_j\in \tau_{i_j}\in T$. For $j\in [1,d-1]$, $\tau_{i_j}$ and $\tau_{i_{j+1}}$ form two edges of a triangle $\Delta_j$ of $T$ such that the segment connecting $p_j$ and $p_{j+1}$ lies inside of $\Delta_j$. Denote the third edge of $\Delta_j$ by $\tau_{[\gamma_j]}$. We choose a point on the segment connecting $p_j$ and $p_{j+1}$ for the following two cases:

\begin{enumerate}[$(1)$]

  \item one of $\tau_{i_j}$ and $\tau_{i_{j+1}}$ is in a same triangle with $\tau$, the other one is not.

  \item $\tau_{i_j},\tau_{i_{j+1}}\neq \tau$, $\tau_{i_j}$ and $\tau_{i_{j+1}}$ are in a same triangle with $\tau$ but $\tau_{[\gamma_j]}\neq \tau$.

\end{enumerate}

\medskip

Denote the points chosen by $o_1,o_2,\cdots,o_{k-1}$ in order and denote $p_0=o_0, p_{d+1}=o_{k}$. Denote the subcurve connecting $o_{i-1}$ and $o_i$ of $\gamma$ by $\gamma_{i}$ for $i\in [1,k]$. See the following figures for example.

\centerline{\includegraphics{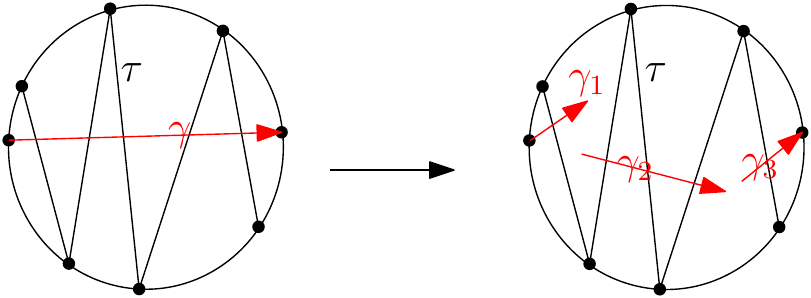}}

\centerline{\includegraphics{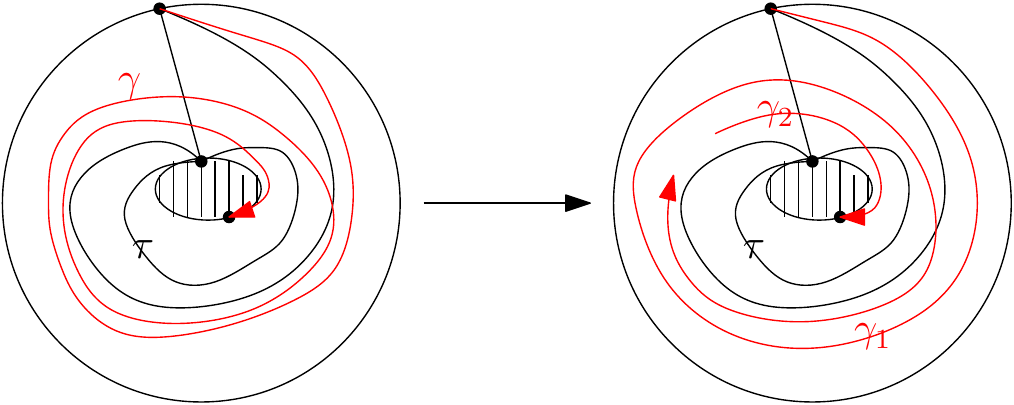}}

\medskip

Let $\rho$ be one of $\gamma_i,i\in [1,k]$. In this section, we compare $\mathcal P(G_{T,\rho})$ with $\mathcal P(G_{\mu_{\tau}(T),\rho})$. Note that if $\rho$ crosses no arc which is in the same triangle with $\tau$, then $G_{T,\rho}=G_{\mu_{\tau}(T),\rho}$. Throughout this section, assume $\tau'\neq \gamma\notin T$ and $\rho$ crosses at least one arc which is in the same triangle with $\tau$. We do not consider the orientation of $\rho$.

\medskip

Denote the first tile and the last tile of $G_{T,\rho}$ by $G_1$ and $G_s$, respectively. Then $G_{T,\gamma}$ can be obtained by gluing some graph $G$ left or below to $G_1$ and some graph $G'$ right or upper to $G_s$. Note that $G$ is empty if and only if $\rho$ and $\gamma$ have the same starting point, $G'$ is empty if and only if $\rho$ and $\gamma$ have the same endpoint. When $G$ is not empty, we say the left almost or lower almost edge of $G_1$ is the \emph{first gluing edge} of $G_{T,\rho}$, and when $G'$ is not empty, we say the right almost or upper almost edge of $G_s$ is the \emph{last gluing edge} of $G_{T,\rho}$. Similarly, we can define the first/last gluing edge of $G_{T',\rho}$. Assume $\rho$ crosses $T$ at $p_j,\cdots,p_r$ for some $1\leq j\leq r\leq d$, one can see the labels of the first/last gluing edges of $G_{T,\rho}$ and $G_{\mu_{\tau}(T),\rho}$ are the same, precisely, the first gluing edges are labeled $\tau_{[\gamma_{j-1}]}$ and the last gluing edges are labeled $\tau_{[\gamma_r]}$.

\medskip

For two edges of $G_{T,\rho}$ labeled $\tau$, we say that they are \emph{equivalent} if they are incident to a same diagonal of a tile of $G_{T,\rho}$. Each such equivalence class is called a \emph{$\tau$-equivalence class} in $G_{T,\rho}$. We divide the edges of $G_{T,\rho}$ labeled $\tau$ into the following types for all cases up to a difference of relative orientation.

\begin{enumerate}[$(I)$]

  \item there are two non-incident edges in its equivalence class.

  \item there are two incident edges in its equivalence class.

  \item there is one edge in its equivalence class and it is incident to a diagonal.

  \item there is one edge in its equivalence class and it is not incident to a diagonal.

\end{enumerate}

Type (I):

\centerline{\begin{tikzpicture}
\draw[-] (0,0) -- (1,0);
\draw[-] (0,0) -- (0,-1);
\draw[-] (0,-1) -- (1,-1);
\draw[-] (1,0) -- (2,0);
\draw[-] (1,-1) -- (2,-1);
\draw[-] (2,0) -- (3,0);
\draw[-] (2,-1) -- (3,-1);
\draw[-] (3,0) -- (3,-1);
\draw[dashed] (1,0) -- (2,-1);
\draw[dashed] (0,0) -- (1,-1);
\draw[dashed] (2,0) -- (3,-1);
\node[right] at (1.4,-0.45) {$\tau$};
\node[above] at (0.5,0) {$\tau$};
\node[below] at (2.5,-1) {$\tau$};
\draw [-] (1, 0) -- (1,-1);
\draw [-] (2, 0) -- (2,-1);
\draw [fill] (0,0) circle [radius=.05];
\draw [fill] (0,-1) circle [radius=.05];
\draw [fill] (1,0) circle [radius=.05];
\draw [fill] (2,0) circle [radius=.05];
\draw [fill] (1,-1) circle [radius=.05];
\draw [fill] (2,-1) circle [radius=.05];
\draw [fill] (3,0) circle [radius=.05];
\draw [fill] (3,-1) circle [radius=.05];
\end{tikzpicture}}

Type (II):

\centerline{\begin{tikzpicture}
\draw[-] (1,0) -- (2,0);
\draw[-] (1,-1) -- (2,-1);
\draw[-] (2,0) -- (3,0);
\draw[-] (2,-1) -- (3,-1);
\draw[-] (3,0) -- (3,-1);
\draw[-] (1,-1) -- (1,-2);
\draw[-] (2,-1) -- (2,-2);
\draw[-] (1,-2) -- (2,-2);
\draw[dashed] (1,0) -- (2,-1);
\draw[dashed] (2,0) -- (3,-1);
\draw[dashed] (1,-1) -- (2,-2);
\node[right] at (1.4,-0.45) {$\tau$};
\node[right] at (1.95,-1.5) {$\tau$};
\node[below] at (2.5,-1) {$\tau$};
\draw [-] (1, 0) -- (1,-1);
\draw [-] (2, 0) -- (2,-1);
\draw [fill] (1,0) circle [radius=.05];
\draw [fill] (2,0) circle [radius=.05];
\draw [fill] (1,-1) circle [radius=.05];
\draw [fill] (2,-1) circle [radius=.05];
\draw [fill] (1,-2) circle [radius=.05];
\draw [fill] (2,-2) circle [radius=.05];
\draw [fill] (3,0) circle [radius=.05];
\draw [fill] (3,-1) circle [radius=.05];
\end{tikzpicture}}

Type (III)

\centerline{\begin{tikzpicture}
\draw[-] (1,0) -- (2,0);
\draw[-] (1,-1) -- (2,-1);
\draw[-] (1,-1) -- (1,-2);
\draw[-] (2,-1) -- (2,-2);
\draw[-] (1,-2) -- (2,-2);
\draw[dashed] (1,0) -- (2,-1);
\draw[dashed] (1,-1) -- (2,-2);
\node[right] at (1.4,-0.45) {$\tau$};
\node[right] at (1.95,-1.5) {$\tau$};
\draw [-] (1, 0) -- (1,-1);
\draw [-] (2, 0) -- (2,-1);
\draw [fill] (1,0) circle [radius=.05];
\draw [fill] (2,0) circle [radius=.05];
\draw [fill] (1,-1) circle [radius=.05];
\draw [fill] (2,-1) circle [radius=.05];
\draw [fill] (1,-2) circle [radius=.05];
\draw [fill] (2,-2) circle [radius=.05];
\draw[-] (6,0) -- (7,0);
\draw[-] (6,-1) -- (7,-1);
\draw[-] (7,0) -- (7,-1);
\draw[-] (7,-1) -- (7,-2);
\draw[-] (6,-2) -- (7,-2);
\draw [-] (6, 0) -- (6,-1);
\draw[-] (6,-1) -- (7,-1);
\draw[-] (6,-1) -- (6,-2);
\draw[dashed] (6,0) -- (7,-1);
\draw[dashed] (6,-1) -- (7,-2);
\node[left] at (6.05,-0.5) {$\tau$};
\node[right] at (6.4,-1.45) {$\tau$};
\draw [fill] (7,0) circle [radius=.05];
\draw [fill] (7,-2) circle [radius=.05];
\draw [fill] (7,-1) circle [radius=.05];
\draw [fill] (6,0) circle [radius=.05];
\draw [fill] (6,-1) circle [radius=.05];
\draw [fill] (6,-2) circle [radius=.05];
\end{tikzpicture}}

Type (IV)

\centerline{\begin{tikzpicture}
\draw[-] (1,0) -- (2,0);
\draw[-] (1,-1) -- (2,-1);
\draw[-] (1,0) -- (1,-1);
\draw[-] (2,0) -- (2,-1);
\draw[dashed] (1,0) -- (2,-1);
\draw [fill] (1,0) circle [radius=.05];
\draw [fill] (2,0) circle [radius=.05];
\draw [fill] (1,-1) circle [radius=.05];
\draw [fill] (2,-1) circle [radius=.05];
\node[left] at (1.05,-0.5) {$\tau$};
\draw[-] (5,0) -- (6,0);
\draw[-] (5,-1) -- (6,-1);
\draw[-] (5,0) -- (5,-1);
\draw[-] (6,0) -- (6,-1);
\draw[dashed] (5,0) -- (6,-1);
\draw [fill] (5,0) circle [radius=.05];
\draw [fill] (6,0) circle [radius=.05];
\draw [fill] (5,-1) circle [radius=.05];
\draw [fill] (6,-1) circle [radius=.05];
\node[above] at (5.5,-0.05) {$\tau$};
\end{tikzpicture}}

\medskip

Denote by $n^{\tau}(T,\rho)$ the number of $\tau$-equivalence classes in $G_{T,\rho}$ and we list the $\tau$-equivalence classes according to the order of the tiles. Denote by $S_{T,\rho}$ the set containing all sequences $\nu=(\nu_{1},\cdots,\nu_{n^{\tau}(T,\rho)})$ which satisfies $\nu_{j}\in \{-1,0,1\}$ if the $j$-th $\tau$-equivalence class is of type (I), $\nu_{j}\in \{-1,0\}$ if the $j$-th $\tau$-equivalent class is of type (II) or (III), $\nu_{j}\in \{0,1\}$ if the $j$-th $\tau$-equivalence class is of type (IV). Given a sequence $\nu_=(\nu_{1},\cdots,\nu_{n^{\tau}(T,\rho)})\in S_{T,\rho}$, let $\mathcal P^{\tau}_{\nu}(G_{T,\rho})$ containing all perfect matching $P$ which contains $\nu_{j}+1$ edges of the $j$-th $\tau$-equivalence class if the $j$-th $\tau$-equivalence class is of type (I) (II) or (III), and $\nu_{j}$ edges of the $j$-th $\tau$-equivalence class if the $j$-th $\tau$-equivalence class is of type (IV).

\medskip

It should be noted that if $\nu_j=-1$, then the $j$-th $\tau$-equivalence class corresponds to a diagonal of a tile $G(p)$, moreover, any $P\in \mathcal P^{\tau}_{\nu}(G_{T,\rho})$ can twist on $G(p)$.

\medskip

One can easily see the following result holds.

 \medskip

\begin{Lemma}\label{localdec}

$\mathcal P(G_{T,\rho})=\bigsqcup_{\nu\in S_{T,\rho}}\mathcal P^{\tau}_{\nu}(G_{T,\rho})$.

\end{Lemma}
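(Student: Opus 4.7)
The plan is to observe that the decomposition is essentially a bookkeeping statement that follows from unpacking the definitions. Given any perfect matching $P \in \mathcal P(G_{T,\rho})$, I would record for each $\tau$-equivalence class the number of its edges lying in $P$, and define a sequence $\nu(P)$ by setting $\nu(P)_j$ equal to this count minus one if the $j$-th class has type (I), (II) or (III), and equal to this count itself if it has type (IV). Once one checks that $\nu(P) \in S_{T,\rho}$, the relation $P \in \mathcal P^{\tau}_{\nu(P)}(G_{T,\rho})$ holds by definition, and $\nu(P)$ is manifestly the unique sequence with this property, so the sets $\mathcal P^{\tau}_{\nu}$ for distinct $\nu$ are automatically disjoint.

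The only substantive step is therefore to verify that the counts obtained from an arbitrary $P$ always fall within the ranges prescribed by $S_{T,\rho}$. For a class of type (I) this is clear because the two edges in the class are non-incident in the snake graph, so any one of $0$, $1$ or $2$ of them may coexist in a matching, giving $\nu(P)_j\in\{-1,0,1\}$; for types (III) and (IV) the class has a single edge, so the count is $0$ or $1$, matching the prescribed ranges. The one case requiring a small observation is type (II), where the class consists of two edges that share a common vertex of $G_{T,\rho}$; by the very definition of a perfect matching, at most one of them can lie in $P$, forcing the count to be $0$ or $1$ and hence $\nu(P)_j \in \{-1,0\}$ as required.

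Combining these two steps, every $P \in \mathcal P(G_{T,\rho})$ belongs to exactly one $\mathcal P^{\tau}_{\nu(P)}(G_{T,\rho})$ with $\nu(P) \in S_{T,\rho}$, yielding the claimed disjoint union. I do not anticipate any real obstacle here: the lemma is in effect a restatement of the definitions, and the only nontrivial input is the elementary fact that two edges sharing a common vertex cannot simultaneously belong to a perfect matching.
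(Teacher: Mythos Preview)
Your proposal is correct and is exactly the kind of definition-unpacking the paper has in mind; indeed the paper gives no proof at all, simply stating ``One can easily see the following result holds,'' so your argument just spells out the routine verification that the paper omits.
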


\medskip

Assume $a_1,a_4,\tau$ and $a_2,a_3,\tau$ are two triangles in $T$ such that $a_1,a_3$ are clockwise to $\tau$ and $a_2,a_4$ are counterclockwise to $\tau$, as shown in the figure below. Since $\mathcal O$ without punctures, $T$ does not have self-folded triangle, $a_2\neq a_3$ and $a_1\neq a_4$. Similarly, do flip at $\tau$, we know $a_1\neq a_2$ and $a_3\neq a_4$.

\centerline{\begin{tikzpicture}
\draw[-] (1,0) -- (2,0);
\draw[-] (1,-1) -- (2,-1);
\draw[-] (1,0) -- (2,-1);
\node[right] at (1.4,-0.45) {$\tau$};
\node[above] at (1.5,-0.05) {$a_2$};
\node[left] at (1,-0.5) {$a_1$};
\node[below] at (1.5,-0.95) {$a_4$};
\node[right] at (2,-0.5) {$a_3$};
\draw [-] (1, 0) -- (1,-1);
\draw [-] (2, 0) -- (2,-1);
\draw [fill] (1,0) circle [radius=.05];
\draw [fill] (2,0) circle [radius=.05];
\draw [fill] (1,-1) circle [radius=.05];
\draw [fill] (2,-1) circle [radius=.05];
\end{tikzpicture}}

\subsection{When $a_1\neq a_3$ and $a_2\neq a_4$.}

By the construction of $\rho$ and $\tau'\neq \gamma\notin T$, we have the following possibilities (the addition operation below is taken in $\mathbb Z_4$):

\begin{enumerate}[$(1)$]

  \item $\rho$ crosses $\tau,a_i$ for $i=1,2,3,4$.

  \item $\rho$ crosses $a_i,\tau, a_{i+1}$ for $i=1,3$.

  \item $\rho$ crosses $a_i,\tau,\tau',a_{i+2}$ for $i=1,2$.

  \item $\rho$ crosses $\tau',a_i$ for $i=1,2,3,4$.

  \item $\rho$ crosses $a_i,\tau',a_{i+1}$ for $i=2,4$.

\end{enumerate}

Since case (1) and case (4) are dual, case (2) and case (5) are dual, we shall only consider cases (1), (2) and (3).

\medskip

In case (1), $\rho$ and $\gamma$ have the same starting point. We may assume $\rho$ crosses $\tau, a_2$, then up to a difference of relative orientation, $G_{T,\rho}$ and $G_{T',\rho}$ are the following graphs, respectively,

\centerline{\begin{tikzpicture}
\draw[-] (1,0) -- (2,0);
\draw[-] (1,-1) -- (2,-1);
\draw[-] (2,0) -- (3,0);
\draw[-] (2,-1) -- (3,-1);
\draw[-] (3,0) -- (3,-1);
\draw[dashed] (1,0) -- (2,-1);
\draw[dashed] (2,0) -- (3,-1);
\node[right] at (1.4,-0.45) {$\tau$};
\node[above] at (1.5,-0.05) {$a_2$};
\node[left] at (1,-0.5) {$a_1$};
\node[below] at (1.5,-1) {$a_4$};
\node[right] at (1.75,-0.5) {$a_3$};
\node[right] at (2.4,-0.5) {$a_2$};
\node[above] at (2.5,-0.05) {$a_6$};
\node[below] at (2.5,-1) {$\tau$};
\node[right] at (3,-0.5) {$a_5$};
\draw [-] (1, 0) -- (1,-1);
\draw [-] (2, 0) -- (2,-1);
\draw [fill] (1,0) circle [radius=.05];
\draw [fill] (2,0) circle [radius=.05];
\draw [fill] (1,-1) circle [radius=.05];
\draw [fill] (2,-1) circle [radius=.05];
\draw [fill] (3,0) circle [radius=.05];
\draw [fill] (3,-1) circle [radius=.05];
\draw[-] (5,0) -- (6,0);
\draw[-] (5,-1) -- (6,-1);
\draw[dashed] (5,0) -- (6,-1);
\node[right] at (5.4,-0.5) {$a_2$};
\node[above] at (5.5,-0.05) {$a_5$};
\node[left] at (5,-0.5) {$a_1$};
\node[below] at (5.5,-0.95) {$\tau'$};
\node[right] at (6,-0.5) {$a_6$};
\draw [-] (5, 0) -- (5,-1);
\draw [-] (6, 0) -- (6,-1);
\draw [fill] (5,0) circle [radius=.05];
\draw [fill] (6,0) circle [radius=.05];
\draw [fill] (5,-1) circle [radius=.05];
\draw [fill] (6,-1) circle [radius=.05];
\end{tikzpicture}}

In this case, $G_{T,\rho}$ (respectively $G_{T',\rho}$) has one $\tau$-(respectively $\tau'$-)equivalence class of type (III) (respectively (IV)). Thus $n^{\tau}(T,\rho)=n^{\tau'}(T',\rho)$. Moreover, we have a partition bijection from $\mathcal P(G_{T,\rho})$ to $\mathcal P(G_{T',\rho})$, as shown in the figure below.

\centerline{\includegraphics{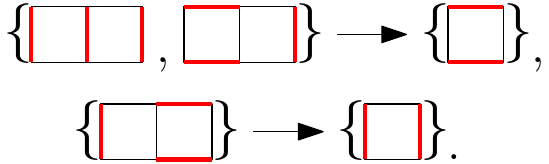}}

It is easy to see the right upper edge labeled $a_5$/$a_6$ of $G_{T,\rho}$ is in the left set if and only if the right upper edge labeled $a_5$/$a_6$ of $G_{T',\rho}$ is in the right set. Thus, the last gluing edge of $G_{T,\rho}$ is in the left set if and only if the last gluing edge of $G_{T',\rho}$ in the right set. Since $\rho$ and $\gamma$ have the same starting point, $G_{T,\rho}$ and $G_{T',\rho}$ do not have first gluing edges.

\medskip
%
%

In case (2), we may assume $\rho$ crosses $a_1, \tau, a_2$, then up to a difference of relative orientation, $G_{T,\rho}$ and $G_{T',\rho}$ are the following graphs, respectively,

\centerline{\begin{tikzpicture}
\draw[-] (1,0) -- (2,0);
\draw[-] (1,-1) -- (2,-1);
\draw[-] (2,0) -- (3,0);
\draw[-] (2,-1) -- (3,-1);
\draw[-] (3,0) -- (3,-1);
\draw[-] (1,-1) -- (1,-2);
\draw[-] (2,-1) -- (2,-2);
\draw[-] (1,-2) -- (2,-2);
\draw[dashed] (1,0) -- (2,-1);
\draw[dashed] (2,0) -- (3,-1);
\draw[dashed] (1,-1) -- (2,-2);
\node[right] at (1.4,-0.45) {$\tau$};
\node[right] at (1.4,-1.5) {$a_1$};
\node[above] at (1.5,-0.05) {$a_2$};
\node[left] at (1.05,-0.5) {$a_1$};
\node[left] at (1.05,-1.5) {$a_8$};
\node[below] at (1.5,-0.8) {$a_4$};
\node[below] at (1.5,-1.95) {$a_7$};
\node[right] at (1.75,-0.5) {$a_3$};
\node[right] at (1.95,-1.5) {$\tau$};
\node[right] at (2.4,-0.5) {$a_2$};
\node[above] at (2.5,-0.05) {$a_6$};
\node[below] at (2.5,-1) {$\tau$};
\node[right] at (2.95,-0.5) {$a_5$};
\draw [-] (1, 0) -- (1,-1);
\draw [-] (2, 0) -- (2,-1);
\draw [fill] (1,0) circle [radius=.05];
\draw [fill] (2,0) circle [radius=.05];
\draw [fill] (1,-1) circle [radius=.05];
\draw [fill] (2,-1) circle [radius=.05];
\draw [fill] (1,-2) circle [radius=.05];
\draw [fill] (2,-2) circle [radius=.05];
\draw [fill] (3,0) circle [radius=.05];
\draw [fill] (3,-1) circle [radius=.05];
\draw[-] (5,0) -- (6,0);
\draw[-] (5,-1) -- (6,-1);
\draw[-] (5,-2) -- (6,-2);
\draw[-] (5,-1) -- (5,-2);
\draw[-] (6,-1) -- (6,-2);
\draw[dashed] (5,-1) -- (6,-2);
\draw[dashed] (5,0) -- (6,-1);
\node[right] at (5.4,-1.5) {$a_1$};
\node[right] at (5.4,-0.5) {$a_2$};
\node[above] at (5.5,-0.05) {$a_5$};
\node[left] at (5.05,-0.5) {$a_1$};
\node[below] at (5.5,-0.7) {$\tau'$};
\node[right] at (5.95,-0.5) {$a_6$};
\node[right] at (5.95,-1.5) {$a_2$};
\node[left] at (5.05,-1.5) {$a_8$};
\node[below] at (5.5,-1.95) {$a_7$};
\draw [-] (5, 0) -- (5,-1);
\draw [-] (6, 0) -- (6,-1);
\draw [fill] (5,0) circle [radius=.05];
\draw [fill] (6,0) circle [radius=.05];
\draw [fill] (5,-1) circle [radius=.05];
\draw [fill] (6,-1) circle [radius=.05];
\draw [fill] (5,-2) circle [radius=.05];
\draw [fill] (6,-2) circle [radius=.05];
\end{tikzpicture}}

In this case, $G_{T,\rho}$ (respectively $G_{T',\rho}$) has one $\tau$-(respectively $\tau'$-)equivalence class of type (II) (respectively (IV)). Thus $n^{\tau}(T,\rho)=n^{\tau'}(T',\rho)$. Moreover, we have a partition bijection from $\mathcal P(G_{T,\rho})$ to $\mathcal P(G_{T',\rho})$, as shown in the following.

\centerline{\includegraphics{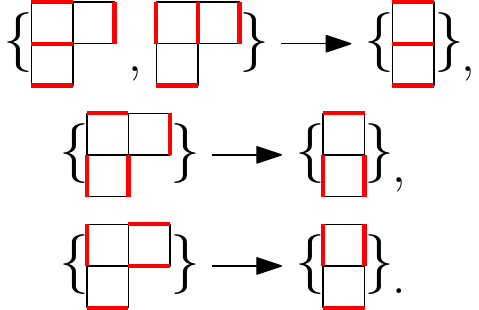}}

As case (1), we can see that the first/last gluing edge of $G_{T,\rho}$ is in the left set if and only if the first/last gluing edge of $G_{T',\rho}$ in the right set.

\medskip
%
%

In case (3), we may assume $\rho$ crosses $a_2, \tau, \tau', a_4$, then up to a difference of relative orientation, $G_{T,\rho}$ and $G_{T',\rho}$ are the following graphs, respectively,

\centerline{\begin{tikzpicture}
\draw[-] (0,0) -- (1,0);
\draw[-] (0,0) -- (0,-1);
\draw[-] (0,-1) -- (1,-1);
\draw[-] (1,0) -- (2,0);
\draw[-] (1,-1) -- (2,-1);
\draw[-] (2,0) -- (3,0);
\draw[-] (2,-1) -- (3,-1);
\draw[-] (3,0) -- (3,-1);
\draw[dashed] (1,0) -- (2,-1);
\draw[dashed] (0,0) -- (1,-1);
\draw[dashed] (2,0) -- (3,-1);
\node[right] at (1.4,-0.45) {$\tau$};
\node[above] at (1.5,0) {$a_2$};
\node[above] at (0.5,0) {$\tau$};
\node[left] at (1.35,-0.5) {$a_1$};
\node[below] at (1.5,-1) {$a_4$};
\node[below] at (0.5,-1) {$a_7$};
\node[right] at (1.75,-0.5) {$a_3$};
\node[right] at (2.4,-0.5) {$a_2$};
\node[left] at (0.8,-0.5) {$a_4$};
\node[above] at (2.5,0) {$a_6$};
\node[below] at (2.5,-1) {$\tau$};
\node[right] at (3,-0.5) {$a_5$};
\node[left] at (0,-0.5) {$a_8$};
\draw [-] (1, 0) -- (1,-1);
\draw [-] (2, 0) -- (2,-1);
\draw [fill] (0,0) circle [radius=.05];
\draw [fill] (0,-1) circle [radius=.05];
\draw [fill] (1,0) circle [radius=.05];
\draw [fill] (2,0) circle [radius=.05];
\draw [fill] (1,-1) circle [radius=.05];
\draw [fill] (2,-1) circle [radius=.05];
\draw [fill] (3,0) circle [radius=.05];
\draw [fill] (3,-1) circle [radius=.05];
\draw[-] (5,0) -- (6,0);
\draw[-] (5,-1) -- (6,-1);
\draw [-] (5, 0) -- (5,-1);
\draw [-] (6, 0) -- (6,-1);
\draw[-] (5,0) -- (5,1);
\draw[-] (5,-1) -- (5,-2);
\draw[-] (6,0) -- (6,1);
\draw[-] (6,-1) -- (6,-2);
\draw[-] (5,1) -- (6,1);
\draw[-] (5,-2) -- (6,-2);
\draw[dashed] (5,0) -- (6,-1);
\draw[dashed] (5,1) -- (6,0);
\draw[dashed] (5,-1) -- (6,-2);
\node[left] at (5.9,0.5) {$a_2$};
\node[above] at (5.5,-0.25) {$a_1$};
\node[right] at (6,-0.5) {$a_2$};
\node[right] at (6,0.5) {$a_5$};
\node[right] at (6,-1.5) {$\tau'$};
\node[right] at (5.3,-1.5) {$a_4$};
\node[below] at (5.5,-0.8) {$a_3$};
\node[left] at (5.9,-0.4) {$\tau'$};
\node[left] at (5,-0.5) {$a_4$};
\node[left] at (5,-1.5) {$a_8$};
\node[left] at (5,0.5) {$\tau'$};
\node[below] at (5.5,-1.95) {$a_7$};
\node[above] at (5.5,0.95) {$a_6$};
\draw [fill] (5,0) circle [radius=.05];
\draw [fill] (6,0) circle [radius=.05];
\draw [fill] (5,-1) circle [radius=.05];
\draw [fill] (6,-1) circle [radius=.05];
\draw [fill] (6,-2) circle [radius=.05];
\draw [fill] (5,-2) circle [radius=.05];
\draw [fill] (5,1) circle [radius=.05];
\draw [fill] (6,1) circle [radius=.05];
\end{tikzpicture}}

In this case, $G_{T,\rho}$ (respectively $G_{T',\rho}$) has one $\tau$-(respectively $\tau'$-)equivalence class of type (I). Thus $n^{\tau}(T,\rho)=n^{\tau'}(T',\rho)$. Moreover, we have a partition bijection from $\mathcal P(G_{T,\rho})$ to $\mathcal P(G_{T',\rho})$, as shown in the following figure.

\centerline{\includegraphics{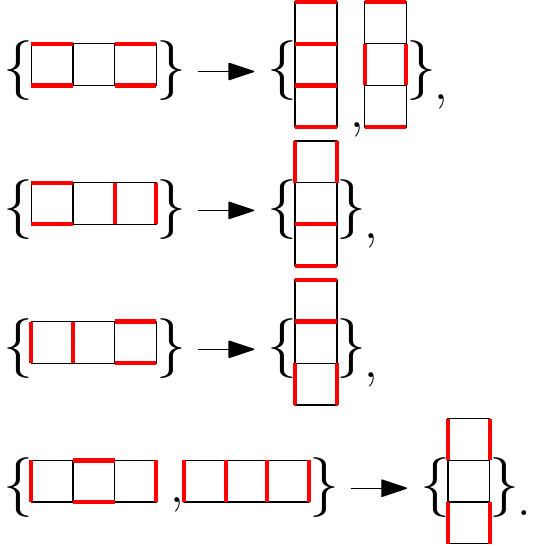}}

As cases (1) and (2), we can see that the first/last gluing edge of $G_{T,\rho}$ is in the left set if and only if the first/last gluing edge of $G_{T',\rho}$ in the right set.
%
%

\subsection{When $a_1= a_3$ or $a_2= a_4$.}

\begin{Lemma}

If $a_1=a_3$ and $a_2=a_4$, then $\mathcal O$ is a torus with one marked point.

\end{Lemma}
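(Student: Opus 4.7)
The plan is to show that the union $\Delta_1 \cup \Delta_2$ of the two triangles sharing $\tau$ is already a closed sub-surface of $\mathcal{O}$, hence equals $\mathcal{O}$ by connectedness, and then to identify this union topologically as a torus with a single vertex.

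For the first step, I will use the fact that in any triangulation $T$ of an unpunctured surface, every interior arc is a side of exactly two triangles, while every boundary arc is a side of exactly one. The arc $a_1$ is a side of $\Delta_1$, and the hypothesis $a_1 = a_3$ makes it also a side of $\Delta_2$; hence both triangles incident to $a_1$ lie in $\{\Delta_1, \Delta_2\}$, and $a_1$ is interior. The identical argument with $a_2 = a_4$ shows both triangles incident to $a_2$ are $\Delta_1$ and $\Delta_2$. The remaining edge of each triangle is $\tau$, which by construction is shared only between $\Delta_1$ and $\Delta_2$. Thus no edge of $\Delta_1 \cup \Delta_2$ lies on $\partial \mathcal{O}$ nor is adjacent to any triangle of $T$ outside $\{\Delta_1,\Delta_2\}$. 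So $\Delta_1 \cup \Delta_2$ is a non-empty compact sub-surface of $\mathcal{O}$ with empty boundary (as a sub-surface). Since $\mathcal{O}$ is a connected surface of the same dimension, $\mathcal{O} = \Delta_1 \cup \Delta_2$.

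For the second step, I will realise $\Delta_1 \cup \Delta_2$ as a quadrilateral $Q$ with cyclic sides $a_1, a_2, a_3, a_4$ and interior diagonal $\tau$, subject to the gluings $a_1 \sim a_3$ and $a_2 \sim a_4$. Because $\mathcal{O}$ is orientable, the side-pairings must be by the orientation-reversing pattern that yields the torus (not the Klein bottle). Tracing the corner identifications around the boundary cycle of $Q$ via these two pairings shows that all four corners fall into a single equivalence class. A quick cell count then gives one $0$-cell, three $1$-cells ($\tau, a_1, a_2$) and two $2$-cells, so $\chi(\mathcal{O}) = 1 - 3 + 2 = 0$, confirming that $\mathcal{O}$ is a torus. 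Since the $0$-skeleton has one vertex, $|M| = 1$.

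The main delicacy will be the corner-tracing argument: I must verify that the two pairings chain together to put the four endpoints of $Q$ into a single orbit rather than two orbits of size two, which is the only place where the specific torus conclusion (as opposed to just ``closed orientable surface of genus one'') depends on the combinatorial data. Once this vertex count is pinned down, the conclusion that $\mathcal{O}$ is a torus with exactly one marked point is immediate.
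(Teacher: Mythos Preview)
Your proposal is correct and follows essentially the same approach as the paper: show that $\Delta_1\cup\Delta_2$ is a closed subsurface, hence all of $\mathcal O$ by connectedness, and then identify it as a torus using orientability. The paper's proof is terser---it simply asserts that the glued square is homeomorphic to a torus, Klein bottle, or $\mathbb{RP}^2$ and then invokes orientability---whereas you supply more detail via the Euler-characteristic count and the corner-tracing argument for the single vertex; but the architecture is the same.
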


\begin{proof}

Denote the triangles formed by $a_1,a_4,\tau$ and $a_2,a_3,\tau$ by $\Delta_1$ and $\Delta_2$, respectively. If $a_1=a_3$ and $a_2=a_4$, gluing $a_1,a_3$ and gluing $a_2,a_4$, we get the subspace $\mathcal O'$ of $\mathcal O$ formed by the union of triangles $\Delta_1$ and $\Delta_2$ is homeomorphism to the torus, the Klein bottle or the real prjective plan with one marked point, which is a closed surface without boundary. Since $\mathcal O$ is a connected manifold, $\mathcal O=\mathcal O'$. Moreover, since $\mathcal O$ is oriented, it is a torus with one marked point.
\end{proof}

\medskip

In view of the above lemma, we know $a_1=a_3$ and $a_2=a_4$ can not hold at the same time since $\mathcal O$ without punctures. We may assume that $a_1=a_3$, thus $a_2\neq a_4$.

\medskip

Denote by $\Delta_1$ the triangle formed by arcs $\tau,a_1,a_2$ and by $\Delta_2$ the triangle formed by arcs $\tau,a_1,a_4$. Since $\rho$ does not cross itself and $\tau'\neq\gamma\notin T$, consider the universal covering of subspace $\Delta_1\cup \Delta_2$, we have the following possibilities (we illustrate cases (1)-(7) in the figure below, the other cases are dual):

\medskip

\centerline{\includegraphics{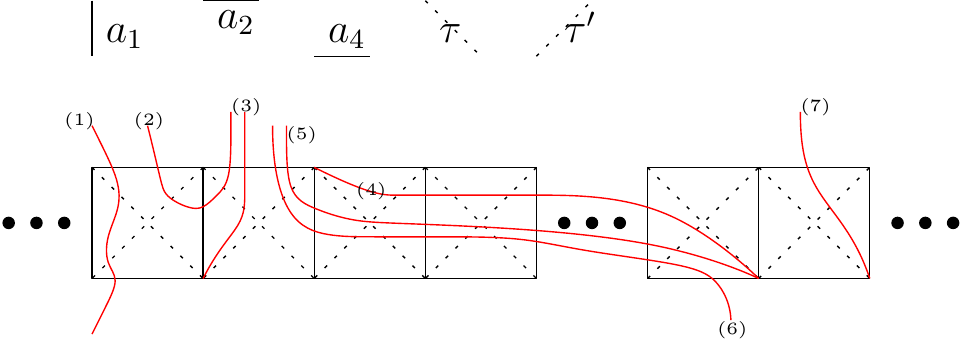}}

\begin{enumerate}[$(1)$]

  \item $\rho$ crosses $a_2,\tau,\tau',a_4$.

  \item $\rho$ crosses $a_i,\tau',a_1,\tau,a_i$ for $i=2,4$.

  \item $\rho$ crosses $a_i,\tau$ for $i=2,4$.

  \item $\rho$ crosses $\tau',a_1,(\tau',\overset{s}{\overbrace{\tau,a_1),\cdots,(\tau',\tau}},a_1),\tau'$ for $s\geq 0$.

  \item $\rho$ crosses $a_i,\tau',a_1,(\tau',\overset{s}{\overbrace{\tau,a_1),\cdots,(\tau',\tau}},a_1),\tau'$ for $i=2,4$ and $s\geq 0$.

  \item $\rho$ crosses $a_2,\tau',a_1,(\tau',\overset{s}{\overbrace{\tau,a_1),\cdots,(\tau',\tau}},a_1),\tau',a_4$ for $s\geq 0$.

  \item $\rho$ crosses $a_i,\tau'$ for $i=2,4$.

  \item $\rho$ crosses $\tau,a_1,(\tau,\overset{s}{\overbrace{\tau',a_1),\cdots,(\tau,\tau'}},a_1),\tau$ for $s\geq 0$.

  \item $\rho$ crosses $a_i,\tau,a_1,(\tau,\overset{s}{\overbrace{\tau',a_1),\cdots,(\tau,\tau'}},a_1),\tau$ for $i=2,4$ and $s\geq 0$.

  \item $\rho$ crosses $a_2,\tau,a_1,(\tau,\overset{s}{\overbrace{\tau',a_1),\cdots,(\tau,\tau'}},a_1),\tau,a_4$ for $s\geq 0$.

\end{enumerate}

\medskip

Since cases (3) and (7), (4) and (8), (5) and (9), (6) and (10) are dual respectively, we shall only discuss cases (1)-(6).

\medskip

In case (1), up to a difference of relative orientation, $G_{T,\rho}$ and $G_{T',\rho}$ are the following graphs, respectively,

\centerline{\begin{tikzpicture}
\draw[-] (0,0) -- (1,0);
\draw[-] (0,0) -- (0,-1);
\draw[-] (0,-1) -- (1,-1);
\draw[-] (1,0) -- (2,0);
\draw[-] (1,-1) -- (2,-1);
\draw[-] (2,0) -- (3,0);
\draw[-] (2,-1) -- (3,-1);
\draw[-] (3,0) -- (3,-1);
\draw[dashed] (1,0) -- (2,-1);
\draw[dashed] (0,0) -- (1,-1);
\draw[dashed] (2,0) -- (3,-1);
\node[right] at (1.4,-0.45) {$\tau$};
\node[above] at (1.5,0) {$a_2$};
\node[above] at (0.5,0) {$\tau$};
\node[left] at (1.35,-0.5) {$a_1$};
\node[below] at (1.5,-1) {$a_4$};
\node[below] at (0.5,-1) {$a_7$};
\node[right] at (1.75,-0.5) {$a_1$};
\node[right] at (2.4,-0.5) {$a_2$};
\node[left] at (0.8,-0.5) {$a_4$};
\node[above] at (2.5,0) {$a_6$};
\node[below] at (2.5,-1) {$\tau$};
\node[right] at (3,-0.5) {$a_5$};
\node[left] at (0,-0.5) {$a_8$};
\draw [-] (1, 0) -- (1,-1);
\draw [-] (2, 0) -- (2,-1);
\draw [fill] (0,0) circle [radius=.05];
\draw [fill] (0,-1) circle [radius=.05];
\draw [fill] (1,0) circle [radius=.05];
\draw [fill] (2,0) circle [radius=.05];
\draw [fill] (1,-1) circle [radius=.05];
\draw [fill] (2,-1) circle [radius=.05];
\draw [fill] (3,0) circle [radius=.05];
\draw [fill] (3,-1) circle [radius=.05];
\draw[-] (5,0) -- (6,0);
\draw[-] (5,-1) -- (6,-1);
\draw [-] (5, 0) -- (5,-1);
\draw [-] (6, 0) -- (6,-1);
\draw[-] (5,0) -- (5,1);
\draw[-] (5,-1) -- (5,-2);
\draw[-] (6,0) -- (6,1);
\draw[-] (6,-1) -- (6,-2);
\draw[-] (5,1) -- (6,1);
\draw[-] (5,-2) -- (6,-2);
\draw[dashed] (5,0) -- (6,-1);
\draw[dashed] (5,1) -- (6,0);
\draw[dashed] (5,-1) -- (6,-2);
\node[left] at (5.9,0.5) {$a_2$};
\node[above] at (5.5,-0.25) {$a_1$};
\node[right] at (6,-0.5) {$a_2$};
\node[right] at (6,0.5) {$a_5$};
\node[right] at (6,-1.5) {$\tau'$};
\node[right] at (5.3,-1.5) {$a_4$};
\node[below] at (5.5,-0.8) {$a_1$};
\node[left] at (5.9,-0.4) {$\tau'$};
\node[left] at (5,-0.5) {$a_4$};
\node[left] at (5,-1.5) {$a_8$};
\node[left] at (5,0.5) {$\tau'$};
\node[below] at (5.5,-1.95) {$a_7$};
\node[above] at (5.5,0.95) {$a_6$};
\draw [fill] (5,0) circle [radius=.05];
\draw [fill] (6,0) circle [radius=.05];
\draw [fill] (5,-1) circle [radius=.05];
\draw [fill] (6,-1) circle [radius=.05];
\draw [fill] (6,-2) circle [radius=.05];
\draw [fill] (5,-2) circle [radius=.05];
\draw [fill] (5,1) circle [radius=.05];
\draw [fill] (6,1) circle [radius=.05];
\end{tikzpicture}}

In this case, $G_{T,\rho}$ (respectively $G_{T',\rho}$) has one $\tau$-(respectively $\tau'$-)equivalence class of type (I). Thus $n^{\tau}(T,\rho)=n^{\tau'}(T',\rho)$. Moreover, we have a partition bijection from $\mathcal P(G_{T,\rho})$ to $\mathcal P(G_{T',\rho})$, as shown in the following figure.

\centerline{\includegraphics{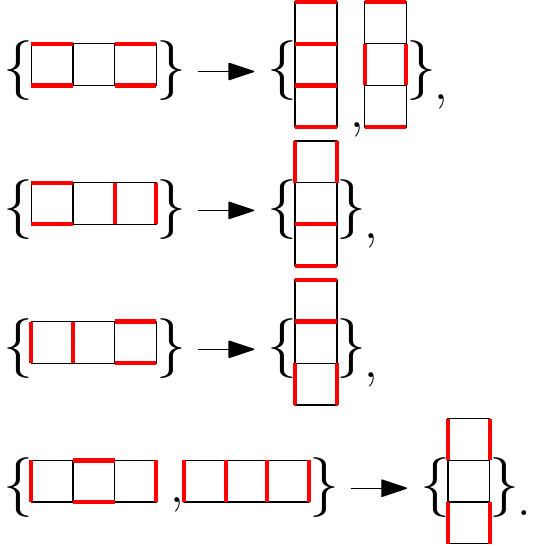}}

One can  see that the first/last gluing edge of $G_{T,\rho}$ is in the left set if and only if the first/last gluing edge of $G_{T',\rho}$ in the right set.

\medskip
%
%

In case (2), we may assume $i=2$, then up to a difference of relative orientation, $G_{T,\rho}$ and $G_{T',\rho}$ are the following graphs, respectively,

\medskip

\centerline{\begin{tikzpicture}
\draw[-] (1,2) -- (2,2);
\draw[-] (1,3) -- (2,3);
\draw[-] (2,2) -- (2,3);
\draw[-] (3,2) -- (3,3);
\draw[-] (2,2) -- (3,2);
\draw[-] (2,4) -- (2,3);
\draw[-] (3,4) -- (3,3);
\draw[-] (2,4) -- (3,4);
\draw[-] (2,3) -- (3,3);
\draw[-] (4,4) -- (4,3);
\draw[-] (4,4) -- (3,4);
\draw[-] (4,3) -- (3,3);
\draw[dashed] (1,3) -- (2,2);
\draw[dashed] (2,3) -- (3,2);
\draw[dashed] (2,4) -- (3,3);
\draw[dashed] (3,4) -- (4,3);
\node[right] at (1.2,2.55) {$a_2$};
\node[above] at (1.5,2.95) {$a_1$};
\node[left] at (1.05,2.5) {$a_6$};
\node[below] at (1.5,2.05) {$a_5$};
\node[right] at (2.3,2.5) {$a_1$};
\node[right] at (2.3,3.5) {$\tau$};
\node[right] at (3.3,3.5) {$a_2$};
\node[above] at (2.5,2.8) {$a_4$};
\node[below] at (3.5,3.05) {$\tau$};
\node[above] at (2.5,3.95) {$a_2$};
\node[above] at (3.5,3.95) {$a_6$};
\node[right] at (3.95,3.5) {$a_5$};
\node[below] at (2.5,2.05) {$a_2$};
\node[left] at (2.2,2.5) {$\tau$};
\node[left] at (2.05,3.5) {$a_1$};
\node[right] at (2.95,2.5) {$\tau$};
\node[right] at (2.8,3.5) {$a_1$};
\draw [-] (1, 2) -- (1,3);
\draw [-] (2, 2) -- (2,3);
\draw [fill] (1,2) circle [radius=.05];
\draw [fill] (1,3) circle [radius=.05];
\draw [fill] (3,2) circle [radius=.05];
\draw [fill] (3,3) circle [radius=.05];
\draw [fill] (4,4) circle [radius=.05];
\draw [fill] (4,3) circle [radius=.05];
\draw [fill] (3,4) circle [radius=.05];
\draw [fill] (2,2) circle [radius=.05];
\draw [fill] (2,3) circle [radius=.05];
\draw [fill] (2,4) circle [radius=.05];
\draw[-] (6,2) -- (7,2);
\draw[-] (6,3) -- (7,3);
\draw[-] (7,2) -- (7,3);
\draw[-] (8,2) -- (8,3);
\draw[-] (7,2) -- (8,2);
\draw[-] (7,4) -- (7,3);
\draw[-] (8,4) -- (8,3);
\draw[-] (7,4) -- (8,4);
\draw[-] (7,3) -- (8,3);
\draw[-] (9,4) -- (9,3);
\draw[-] (9,4) -- (8,4);
\draw[-] (9,3) -- (8,3);
\draw[dashed] (6,3) -- (7,2);
\draw[dashed] (7,3) -- (8,2);
\draw[dashed] (7,4) -- (8,3);
\draw[dashed] (8,4) -- (9,3);
\node[right] at (6.2,2.5) {$a_2$};
\node[above] at (6.5,2.95) {$\tau'$};
\node[left] at (6.05,2.5) {$a_6$};
\node[below] at (6.5,2.05) {$a_5$};
\node[right] at (6.75,2.5) {$a_1$};
\node[right] at (7.3,2.5) {$\tau'$};
\node[right] at (7.3,3.5) {$a_1$};
\node[right] at (8.3,3.5) {$a_2$};
\node[above] at (7.5,2.8) {$a_4$};
\node[below] at (8.5,3.05) {$a_1$};
\node[above] at (7.5,3.95) {$a_2$};
\node[above] at (8.5,3.95) {$a_6$};
\node[right] at (8.95,3.5) {$a_5$};
\node[below] at (7.5,2.05) {$a_2$};
\node[left] at (7.05,3.5) {$\tau'$};
\node[right] at (7.95,2.5) {$a_1$};
\node[right] at (7.8,3.5) {$\tau'$};
\draw [-] (6, 3) -- (6,2);
\draw [fill] (6,2) circle [radius=.05];
\draw [fill] (7,2) circle [radius=.05];
\draw [fill] (6,3) circle [radius=.05];
\draw [fill] (7,3) circle [radius=.05];
\draw [fill] (8,2) circle [radius=.05];
\draw [fill] (8,3) circle [radius=.05];
\draw [fill] (9,4) circle [radius=.05];
\draw [fill] (9,3) circle [radius=.05];
\draw [fill] (8,4) circle [radius=.05];
\draw [fill] (7,2) circle [radius=.05];
\draw [fill] (7,3) circle [radius=.05];
\draw [fill] (7,4) circle [radius=.05];
\end{tikzpicture}}

In this case, $G_{T,\rho}$ (respectively $G_{T',\rho}$) has one $\tau$-(respectively $\tau'$-)equivalence class of type (II), (IV) (respectively (IV), (II)). Thus $n^{\tau}(T,\rho)=n^{\tau'}(T',\rho)$. Moreover, we have a partition bijection from $\mathcal P(G_{T,\rho})$ to $\mathcal P(G_{T',\rho})$, as shown in the following figure.

\centerline{\includegraphics{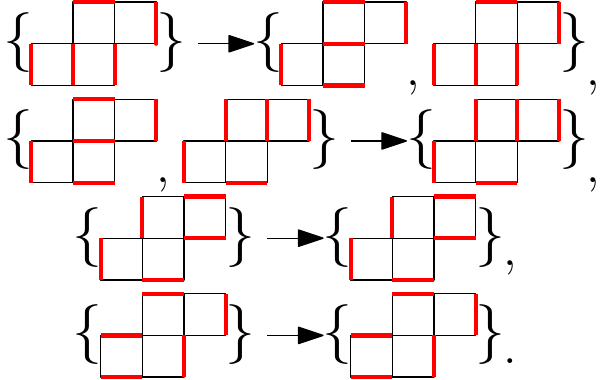}}

We can see that the first/last gluing edge of $G_{T,\rho}$ is in the left set if and only if the first/last gluing edge of $G_{T',\rho}$ in the right set.

\medskip
%
%

In case (3), $\rho$ and $\gamma$ have the same starting point. We may assume $\rho$ crosses $\tau, a_2$, then up to a difference of relative orientation, $G_{T,\rho}$ and $G_{T',\rho}$ are the following graphs, respectively,

\centerline{\begin{tikzpicture}
\draw[-] (1,0) -- (2,0);
\draw[-] (1,-1) -- (2,-1);
\draw[-] (2,0) -- (3,0);
\draw[-] (2,-1) -- (3,-1);
\draw[-] (3,0) -- (3,-1);
\draw[dashed] (1,0) -- (2,-1);
\draw[dashed] (2,0) -- (3,-1);
\node[right] at (1.4,-0.45) {$\tau$};
\node[above] at (1.5,-0.05) {$a_2$};
\node[left] at (1,-0.5) {$a_1$};
\node[below] at (1.5,-0.95) {$a_4$};
\node[right] at (1.75,-0.5) {$a_1$};
\node[right] at (2.4,-0.5) {$a_2$};
\node[above] at (2.5,-0.05) {$a_6$};
\node[below] at (2.5,-0.95) {$\tau$};
\node[right] at (3,-0.5) {$a_5$};
\draw [-] (1, 0) -- (1,-1);
\draw [-] (2, 0) -- (2,-1);
\draw [fill] (1,0) circle [radius=.05];
\draw [fill] (2,0) circle [radius=.05];
\draw [fill] (1,-1) circle [radius=.05];
\draw [fill] (2,-1) circle [radius=.05];
\draw [fill] (3,0) circle [radius=.05];
\draw [fill] (3,-1) circle [radius=.05];
\draw[-] (5,0) -- (6,0);
\draw[-] (5,-1) -- (6,-1);
\draw[dashed] (5,0) -- (6,-1);
\node[right] at (5.4,-0.5) {$a_2$};
\node[above] at (5.5,-0.05) {$a_5$};
\node[left] at (5,-0.5) {$a_1$};
\node[below] at (5.5,-0.95) {$\tau'$};
\node[right] at (6,-0.5) {$a_6$};
\draw [-] (5, 0) -- (5,-1);
\draw [-] (6, 0) -- (6,-1);
\draw [fill] (5,0) circle [radius=.05];
\draw [fill] (6,0) circle [radius=.05];
\draw [fill] (5,-1) circle [radius=.05];
\draw [fill] (6,-1) circle [radius=.05];
\end{tikzpicture}}

In this case, $G_{T,\rho}$ (respectively $G_{T',\rho}$) has one $\tau$-(respectively $\tau'$-)equivalence class of type (III) (respectively (IV)). Thus $n^{\tau}(T,\rho)=n^{\tau'}(T',\rho)$. Moreover, we have a partition bijection from $\mathcal P(G_{T,\rho})$ to $\mathcal P(G_{T',\rho})$, as shown in the following.

\centerline{\includegraphics{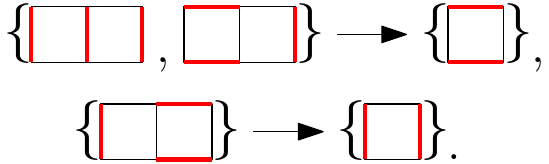}}

We can see that the last gluing edge of $G_{T,\rho}$ is in the left set if and only if the last gluing edge of $G_{T',\rho}$ in the right set. Since $\rho$ and $\gamma$ have the same starting point, $G_{T,\rho}$ and $G_{T',\rho}$ do not have first gluing edges.

\medskip

%

To consider the remaining cases, we should introduce more notations. If both $a$ and $b$ are edges of two different triangles in $T$, for any non-negative integer $s$, we define the graph $H_s(a,b)$ to be $G_{T,\varsigma}$, where $\varsigma$ is the curve in $\mathcal O$ crossing $a$ $s+1$ times and $b$ $s$ times alternately. See the graph below.

\centerline{\begin{tikzpicture}
\draw[-] (0,0) -- (1,0);
\draw[-] (0,0) -- (0,-1);
\draw[-] (0,-1) -- (1,-1);
\draw[-] (1,0) -- (2,0);
\draw[-] (1,-1) -- (2,-1);
\draw[-] (2,0) -- (3,0);
\draw[-] (2,-1) -- (3,-1);
\draw[-] (3,0) -- (3,-1);
\draw[-] (3,0) -- (4,0);
\draw[-] (3,-1) -- (4,-1);
\draw[-] (4,0) -- (4,-1);
\draw[-] (5,0) -- (6,0);
\draw[-] (5,0) -- (5,-1);
\draw[-] (6,0) -- (6,-1);
\draw[-] (5,-1) -- (6,-1);
\draw[-] (6,0) -- (7,0);
\draw[-] (6,-1) -- (7,-1);
\draw[-] (7,0) -- (7,-1);
\draw[-] (7,0) -- (8,0);
\draw[-] (7,-1) -- (8,-1);
\draw[-] (8,0) -- (8,-1);
\draw[dashed] (1,0) -- (2,-1);
\draw[dashed] (0,0) -- (1,-1);
\draw[dashed] (2,0) -- (3,-1);
\draw[dashed] (3,0) -- (4,-1);
\draw[dashed] (5,0) -- (6,-1);
\draw[dashed] (6,0) -- (7,-1);
\draw[dashed] (7,0) -- (8,-1);
\node[above] at (0.5,-0.05) {$b$};
\node[above] at (1.5,-0.05) {$a$};
\node[above] at (2.5,-0.05) {$b$};
\node[above] at (3.5,-0.05) {$a$};
\node[above] at (5.5,-0.05) {$b$};
\node[above] at (6.5,-0.05) {$a$};
\node[above] at (7.5,-0.05) {$b$};
\node[below] at (0.5,-0.95) {$b$};
\node[below] at (1.5,-0.95) {$a$};
\node[below] at (2.5,-0.95) {$b$};
\node[below] at (3.5,-0.95) {$a$};
\node[below] at (5.5,-0.95) {$b$};
\node[below] at (6.5,-0.95) {$a$};
\node[below] at (7.5,-0.95) {$b$};
\node[right] at (1.4,-0.45) {$b$};
\node[right] at (3.4,-0.45) {$b$};
\node[right] at (2.4,-0.5) {$a$};
\node[right] at (6.4,-0.45) {$b$};
\node[right] at (5.4,-0.5) {$a$};
\node[right] at (7.4,-0.5) {$a$};
\node[left] at (0.8,-0.5) {$a$};
\draw [-] (1, 0) -- (1,-1);
\draw [-] (2, 0) -- (2,-1);
\draw [fill] (0,0) circle [radius=.05];
\draw [fill] (0,-1) circle [radius=.05];
\draw [fill] (1,0) circle [radius=.05];
\draw [fill] (2,0) circle [radius=.05];
\draw [fill] (1,-1) circle [radius=.05];
\draw [fill] (2,-1) circle [radius=.05];
\draw [fill] (3,0) circle [radius=.05];
\draw [fill] (3,-1) circle [radius=.05];
\draw [fill] (4,0) circle [radius=.05];
\draw [fill] (4,-1) circle [radius=.05];
\draw [fill] (5,0) circle [radius=.05];
\draw [fill] (5,-1) circle [radius=.05];
\draw [fill] (6,0) circle [radius=.05];
\draw [fill] (6,-1) circle [radius=.05];
\draw [fill] (7,0) circle [radius=.05];
\draw [fill] (8,0) circle [radius=.05];
\draw [fill] (8,-1) circle [radius=.05];
\draw [fill] (4.3,-0.5) circle [radius=.02];
\draw [fill] (4.5,-0.5) circle [radius=.02];
\draw [fill] (4.7,-0.5) circle [radius=.02];
\draw [fill] (7,-1) circle [radius=.05];
\end{tikzpicture}}

%

Denote the $i$-th tile of $H_s(a,b)$ by $H_i$. For a sequence $(\lambda_1,\cdots,\lambda_s)\in \{0,1\}^{s}$, let $\mathcal H_{(\lambda_1,\cdots,\lambda_s)}(a,b)$ contains all perfect matching $P$ of $H_s(a,b)$ satisfying the following conditions: if $\lambda_i=1$, then the two edges of $H_{2i}$ labeled $a$ are both in $P$; if $\lambda_i=0$, then the two edges of $H_{2i}$ labeled $a$ are both not in $P$; for a sequence $(\lambda_1,\cdots,\lambda_{s+1})\in \{0,1\}^{s+1}$, let $\mathcal H'_{(\lambda_1,\cdots,\lambda_{s+1})}(a,b)$ contains all the perfect matching $P$ of $H_s(a,b)$ satisfying the following conditions: if $\lambda_i=1$, then the two edges of $H_{2i-1}$ labeled $b$ are both in $P$; if $\lambda_i=0$, then the two edges of $H_{2i-1}$ labeled $b$ are both not in $P$. See the following figure for example $\mathcal H_{(1,0)}(a,b)$ and $\mathcal H'_{(1,0,1)}(a,b)$.

\centerline{\includegraphics{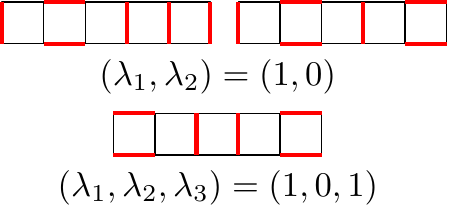}}

We have the easy observations.

\medskip

\begin{Lemma}\label{basicdecom}

\begin{enumerate}[$(1)$]



  \item $\mathcal H_{(\lambda_1,\cdots,\lambda_s)}(a,b) = \mathcal P^{a}_{(\lambda_1-1,\lambda_1+\lambda_2-1,\cdots,\lambda_{s-1}+\lambda_s-1,\lambda_s-1)}(H_s(a,b))$.

  \item $\mathcal H'_{(\lambda_1,\cdots,\lambda_{s+1})}(a,b) = \mathcal P^{b}_{(\lambda_1,\lambda_1+\lambda_2-1,\cdots,\lambda_{s}+\lambda_{s+1}-1,\lambda_{s+1})}(H_s(a,b))$.

  \item  \[\begin{array}{ccl} \mathcal P(H_s(a,b))
  & = & \bigsqcup_{(\lambda_1,\cdots,\lambda_s)\in \{0,1\}^{s}}\mathcal H_{(\lambda_1,\cdots,\lambda_s)}(a,b)\\
  & = & \bigsqcup_{(\lambda_1,\cdots,\lambda_{s+1})\in \{0,1\}^{s+1}}\mathcal H'_{(\lambda_1,\cdots,\lambda_{s+1})}(a,b).
  \end{array}\]

\end{enumerate}

\end{Lemma}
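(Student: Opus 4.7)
The plan is first to unpack the combinatorial structure of $H_s(a,b)$, then verify parts (1) and (2) by a direct count, and finally establish part (3) by proving an ``either--both--or--neither'' property for the top/bottom edges of each tile.

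First I would analyze $H_s(a,b)$: it is a snake of $2s+1$ tiles $H_1,\ldots,H_{2s+1}$ whose diagonals are labeled $a,b,a,b,\ldots,a$ alternately, with all diagonals running from top-left to bottom-right as in the figure. Consequently the $a$-labeled edges of the graph are exactly the top and bottom of each even tile $H_{2i}$ ($1\leq i\leq s$), and the $b$-labeled edges are exactly the top and bottom of each odd tile $H_{2i-1}$ ($1\leq i\leq s+1$). Checking incidences, the $a$-diagonal of $H_{2i-1}$ meets only the top of $H_{2i-2}$ (when $i\geq 2$) and the bottom of $H_{2i}$ (when $i\leq s$) among the $a$-edges; this produces $s+1$ $a$-equivalence classes, namely $\{\text{bot}(H_2)\}$ of type (III), the $s-1$ classes $\{\text{top}(H_{2i-2}),\text{bot}(H_{2i})\}$ of type (I) for $2\leq i\leq s$, and $\{\text{top}(H_{2s})\}$ of type (III). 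A dual analysis produces the $s+2$ $b$-equivalence classes: two type-(IV) singletons $\{\text{bot}(H_1)\}$ and $\{\text{top}(H_{2s+1})\}$ together with $s$ type-(I) classes $\{\text{top}(H_{2i-1}),\text{bot}(H_{2i+1})\}$.

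For (1) and (2) I would then count edges class by class. Given $P\in \mathcal H_{(\lambda_1,\ldots,\lambda_s)}(a,b)$, exactly both of $\text{top}(H_{2i}),\text{bot}(H_{2i})$ lie in $P$ when $\lambda_i=1$, and neither when $\lambda_i=0$; summing over the three flavors of $a$-equivalence classes gives $\nu_1+1=\lambda_1$, $\nu_i+1=\lambda_{i-1}+\lambda_i$ for $2\leq i\leq s$, and $\nu_{s+1}+1=\lambda_s$, which (recalling that type-(I) and type-(III) classes require $\nu_j+1$ edges of $P$) is exactly the vector of (1). The computation for (2) is completely parallel, using additionally that type-(IV) singletons require $\nu_j$ edges, yielding $\nu_1=\lambda_1$, $\nu_i=\lambda_{i-1}+\lambda_i-1$ for $2\leq i\leq s+1$, and $\nu_{s+2}=\lambda_{s+1}$.

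The main step is (3), which reduces to the structural claim that, for every $P\in\mathcal P(H_s(a,b))$ and every $i$, the top and bottom $a$-edges of $H_{2i}$ are simultaneously in $P$ or simultaneously absent (and likewise for the $b$-edges of each $H_{2i-1}$). I would argue by contradiction using Lemma \ref{inone} as the propagation tool. Assuming $\text{top}(H_{2i})\in P$ but $\text{bot}(H_{2i})\notin P$, the two bottom corners of $H_{2i}$ can only be matched by $\text{bot}(H_{2i-1})$ and $\text{bot}(H_{2i+1})$, forcing both into $P$; then the top-left corner of $H_{2i-1}$ forces $\text{top}(H_{2i-2})\in P$ (for $i\geq 2$), and the bottom-left corner forces $\text{bot}(H_{2i-2})\notin P$. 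Thus the same defective configuration has propagated one tile leftward, and iterating eventually forces both $\text{bot}(H_1)$ and the left boundary edge of $H_1$ to lie in $P$, doubly covering the bottom-left corner of $H_1$ — a contradiction. The dual case and the $b$-version follow by the symmetric propagation. The main technical obstacle is correctly accounting for the boundary behaviour at $H_1$ and $H_{2s+1}$, but the recursion naturally terminates there in a forced clash. Once the either--both--or--neither property is established, each $P$ determines a unique $\lambda\in\{0,1\}^s$ (respectively $\lambda'\in\{0,1\}^{s+1}$) and therefore lies in a unique $\mathcal H_\lambda(a,b)$ (respectively $\mathcal H'_{\lambda'}(a,b)$), yielding the two disjoint union decompositions.
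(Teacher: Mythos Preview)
Your proposal is correct and follows the same outline as the paper. Parts (1) and (2) are exactly what the paper means by ``follow by definition'': unpacking the equivalence classes and counting edges is the definition of $\mathcal P^{\tau}_\nu$, and you have done this carefully.

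For part (3) you reach the right conclusion via the right tool, but your propagation argument is longer than necessary. The paper's one-line ``follows immediately by Lemma~\ref{inone}'' really is immediate: once you have deduced that $\text{top}(H_{2i})\in P$, $\text{bot}(H_{2i})\notin P$ forces $\text{bot}(H_{2i-1})\in P$ (by the vertex-covering step you already wrote), Lemma~\ref{inone} applied to the consecutive tiles $H_{2i-1},H_{2i}$ gives a contradiction on the spot, since $\text{bot}(H_{2i-1})$ and $\text{top}(H_{2i})$ are edges of adjacent tiles both incident to the shared edge. There is no need to propagate all the way to $H_1$. Your longer argument is not wrong, and it has the minor virtue of being self-contained (it never actually invokes Lemma~\ref{inone} despite announcing it), but you should recognize that the lemma you cited already delivers the local contradiction.
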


\begin{proof}

(1) and (2) follow by definition. (3) follows immediately by Lemma \ref{inone}.
\end{proof}

\medskip

\begin{Lemma}\label{diff}
\begin{enumerate}[$(1)$]

  \item If $P\neq Q\in \mathcal H_{(\lambda_1,\cdots,\lambda_s)}(a,b)$, then there exists $i\in [1,s+1]$ such that $P$ and $Q$ can twist on the tile $H_{2i-1}$ and $P\cap E(H_{2i-1})\neq Q\cap E(H_{2i-1})$.

  \item If $P'\neq Q'\in \mathcal H'_{(\lambda_1,\cdots,\lambda_{s+1})}(a,b)$, then there exists $i\in [1,s]$ such that $P'$ and $Q'$ can twist on the tile $H_{2i}$ and $P'\cap E(H_{2i})\neq Q'\cap E(H_{2i})$.

\end{enumerate}

\end{Lemma}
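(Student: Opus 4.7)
The plan is to analyze the local rigidity of perfect matchings in $\mathcal{H}_{(\lambda_1,\ldots,\lambda_s)}(a,b)$: the prescribed configuration of the two $a$-labelled edges of every even tile $H_{2i}$ (both in $P$ if $\lambda_i=1$, both out of $P$ if $\lambda_i=0$) propagates through the perfect-matching condition and leaves only a few possible local states at each odd tile $H_{2i-1}$. The goal is to show that the only remaining degrees of freedom are located at odd tiles where $P$ can twist, so that any two distinct matchings in $\mathcal{H}_{(\lambda_1,\ldots,\lambda_s)}(a,b)$ must differ at such a tile.

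The main step is a local case analysis at each odd tile $H_{2i-1}$ in terms of the pair $(\lambda_{i-1},\lambda_i)$, with virtual boundary conventions at $i=1$ and $i=s+1$. When $\lambda_i=1$, the two $a$-edges of $H_{2i}$ cover all four of its vertices, so the shared edge $e_{2i-1}$ between $H_{2i-1}$ and $H_{2i}$ is forced out of $P$; an analogous statement on the left holds when $\lambda_{i-1}=1$. When $\lambda_i=0$, Lemma \ref{inone} together with the perfect-matching requirement at the vertices of $H_{2i}$ yields a dichotomy: either $e_{2i-1}\in P$, in which case neither the top nor the bottom of $H_{2i-1}$ can be in $P$, or $e_{2i-1}\notin P$, in which case both top and bottom of $H_{2i-1}$ must lie in $P$. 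Combining the two sides gives a trichotomy: $(\lambda_{i-1},\lambda_i)=(1,1)$ forces $|P\cap E(H_{2i-1})|=0$; $(\lambda_{i-1},\lambda_i)\in\{(0,1),(1,0)\}$ forces $|P\cap E(H_{2i-1})|=1$ with the unique edge being the shared edge on the side corresponding to the $0$; and $(\lambda_{i-1},\lambda_i)=(0,0)$ permits exactly two configurations of $H_{2i-1}$, namely $\{\text{top},\text{bottom}\}$ and $\{e_{2i-2},e_{2i-1}\}$, each actually realised. Only in the last case is $P$ twistable on $H_{2i-1}$, and the binary choices at different twistable tiles are independent because the intervening even tile fixes any coupling between them.

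Given this structure, part (1) is immediate: if $P\neq Q$ both lie in $\mathcal{H}_{(\lambda_1,\ldots,\lambda_s)}(a,b)$, then they must differ at some odd tile $H_{2i-1}$ with $(\lambda_{i-1},\lambda_i)=(0,0)$, at which both matchings are twistable but realise distinct sub-configurations, yielding the required index $i$. Part (2) follows by the symmetric argument after interchanging the roles of odd and even tiles and swapping the labels $a\leftrightarrow b$: the data $(\lambda_1,\ldots,\lambda_{s+1})$ now fix the $b$-labelled edges on the $s+1$ odd tiles, and twistable freedom occurs on even tiles $H_{2i}$ precisely when both adjacent odd-tile constraints equal $0$. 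The principal technical point demanding care is the behaviour at the boundary tiles (the extremal odd tiles in part (1) and the extremal even tiles in part (2)), where only one neighbour exists and an exterior edge of the snake graph must play the role of a missing shared edge; the propagation argument has to be checked there with appropriate virtual values of the absent $\lambda$.
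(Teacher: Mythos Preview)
Your argument is correct and follows essentially the same approach as the paper: both set the boundary conventions $\lambda_0=\lambda_{s+1}=0$, observe that the configuration of $P$ at the odd tile $H_{2i-1}$ is forced unless $\lambda_{i-1}=\lambda_i=0$, and that in the latter case $P$ has exactly two edges of $H_{2i-1}$ and hence can twist there. The paper compresses this into a single sentence, while you spell out the four-case local analysis and the boundary behaviour explicitly; the extra remark on independence of the binary choices is not needed for the lemma itself but does no harm.
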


\begin{proof}

We shall only prove (1) because (2) can be proved similarly. If we assume $\lambda_0=\lambda_{s+1}=0$ in convention, according to the definition of $\mathcal H_{(\lambda_1,\cdots,\lambda_s)}(a,b)$, $P$ and $Q$ can twist on tiles $H_{2i-1}$ for $i\in [1,s+1]$ satisfying $\lambda_{i-1}=\lambda_i=0$, the other edges in $P$ and $Q$ are the same. Therefore our result follows at once.
\end{proof}

\medskip

Now we turn to the comparison of the perfect matchings of $G_{T,\rho}$ and $G_{T',\rho}$.

\medskip

In case (4), $\rho=\gamma$. Up to a difference of relative orientation, $G_{T,\rho}$ and $G_{T',\rho}$ are the following graphs, respectively,

\centerline{\begin{tikzpicture}
\draw[-] (0,0) -- (1,0);
\draw[-] (0,0) -- (0,-1);
\draw[-] (0,-1) -- (1,-1);
\draw[-] (1,0) -- (2,0);
\draw[-] (1,-1) -- (2,-1);
\draw[-] (2,0) -- (3,0);
\draw[-] (2,-1) -- (3,-1);
\draw[-] (3,0) -- (3,-1);
\draw[-] (3,0) -- (4,0);
\draw[-] (3,-1) -- (4,-1);
\draw[-] (4,0) -- (4,-1);
\draw[-] (5,0) -- (6,0);
\draw[-] (5,0) -- (5,-1);
\draw[-] (6,0) -- (6,-1);
\draw[-] (5,-1) -- (6,-1);
\draw[-] (6,0) -- (7,0);
\draw[-] (6,-1) -- (7,-1);
\draw[-] (7,0) -- (7,-1);
\draw[-] (7,0) -- (8,0);
\draw[-] (7,-1) -- (8,-1);
\draw[-] (8,0) -- (8,-1);
\draw[dashed] (1,0) -- (2,-1);
\draw[dashed] (0,0) -- (1,-1);
\draw[dashed] (2,0) -- (3,-1);
\draw[dashed] (3,0) -- (4,-1);
\draw[dashed] (5,0) -- (6,-1);
\draw[dashed] (6,0) -- (7,-1);
\draw[dashed] (7,0) -- (8,-1);
\node[left] at (0.05,-0.5) {$a_2$};
\node[left] at (2.25,-0.5) {$a_2$};
\node[left] at (4.25,-0.5) {$a_2$};
\node[left] at (5.25,-0.5) {$a_2$};
\node[left] at (7.25,-0.5) {$a_2$};
\node[left] at (1.25,-0.5) {$a_4$};
\node[left] at (3.25,-0.5) {$a_4$};
\node[left] at (6.25,-0.5) {$a_4$};
\node[right] at (7.95,-0.5) {$a_4$};
\node[above] at (0.5,-0.05) {$\tau$};
\node[above] at (1.5,-0.05) {$a_1$};
\node[above] at (2.5,-0.05) {$\tau$};
\node[above] at (3.5,-0.05) {$a_1$};
\node[above] at (5.5,-0.05) {$\tau$};
\node[above] at (6.5,-0.05) {$a_1$};
\node[above] at (7.5,-0.05) {$\tau$};
\node[below] at (0.5,-0.95) {$\tau$};
\node[below] at (1.5,-0.95) {$a_1$};
\node[below] at (2.5,-0.95) {$\tau$};
\node[below] at (3.5,-0.95) {$a_1$};
\node[below] at (5.5,-0.95) {$\tau$};
\node[below] at (6.5,-0.95) {$a_1$};
\node[below] at (7.5,-0.95) {$\tau$};
\node[right] at (1.3,-0.45) {$\tau$};
\node[right] at (3.3,-0.45) {$\tau$};
\node[right] at (2.3,-0.5) {$a_1$};
\node[right] at (6.3,-0.45) {$\tau$};
\node[right] at (5.3,-0.5) {$a_1$};
\node[right] at (7.3,-0.5) {$a_1$};
\node[left] at (0.8,-0.5) {$a_1$};
\draw [-] (1, 0) -- (1,-1);
\draw [-] (2, 0) -- (2,-1);
\draw [fill] (0,0) circle [radius=.05];
\draw [fill] (0,-1) circle [radius=.05];
\draw [fill] (1,0) circle [radius=.05];
\draw [fill] (2,0) circle [radius=.05];
\draw [fill] (1,-1) circle [radius=.05];
\draw [fill] (2,-1) circle [radius=.05];
\draw [fill] (3,0) circle [radius=.05];
\draw [fill] (3,-1) circle [radius=.05];
\draw [fill] (4,0) circle [radius=.05];
\draw [fill] (4,-1) circle [radius=.05];
\draw [fill] (5,0) circle [radius=.05];
\draw [fill] (5,-1) circle [radius=.05];
\draw [fill] (6,0) circle [radius=.05];
\draw [fill] (6,-1) circle [radius=.05];
\draw [fill] (7,0) circle [radius=.05];
\draw [fill] (8,0) circle [radius=.05];
\draw [fill] (8,-1) circle [radius=.05];
\draw [fill] (4.3,-0.5) circle [radius=.02];
\draw [fill] (4.5,-0.5) circle [radius=.02];
\draw [fill] (4.7,-0.5) circle [radius=.02];
\draw [fill] (7,-1) circle [radius=.05];
\end{tikzpicture}}

\medskip

\centerline{\begin{tikzpicture}
\draw[-] (-1,0) -- (0,0);
\draw[-] (-1,-1) -- (0,-1);
\draw[-] (-1,0) -- (-1,-1);
\draw[-] (0,0) -- (1,0);
\draw[-] (0,0) -- (0,-1);
\draw[-] (0,-1) -- (1,-1);
\draw[-] (1,0) -- (2,0);
\draw[-] (1,-1) -- (2,-1);
\draw[-] (2,0) -- (3,0);
\draw[-] (2,-1) -- (3,-1);
\draw[-] (3,0) -- (3,-1);
\draw[-] (3,0) -- (4,0);
\draw[-] (3,-1) -- (4,-1);
\draw[-] (4,0) -- (4,-1);
\draw[-] (5,0) -- (6,0);
\draw[-] (5,0) -- (5,-1);
\draw[-] (6,0) -- (6,-1);
\draw[-] (5,-1) -- (6,-1);
\draw[-] (6,0) -- (7,0);
\draw[-] (6,-1) -- (7,-1);
\draw[-] (7,0) -- (7,-1);
\draw[-] (7,0) -- (8,0);
\draw[-] (7,-1) -- (8,-1);
\draw[-] (8,0) -- (8,-1);
\draw[-] (8,0) -- (9,0);
\draw[-] (8,-1) -- (9,-1);
\draw[-] (9,0) -- (9,-1);
\draw[dashed] (-1,0) -- (0,-1);
\draw[dashed] (1,0) -- (2,-1);
\draw[dashed] (0,0) -- (1,-1);
\draw[dashed] (2,0) -- (3,-1);
\draw[dashed] (3,0) -- (4,-1);
\draw[dashed] (5,0) -- (6,-1);
\draw[dashed] (6,0) -- (7,-1);
\draw[dashed] (7,0) -- (8,-1);
\draw[dashed] (8,0) -- (9,-1);
\node[left] at (-0.95,-0.5) {$a_2$};
\node[left] at (2.25,-0.5) {$a_4$};
\node[left] at (4.25,-0.5) {$a_4$};
\node[left] at (5.25,-0.5) {$a_4$};
\node[left] at (7.25,-0.5) {$a_4$};
\node[left] at (1.25,-0.5) {$a_2$};
\node[left] at (0.25,-0.5) {$a_4$};
\node[left] at (3.25,-0.5) {$a_2$};
\node[left] at (6.25,-0.5) {$a_2$};
\node[left] at (8.25,-0.5) {$a_2$};
\node[right] at (8.95,-0.5) {$a_4$};
\node[above] at (0.5,-0.05) {$\tau'$};
\node[above] at (1.5,-0.05) {$a_1$};
\node[above] at (-0.5,-0.05) {$a_1$};
\node[above] at (2.5,-0.05) {$\tau'$};
\node[above] at (3.5,-0.05) {$a_1$};
\node[above] at (5.5,-0.05) {$\tau'$};
\node[above] at (6.5,-0.05) {$a_1$};
\node[above] at (8.5,-0.05) {$a_1$};
\node[above] at (7.5,-0.05) {$\tau'$};
\node[below] at (0.5,-0.95) {$\tau'$};
\node[below] at (1.5,-0.95) {$a_1$};
\node[below] at (-0.5,-0.95) {$a_1$};
\node[below] at (2.5,-0.95) {$\tau'$};
\node[below] at (3.5,-0.95) {$a_1$};
\node[below] at (5.5,-0.95) {$\tau'$};
\node[below] at (6.5,-0.95) {$a_1$};
\node[below] at (8.5,-0.95) {$a_1$};
\node[below] at (7.5,-0.95) {$\tau'$};
\node[right] at (1.3,-0.45) {$\tau'$};
\node[right] at (-0.7,-0.45) {$\tau'$};
\node[right] at (3.3,-0.45) {$\tau'$};
\node[right] at (2.3,-0.5) {$a_1$};
\node[right] at (6.3,-0.45) {$\tau'$};
\node[right] at (8.3,-0.45) {$\tau'$};
\node[right] at (5.3,-0.5) {$a_1$};
\node[right] at (7.3,-0.5) {$a_1$};
\node[left] at (0.8,-0.5) {$a_1$};
\draw [-] (1, 0) -- (1,-1);
\draw [-] (2, 0) -- (2,-1);
\draw [fill] (0,0) circle [radius=.05];
\draw [fill] (0,-1) circle [radius=.05];
\draw [fill] (-1,0) circle [radius=.05];
\draw [fill] (-1,-1) circle [radius=.05];
\draw [fill] (1,0) circle [radius=.05];
\draw [fill] (2,0) circle [radius=.05];
\draw [fill] (1,-1) circle [radius=.05];
\draw [fill] (2,-1) circle [radius=.05];
\draw [fill] (3,0) circle [radius=.05];
\draw [fill] (3,-1) circle [radius=.05];
\draw [fill] (4,0) circle [radius=.05];
\draw [fill] (4,-1) circle [radius=.05];
\draw [fill] (5,0) circle [radius=.05];
\draw [fill] (5,-1) circle [radius=.05];
\draw [fill] (6,0) circle [radius=.05];
\draw [fill] (6,-1) circle [radius=.05];
\draw [fill] (7,0) circle [radius=.05];
\draw [fill] (8,0) circle [radius=.05];
\draw [fill] (8,-1) circle [radius=.05];
\draw [fill] (9,0) circle [radius=.05];
\draw [fill] (9,-1) circle [radius=.05];
\draw [fill] (4.3,-0.5) circle [radius=.02];
\draw [fill] (4.5,-0.5) circle [radius=.02];
\draw [fill] (4.7,-0.5) circle [radius=.02];
\draw [fill] (7,-1) circle [radius=.05];
\end{tikzpicture}}

\medskip

Clearly we have $n^{\tau}(T,\rho)=n^{\tau'}(T',\rho)$.

\medskip

\huang{change $s+1$ to $s$}Since $G_{T,\rho}$ is isomorphic to $H_{s+1}(a_1,\tau)$ and $G_{T',\rho}$ is isomorphic to $H_{s+2}(\tau',a_1)$. By Lemma \ref{basicdecom}, as sets, we have $\mathcal P(G_{T,\rho})\cong\bigsqcup_{(\lambda_1,\cdots,\lambda_{s+2})\in \{0,1\}^{s+2}}\mathcal H'_{(\lambda_1,\cdots,\lambda_{s+2})}(a_1,\tau)$, and  $\mathcal P(G_{T',\rho})\cong\bigsqcup_{(\lambda_1,\cdots,\lambda_{s+2})\in \{0,1\}^{s+2}}\mathcal H_{(\lambda_1,\cdots,\lambda_{s+2})}(\tau',a_1)$.


\medskip

We have the following observation.

\medskip

\begin{Lemma}\label{non-tau-mu1}

Let $P\in \mathcal H'_{(\lambda_1,\cdots,\lambda_{s+2})}(a_1,\tau)$ for some sequence $(\lambda_1,\cdots,\lambda_{s+2})$. We assume $\lambda_{0}=1$ in convention, then for any $i\in [1,s+2]$,

 \begin{enumerate}[$(1)$]

   \item the edge labeled $a_2$ of the $(2i-1)$-th tile is in $P$ but non-$\tau$-mutable if and only if $\lambda_{i-1}=1$ and $\lambda_i=0$;

   \item the edge labeled $a_4$ of the $(2i-1)$-th tile is in $P$ but non-$\tau$-mutable if and only if $\lambda_{i-1}=0$ and $\lambda_{i}=1$.

 \end{enumerate}

\end{Lemma}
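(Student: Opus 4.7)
The plan is to work locally at each odd tile $H_{2i-1}$ (diagonal $a_1$), whose four outer edges are two opposite $\tau$-edges together with an $a_2$-edge and an $a_4$-edge; each of the latter is either on the boundary of the snake graph or is shared with a neighbouring even tile (diagonal $\tau$). By the definition of $\mathcal H'$, the two $\tau$-edges of $H_{2i-1}$ are both in $P$ precisely when $\lambda_i=1$, and both out of $P$ precisely when $\lambda_i=0$; in particular, Lemma~\ref{inone} shows that neither of the side edges of $H_{2i-1}$ can be in $P$ when $\lambda_i=1$, so the equivalence in (1) immediately forces $\lambda_i=0$ in the forward direction.

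For (1), assume the $a_2$-edge $e$ of $H_{2i-1}$ lies in $P$; then $e$ is also an edge of the adjacent even tile $H_{2i-2}$ (whose diagonal is $\tau$), and Lemma~\ref{inone} forces the two $a_1$-edges and the opposite side edge of $H_{2i-2}$ out of $P$. Covering the remaining two corners of $H_{2i-2}$ (which coincide with the corresponding corners of $H_{2i-3}$) then requires both $\tau$-edges of $H_{2i-3}$ to lie in $P$, giving $\lambda_{i-1}=1$. Moreover, $H_{2i-2}$ is the only $\tau$-tile that could provide a twistable pair containing $e$, and by the previous step it has only the single edge $e$ in $P$, so $P$ cannot twist there and $e$ is non-$\tau$-mutable. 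The converse runs the same forcing argument in the opposite direction: starting from $\lambda_{i-1}=1$ and $\lambda_i=0$, one uses Lemma~\ref{inone} around each corner to exclude the top, bottom, and far side edges of $H_{2i-2}$, leaving $e$ as the unique edge that can cover the relevant corner. The boundary convention $\lambda_0=1$ accommodates $i=1$ uniformly, since in that case $e$ is a boundary edge, adjacent to no $\tau$-tile, and thus automatically non-$\tau$-mutable.

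Part (2) follows by the mirror-image analysis, propagating the same vertex-covering and Lemma~\ref{inone} arguments through the even tile on the other side of $H_{2i-1}$ to the next odd tile. The main subtlety, shared by both parts, is the complementary scenario in which the shared edge between $H_{2i-3}$ and $H_{2i-2}$ is itself in $P$: together with $e$, that would form a twistable pair on the $\tau$-tile $H_{2i-2}$, making $e$ $\tau$-mutable rather than non-$\tau$-mutable. Distinguishing this situation from the forced non-twistable case—and verifying that it is precisely the one excluded by the hypotheses on $\lambda_{i-1}$ and $\lambda_i$—is the pivot of the argument; everything else is a direct propagation through the snake graph using perfect-matching constraints at each vertex.
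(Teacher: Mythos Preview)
Your approach is essentially the same as the paper's: both arguments work locally around the odd tile $H_{2i-1}$, propagate the perfect-matching constraints through the adjacent $\tau$-tile $H_{2i-2}$ to the odd tile $H_{2i-3}$, and read off $\lambda_{i-1}$ and $\lambda_i$ from which $\tau$-edges are present.

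There is, however, a presentational slip in your forward direction. You begin by assuming only that the $a_2$-edge $e$ lies in $P$ and then assert that ``Lemma~\ref{inone} forces the two $a_1$-edges and the opposite side edge of $H_{2i-2}$ out of $P$.'' The $a_1$-edges are out simply because they are incident to $e$ (no lemma needed), but Lemma~\ref{inone} does \emph{not} exclude the opposite side edge (the $a_4$-edge of $H_{2i-2}$); that edge and $e$ are not incident and could perfectly well both lie in $P$, forming a twistable pair on the $\tau$-tile $H_{2i-2}$. What actually rules this out is the hypothesis that $e$ is non-$\tau$-mutable, which you only invoke afterwards. Your final paragraph shows you understand this (``the main subtlety\dots''), but as written the second paragraph derives $\lambda_{i-1}=1$ and the non-$\tau$-mutability of $e$ from $e\in P$ alone, which is circular and in fact false. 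The paper's proof avoids this by using the full hypothesis ``$e\in P$ and non-$\tau$-mutable'' from the first line to exclude the $a_4$-edge of $H_{2i-2}$. Reordering your argument so that non-$\tau$-mutability is used up front, rather than derived, fixes the issue.
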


\begin{proof}

By the symmetry, we shall only prove (1).

``Only If Part:"
Since the edge labeled $a_2$ of the $(2i-1)$-th tile is in $P$ but non-$\tau$-mutable, the edges labeled $a_1,a_4$ of the $(2i-2)$-th tile and the edges labeled $\tau$ of the $(2i-1)$-th tile are not in $P$. Thus the edges labeled $\tau$ of the $(2i-3)$-th tile are in $P$. Therefore, $\lambda_{i-1}=1$ and $\lambda_i=0$.

``If Part:" Since $\lambda_{i-1}=1$, the edges labeled $\tau$ of the $(2i-3)$-th tile are in $P$, and hence the edges labeled $a_1$ and $a_4$ of the $(2i-2)$-th tile are not in $P$. Since $\lambda_i=0$, the edges labeled $\tau$ of the $(2i-1)$-th tile are not in $P$. Therefore, the edge labeled $a_2$ of the $(2i-1)$-th tile is in $P$ but non-$\tau$-mutable.
\end{proof}

\medskip

Similarly, we have the following result. The proof is similar to the proof of Lemma \ref{non-tau-mu1}, so we omit it.

\medskip

\begin{Lemma}\label{non-tau-mu2}

Let $P\in \mathcal H_{(\lambda_1,\cdots,\lambda_{s+2})}(\tau',a_1)$ for some sequence $(\lambda_1,\cdots,\lambda_{s+2})$. We assume $\lambda_{0}=\lambda_{s+3}=0$ in convention, then for any $i\in [1,s+3]$,

 \begin{enumerate}[$(1)$]

   \item the edge labeled $a_2$ of the $(2i-1)$-th tile is in $P$ but non-$\tau'$-mutable if and only if $\lambda_{i-1}=0$ and $\lambda_i=1$;

   \item the edge labeled $a_4$ of the $(2i-1)$-th tile is in $P$ but non-$\tau'$-mutable if and only if $\lambda_{i}=1$ and $\lambda_{i+1}=0$.

 \end{enumerate}

\end{Lemma}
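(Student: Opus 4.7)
The plan is to mimic verbatim the proof of Lemma \ref{non-tau-mu1}, exploiting the formal duality between $\mathcal{H}$ and $\mathcal{H}'$ that is already reflected in Lemma \ref{basicdecom}. By the top--bottom symmetry of a tile (i.e.\ the swap $a_2\leftrightarrow a_4$), statement (2) is obtained from (1) by reversing the roles of $\lambda_{i-1}$ and $\lambda_{i+1}$; hence it suffices to prove (1).

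For the \emph{only if} direction I would track edge inclusions tile by tile, using Lemma \ref{inone} as the local obstruction and the defining property of $\mathcal{H}_{(\lambda_1,\ldots,\lambda_{s+2})}(\tau',a_1)$, namely that $\lambda_i$ prescribes whether both $\tau'$-labelled edges of the tile $H_{2i}$ lie in $P$. Starting from the assumption that the edge $a_2$ of $H_{2i-1}$ lies in $P$ but is non-$\tau'$-mutable, the opposite edge $a_4$ of $H_{2i-1}$ is excluded from $P$. The vertices of $H_{2i-1}$ incident to $a_2$ must then be covered by edges living in the neighbouring tiles $H_{2i-2}$ and $H_{2i}$; a direct application of Lemma \ref{inone} at each junction vertex forces the pair of $\tau'$-labelled edges of $H_{2i-2}$ to be absent from $P$ (giving $\lambda_{i-1}=0$) and the pair of $\tau'$-labelled edges of $H_{2i}$ to both be in $P$ (giving $\lambda_i=1$).

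For the \emph{if} direction I would run the same propagation in reverse. The hypothesis $\lambda_i=1$ places both $\tau'$-edges of $H_{2i}$ in $P$, and via Lemma \ref{inone} forbids the edge of $H_{2i-1}$ incident to the shared boundary with $H_{2i}$ from belonging to $P$; in particular $a_4\notin P$. The hypothesis $\lambda_{i-1}=0$ likewise excludes the $\tau'$-edges of $H_{2i-2}$, which via Lemma \ref{inone} leaves the two remaining vertices of $H_{2i-1}$ uncovered unless $a_2\in P$. Together this yields precisely the configuration in which $a_2$ lies in $P$ while no $\tau'$-mutable pair is formed at $H_{2i-1}$.

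The main -- essentially the only -- obstacle is careful bookkeeping of labels and orientations across successive tiles, in particular at the boundary indices $i=1$ and $i=s+3$ where the conventions $\lambda_0=\lambda_{s+3}=0$ encode the absence of a neighbouring tile. This is handled by the same initial-matching conventions already used in the proof of Lemma \ref{non-tau-mu1} and introduces no new difficulty, which is why the paper chooses to omit the detailed verification.
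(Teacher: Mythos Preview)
Your approach—mimicking the proof of Lemma \ref{non-tau-mu1} via the $\mathcal{H}/\mathcal{H}'$ duality—is exactly what the paper does; indeed the paper omits the proof entirely, stating only that it is similar to that of Lemma \ref{non-tau-mu1}.

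Two small slips in your sketch are worth correcting. In the ``only if'' direction you write that the vertices of $H_{2i-1}$ \emph{incident to $a_2$} must be covered by edges of $H_{2i-2}$ and $H_{2i}$, but those two vertices are already covered by $a_2$ itself; the argument should instead track the two vertices of $H_{2i-1}$ lying on $a_4$ (which is excluded from $P$), and it is their covering that forces both $\tau'$-edges of $H_{2i}$ into $P$, giving $\lambda_i=1$, while $\lambda_{i-1}=0$ follows directly from the fact that $a_2$ (the right edge of $H_{2i-2}$) is adjacent to both $\tau'$-edges of $H_{2i-2}$. Also, the symmetry reducing (2) to (1) is the left--right reflection of the entire snake graph (which swaps $a_2\leftrightarrow a_4$ and reverses the tile indexing, hence sends $\lambda_{i-1}$ to $\lambda_{i+1}$), not a top--bottom symmetry of a single tile.
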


\medskip

\begin{Lemma}\label{same-num}

Let $P\in \mathcal H'_{(\lambda_1,\cdots,\lambda_{s+2})}(a_1,\tau)$ and $P'\in\mathcal H_{(1-\lambda_1,\cdots,1-\lambda_{s+2})}(\tau',a_1)$. For each edge $\tau \neq a\in T$, the number of the non-$\tau$-mutable edges labeled $a$ in $P$ equals to the number of the non-$\tau'$-mutable edges labeled $a$ in $P'$.

\end{Lemma}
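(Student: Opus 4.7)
The plan is to fix an arc $a \in T$ with $a \neq \tau$ and establish the equality of the two counts label-by-label. Since every tile of $G_{T,\rho}$ and of $G_{T',\rho}$ in case (4) is labeled only by arcs in $\{\tau, \tau', a_1, a_2, a_4\}$, only the three labels $a \in \{a_1, a_2, a_4\}$ need be considered. Moreover, with the identifications $G_{T,\rho} \cong H_{s+1}(a_1,\tau)$ and $G_{T',\rho} \cong H_{s+2}(\tau', a_1)$, the odd tiles of $G_{T,\rho}$ have diagonal $a_1$ and even tiles have diagonal $\tau$, while the odd tiles of $G_{T',\rho}$ have diagonal $\tau'$ and even tiles have diagonal $a_1$; in particular $a_1$ labels sides only of the even tiles of $G_{T,\rho}$ and only of the odd tiles of $G_{T',\rho}$.

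For $a = a_2$, I would first apply Lemma~\ref{non-tau-mu1}(1) to $P$: the non-$\tau$-mutable $a_2$-edges of $P$ sitting on odd tiles of $G_{T,\rho}$ are indexed by $i \in [1, s+2]$ with $\lambda_{i-1} = 1$ and $\lambda_i = 0$ (where $\lambda_0 = 1$ by convention). Dually, Lemma~\ref{non-tau-mu2}(1) applied to $P' \in \mathcal H_{(1-\lambda_1,\ldots,1-\lambda_{s+2})}(\tau', a_1)$, with $\mu_i = 1 - \lambda_i$ and the convention $\mu_0 = \mu_{s+3} = 0$, characterizes the non-$\tau'$-mutable $a_2$-edges on odd tiles of $G_{T',\rho}$ as those with $\mu_{i-1} = 0$ and $\mu_i = 1$, i.e., $\lambda_{i-1} = 1$ and $\lambda_i = 0$; the extra index $i = s+3$ contributes nothing since $\lambda_{s+3} = 1$. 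An analogous argument using parts (2) of both lemmas handles $a = a_4$, after the index shift $j = i+1$; here $\lambda_0 = 1$ makes the $i = 1$ term vacuous on the $P$-side, which precisely compensates the shift.

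The case $a = a_1$ is structurally different since these edges live on tiles with diagonal $\tau$ (in $G_{T,\rho}$) or $\tau'$ (in $G_{T',\rho}$). An $a_1$-edge of $P$ is non-$\tau$-mutable iff the unique even tile of $G_{T,\rho}$ carrying it does not admit a twist by $P$, and analogously for $P'$. Using the defining conditions of $\mathcal H'_{(\lambda_1,\ldots,\lambda_{s+2})}(a_1, \tau)$ (respectively $\mathcal H_{(1-\lambda_1,\ldots,1-\lambda_{s+2})}(\tau', a_1)$) together with Lemma~\ref{inone}, one can determine exactly which adjacent even (resp.\ odd) tiles support a twist in $P$ (resp.\ $P'$) from the sequence $\lambda$ alone. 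A tile-by-tile matching under the complementation $\mu_i = 1 - \lambda_i$ then delivers the desired equality.

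The main obstacle will be the careful tracking of $a_2$- and $a_4$-edges that are not the distinguished $a_2$- or $a_4$-edge of any odd tile; such edges arise as exterior sides of even tiles at the extremities of the snake graph, and must be handled separately. One has to verify that these boundary contributions in $G_{T,\rho}$ and $G_{T',\rho}$ match up, despite the two graphs having different lengths and the two lemmas employing different boundary conventions (\emph{viz.} $\lambda_0 = 1$ for $P$ versus $\mu_0 = \mu_{s+3} = 0$ for $P'$). I expect the combinatorial identity underlying the match to be that the shift of range and the added boundary value in the $P'$-side conspire to produce exactly the same number of qualifying indices as in the $P$-side, with no unmatched term at either end.
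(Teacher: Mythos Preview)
Your approach for $a_2$ and $a_4$ is essentially the same as the paper's: both apply Lemmas~\ref{non-tau-mu1} and~\ref{non-tau-mu2} and match the characterizing conditions on $(\lambda_{i-1},\lambda_i)$ under the substitution $\mu_i=1-\lambda_i$, with the extra index on the $P'$-side seen to be vacuous. Your write-up of this part is accurate.

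Where you diverge from the paper is the case $a=a_1$ (and implicitly all other labels $a\neq a_2,a_4$). You set up a tile-by-tile matching argument, and you flag as the ``main obstacle'' certain putative boundary $a_2/a_4$-edges living on even extremity tiles. Both of these are unnecessary. In case~(4) the snake graphs are horizontal strips, and every edge labeled $a_1$ is an upper or lower edge of a tile whose diagonal is $\tau$ (resp.\ $\tau'$). If such an edge lies in $P$, the perfect-matching constraint forces the opposite horizontal edge of the \emph{same} tile to lie in $P$ as well (the two vertical edges of that tile are shared with neighbours and cannot cover the remaining vertices without conflict). Hence $P$ can twist on that tile, and the $a_1$-edge is automatically $\tau$-mutable. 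The paper records this in one line: by Lemma~\ref{inone}, for $a\neq a_2,a_4$ both counts are $0$. Your proposed matching would work, but it is doing real work to establish an equality between two zeros. Likewise, your ``main obstacle'' does not arise: in case~(4) both extremity tiles are odd (there are $2s+3$ and $2s+5$ tiles respectively), so every $a_2$- and $a_4$-edge is already the distinguished vertical edge of some odd tile and is covered by the two lemmas.
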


\begin{proof}

By Lemma \ref{inone}, if $a\neq a_2,a_4$, then the two numbers both are equal to $0$. If $a=a_2$, by Lemma \ref{non-tau-mu1} and Lemma \ref{non-tau-mu2}, for each $i\in [1,s+2]$, the edge labeled $a_2$ of the $(2i-1)$-th tile of $G_{T,\rho}$ is non-$\tau$-mutable in $P$ if and only if the edge labeled $a_2$ of the $(2i-1)$-th tile of $G_{T',\rho}$ is non-$\tau'$-mutable in $P'$. By Lemma \ref{non-tau-mu2}, the edge labeled $a_2$ of the $(2(s+3)-1)$-th tile of $G_{T,\rho}$ is not a non-$\tau$-mutable edge in $P$. Therefore, the result holds for $a=a_2$. Similarly, the result holds for $a=a_4$.
\end{proof}

\medskip

Thus, we associate $P\in \mathcal H'_{(\lambda_1,\cdots,\lambda_{s+2})}(a_1,\tau)$ with $\mathcal H_{(1-\lambda_1,\cdots,1-\lambda_{s+2})}(\tau',a_1)$, denote as
$$\psi_{\rho}(P)=\mathcal H_{(1-\lambda_1,\cdots,1-\lambda_{s+2})}(\tau',a_1).$$

On the other hand, for each $P'\in \mathcal H_{(\lambda_1,\cdots,\lambda_{s+2})}(\tau',a_1)$, we associate $P'$ with $$\psi'_{\rho}(P')=\mathcal H'_{(1-\lambda_1,\cdots,1-\lambda_{s+2})}(a_1,\tau).$$

Clearly $P'\in \psi_{\rho}(P)$ if and only if $P\in \psi'_{\rho}(P')$, in this case, by Lemma \ref{same-num} the number of non-$\tau$-mutable edges labeled $a$ in $P$ equals to the number of non-$\tau'$-mutable edges labeled $a$ in $P'$ for any $\tau \neq a\in T$.

\medskip

In this case, since $\rho=\gamma$, $G_{T,\rho}$ and $G_{T',\rho}$ have no first or last gluing edge.
%

\medskip

In case (5), $\rho$ and $\gamma$ have the same starting point. We may assume $i=4$, then up to a difference of relative orientation, $G_{T,\rho}$ and $G_{T',\rho}$ are the following graphs, respectively,

\centerline{\begin{tikzpicture}
\draw[-] (0,0) -- (1,0);
\draw[-] (0,0) -- (0,-1);
\draw[-] (0,-1) -- (1,-1);
\draw[-] (1,0) -- (2,0);
\draw[-] (1,-1) -- (2,-1);
\draw[-] (2,0) -- (3,0);
\draw[-] (2,-1) -- (3,-1);
\draw[-] (3,0) -- (3,-1);
\draw[-] (3,0) -- (4,0);
\draw[-] (3,-1) -- (4,-1);
\draw[-] (4,0) -- (4,-1);
\draw[-] (5,0) -- (6,0);
\draw[-] (5,0) -- (5,-1);
\draw[-] (6,0) -- (6,-1);
\draw[-] (5,-1) -- (6,-1);
\draw[-] (6,0) -- (7,0);
\draw[-] (6,-1) -- (7,-1);
\draw[-] (7,0) -- (7,-1);
\draw[-] (7,0) -- (8,0);
\draw[-] (7,-1) -- (8,-1);
\draw[-] (8,0) -- (8,-1);
\draw[-] (7,1) -- (8,1);
\draw[-] (7,0) -- (7,1);
\draw[-] (8,0) -- (8,1);
\draw[dashed] (1,0) -- (2,-1);
\draw[dashed] (0,0) -- (1,-1);
\draw[dashed] (2,0) -- (3,-1);
\draw[dashed] (3,0) -- (4,-1);
\draw[dashed] (5,0) -- (6,-1);
\draw[dashed] (6,0) -- (7,-1);
\draw[dashed] (7,0) -- (8,-1);
\draw[dashed] (7,1) -- (8,0);
\node[left] at (0.05,-0.5) {$a_2$};
\node[left] at (2.25,-0.5) {$a_2$};
\node[left] at (4.25,-0.5) {$a_2$};
\node[left] at (5.25,-0.5) {$a_2$};
\node[left] at (7.25,-0.5) {$a_2$};
\node[left] at (1.25,-0.5) {$a_4$};
\node[left] at (3.25,-0.5) {$a_4$};
\node[left] at (6.25,-0.5) {$a_4$};
\node[right] at (7.95,-0.5) {$a_4$};
\node[above] at (0.5,-0.05) {$\tau$};
\node[above] at (1.5,-0.05) {$a_1$};
\node[above] at (2.5,-0.05) {$\tau$};
\node[above] at (3.5,-0.05) {$a_1$};
\node[above] at (5.5,-0.05) {$\tau$};
\node[above] at (6.5,-0.05) {$a_1$};
\node[above] at (7.5,-0.2) {$\tau$};
\node[below] at (0.5,-0.95) {$\tau$};
\node[below] at (1.5,-0.95) {$a_1$};
\node[below] at (2.5,-0.95) {$\tau$};
\node[below] at (3.5,-0.95) {$a_1$};
\node[below] at (5.5,-0.95) {$\tau$};
\node[below] at (6.5,-0.95) {$a_1$};
\node[below] at (7.5,-0.95) {$\tau$};
\node[right] at (1.3,-0.45) {$\tau$};
\node[right] at (3.3,-0.45) {$\tau$};
\node[right] at (2.3,-0.5) {$a_1$};
\node[right] at (7.3,0.5) {$a_4$};
\node[right] at (6.3,-0.45) {$\tau$};
\node[right] at (5.3,-0.5) {$a_1$};
\node[right] at (7.3,-0.5) {$a_1$};
\node[left] at (0.8,-0.5) {$a_1$};
\node[left] at (7.05,0.5) {$a_1$};
\node[right] at (7.95,0.5) {$a_8$};
\node[above] at (7.5,0.95) {$a_7$};
\draw [-] (1, 0) -- (1,-1);
\draw [-] (2, 0) -- (2,-1);
\draw [fill] (0,0) circle [radius=.05];
\draw [fill] (0,-1) circle [radius=.05];
\draw [fill] (1,0) circle [radius=.05];
\draw [fill] (2,0) circle [radius=.05];
\draw [fill] (1,-1) circle [radius=.05];
\draw [fill] (2,-1) circle [radius=.05];
\draw [fill] (3,0) circle [radius=.05];
\draw [fill] (3,-1) circle [radius=.05];
\draw [fill] (4,0) circle [radius=.05];
\draw [fill] (4,-1) circle [radius=.05];
\draw [fill] (5,0) circle [radius=.05];
\draw [fill] (5,-1) circle [radius=.05];
\draw [fill] (6,0) circle [radius=.05];
\draw [fill] (6,-1) circle [radius=.05];
\draw [fill] (7,0) circle [radius=.05];
\draw [fill] (8,0) circle [radius=.05];
\draw [fill] (8,-1) circle [radius=.05];
\draw [fill] (7,1) circle [radius=.05];
\draw [fill] (8,1) circle [radius=.05];
\draw [fill] (4.3,-0.5) circle [radius=.02];
\draw [fill] (4.5,-0.5) circle [radius=.02];
\draw [fill] (4.7,-0.5) circle [radius=.02];
\draw [fill] (7,-1) circle [radius=.05];
\end{tikzpicture}}

\centerline{\begin{tikzpicture}
\draw[-] (-1,0) -- (0,0);
\draw[-] (-1,-1) -- (0,-1);
\draw[-] (-1,0) -- (-1,-1);
\draw[-] (0,0) -- (1,0);
\draw[-] (0,0) -- (0,-1);
\draw[-] (0,-1) -- (1,-1);
\draw[-] (1,0) -- (2,0);
\draw[-] (1,-1) -- (2,-1);
\draw[-] (2,0) -- (3,0);
\draw[-] (2,-1) -- (3,-1);
\draw[-] (3,0) -- (3,-1);
\draw[-] (3,0) -- (4,0);
\draw[-] (3,-1) -- (4,-1);
\draw[-] (4,0) -- (4,-1);
\draw[-] (5,0) -- (6,0);
\draw[-] (5,0) -- (5,-1);
\draw[-] (6,0) -- (6,-1);
\draw[-] (5,-1) -- (6,-1);
\draw[-] (6,0) -- (7,0);
\draw[-] (6,-1) -- (7,-1);
\draw[-] (7,0) -- (7,-1);
\draw[-] (7,0) -- (8,0);
\draw[-] (7,-1) -- (8,-1);
\draw[-] (8,0) -- (8,-1);
\draw[-] (8,0) -- (9,0);
\draw[-] (8,-1) -- (9,-1);
\draw[-] (9,0) -- (9,-1);
\draw[-] (8,1) -- (9,1);
\draw[-] (9,0) -- (9,1);
\draw[-] (8,0) -- (8,1);
\draw[dashed] (-1,0) -- (0,-1);
\draw[dashed] (1,0) -- (2,-1);
\draw[dashed] (0,0) -- (1,-1);
\draw[dashed] (2,0) -- (3,-1);
\draw[dashed] (3,0) -- (4,-1);
\draw[dashed] (5,0) -- (6,-1);
\draw[dashed] (6,0) -- (7,-1);
\draw[dashed] (7,0) -- (8,-1);
\draw[dashed] (8,0) -- (9,-1);
\draw[dashed] (8,1) -- (9,0);
\node[left] at (-0.95,-0.5) {$a_2$};
\node[left] at (2.25,-0.5) {$a_4$};
\node[left] at (4.25,-0.5) {$a_4$};
\node[left] at (5.25,-0.5) {$a_4$};
\node[left] at (7.25,-0.5) {$a_4$};
\node[left] at (1.25,-0.5) {$a_2$};
\node[left] at (0.25,-0.5) {$a_4$};
\node[left] at (3.25,-0.5) {$a_2$};
\node[left] at (6.25,-0.5) {$a_2$};
\node[left] at (8.25,-0.5) {$a_2$};
\node[right] at (8.95,-0.5) {$a_4$};
\node[above] at (0.5,-0.05) {$\tau'$};
\node[above] at (1.5,-0.05) {$a_1$};
\node[above] at (-0.5,-0.05) {$a_1$};
\node[above] at (2.5,-0.05) {$\tau'$};
\node[above] at (3.5,-0.05) {$a_1$};
\node[above] at (5.5,-0.05) {$\tau'$};
\node[above] at (6.5,-0.05) {$a_1$};
\node[above] at (8.5,-0.2) {$a_1$};
\node[above] at (7.5,-0.05) {$\tau'$};
\node[below] at (0.5,-0.95) {$\tau'$};
\node[below] at (1.5,-0.95) {$a_1$};
\node[below] at (-0.5,-0.95) {$a_1$};
\node[below] at (2.5,-0.95) {$\tau'$};
\node[below] at (3.5,-0.95) {$a_1$};
\node[below] at (5.5,-0.95) {$\tau'$};
\node[below] at (6.5,-0.95) {$a_1$};
\node[below] at (8.5,-0.95) {$a_1$};
\node[below] at (7.5,-0.95) {$\tau'$};
\node[right] at (1.3,-0.45) {$\tau'$};
\node[right] at (-0.7,-0.45) {$\tau'$};
\node[right] at (3.3,-0.45) {$\tau'$};
\node[right] at (2.3,-0.5) {$a_1$};
\node[right] at (6.3,-0.45) {$\tau'$};
\node[right] at (8.3,-0.45) {$\tau'$};
\node[right] at (8.3,0.55) {$a_4$};
\node[right] at (5.3,-0.5) {$a_1$};
\node[right] at (7.3,-0.5) {$a_1$};
\node[left] at (0.8,-0.5) {$a_1$};
\node[left] at (8.05,0.5) {$\tau'$};
\node[right] at (8.95,0.5) {$a_8$};
\node[above] at (8.5,0.95) {$a_7$};
\draw [-] (1, 0) -- (1,-1);
\draw [-] (2, 0) -- (2,-1);
\draw [fill] (0,0) circle [radius=.05];
\draw [fill] (0,-1) circle [radius=.05];
\draw [fill] (-1,0) circle [radius=.05];
\draw [fill] (-1,-1) circle [radius=.05];
\draw [fill] (1,0) circle [radius=.05];
\draw [fill] (2,0) circle [radius=.05];
\draw [fill] (1,-1) circle [radius=.05];
\draw [fill] (2,-1) circle [radius=.05];
\draw [fill] (3,0) circle [radius=.05];
\draw [fill] (3,-1) circle [radius=.05];
\draw [fill] (4,0) circle [radius=.05];
\draw [fill] (4,-1) circle [radius=.05];
\draw [fill] (5,0) circle [radius=.05];
\draw [fill] (5,-1) circle [radius=.05];
\draw [fill] (6,0) circle [radius=.05];
\draw [fill] (6,-1) circle [radius=.05];
\draw [fill] (7,0) circle [radius=.05];
\draw [fill] (8,0) circle [radius=.05];
\draw [fill] (8,-1) circle [radius=.05];
\draw [fill] (9,0) circle [radius=.05];
\draw [fill] (9,-1) circle [radius=.05];
\draw [fill] (9,1) circle [radius=.05];
\draw [fill] (8,1) circle [radius=.05];
\draw [fill] (4.3,-0.5) circle [radius=.02];
\draw [fill] (4.5,-0.5) circle [radius=.02];
\draw [fill] (4.7,-0.5) circle [radius=.02];
\draw [fill] (7,-1) circle [radius=.05];
\end{tikzpicture}}

Clearly we have $n^{\tau}(T,\rho)=n^{\tau'}(T',\rho)$.

\medskip

Since $G_{T,\rho}$ can be obtained by gluing a tile with a graph which is isomorphic to $H_s(a_1,\tau)$, we have $\mathcal P(G_{T,\rho})=\mathcal P_1\sqcup \mathcal P_2$, where $\mathcal P_1$ contains all $P$ which contains the upper right edge labeled $a_7$, $\mathcal P_2$ contains all $P$ which contains the upper right edge labeled $a_8$. By Lemma \ref{basicdecom}, as sets, we have $$\mathcal P_1\cong\textstyle\bigsqcup_{(\lambda_1,\cdots,\lambda_{s+1})\in \{0,1\}^{s+1}}\mathcal H'_{(\lambda_1,\cdots,\lambda_{s+1})}(a_1,\tau),$$ $$\mathcal P_2\cong\textstyle\bigsqcup_{(\lambda_1,\cdots,\lambda_{s})\in \{0,1\}^{s}}\mathcal H'_{(\lambda_1,\cdots,\lambda_{s})}(a_1,\tau).$$

\medskip

By the same reason, $\mathcal P(G_{T',\rho})=\mathcal P'_1\sqcup \mathcal P'_2$, where $\mathcal P'_1$ contains all $P$ which contains the upper right edge labeled $a_7$, $\mathcal P'_2$ contains all $P$ which contains the upper right edge labeled $a_8$. As sets, we have $$\mathcal P'_1\cong\textstyle\bigsqcup_{(\lambda_1,\cdots,\lambda_{s+1})\in \{0,1\}^{s+1}}\mathcal H_{(\lambda_1,\cdots,\lambda_{s+1})}(\tau',a_1),$$ $$\mathcal P'_2\cong\textstyle\bigsqcup_{(\lambda_1,\cdots,\lambda_{s})\in \{0,1\}^{s}}\mathcal H_{(\lambda_1,\cdots,\lambda_{s})}(\tau',a_1).$$

\medskip

As case (4), under the above isomorphisms, for each $P\in \mathcal H'_{(\lambda_1,\cdots,\lambda_{s+1})}(a_1,\tau)$, we associate $P$ with
$$\psi_{\rho}(P)=\mathcal H_{(1-\lambda_1,\cdots,1-\lambda_{s+1})}(\tau',a_1).$$ For each $P\in \mathcal H'_{(\lambda_1,\cdots,\lambda_{s})}(a_1,\tau)$, we associate $P$ with
$$\psi_{\rho}(P)=\mathcal H_{(1-\lambda_1,\cdots,1-\lambda_{s})}(\tau',a_1).$$

On the other hand, for each $P'\in \mathcal H_{(\lambda_1,\cdots,\lambda_{s+1})}(\tau',a_1)$, we associate $P'$ with $$\psi'_{\rho}(P')=\mathcal H'_{(1-\lambda_1,\cdots,1-\lambda_{s+1})}(a_1,\tau).$$ For each $P'\in \mathcal H_{(\lambda_1,\cdots,\lambda_{s})}(\tau',a_1)$, we associate $P'$ with $$\psi'_{\rho}(P')=\mathcal H'_{(1-\lambda_1,\cdots,1-\lambda_{s})}(a_1,\tau).$$

We can see $P'\in \psi_{\rho}(P)$ if and only if $P\in \psi'_{\rho}(P')$, in this case, by Lemma \ref{same-num} the number of non-$\tau$-mutable edges labeled $a$ in $P$ equals to the number of non-$\tau'$-mutable edges labeled $a$ in $P'$ for each $\tau\neq a\in T$.

\medskip

For each $P\in \mathcal P(G_{T,\rho})$ and $P'\in \mathcal P(G_{T',\rho})$ with $P'\in \psi_{\rho}(P)$, since $\psi_{\rho}(\mathcal P_i)\subset \mathcal P'_i$ for $i=1,2$, the last gluing edge of $G_{T,\rho}$ is in $P$ if and only if the last gluing edge of $G_{T',\rho}$ in $P'$. Since $\rho$ and $\gamma$ have the same starting point, $G_{T,\rho}$ and $G_{T',\rho}$ do not have first gluing edges.

%

\medskip

In case (6), up to a difference of relative orientation, $G_{T,\rho}$ and $G_{T',\rho}$ are the following graphs, respectively,

\centerline{\begin{tikzpicture}
\draw[-] (0,-1) -- (0,-2);
\draw[-] (1,-1) -- (1,-2);
\draw[-] (0,-2) -- (1,-2);
\draw[-] (0,0) -- (1,0);
\draw[-] (0,0) -- (0,-1);
\draw[-] (0,-1) -- (1,-1);
\draw[-] (1,0) -- (2,0);
\draw[-] (1,-1) -- (2,-1);
\draw[-] (2,0) -- (3,0);
\draw[-] (2,-1) -- (3,-1);
\draw[-] (3,0) -- (3,-1);
\draw[-] (3,0) -- (4,0);
\draw[-] (3,-1) -- (4,-1);
\draw[-] (4,0) -- (4,-1);
\draw[-] (5,0) -- (6,0);
\draw[-] (5,0) -- (5,-1);
\draw[-] (6,0) -- (6,-1);
\draw[-] (5,-1) -- (6,-1);
\draw[-] (6,0) -- (7,0);
\draw[-] (6,-1) -- (7,-1);
\draw[-] (7,0) -- (7,-1);
\draw[-] (7,0) -- (8,0);
\draw[-] (7,-1) -- (8,-1);
\draw[-] (8,0) -- (8,-1);
\draw[-] (7,1) -- (8,1);
\draw[-] (7,0) -- (7,1);
\draw[-] (8,0) -- (8,1);
\draw[dashed] (1,0) -- (2,-1);
\draw[dashed] (0,0) -- (1,-1);
\draw[dashed] (0,-1) -- (1,-2);
\draw[dashed] (2,0) -- (3,-1);
\draw[dashed] (3,0) -- (4,-1);
\draw[dashed] (5,0) -- (6,-1);
\draw[dashed] (6,0) -- (7,-1);
\draw[dashed] (7,0) -- (8,-1);
\draw[dashed] (7,1) -- (8,0);
\node[left] at (0.05,-0.5) {$a_2$};
\node[left] at (2.25,-0.5) {$a_2$};
\node[left] at (4.25,-0.5) {$a_2$};
\node[left] at (5.25,-0.5) {$a_2$};
\node[left] at (7.25,-0.5) {$a_2$};
\node[left] at (1.25,-0.5) {$a_4$};
\node[left] at (3.25,-0.5) {$a_4$};
\node[left] at (6.25,-0.5) {$a_4$};
\node[right] at (7.95,-0.5) {$a_4$};
\node[above] at (0.5,-0.05) {$\tau$};
\node[above] at (1.5,-0.05) {$a_1$};
\node[above] at (2.5,-0.05) {$\tau$};
\node[above] at (3.5,-0.05) {$a_1$};
\node[above] at (5.5,-0.05) {$\tau$};
\node[above] at (6.5,-0.05) {$a_1$};
\node[above] at (7.5,-0.2) {$\tau$};
\node[below] at (0.5,-0.8) {$\tau$};
\node[below] at (1.5,-0.95) {$a_1$};
\node[below] at (2.5,-0.95) {$\tau$};
\node[below] at (3.5,-0.95) {$a_1$};
\node[below] at (0.5,-1.95) {$a_6$};
\node[below] at (5.5,-0.95) {$\tau$};
\node[below] at (6.5,-0.95) {$a_1$};
\node[below] at (7.5,-0.95) {$\tau$};
\node[right] at (1.3,-0.45) {$\tau$};
\node[right] at (3.3,-0.45) {$\tau$};
\node[right] at (2.3,-0.5) {$a_1$};
\node[right] at (7.3,0.5) {$a_4$};
\node[right] at (0.95,-1.5) {$a_1$};
\node[right] at (6.3,-0.45) {$\tau$};
\node[right] at (5.3,-0.5) {$a_1$};
\node[right] at (7.3,-0.5) {$a_1$};
\node[left] at (0.8,-0.5) {$a_1$};
\node[left] at (0.05,-1.5) {$a_5$};
\node[left] at (0.8,-1.5) {$a_2$};
\node[left] at (7.05,0.5) {$a_1$};
\node[right] at (7.95,0.5) {$a_8$};
\node[above] at (7.5,0.95) {$a_7$};
\draw [-] (1, 0) -- (1,-1);
\draw [-] (2, 0) -- (2,-1);
\draw [fill] (0,0) circle [radius=.05];
\draw [fill] (0,-1) circle [radius=.05];
\draw [fill] (0,-2) circle [radius=.05];
\draw [fill] (1,-2) circle [radius=.05];
\draw [fill] (1,0) circle [radius=.05];
\draw [fill] (2,0) circle [radius=.05];
\draw [fill] (1,-1) circle [radius=.05];
\draw [fill] (2,-1) circle [radius=.05];
\draw [fill] (3,0) circle [radius=.05];
\draw [fill] (3,-1) circle [radius=.05];
\draw [fill] (4,0) circle [radius=.05];
\draw [fill] (4,-1) circle [radius=.05];
\draw [fill] (5,0) circle [radius=.05];
\draw [fill] (5,-1) circle [radius=.05];
\draw [fill] (6,0) circle [radius=.05];
\draw [fill] (6,-1) circle [radius=.05];
\draw [fill] (7,0) circle [radius=.05];
\draw [fill] (8,0) circle [radius=.05];
\draw [fill] (8,-1) circle [radius=.05];
\draw [fill] (7,1) circle [radius=.05];
\draw [fill] (8,1) circle [radius=.05];
\draw [fill] (4.3,-0.5) circle [radius=.02];
\draw [fill] (4.5,-0.5) circle [radius=.02];
\draw [fill] (4.7,-0.5) circle [radius=.02];
\draw [fill] (7,-1) circle [radius=.05];
\end{tikzpicture}}

\centerline{\begin{tikzpicture}
\draw[-] (-1,0) -- (0,0);
\draw[-] (-1,-1) -- (0,-1);
\draw[-] (-1,-2) -- (0,-2);
\draw[-] (-1,-1) -- (-1,-2);
\draw[-] (0,-1) -- (0,-2);
\draw[-] (-1,0) -- (-1,-1);
\draw[-] (0,0) -- (1,0);
\draw[-] (0,0) -- (0,-1);
\draw[-] (0,-1) -- (1,-1);
\draw[-] (1,0) -- (2,0);
\draw[-] (1,-1) -- (2,-1);
\draw[-] (2,0) -- (3,0);
\draw[-] (2,-1) -- (3,-1);
\draw[-] (3,0) -- (3,-1);
\draw[-] (3,0) -- (4,0);
\draw[-] (3,-1) -- (4,-1);
\draw[-] (4,0) -- (4,-1);
\draw[-] (5,0) -- (6,0);
\draw[-] (5,0) -- (5,-1);
\draw[-] (6,0) -- (6,-1);
\draw[-] (5,-1) -- (6,-1);
\draw[-] (6,0) -- (7,0);
\draw[-] (6,-1) -- (7,-1);
\draw[-] (7,0) -- (7,-1);
\draw[-] (7,0) -- (8,0);
\draw[-] (7,-1) -- (8,-1);
\draw[-] (8,0) -- (8,-1);
\draw[-] (8,0) -- (9,0);
\draw[-] (8,-1) -- (9,-1);
\draw[-] (9,0) -- (9,-1);
\draw[-] (8,1) -- (9,1);
\draw[-] (9,0) -- (9,1);
\draw[-] (8,0) -- (8,1);
\draw[dashed] (-1,0) -- (0,-1);
\draw[dashed] (-1,-1) -- (0,-2);
\draw[dashed] (1,0) -- (2,-1);
\draw[dashed] (0,0) -- (1,-1);
\draw[dashed] (2,0) -- (3,-1);
\draw[dashed] (3,0) -- (4,-1);
\draw[dashed] (5,0) -- (6,-1);
\draw[dashed] (6,0) -- (7,-1);
\draw[dashed] (7,0) -- (8,-1);
\draw[dashed] (8,0) -- (9,-1);
\draw[dashed] (8,1) -- (9,0);
\node[left] at (-0.95,-0.5) {$a_2$};
\node[left] at (2.25,-0.5) {$a_4$};
\node[left] at (4.25,-0.5) {$a_4$};
\node[left] at (5.25,-0.5) {$a_4$};
\node[left] at (7.25,-0.5) {$a_4$};
\node[left] at (1.25,-0.5) {$a_2$};
\node[left] at (0.25,-0.5) {$a_4$};
\node[left] at (3.25,-0.5) {$a_2$};
\node[left] at (6.25,-0.5) {$a_2$};
\node[left] at (8.25,-0.5) {$a_2$};
\node[right] at (8.95,-0.5) {$a_4$};
\node[above] at (0.5,-0.05) {$\tau'$};
\node[above] at (1.5,-0.05) {$a_1$};
\node[above] at (-0.5,-0.05) {$a_1$};
\node[above] at (2.5,-0.05) {$\tau'$};
\node[above] at (3.5,-0.05) {$a_1$};
\node[above] at (5.5,-0.05) {$\tau'$};
\node[above] at (6.5,-0.05) {$a_1$};
\node[above] at (8.5,-0.2) {$a_1$};
\node[above] at (7.5,-0.05) {$\tau'$};
\node[below] at (0.5,-0.95) {$\tau'$};
\node[below] at (1.5,-0.95) {$a_1$};
\node[below] at (-0.5,-0.8) {$a_1$};
\node[below] at (2.5,-0.95) {$\tau'$};
\node[below] at (3.5,-0.95) {$a_1$};
\node[below] at (5.5,-0.95) {$\tau'$};
\node[below] at (6.5,-0.95) {$a_1$};
\node[below] at (8.5,-0.95) {$a_1$};
\node[below] at (7.5,-0.95) {$\tau'$};
\node[below] at (-0.5,-1.95) {$a_6$};
\node[right] at (1.3,-0.45) {$\tau'$};
\node[right] at (-0.05,-1.5) {$\tau'$};
\node[right] at (-0.7,-0.45) {$\tau'$};
\node[right] at (-0.8,-1.45) {$a_2$};
\node[right] at (3.3,-0.45) {$\tau'$};
\node[right] at (2.3,-0.5) {$a_1$};
\node[right] at (6.3,-0.45) {$\tau'$};
\node[right] at (8.3,-0.45) {$\tau'$};
\node[right] at (8.3,0.55) {$a_4$};
\node[right] at (5.3,-0.5) {$a_1$};
\node[right] at (7.3,-0.5) {$a_1$};
\node[left] at (0.8,-0.5) {$a_1$};
\node[left] at (-0.95,-1.5) {$a_5$};
\node[left] at (8.05,0.5) {$\tau'$};
\node[right] at (8.95,0.5) {$a_8$};
\node[above] at (8.5,0.95) {$a_7$};
\draw [-] (1, 0) -- (1,-1);
\draw [-] (2, 0) -- (2,-1);
\draw [fill] (0,0) circle [radius=.05];
\draw [fill] (0,-1) circle [radius=.05];
\draw [fill] (0,-2) circle [radius=.05];
\draw [fill] (-1,-2) circle [radius=.05];
\draw [fill] (-1,0) circle [radius=.05];
\draw [fill] (-1,-1) circle [radius=.05];
\draw [fill] (1,0) circle [radius=.05];
\draw [fill] (2,0) circle [radius=.05];
\draw [fill] (1,-1) circle [radius=.05];
\draw [fill] (2,-1) circle [radius=.05];
\draw [fill] (3,0) circle [radius=.05];
\draw [fill] (3,-1) circle [radius=.05];
\draw [fill] (4,0) circle [radius=.05];
\draw [fill] (4,-1) circle [radius=.05];
\draw [fill] (5,0) circle [radius=.05];
\draw [fill] (5,-1) circle [radius=.05];
\draw [fill] (6,0) circle [radius=.05];
\draw [fill] (6,-1) circle [radius=.05];
\draw [fill] (7,0) circle [radius=.05];
\draw [fill] (8,0) circle [radius=.05];
\draw [fill] (8,-1) circle [radius=.05];
\draw [fill] (9,0) circle [radius=.05];
\draw [fill] (9,-1) circle [radius=.05];
\draw [fill] (9,1) circle [radius=.05];
\draw [fill] (8,1) circle [radius=.05];
\draw [fill] (4.3,-0.5) circle [radius=.02];
\draw [fill] (4.5,-0.5) circle [radius=.02];
\draw [fill] (4.7,-0.5) circle [radius=.02];
\draw [fill] (7,-1) circle [radius=.05];
\end{tikzpicture}}

Clearly we have $n^{\tau}(T,\rho)=n^{\tau'}(T',\rho)$.

\medskip

Since we can obtain $G_{T,\rho}$ by gluing two tiles with a graph which is isomorphic to $H_s(a_1,\tau)$, we have $\mathcal P(G_{T,\rho})=\mathcal P_1\sqcup \mathcal P_2\sqcup \mathcal P_3\sqcup \mathcal P_4$, where $\mathcal P_1$ contains all $P$ which contains the lower left edge labeled $a_6$ and the upper right edge labeled $a_7$, $\mathcal P_2$ contains all $P$ which contains the lower left edge labeled $a_6$ and the upper right edge labeled $a_8$, $\mathcal P_3$ contains all $P$ which contains the lower left edge labeled $a_5$ and the upper right edge labeled $a_7$, $\mathcal P_4$ contains all $P$ which contains the lower left edge labeled $a_5$ and the upper right edge labeled $a_8$. By Lemma \ref{basicdecom}, as sets, we have
$$\mathcal P_1\cong\textstyle\bigsqcup_{(\lambda_1,\cdots,\lambda_{s+1})\in \{0,1\}^{s+1}}\mathcal H'_{(\lambda_1,\cdots,\lambda_{s+1})}(a_1,\tau),$$ $$\mathcal P_2\cong\textstyle\bigsqcup_{(\lambda_1,\cdots,\lambda_{s})\in \{0,1\}^{s}}\mathcal H'_{(\lambda_1,\cdots,\lambda_{s})}(a_1,\tau),$$ $$\mathcal P_3\cong\textstyle\bigsqcup_{(\lambda_2,\cdots,\lambda_{s+1})\in \{0,1\}^{s}}\mathcal H'_{(\lambda_2,\cdots,\lambda_{s+1})}(a_1,\tau),$$ $$\mathcal P_4\cong\textstyle\bigsqcup_{(\lambda_2,\cdots,\lambda_{s})\in \{0,1\}^{s-1}}\mathcal H'_{(\lambda_2,\cdots,\lambda_{s})}(a_1,\tau).$$

Similarly, $\mathcal P(G_{T',\rho})=\mathcal P'_1\sqcup \mathcal P'_2\sqcup \mathcal P'_3\sqcup \mathcal P'_4$, where $\mathcal P'_1$ contains all $P$ which contains the lower left edge labeled $a_6$ and the upper right edge labeled $a_7$, $\mathcal P'_2$ contains all $P$ which contains the lower left edge labeled $a_6$ and the upper right edge labeled $a_8$, $\mathcal P'_3$ contains all $P$ which contains the lower left edge labeled $a_5$ and the upper right edge labeled $a_7$, $\mathcal P'_4$ contains all $P$ which contains the lower left edge labeled $a_5$ and the upper right edge labeled $a_8$. As sets, we have $$\mathcal P'_1\cong\textstyle\bigsqcup_{(\lambda_1,\cdots,\lambda_{s+1})\in \{0,1\}^{s+1}}\mathcal H_{(\lambda_1,\cdots,\lambda_{s+1})}(\tau',a_1),$$ $$\mathcal P'_2\cong\textstyle\bigsqcup_{(\lambda_1,\cdots,\lambda_s)\in \{0,1\}^{s}}\mathcal H_{(\lambda_1,\cdots,\lambda_s)}(\tau',a_1),$$ $$\mathcal P'_3\cong\textstyle\bigsqcup_{(\lambda_2,\cdots,\lambda_{s+1})\in \{0,1\}^{s}}\mathcal H_{(\lambda_2,\cdots,\lambda_{s+1})}(\tau',a_1),$$ $$\mathcal P'_4\cong\textstyle\bigsqcup_{(\lambda_2,\cdots,\lambda_{s})\in \{0,1\}^{s-1}}\mathcal H_{(\lambda_2,\cdots,\lambda_{s})}(\tau',a_1).$$

\medskip

As cases (4) and (5), under the above isomorphisms, for each $P\in \mathcal H'_{(\lambda_1,\cdots,\lambda_{s+1})}(a_1,\tau)$, we associate $P$ with
$$\psi_{\rho}(P)=\mathcal H_{(1-\lambda_1,\cdots,1-\lambda_{s+1})}(\tau',a_1).$$ For each $P\in \mathcal H'_{(\lambda_1,\cdots,\lambda_{s})}(a_1,\tau)$, we associate $P$ with
$$\psi_{\rho}(P)=\mathcal H_{(1-\lambda_1,\cdots,1-\lambda_{s})}(\tau',a_1).$$ For each $P\in \mathcal H'_{(\lambda_2,\cdots,\lambda_{s+1})}(a_1,\tau)$, we associate $P$ with
$$\psi_{\rho}(P)=\mathcal H_{(1-\lambda_2,\cdots,1-\lambda_{s+1})}(\tau',a_1).$$ For each $P\in \mathcal H'_{(\lambda_2,\cdots,\lambda_{s})}(a_1,\tau)$, we associate $P$ with
$$\psi_{\rho}(P)=\mathcal H_{(1-\lambda_2,\cdots,1-\lambda_{s})}(\tau',a_1).$$

On the other hand, for each $P'\in \mathcal H_{(\lambda_1,\cdots,\lambda_{s+1})}(\tau',a_1)$, we associate $P'$ with $$\psi'_{\rho}(P')=\mathcal H'_{(1-\lambda_1,\cdots,1-\lambda_{s+1})}(a_1,\tau).$$ For each $P'\in \mathcal H_{(\lambda_1,\cdots,\lambda_{s})}(\tau',a_1)$, we associate $P'$ with $$\psi'_{\rho}(P')=\mathcal H'_{(1-\lambda_1,\cdots,1-\lambda_{s})}(a_1,\tau).$$ For each $P'\in \mathcal H_{(\lambda_2,\cdots,\lambda_{s+1})}(\tau',a_1)$, we associate $P'$ with $$\psi'_{\rho}(P')=\mathcal H'_{(1-\lambda_2,\cdots,1-\lambda_{s+1})}(a_1,\tau).$$ For each $P'\in \mathcal H_{(\lambda_2,\cdots,\lambda_{s})}(\tau',a_1)$, we associate $P'$ with $$\psi'_{\rho}(P')=\mathcal H'_{(1-\lambda_2,\cdots,1-\lambda_{s})}(a_1,\tau).$$

We can see $P'\in \psi_{\rho}(P)$ if and only if $P\in \psi'_{\rho}(P')$, in this case, by Lemma \ref{same-num} the number of non-$\tau$-mutable edges labeled $a$ in $P$ equals to the number of non-$\tau'$-mutable edges labeled $a$ in $P'$ for each $\tau\neq a\in T$.

\medskip

For each $P\in \mathcal P(G_{T,\rho})$ and $P'\in \mathcal P(G_{T',\rho})$ with $P'\in \psi_{\rho}(P)$, since $\psi_{\rho}(\mathcal P_i)\subset \mathcal P'_i$ for $i=1,2,3,4$, the first/last gluing edge of $G_{T,\rho}$ is in $P$ if and only if the first/last gluing edge of $G_{T',\rho}$ in $P'$.

\medskip

%

We summarize above discussions as the following proposition.

\begin{Proposition}\label{compare-loc}

Keep the foregoing notations. Let $T$ be a triangulation of $\mathcal O$ and $\tau$ be an arc in $T$. Let $\tau'\neq\gamma\notin T$ be an arc in $\mathcal O$. Let $\rho$ be a subcurve of $\gamma$
satisfying the assumptions at the beginning of this section. Then

\begin{enumerate}[$(1)$]

  \item $n^{\tau}(T,\rho)=n^{\tau'}(T',\rho)$.


  \item We can associate a subset $\psi_{\rho}(P)$ of $\mathcal P(G_{T'},\rho)$ for each $P\in \mathcal P(G_{T,\rho})$ and a subset $\psi'_{\rho}(P')$ of $\mathcal P(G_{T,\rho})$ for each $P'\in \mathcal P(G_{T',\rho})$, which satisfy

      \begin{enumerate}[$(a)$]

        \item $P'\in \psi_{\rho}(P)$ if and only if $P\in \psi'_{\rho}(P')$. In such case,

        \item $P\in \mathcal P^{\tau}_\nu(G_{T,\rho})$ for some $\nu$ if and only if $P'\in \mathcal P^{\tau'}_{-\nu}(G_{T',\rho})$;

        \item the number of non-$\tau$-mutable edges labeled $a$ in $P$ equals to the number of non-$\tau'$-mutable edges labeled $a$ in $P'$ for each $\tau\neq a\in T$;

        \item the first/last gluing edge of $G_{T,\rho}$ is in $P$ if and only if the first/last gluing edge of $G_{T',\rho}$ is in $P'$.

     \end{enumerate}

\end{enumerate}

\end{Proposition}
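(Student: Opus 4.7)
The plan is to prove the proposition by exhaustive case analysis on the combinatorial position of the subcurve $\rho$ relative to $\tau$ and the two triangles of $T$ that contain $\tau$, whose remaining sides are $\{a_1,a_4\}$ and $\{a_2,a_3\}$ as in the setup. First I would split into a generic regime where $a_1\neq a_3$ and $a_2\neq a_4$, and a degenerate regime where one of these equalities holds. Since $\mathcal O$ has no punctures, both equalities cannot hold simultaneously (the union of the two triangles would form a closed surface without boundary, forcing a torus with one marked point, which is excluded), so up to symmetry it suffices to treat $a_1=a_3$ with $a_2\neq a_4$. Within each regime, the cutting construction of $\rho$ from $\gamma$ at the marking points $o_i$ restricts the possible crossing patterns of $\rho$ to those enumerated in Section \ref{compare}, which reduce by duality to three generic cases and six degenerate cases.

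For each case I would construct $G_{T,\rho}$ and $G_{T',\rho}$ explicitly by gluing the prescribed tiles, then read off the $\tau$- and $\tau'$-equivalence classes to verify that $n^\tau(T,\rho)=n^{\tau'}(T',\rho)$. I would then define $\psi_\rho$ and $\psi'_\rho$ by the partition bijection displayed in the figures. In the short cases the bijection is exhibited directly at the level of perfect matchings. In the long degenerate cases, the interior of $G_{T,\rho}$ (respectively $G_{T',\rho}$) is isomorphic to a graph $H_s(a_1,\tau)$ (respectively $H_s(\tau',a_1)$), so Lemma \ref{basicdecom} decomposes $\mathcal P(G_{T,\rho})$ and $\mathcal P(G_{T',\rho})$ into subsets indexed by binary sequences, and $\psi_\rho$ is induced by the bit-flip involution $(\lambda_1,\ldots,\lambda_k)\mapsto (1-\lambda_1,\ldots,1-\lambda_k)$. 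The extra outer tiles present in the degenerate cases (5) and (6) split each graph further into two or four pieces, and the bijection is defined piece-by-piece.

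Once $\psi_\rho$ is in place, the four compatibility conditions are checked in each case. Condition (a) is built into the definition since $\psi_\rho$ and $\psi'_\rho$ are mutually inverse partition maps. Condition (b), the reversal of signs in $\nu$, follows because the bit-flip (and the short-case bijections) precisely swap the roles of occupied and unoccupied edges of each $\tau$-equivalence class, which corresponds to negating each entry of the type vector. Condition (c) is exactly the content of Lemma \ref{same-num} in the long cases, and is immediate by inspection in the short cases where only a handful of non-$\tau$-mutable edges occur. Condition (d) follows from the observation that the distinguished outermost tiles of $G_{T,\rho}$ and $G_{T',\rho}$ are matched by $\psi_\rho$ within the corresponding subsets $\mathcal P_i$, $\mathcal P_i'$, so first and last gluing edges sit in matched positions.

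The principal obstacle is the degenerate regime cases (4)--(6), where $\rho$ oscillates arbitrarily many times between $\tau$ and $a_1=a_3$ so no single picture suffices. There one must reduce the comparison to the combinatorics of $\{0,1\}$-valued sequences on $H_s(a_1,\tau)$ versus $H_s(\tau',a_1)$, and the auxiliary Lemmas \ref{basicdecom}, \ref{diff}, \ref{non-tau-mu1}, \ref{non-tau-mu2} and \ref{same-num} are set up precisely to effect this reduction, after which $\psi_\rho$ is forced by the bit-flip involution. All remaining verifications are pictorial and follow the figures in Section \ref{compare}.
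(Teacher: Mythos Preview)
Your proposal is correct and follows essentially the same approach as the paper: the proof of Proposition~\ref{compare-loc} in the paper is just a summary wrapping up the case-by-case analysis carried out throughout Section~\ref{compare}, splitting into the generic regime $a_1\neq a_3,\ a_2\neq a_4$ (three cases up to duality, handled by the explicit partition bijections in the figures) and the degenerate regime $a_1=a_3$ (cases (1)--(6) up to duality, with (4)--(6) handled via the $H_s(a,b)$ decomposition and the bit-flip $(\lambda_i)\mapsto(1-\lambda_i)$, invoking Lemma~\ref{same-num} for condition~(c)). Your identification of the key ingredients, including the torus-with-one-puncture obstruction and the piecewise treatment along the $\mathcal P_i$ in cases (5) and (6), matches the paper exactly.
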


\begin{proof}

We already proved (1) for each case.

When $a_1\neq a_3, a_2\neq a_4$, we have a partition bijection between $\mathcal P(G_{T,\rho})$ and $\mathcal P(G_{T',\rho})$. For any $P\in\mathcal P(G_{T,\rho})$, there is a unique $S\subset \mathcal P(G_{T,\rho})$ such that $P\in S$ according to the partition bijection. We define $\psi_{\rho}(P)$ be the subset of $\mathcal P(G_{T',\rho})$ which corresponds to $S$ under the partition bijection. Use the same method, we can define $\psi'_{\rho}(P')$ for any $P'\in \mathcal P(G_{T',\rho})$. It is easy to check that $\psi_{\rho}$ and $\psi'_{\rho}$ satisfy the conditions of (2).

When $a_1=a_3$ or $a_2=a_4$, we may assume $a_1=a_3$. We have already proved (2) for cases (4,5,6). Dually, (2) holds for cases (8,9,10). In the remaining cases, we have a partition bijection between $\mathcal P(G_{T,\rho})$ and $\mathcal P(G_{T',\rho})$. The results can be proved similarly as the case $a_1\neq a_3,a_2\neq a_4$.
\end{proof}

\medskip

The following observations would help us to prove theorem \ref{partition bi}.

\medskip

\begin{Lemma}\label{diff2}

Keep the foregoing notations. Let $P,Q\in \mathcal P^{\tau}_{\nu}(G_{T,\rho})$ with $P\neq Q$ for some $\nu$. Then either $\psi_{\rho}(P)\cap \psi_{\rho}(Q)=\emptyset$ or there exists a tile $G$ of $G_{T,\rho}$ with diagonal labeled $\tau$ such that $P$ and $Q$ can twist on $G$ and $P\cap E(G)\neq Q\cap E(G)$.

\end{Lemma}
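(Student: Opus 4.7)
The plan is to proceed by case analysis, mirroring the ten geometric configurations considered in Section~\ref{compare}. In each case, the map $\psi_\rho$ was defined so that the collection $\{\psi_\rho(P) : P \in \mathcal P(G_{T,\rho})\}$ partitions $\mathcal P(G_{T',\rho})$; therefore $\psi_\rho(P) \cap \psi_\rho(Q) \neq \emptyset$ is equivalent to $\psi_\rho(P) = \psi_\rho(Q)$. Under this equality I must exhibit a tile $G$ of $G_{T,\rho}$ with diagonal $\tau$ on which both $P$ and $Q$ admit a twist and with $P \cap E(G) \neq Q \cap E(G)$.

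For cases (1)--(3) of the subsection where $a_1\neq a_3,\, a_2\neq a_4$ and cases (1)--(3) of the subsection where $a_1=a_3$, the map $\psi_\rho$ arises from an explicit partition bijection depicted in Figures~1, 2, 3, 7, 8, 9. In each such case the unique tile of $G_{T,\rho}$ with diagonal $\tau$ is clearly identified, and direct inspection of the partition blocks shows that every non-singleton block on the $G_{T,\rho}$-side consists of a pair $\{P, \mu_p P\}$ where $G(p)$ is precisely this $\tau$-tile and $P$ can twist on it. Hence any two distinct matchings in a common fiber of $\psi_\rho$ are related by a twist on this $\tau$-tile, and the conclusion is immediate.

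For cases (4)--(6) of the subsection $a_1=a_3$, where $G_{T,\rho}$ contains an $H_s(a_1,\tau)$-subgraph arising from the torus configuration, the map $\psi_\rho$ is defined via the decomposition
\[
\mathcal P(G_{T,\rho}) \;\cong\; \textstyle\bigsqcup_{(\lambda_1,\ldots,\lambda_r)} \mathcal H'_{(\lambda_1,\ldots,\lambda_r)}(a_1,\tau),
\]
further refined in cases (5) and (6) by the choice of the first and/or last gluing edges, and it sends $\mathcal H'_{(\lambda_1,\ldots,\lambda_r)}(a_1,\tau)$ to $\mathcal H_{(1-\lambda_1,\ldots,1-\lambda_r)}(\tau',a_1)$ within the matching gluing-edge family. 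If $\psi_\rho(P)\cap\psi_\rho(Q)\neq\emptyset$, then $P$ and $Q$ lie in the same $\mathcal H'_{(\lambda_1,\ldots,\lambda_r)}(a_1,\tau)$ with matching gluing-edge choices. By Lemma~\ref{diff}(2) applied with $(a,b)=(a_1,\tau)$, there exists an index $i$ such that both $P$ and $Q$ can twist on the tile $H_{2i}$ and $P\cap E(H_{2i})\neq Q\cap E(H_{2i})$. Since the even-indexed tiles of $H_s(a_1,\tau)$ are precisely those with diagonal $\tau$, this $H_{2i}$ is the required tile $G$.

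The main technical obstacle lies not in any single case but in the exhaustive bookkeeping of case (6), where $\mathcal P(G_{T,\rho})$ splits first into four subsets $\mathcal P_1,\ldots,\mathcal P_4$ according to the lower-left and upper-right gluing edges and each $\mathcal P_i$ further decomposes via $\mathcal H'$. Since the construction of $\psi_\rho$ in that case respects the $\mathcal P_i$-subdivision (the gluing-edge labels are preserved by $\psi_\rho$), the Lemma~\ref{diff}-based argument applies uniformly within each $\mathcal P_i$, which yields the result across all branches. The dual cases (7)--(10) are treated identically.
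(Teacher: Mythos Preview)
Your proof is correct and follows essentially the same approach as the paper's: a case split into the ``simple'' configurations handled by direct inspection of the partition bijection, and the $H$-graph configurations (cases (4)--(6) and their duals) handled by Lemma~\ref{diff}, with Lemma~\ref{basicdecom} identifying $\mathcal H'_{(\lambda)}(a_1,\tau)$ with $\mathcal P^\tau_\nu$ so that $P,Q$ lie in a common $\mathcal H'$-block. One minor remark: case~(7) is dual to case~(3), not to cases (4)--(6), so it belongs with your first bullet (the partition-bijection cases) rather than with the $H$-graph cases; and in the dual cases (8)--(10) you invoke Lemma~\ref{diff}(1) rather than~(2), since there $G_{T,\rho}\cong H_*(\tau,a_1)$ and the $\tau$-diagonals sit on the odd-indexed tiles.
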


\begin{proof}

When $a_1\neq a_3, a_2\neq a_4$, we have a partition bijection between $\mathcal P(G_{T,\rho})$ and $\mathcal P(G_{T',\rho})$. By the definition of $\psi_{\rho}$, it is easy to see the following fact. If $P$ and $Q$ are not in the same set $S$ which in the domain of the partition bijection, then $\psi_{\rho}(P)\cap \psi_{\rho}(Q)=\emptyset$. Otherwise, the latter possibility happens.

When $a_1= a_3$ or $a_2= a_4$, we may assume $a_1=a_3$. In cases (4,5,6,8,9,10), by Lemma \ref{basicdecom} and Lemma \ref{diff}, there exists a tile $G$ with diagonal labeled $\tau$ such that $P$ and $Q$ can twist on $G$ and $P\cap E(G)\neq Q\cap E(G)$. In the remaining cases, the result can be proved similarly as the case $a_1\neq a_3, a_2\neq a_4$.
\end{proof}

\medskip

\begin{Lemma}\label{2^m}

Keep the foregoing notations.

\begin{enumerate}[$(1)$]

  \item For any $P\in \mathcal P^{\tau}_{\nu}(G_{T,\rho})$, we have $|\psi_{\rho}(P)|=2^{\sum_{\nu_l=1}\nu_l}$. Precisely, for each $t$ such that $\nu_t=1$, there are two possibilities to choose the edges of $G'_{l_t}$ as $\tau'$-mutable edges pair in $P'$ for each $P'\in \psi_{\rho}(P)$, the other remaining edges are the same for $P'\neq P''\in \psi_{\rho}(P)$. Here $G'_{l_t}$ is the tile of $G_{T',\rho}$ determined by the $t$-th $\tau'$-equivalence class when $\nu_t=1$.

  \item For any $P'\in \mathcal P^{\tau'}_{\nu}(G_{T',\rho})$, we have $|\psi'_{\rho}(P')|=2^{\sum_{\nu_l=1}\nu_l}$. Precisely, for each $t$ such that $\nu_t=1$, there are two possibilities to choose the edges of $G_{l_t}$ as $\tau$-mutable edges pair in $P$ for each $P\in \psi'_{\rho}(P')$, the other remaining edges are the same for $P\neq Q\in \psi_{\rho}(P)$. Here $G_{l_t}$ is the tile of $G_{T,\rho}$ determined by the $t$-th $\tau$-equivalence class when $\nu_t=1$.

\end{enumerate}

\end{Lemma}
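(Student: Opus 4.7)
We prove assertion (1); part (2) follows by the entirely analogous argument applied to $\psi'_\rho$, using the bijective correspondence $P \in \psi'_\rho(P') \iff P' \in \psi_\rho(P)$ established in Proposition \ref{compare-loc}(2)(a). The argument is a direct case-by-case inspection that mirrors the ten-case analysis in Section \ref{compare}.

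The key reduction is as follows. By Proposition \ref{compare-loc}(2)(b), if $P \in \mathcal P^\tau_\nu(G_{T,\rho})$ and $P' \in \psi_\rho(P)$, then $P' \in \mathcal P^{\tau'}_{-\nu}(G_{T',\rho})$. Hence every index $t$ with $\nu_t = 1$ corresponds to an index where the $t$-th $\tau'$-equivalence class is assigned value $-\nu_t = -1$, and by the observation just after the definition of $\mathcal P^\tau_\nu$ this class must correspond to a diagonal of a unique tile $G'_{l_t}$ on which $P'$ can twist. Since the class contains zero edges of $P'$, the remaining two edges of $G'_{l_t}$ (together forming a $\tau'$-mutable edges pair) must both be in $P'$, and for a square tile there are exactly two such opposite-sides pairs. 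Thus each $t$ with $\nu_t = 1$ contributes a factor of $2$ to $|\psi_\rho(P)|$, provided the choices at distinct $t$'s are independent.

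To verify this count and the independence, we run through the cases of Section \ref{compare}. In the subcases handled by a direct partition bijection (case A with $a_1 \neq a_3$ and $a_2 \neq a_4$, and subcases (1), (2), (3), (7) of case B with $a_1 = a_3$), the map $\psi_\rho(P) = \pi(S)$ for the unique block $S \ni P$, and inspection of the figures shows that $|\pi(S)|$ equals $1$ or $2$: the value $2$ occurs precisely when there is exactly one $\tau$-equivalence class (necessarily of type (I)) with $\nu_t = 1$, in which case the two elements of $\pi(S)$ differ exactly by a swap of the two opposite-sides pairs on the corresponding $G'_{l_t}$. In the remaining subcases (4), (5), (6) (and their duals (8), (9), (10)), $\psi_\rho(P)$ is defined explicitly as an $\mathcal H$-set, and by Lemma \ref{basicdecom}(1) it coincides with $\mathcal P^{\tau'}_{\nu'}(H_k(\tau', a_1))$ for a sequence $\nu'$ which, after translation through the identifications in Proposition \ref{compare-loc}, satisfies $\nu' = -\nu$. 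Lemma \ref{diff} then shows that two distinct elements of this $\mathcal H$-set differ exactly on the tiles where $\nu'_j = -1$, and at each such tile the matching admits precisely two consistent completions (the two opposite-sides pairs), the choices being mutually independent across tiles because the other constraints are determined by the fixed entries of the $\lambda$-sequence. Combining, $|\psi_\rho(P)| = 2^{|\{t : \nu'_t = -1\}|} = 2^{|\{t : \nu_t = 1\}|} = 2^{\sum_{\nu_l = 1}\nu_l}$, and the identification of $G'_{l_t}$ as the tile whose diagonal realizes the $t$-th $\tau'$-equivalence class is exactly the one produced by the $\mathcal H$-decomposition.

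The principal obstacle is the bookkeeping in the subcases (4)--(6): one must keep track of the indexing convention linking the $\mathcal H$-decomposition indices to the Section \ref{compare} enumeration of $\tau'$-equivalence classes, and verify that the local two-fold choice on $G'_{l_t}$ is consistent with the global perfect matching condition along the rest of $G_{T',\rho}$. This is mechanical once the $\psi_\rho$ formula and Lemmas \ref{basicdecom}, \ref{diff} are in place, but requires care in each subcase to avoid off-by-one errors in the boundary entries of the $\lambda$-sequence.
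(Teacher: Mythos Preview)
Your proposal is correct and follows essentially the same approach as the paper's proof: both reduce to the case analysis of Section~\ref{compare}, verifying the claim by direct inspection of the explicit partition bijections in the pictorial cases and by invoking Lemma~\ref{basicdecom} on the $\mathcal H$-sets in cases (4)--(6) and their duals (8)--(10). Your write-up is somewhat more detailed in motivating the count via Proposition~\ref{compare-loc}(2)(b) and in invoking Lemma~\ref{diff} to justify the independence of the twofold choices, but the underlying argument is the same.
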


\begin{proof}

We shall only prove (1) because (2) can be proved dually.

When $a_1\neq a_3, a_2\neq a_4$. It is easy to verify the result by checking the partition bijection between $\mathcal P(G_{T,\rho})$ and $\mathcal P(G_{T',\rho})$.

When $a_1=a_3$ or $a_2=a_4$, we may assume $a_1=a_3$. In cases (4,5,6), by Lemma \ref{basicdecom} (1), $\psi_{\rho}(P)=\mathcal H_{(\lambda_1,\cdots,\lambda_s)}(\tau',a_1)$ for some $s$ and $(\lambda_1,\cdots,\lambda_s)$. It is easy to see $\mathcal H_{(\lambda_1,\cdots,\lambda_s)}(\tau',a_1)$ satisfies the required condition. In cases (8,9,10), by Lemma \ref{basicdecom} (2), $\psi_{\rho}(P)=\mathcal H'_{(\lambda_1,\cdots,\lambda_s)}(a_1,\tau)$ for some $s$ and $(\lambda_1,\cdots,\lambda_s)$. It is easy to see $\mathcal H'_{(\lambda_1,\cdots,\lambda_s)}(a_1,\tau)$ satisfies the required condition. In the remaining cases, the result can be proved similarly as the case $a_1\neq a_3, a_2\neq a_4$.
\end{proof}

\medskip

We need the following observations for the proof of Theorem \ref{mainthm}.

\medskip

\begin{Lemma}\label{maxtomax}
Keep the foregoing notations.
\begin{enumerate}[$(1)$]

  \item $P_{+}(G_{T',\rho})\subset \psi_{\rho}(P_{+}(G_{T,\rho}))$,\;\;  $P_{-}(G_{T',\rho})\subset \psi_{\rho}(P_{-}(G_{T,\rho}))$.

  \item $P_{+}(G_{T,\rho})\subset \psi'_{\rho}(P_{+}(G_{T',\rho}))$,\;\;  $P_{-}(G_{T,\rho})\subset \psi'_{\rho}(P_{-}(G_{T',\rho}))$.

\end{enumerate}

\end{Lemma}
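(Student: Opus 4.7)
The plan is to proceed by the case analysis established in Section \ref{compare}, exploiting the explicit descriptions of $\psi_\rho$ and $\psi'_\rho$ case by case. By the symmetry of the construction (swapping the roles of $T$ and $T'$, and noting that $\mu_\tau(\mu_\tau(T))=T$), statement $(2)$ will follow from statement $(1)$, and the $P_-$ assertion will follow from the $P_+$ assertion by exchanging the conventions ``clockwise'' and ``counterclockwise'' in Lemma \ref{max-min}. Hence it suffices to prove $P_{+}(G_{T',\rho})\subset \psi_{\rho}(P_{+}(G_{T,\rho}))$.

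The first step is a characterization: by the definition of $P_+$ and by Lemma \ref{max-min}, $P_+(G_{T,\rho})$ is the unique perfect matching consisting entirely of boundary edges in which the edge chosen from tile $G(p_j)$ lies counterclockwise from the diagonal when $j$ is odd, and clockwise when $j$ is even. In particular, for each $\tau$-equivalence class of $G_{T,\rho}$, the edges of that class that belong to $P_+(G_{T,\rho})$ are completely determined by this local rule, and similarly for $G_{T',\rho}$.

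The second step is the case check itself. When $a_1\neq a_3$ and $a_2\neq a_4$, in each of cases (1)--(3) of Section \ref{compare} (and dually of (4) and (5)), $\psi_\rho$ is defined via an explicit partition bijection depicted in the figures. I would read off the block containing $P_+(G_{T,\rho})$ from the orientation rule above and verify, by direct inspection of the corresponding block on the right, that it contains $P_+(G_{T',\rho})$. For the subsection where $a_1=a_3$ (so $a_2\neq a_4$), cases (1)--(3) are handled identically by inspection. The nontrivial cases are (4), (5), (6), where $\psi_\rho$ is defined via the identification $\mathcal H'_{(\lambda_1,\dots,\lambda_k)}(a_1,\tau)\mapsto\mathcal H_{(1-\lambda_1,\dots,1-\lambda_k)}(\tau',a_1)$. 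Here I would determine the unique sequence $(\lambda_1,\dots,\lambda_k)$ for which $P_+(G_{T,\rho})\in\mathcal H'_{(\lambda_1,\dots,\lambda_k)}(a_1,\tau)$ by examining which $\tau$-edges of $G_{T,\rho}$ lie in $P_+$: these are precisely the tiles where the orientation rule forces both $\tau$-edges into the matching, giving a definite pattern in the $\lambda$-sequence. Then I would check that in $G_{T',\rho}$, the analogous rule singles out $P_+(G_{T',\rho})$ in $\mathcal H_{(1-\lambda_1,\dots,1-\lambda_k)}(\tau',a_1)$, which follows because a flip at $\tau$ exchanges the two triangles incident to $\tau$ and therefore exchanges which tiles are forced to contain their $\tau'$-edges in the maximal matching.

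The main obstacle I anticipate is the bookkeeping for cases (4)--(6): there one must track carefully how the orientation of the parity of the tile index interacts with the flip, in order to verify that the $\lambda_i$ determined from the orientation rule on the $T$-side is indeed $1$ if and only if the corresponding $\tilde\lambda_i$ on the $T'$-side equals $0$. Once this parity check is carried out, the inclusion $P_+(G_{T',\rho})\in\psi_\rho(P_+(G_{T,\rho}))$ is immediate from the definition of $\psi_\rho$, and assembling all the cases completes the proof.
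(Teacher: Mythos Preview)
Your proposal is correct and follows essentially the same case-by-case approach as the paper. One minor remark: in cases (4)--(6) the paper's argument is shorter than the ``parity bookkeeping'' you anticipate --- since $G_{T,\rho}$ and $G_{T',\rho}$ are (up to gluing a tile or two) straight horizontal snake graphs $H_s(a_1,\tau)$ and $H_{s+1}(\tau',a_1)$, one simply observes directly that $P_+(H_s(a_1,\tau))\in\mathcal H'_{(0,\dots,0)}(a_1,\tau)$ and $P_+(H_{s+1}(\tau',a_1))\in\mathcal H_{(1,\dots,1)}(\tau',a_1)$ (and complementarily for $P_-$), so the sequences are constant and $\psi_\rho$ sends one to the other by construction.
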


\begin{proof}

We shall only prove (1) because (2) can be proved dually.

Firstly, suppose that $a_1\neq a_3, a_2\neq a_4$. It is easy to see the result holds by checking the partition bijection between $\mathcal P(G_{T,\rho})$ and $\mathcal P(G_{T',\rho})$.

Then, suppose that $a_1=a_3$ or $a_2=a_4$, we may assume $a_1=a_3$. In cases (4,5,6), since $P_{+}(H_s(a_1,\tau))\in \mathcal H'_{(0,\cdots,0)}(a_1,\tau)$, $P_{-}(H_s(a_1,\tau))\in \mathcal H'_{(1,\cdots,1)}(a_1,\tau)$ and  $P_{+}(H_{s+1}(\tau',a_1))\in \mathcal H_{(1,\cdots,1)}(\tau',a_1)$, $P_{-}(H_{s+1}(\tau',a_1))\in \mathcal H_{(0,\cdots,0)}(\tau',a_1)$, our result follows in these cases. Dually, our result holds in cases (8,9,10).
In the remainding cases, the result can be verified similarly as the case $a_1\neq a_3, a_2\neq a_4$.
\end{proof}

\medskip

\begin{Lemma}\label{com}

Keep the foregoing notations.

\begin{enumerate}[$(1)$]

  \item Suppose that $P\in \mathcal P(G_{T,\rho})$ can twist on a tile $G(p)$ with diagonal labeled $a=a_q,q=1,2,3,4$. If all $\tau$-mutable edges pairs in $P$ are labeled $a_{q-1},a_{q+1}$, then there exists $P'\in \psi_{\rho}(P)$ such that $P'$ satisfies

       \begin{enumerate}[$(a)$]

         \item $P'$ can twist on $G'(p)$, here $G'(p)$ is the tile of $G_{T',\rho}$ determined by $p$.

         \item $\mu_pP'\in \psi_{\rho}(\mu_{p}P)$.

         \item All $\tau'$-mutable edges pairs in $P'$ are labeled $a_{q-1}, a_{q+1}$.

         \item $n^+_p(a,P)-m^+_p(a,\rho)=n^+_p(a,P')-m^+_p(a,\rho)$ and $n^-_p(a,P)-m^-_p(a,\rho)=n^-_p(a,P')-m^-_p(a,\rho)$.

       \end{enumerate}

  \item Suppose that $P'\in \mathcal P(G_{T',\rho})$ can twist on a tile $G'(p)$ with diagonal labeled $a=a_q,q=1,2,3,4$. If all $\tau'$-mutable edges pairs in $P'$ are labeled $a_{q-1},a_{q+1}$, then there exists $P\in \psi'_{\rho}(P')$ such that $P$ satisfies

      \begin{enumerate}[$(a)$]

         \item $P$ can twist on $G(p)$, here $G(p)$ is the tile of $G_{T,\rho}$ determined by $p$.

         \item $\mu_pP\in \psi'_{\rho}(\mu_{p}P')$.

         \item all $\tau$-mutable edges pairs in $P$ are labeled $a_{q-1}, a_{q+1}$.

         \item $n^+_p(a,P)-m^+_p(a,\rho)=n^+_p(a,P')-m^+_p(a,\rho)$ and $n^-_p(a,P)-m^-_p(a,\rho)=n^-_p(a,P')-m^-_p(a,\rho)$.

       \end{enumerate}

\end{enumerate}

\end{Lemma}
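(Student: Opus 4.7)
The plan is to prove (1) by following the same case analysis that was established earlier in Section~\ref{compare}, namely the configurations (the subcase $a_1\neq a_3,a_2\neq a_4$ together with the ten subcases produced when $a_1=a_3$); (2) then follows by the obvious symmetry between $\psi_\rho$ and $\psi'_\rho$. In each configuration the map $\psi_\rho$ has already been described explicitly, either as a concrete partition bijection on the pictures drawn in Section~\ref{compare}, or, in the ``$H_s(a,b)$'' configurations (4)--(6) and (8)--(10), as the operation that flips a binary sequence $(\lambda_1,\dots,\lambda_{s+2})\mapsto (1-\lambda_1,\dots,1-\lambda_{s+2})$ between $\mathcal H'_{(\lambda_\bullet)}(a_1,\tau)$ and $\mathcal H_{(1-\lambda_\bullet)}(\tau',a_1)$. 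The strategy in each case is to look at the tile $G(p)$ where the twist happens, read off exactly which pair of edges of $G(p)$ lie in $P$, and then exhibit a matching $P'\in\psi_\rho(P)$ whose restriction to the corresponding tile $G'(p)$ is the twistable pair labeled $a_{q-1},a_{q+1}$.

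First I would handle the easy configurations, namely $a_1\neq a_3,a_2\neq a_4$ together with subcases (1)--(3) and (7) in the $a_1=a_3$ picture. Here $\psi_\rho$ is a partition bijection coming from a local $6$-vertex rewrite that was drawn edge-by-edge; the tile $G(p)$ and its corresponding tile $G'(p)$ have diagonals $a_q$ and $a_q$ respectively, and the rewrite preserves the labels on every edge outside the local picture. One simply chooses $P'$ to be the element of $\psi_\rho(P)$ whose local picture has the two edges labeled $a_{q-1},a_{q+1}$, and verifies (a), (b), (c) by inspection; condition (d) is immediate, since outside of the twist tile the labels along $\rho$ match those along the tiles of $G_{T,\rho}$ and $G_{T',\rho}$ tile-by-tile in the same order. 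The corresponding verification for $\mu_pP$ and $\mu_pP'$ is identical after exchanging the two pairs of edges of $G(p)$.

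The harder subcases are the torus-type configurations (4)--(6) and (8)--(10), where $G_{T,\rho}$ is built from $H_s(a_1,\tau)$ and $G_{T',\rho}$ from $H_{s+1}(\tau',a_1)$ (up to extra gluing tiles). Here the twist tile $G(p)$ sits inside the $H_s$-part and has diagonal $\tau$ or $a_1$; I would split on whether $a=a_q$ equals $\tau$ (i.e.\ $q=1$, an $a_1$-diagonal tile) or $a=a_q\in\{a_2,a_4\}$, noting that the hypothesis on $\tau$-mutable pairs forces $q\in\{2,4\}$ precisely when the twist is in a tile of $H_s$ with diagonal $a_1$. By Lemma~\ref{basicdecom}, the sequence $(\lambda_1,\dots,\lambda_{s+2})$ encoding $P$ changes in exactly one coordinate under $\mu_p$. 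Applying the flip $\lambda\mapsto 1-\lambda$ produces two sequences encoding $\psi_\rho(P)$ and $\psi_\rho(\mu_pP)$ that again differ in the same single coordinate; picking $P'$ to be the element of $\psi_\rho(P)$ whose local picture on $G'(p)$ is the $a_{q-1},a_{q+1}$ pair (possible by the explicit description of $\mathcal H_{(\lambda_\bullet)}$ and $\mathcal H'_{(\lambda_\bullet)}$) yields (a), (b), (c). Condition (d) is where I expect the only real bookkeeping: one must check that the number of tiles of $G_{T,\rho}$ strictly after $G(p)$ whose edges in $P$ carry label $a$ equals the number of tiles of $G_{T',\rho}$ strictly after $G'(p)$ whose edges in $P'$ carry label $a$, and similarly strictly before; this reduces to Lemma~\ref{same-num} applied separately to the two subgraphs of $G_{T,\rho}$ and $G_{T',\rho}$ cut off by $G(p)$, since by construction $\psi_\rho$ restricts to the analogous association on each half.

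The main obstacle, as indicated above, is the verification of (d) in the $H_s$-type subcases: one has to be careful about boundary-of-interval effects at the two tiles bordering $G(p)$, because the sequence index at $p$ shifts by one between the $\mathcal H'$-indexing on the $T$-side and the $\mathcal H$-indexing on the $T'$-side. I would dispose of this by treating the twisted coordinate $\lambda_i$ separately, noting that the edges of $G(p)$ itself do not contribute to either of $n^{\pm}_p(a,P)$ or $n^{\pm}_p(a,P')$ (they are excluded by the indices $u>t+1$ and $u<t$ in Definition of $n^{\pm}_p$), so the required equality collapses onto the two halves of $\rho$ to which Lemma~\ref{same-num} directly applies. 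Part (2) is proved by interchanging the roles of $T$ and $T'$, $\tau$ and $\tau'$, and using that $P'\in\psi_\rho(P)\iff P\in\psi'_\rho(P')$, as recorded in Proposition~\ref{compare-loc}(2)(a).
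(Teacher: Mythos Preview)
Your overall strategy---a case-by-case verification following the configurations already set up in Section~\ref{compare}, with the easy pictorial cases handled by inspection and the $H_s$-type cases handled via the $(\lambda_\bullet)\mapsto(1-\lambda_\bullet)$ flip---is exactly what the paper does. Part (2) by symmetry is also how the paper proceeds.

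Two points need correction. First, the sentence ``split on whether $a=a_q$ equals $\tau$ (i.e.\ $q=1$, an $a_1$-diagonal tile) or $a=a_q\in\{a_2,a_4\}$'' is garbled: the diagonal of $G(p)$ is $a_q$ with $q\in\{1,2,3,4\}$, never $\tau$. In cases (4)--(6) the tiles of the $H_s$-part have diagonals alternating $a_1,\tau,a_1,\tau,\dots$, so $G(p)$ lies in an odd tile (diagonal $a_1$, hence $q\in\{1,3\}$) unless it is one of the extra glued tiles in cases (5) or (6), where $q\in\{2,4\}$. The paper splits exactly this way: in case (4) one has $a=a_1$ only; in case (5) one treats $a=a_1$ as in case (4) and then $a=a_4$ separately on the terminal tile.

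Second, your plan for (d)---cut $G_{T,\rho}$ at $G(p)$ and apply Lemma~\ref{same-num} to each half---does not work as stated, because the two pieces are not themselves of the form $H_{s'}(a_1,\tau)$ with the correct boundary conventions, so Lemma~\ref{same-num} does not literally apply to them. The paper instead does a direct count in case (4) (both sides equal $s+2-i$ and $i-1$ respectively), and in case (5) argues tile-by-tile using Lemmas~\ref{non-tau-mu1} and~\ref{non-tau-mu2}, which say exactly when the $a_2$- or $a_4$-edge of the $(2i-1)$-th tile is non-$\tau$-mutable in terms of the adjacent pair $(\lambda_{i-1},\lambda_i)$. Since under $\lambda\mapsto 1-\lambda$ these conditions match up position-by-position (with a shift of one tile on the $T'$-side, the first tile of $G_{T',\rho}$ never contributing), the counts on each side of $G(p)$ agree. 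If you want to keep your cutting language, you should invoke Lemmas~\ref{non-tau-mu1} and~\ref{non-tau-mu2} directly on each half rather than Lemma~\ref{same-num}.
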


\begin{proof}

We shall only prove (1) because (2) can be proved dually.

When $a_1\neq a_3, a_2\neq a_4$, by checking the partition bijection between $\mathcal P(G_{T,\rho})$ and $\mathcal P(G_{T',\rho})$, we can see the results hold.

When $a_1= a_3$ or $a_2= a_4$, we may assume $a_1=a_3$. The cases (1,2,3,7) can be verified similarly as the case $a_1\neq a_3,a_2\neq a_4$.

In case (4), $a=a_1$, $P\in \mathcal H'_{(\lambda_1,\cdots,\lambda_{s+2})}(a_1,\tau)$ for some $\lambda\in \{0,1\}^{s+2}$. Thus, $\psi_{\rho}(P)=\mathcal H_{(1-\lambda_1,\cdots,1-\lambda_{s+2})}(\tau',a_1)$. Assume $G(p)$ is the $(2i-1)$-th tile, we have $\mu_{p}P\in \mathcal H'_{(\lambda_1,\cdots,\lambda_{i-1},1-\lambda_i,\lambda_{i+1},\cdots,\lambda_{s+2})}(a_1,\tau)$. For any perfect matching $P'$ in $\psi_{\rho}(P)$, $P'$ can twist on $G'(p)$, the $2i$-th tile of $G_{T',\rho}$, and $\mu_pP'\in \mathcal H_{(1-\lambda_1,\cdots,\lambda_i,\cdots,1-\lambda_s)}(\tau',a_1)=\psi_{\rho}(\mu_pP)$. We can choose $P'\in \psi_{\rho}(P)$ such that all the $\tau'$-mutation edges pairs are labeled $a_2,a_4$. Then $n^+_p(a,P)-m^+_p(a,\rho)$ and $n^+_p(a,P')-m^+_p(a,\rho)$ are both equal to $s+2-i$ and $n^-_p(a,P)-m^-_p(a,\rho)$ and $n^-_p(a,P')-m^-_p(a,\rho)$ are both equal to $i-1$.

In case (5), if $a=a_1$, then the results can be proved similarly as case (4). Now suppose that $a\neq a_1$, then $a=a_4$ and $G(p)$ is the last tile of $G_{T,\rho}$.

If the upper right edge labeled $a_8$ of $G(p)$ is in $P$, then $P\in \mathcal H'_{(\lambda_1,\cdots,\lambda_{s+1})}(a_1,\tau)$ for some $\lambda\in \{0,1\}^{s+1}$. Thus $\psi_{\rho}(P)=\mathcal H_{(1-\lambda_1,\cdots,1-\lambda_{s+1})}(\tau',a_1)$. We have the upper right edge labeled $a_8$ of $G'(p)$ is in $P'$ for any $P'\in \psi_{\rho}(P)$, where $G'(p)$ is the last tile of $G_{T',\rho}$. Thus, $P'$ can twist on the tile $G'(p)$ for any $P'\in \psi_{\rho}(P)$. We can choose $P'\in \psi_{\rho}(P)$ such that all the $\tau'$-mutation edges pairs are labeled $a_1$. Since $\mu_pP\in \mathcal H'_{(\lambda_1,\cdots,\lambda_{s+1},1)}(a_1,\tau)$ and $\mu_pP'\in \mathcal H_{(1-\lambda_1,\cdots,1-\lambda_{s+1},1)}(\tau',a_1)$, $\mu_pP'\in \psi_{\rho}(\mu_pP)$.

If the upper right edge labeled $a_7$ of $G(p)$ is in $P$, then $P\in \mathcal H'_{(\lambda_1,\cdots,\lambda_{s+2})}(a_1,\tau)$ for some $\lambda\in \{0,1\}^{s+2}$. Since $P$ can twist on $G(p)$, the edge labeled $\tau$ of $G(p)$ is in $P$, equivalently, $\lambda_{s+2}=1$. Then the upper right edge labeled $a_8$ of $G(p)$ is in $\mu_pP$. Change the roles of $P$ and $\mu_pP$, this case can be proved as the above case.

In both cases, by Lemma \ref{non-tau-mu1} and Lemma \ref{non-tau-mu2}, the edge labeled $a_4$ of the $(2i-1)$-th tile of $G_{T,\rho}$ is non-$\tau$-mutable in $P$ if and only if the edge labeled
$a_4$ of the $(2i+1)$-th tile of $G_{T',\rho}$ is non-$\tau'$-mutable in $P'$, we have $n^+_p(a,P)-m^+_p(a,\rho)=n^+_p(a,P')-m^+_p(a,\rho)$ and $n^-_p(a,P)-m^-_p(a,\rho)=n^-_p(a,P')-m^-_p(a,\rho)$, note that the edge of the first tile of $G_{T',\rho}$ labeled $a_4$ can not be a non-$\tau'$-mutable edge.

Therefore, our results hold for case (5).

For case (6), the results can be proved similarly to cases (4) and (5). Therefore, the results follow in cases (4,5,6). Dually, the results can be proved in cases (8,9,10).

Similarly, one can check the results hold if we change the roles of $\tau$ and $\tau'$. Our results follow.
\end{proof}

\medskip

\section{Proof of the Theorem \ref{partition bi}}\label{main1}

In this section, we prove Theorem \ref{partition bi}. Let $\gamma_i,i\in [1,k]$ be the subcurves of $\gamma$ defined at the beginning of Section \ref{compare}. Our strategy is the following: first, we show each $P\in \mathcal P(G_{T,\gamma})$ uniquely corresponds to an element $(P_i)\in \coprod \mathcal P(G_{T,\gamma_i})$, see Proposition \ref{decompose}. Secondly, using the functions $\psi_{\gamma_i}, \psi'_{\gamma_i}$ defined in Section \ref{compare} we construct the partition bijection between $\mathcal P(G_{T,\gamma})$ and $\mathcal P(G_{\mu_{\tau}(T),\gamma})$ in Subsection \ref{main11}. We mainly deal with the coefficients in subsection \ref{main12}.

\medskip

\subsection{Proof of the Theorem \ref{partition bi} (1)}\label{main11}

Given a perfect matching $P$ of $G_{T,\gamma}$, for each $j$, we associate $P$ a perfect matching $\iota_j(P)$ of $G_{T,\gamma_j}$ as follows: denote the edges incident to $u_i$ by $v_i,w_i$ and $v'_i,w'_i$, as shown in the figure below, where $v_i,w_i$ are edges of $G_{T,\gamma_i}$ and $v'_i,w'_i$ are edges of $G_{T,\gamma_{i+1}}$. By Lemma \ref{inone}, $u_i\in P$ or $v_i\in P$ or $w'_i\in P$.

\centerline{\begin{tikzpicture}
\draw[-] (1,0) -- (2,0);
\draw[-] (1,-1) -- (2,-1);
\draw[-] (2,0) -- (3,0);
\draw[-] (2,-1) -- (3,-1);
\draw[-] (3,0) -- (3,-1);
\node[above] at (1.5,-0.05) {$v_i$};
\node[below] at (1.5,-0.95) {$w_i$};
\node[right] at (1.75,-0.5) {$u_i$};
\node[above] at (2.5,-0.05) {$v'_i$};
\node[below] at (2.5,-0.95) {$w'_i$};
\draw [-] (1, 0) -- (1,-1);
\draw [-] (2, 0) -- (2,-1);
\draw [fill] (1,0) circle [radius=.05];
\draw [fill] (2,0) circle [radius=.05];
\draw [fill] (1,-1) circle [radius=.05];
\draw [fill] (2,-1) circle [radius=.05];
\draw [fill] (3,0) circle [radius=.05];
\draw [fill] (3,-1) circle [radius=.05];
\end{tikzpicture}}

 \[\begin{array}{ccl} \iota_j(P) &=&

         \left\{\begin{array}{ll}

             P\cap E(G_{T,\gamma_j}), &\mbox{if $v_{j-1},w'_{j}\notin P$}, \\

             P\cap E(G_{T,\gamma_j})\cup\{u_{j-1}\}, &\mbox{if $v_{j-1}\in P$ and $w'_{j}\notin P$}, \\

             P\cap E(G_{T,\gamma_j})\cup\{u_{j}\}, &\mbox{if $v_{j-1}\notin P$ and $w'_{j}\in P$}, \\

             P\cap E(G_{T,\gamma_j})\cup\{u_{j-1},u_j\}, &\mbox{if $v_{j-1},w'_{j}\in P$},

         \end{array}\right.

 \end{array}\]
Specially,

 \[\begin{array}{ccl} \iota_1(P) &=&

         \left\{\begin{array}{ll}

             P\cap E(G_{T,\gamma_1}), &\mbox{if $w'_{1}\notin P$}, \\

             P\cap E(G_{T,\gamma_1})\cup\{u_1\}, &\mbox{if $w'_{1}\in P$}.

         \end{array}\right.

 \end{array}\]

 \[\begin{array}{ccl} \iota_k(P) &=&

         \left\{\begin{array}{ll}

             P\cap E(G_{T,\gamma_1}), &\mbox{if $v_{k-1}\notin P$}, \\

             P\cap E(G_{T,\gamma_1})\cup\{u_{k-1}\}, &\mbox{if $v_{k-1}\in P$}.

         \end{array}\right.

 \end{array}\]

One can see $\iota_j(P)$ is a perfect matching of $G_{T,\gamma_j}$ by definition for $j\in [1, k]$.

\medskip

The following proposition illustrates $\mathcal P(G_{T,\gamma})$ is a subset of $\coprod_{i=1}^{k}\mathcal P(G_{T,\gamma_i})$. Thus we can study $\mathcal P(G_{T,\gamma})$ ``locally".

\medskip

\begin{Proposition}\label{decompose}

Keep the notations as above. Assume $\tau'\neq \gamma \notin T$. Then we have a bijection
$$\iota: \mathcal P(G_{T,\gamma})\rightarrow \{(P_i)_{i} \mid P_i\in \mathcal P(G_{T,\gamma_i}), \forall j, u_j\in P_j\cup P_{j+1}\},\;\;\;P\rightarrow (\iota_i(P))_{i}.$$

\end{Proposition}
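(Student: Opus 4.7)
The plan is to establish the bijection in three stages: verify that $\iota$ lands in the stated codomain (well-definedness), construct an inverse map explicitly, and check that the two maps are mutually inverse. The central technical input throughout is Lemma \ref{inone}, which forbids certain pairs of non-adjacent edges in consecutive tiles from simultaneously lying in a perfect matching.

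First I would verify that each $\iota_j(P)$ is a perfect matching of $G_{T,\gamma_j}$. Interior vertices inherit their cover from $P \cap E(G_{T,\gamma_j})$. The real work is at the four boundary vertices (the endpoints of $u_{j-1}$ and of $u_j$). Consider the top of $u_{j-1}$: exactly one of the three incident edges $v_{j-1}, u_{j-1}, v'_{j-1}$ lies in $P$; if $v'_{j-1} \in P$ or $u_{j-1} \in P$, the covering edge is already in $E(G_{T,\gamma_j}) \subseteq \iota_j(P)$, while if $v_{j-1} \in P$ the definition explicitly adds $u_{j-1}$. The most delicate case is the bottom of $u_{j-1}$ when $w_{j-1} \in P$: Lemma \ref{inone} applied to $v'_{j-1}$ and $w_{j-1}$ forces $v'_{j-1} \notin P$, and since $u_{j-1} \notin P$ as well (it shares the bottom vertex with $w_{j-1}$), the top of $u_{j-1}$ must then be covered by $v_{j-1}$, triggering the addition of $u_{j-1}$ to $\iota_j(P)$, which correctly covers the bottom. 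Uniqueness of the cover follows in the same style: for example, when $u_{j-1}$ is added because $v_{j-1} \in P$, the edges $v'_{j-1}$ (shared top vertex with $v_{j-1}$) and $w'_{j-1}$ (Lemma \ref{inone}) are both absent from $P$, so no double-coverage occurs. The codomain condition $u_j \in \iota_j(P) \cup \iota_{j+1}(P)$ follows by the same analysis: either $u_j \in P$ (and then $u_j$ lies in both sides via $P \cap E(G_{T,\gamma_j})$ and $P \cap E(G_{T,\gamma_{j+1}})$), or at least one of $v_j, w'_j$ lies in $P$, triggering the appropriate addition.

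Next, I would construct the inverse: given a tuple $(P_i)$ with $u_j \in P_j \cup P_{j+1}$ for all $j$, define $P \subseteq E(G_{T,\gamma})$ to contain $u_j$ if and only if $u_j \in P_j \cap P_{j+1}$, and to contain each non-gluing edge $e \in E(G_{T,\gamma_i})$ if and only if $e \in P_i$. A short case analysis at each endpoint of each $u_j$---distinguishing whether $u_j$ lies in both $P_j$ and $P_{j+1}$ or in only one---verifies that $P$ is a perfect matching of $G_{T,\gamma}$: when $u_j \notin P_j$ but $u_j \in P_{j+1}$, for instance, the top of $u_j$ is covered in $P_j$ by $v_j$ (forced since $u_j \notin P_j$), so $v_j \in P$ correctly covers it in $G_{T,\gamma}$. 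For the mutual inverse property, non-gluing edges are preserved tautologically, while for gluing edges the identity $u_j \in \iota_j(P) \cap \iota_{j+1}(P) \Leftrightarrow u_j \in P$ holds because the only alternative (``$u_j$ added to both sides via $v_j, w'_j \in P$'') is forbidden by Lemma \ref{inone}. The reverse composition $(P_i) \mapsto P \mapsto (\iota_j(P))$ is verified by matching the four cases of the definition of $\iota_j$ against the reconstructed $P$. The main obstacle throughout is the systematic bookkeeping around the gluing edges: Lemma \ref{inone}, invoked in the right configurations, is precisely what makes the four-case definition of $\iota_j(P)$ consistent, and eliminates the configurations that would otherwise leave a boundary vertex uncovered or doubly covered.
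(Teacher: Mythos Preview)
Your proposal is correct and follows essentially the same route as the paper: define $\iota$, build the explicit inverse $(P_i)_i \mapsto (\cup_i P_i)\setminus\{u_1,\ldots,u_{k-1}\}$ (keeping $u_j$ precisely when $u_j\in P_j\cap P_{j+1}$), and verify both compositions are the identity using Lemma~\ref{inone} to rule out the bad configurations at the gluing edges. You are in fact more careful than the paper in checking that each $\iota_j(P)$ is a perfect matching of $G_{T,\gamma_j}$---the paper simply asserts this ``by definition''---but the overall architecture and the role of Lemma~\ref{inone} are identical.
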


\begin{proof}
We first prove $\iota$ is well-defined. For any $1\leq j<k$, by the constructions of $\iota_j(P)$ and $\iota_{j+1}(P)$, $u_j\notin \iota_j(P)$ if and only if $v_j\in P$, $u_j\notin \iota_{j+1}(P)$ if and only if $w'_j\in P$. However, $v_j\in P$ and $w'_j\in P$ can not hold at the same time by Lemma \ref{inone}. Thus for any $1\leq j<k$, $u_j\in \iota_j(P)\cup \iota_{j+1}(P)$.

Next we show $\iota$ is a bijection. Define
$$\iota^{-1}: \{(P_i)_{i}\;|\;P_i\in \mathcal P(G_{T,\gamma_i}), \forall j, u_j\in P_j\cup P_{j+1}\}\rightarrow \mathcal P(G_{T,\gamma}),$$
$$(P_i)_i\rightarrow (\cup_{i} P_i)\setminus \{u_1,\cdots,u_{k-1}\}.$$
Here in the notation $(\cup_{i} P_i)\setminus \{u_1,\cdots,a_{k-1}\}$, if $u_j\in P_j\cap P_{j+1}$, then we delete $u_j$ in $\cup_{i} P_i$ once, this means $u_j\in (\cup_{i} P_i)\setminus \{u_1,\cdots,u_{k-1}\}$ in this case. By Lemma \ref{inone}, for each $1\leq j<k$, either $u_j\in P_j$ or $v_j\in P_j$, and either $u_j\in P_{j+1}$ or $w'_j\in P_{j+1}$. Thus one of the cases $v_j\in \iota((P_i)_i)$, $w'_j\in \iota((P_i)_i)$ and $u_j\in \iota((P_i)_i)$ happens. Hence $(\cup_{i} P_i)\setminus \{u_1,\cdots,u_{k-1}\}$ is a perfect matching of $G_{T,\gamma}$.

Now we prove $\iota^{-1}$ is the inverse of $\iota$.

For any $P\in \mathcal P(G_{T,\gamma})$, for any $1\leq j<k$, if $v_j\in P$, then $v_j\in \iota_j(P)$, and hence $v_j\in \iota^{-1}\iota(P)$; if $u_j\in P$, then $u_j\in \iota_j(P)\cap \iota_{j+1}(P)$, and hence $u_j\in \iota^{-1}\iota(P)$; if $w'_j\in P$, then $w'_j\in \iota_{j+1}(P)$, and hence $w'_j\in \iota^{-1}\iota(P)$. Therefore $\iota^{-1}\iota(P)=P$.

For any  $(P_i)_i \in\{(P_i)_{i}\;|\;P_i\in \mathcal P(G_{T,\gamma_i}), \forall j, u_j\in P_j\cup P_{j+1}\}$, for any $1\leq j<k$, one of the following cases happens: $v_j\in P_j, u_j\in P_{j+1}$ or $u_j\in P_j, w'_j\in P_{j+1}$ or $u_j\in P_j, u_j\in P_{j+1}$. If $v_j\in P_j, u_j\in P_{j+1}$, then $v_j\in \iota^{-1}((p_i)_i)$, and hence $v_j\in (\iota\iota^{-1}((P_i)_i))_j$ and $u_j\in (\iota\iota^{-1}((P_i)_i))_{j+1}$; if $u_j\in P_j, w'_j\in P_{j+1}$, then $w'_j\in \iota^{-1}((p_i)_i)$, and hence $u_j\in (\iota\iota^{-1}((P_i)_i))_{j}$ and $w'_j\in (\iota\iota^{-1}((P_i)_i))_{j+1}$; if $u_j\in P_j\cap P_{j+1}$, then $u_j\in \iota^{-1}((p_i)_i)$, and hence $u_j\in (\iota\iota^{-1}((P_i)_i))_{j}$ and $u_j\in (\iota\iota^{-1}((P_i)_i))_{j+1}$. Therefore $\iota\iota^{-1}((P_i)_i)=(P_i)_i$.

The proof of the proposition is complete.
\end{proof}

\medskip

According to this Proposition, we would write a perfect matching $P\in \mathcal P(G_{T,\gamma})$ as $(P_i)$ in the sequel. We arrange the order of $\tau$-equivalence classes in $G_{T,\gamma_i}$ according to the orientation of $\gamma$. By Lemma \ref{localdec}, for any perfect matching $P$ of $G_{T,\gamma}$, $P\in \mathcal P^{\tau}_{\nu}(G_{T,\gamma})$ for some sequence $\nu$. By the truncation of $\gamma$, we can truncate $\nu$ as $\nu^1,\cdots,\nu^k$ such that $P_i\in \mathcal P^{\tau}_{\nu^i}(G_{T,\gamma_i})$ for any $i\in [1,k]$. Similarly, we would write $P'\in \mathcal P(G_{T',\gamma})$ as $(P'_i)$ with $P'_i\in \mathcal P(G_{T',\gamma_i})$.

\medskip

\begin{Lemma}\label{formglobal}

Let $P=(P_i)\in \mathcal P(G_{T,\gamma})$. For any $(P'_i)$ such that $P'_i\in \psi_{\gamma_i}(P_i), i\in [1,k]$, $(P'_i)$ forms a perfect matching of $G_{T',\gamma}$. Moreover, if $u_j\notin P_j\cap P_{j+1}$ for some $j\in [1,k-1]$, then $u'_j\notin P'_j\cap P'_{j+1}$.

\end{Lemma}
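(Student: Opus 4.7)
The plan is to reduce the claim directly to Proposition \ref{compare-loc}(2)(d) together with Proposition \ref{decompose} applied to the flipped triangulation $T'$. Note first that, by the discussion preceding Proposition \ref{compare-loc}, the labels of the first and last gluing edges of $G_{T,\rho}$ and $G_{T',\rho}$ coincide for any subcurve $\rho = \gamma_i$. In our global picture, $u_j$ is simultaneously the last gluing edge of $G_{T,\gamma_j}$ and the first gluing edge of $G_{T,\gamma_{j+1}}$; hence there is a corresponding edge $u_j'$ in $G_{T',\gamma}$ that is the last gluing edge of $G_{T',\gamma_j}$ and the first gluing edge of $G_{T',\gamma_{j+1}}$.

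The first step is to verify the gluing compatibility required by Proposition \ref{decompose} for $T'$. Fix $j \in [1,k-1]$. Since $(P_i) = \iota(P)$ with $P \in \mathcal P(G_{T,\gamma})$, we have $u_j \in P_j \cup P_{j+1}$, so at least one of $u_j \in P_j$ or $u_j \in P_{j+1}$ holds. Applying Proposition \ref{compare-loc}(2)(d) to $\rho = \gamma_j$ with $P' = P'_j \in \psi_{\gamma_j}(P_j)$ tells us that $u_j \in P_j$ (as the last gluing edge) if and only if $u_j' \in P'_j$; applying it to $\rho = \gamma_{j+1}$ with $P' = P'_{j+1} \in \psi_{\gamma_{j+1}}(P_{j+1})$ tells us that $u_j \in P_{j+1}$ (as the first gluing edge) if and only if $u_j' \in P'_{j+1}$. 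Combining the two equivalences gives $u_j' \in P'_j \cup P'_{j+1}$ for every $j$, which is exactly the condition required by Proposition \ref{decompose}. Hence $\iota^{-1}((P'_i)) \in \mathcal P(G_{T',\gamma})$, i.e.\ $(P'_i)$ is a perfect matching of $G_{T',\gamma}$.

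For the moreover part, suppose $u_j \notin P_j \cap P_{j+1}$ for some $j$. Then $u_j \notin P_j$ or $u_j \notin P_{j+1}$. The same two applications of Proposition \ref{compare-loc}(2)(d) yield $u_j' \notin P'_j$ or $u_j' \notin P'_{j+1}$, so $u_j' \notin P'_j \cap P'_{j+1}$, as claimed.

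The only subtlety — not really a serious obstacle, just a bookkeeping point to be careful about — is to make sure the roles of ``first gluing edge'' and ``last gluing edge'' are matched correctly across the two adjacent subcurves $\gamma_j$ and $\gamma_{j+1}$, and that the orientation conventions used to define these edges in Section \ref{compare} are consistent with the ordering used for the $\gamma_i$'s. Once the identification of $u_j$ with the relevant gluing edges on both sides is fixed, the proof is a line-by-line transcription of Proposition \ref{compare-loc}(2)(d).
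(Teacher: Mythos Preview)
Your proof is correct and follows essentially the same approach as the paper's own proof: both invoke Proposition \ref{decompose} to get $u_j \in P_j \cup P_{j+1}$, then apply Proposition \ref{compare-loc}(2)(d) (the if-and-only-if on gluing edges) to transfer this to $u'_j \in P'_j \cup P'_{j+1}$, and finally invoke the dual of Proposition \ref{decompose} for $T'$; the moreover part is handled identically. Your version is simply a more explicit unpacking of the same argument, including the careful bookkeeping about first/last gluing edges that the paper leaves implicit.
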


\begin{proof}

For any $j\in [1,k-1]$, since $(P_i)\in \mathcal P(G_{T,\gamma})$, $u_j\in P_j\cup P_{j+1}$ by Proposition \ref{decompose}, and hence $u'_j\in P'_j\cup P'_{j+1}$   by Proposition \ref{compare-loc} (2.d). Therefore, by the dual version of Proposition \ref{decompose}, $(P'_i)$ forms a perfect matching of $G_{T',\gamma}$. By Proposition \ref{compare-loc} (2.d), if $u_j\notin P_j\cap P_{j+1}$ for some $j\in [1,k-1]$, then $u'_j\notin P'_j\cap P'_{j+1}$.
\end{proof}

\medskip

For a sequence $\nu=(\nu_l)\in S_{T,\gamma}$, we choose pairs of elements in the index set $[1,n^{\tau}(T,\gamma)]$ by induction on $n^{\tau}(T,\gamma)$, the length of $\nu$ as follows. Firstly, choose a pair $(\nu_s,\nu_t), s<t$ satisfies (1) $\nu_s\nu_t=-1$; (2) $\nu_l=0$ for all $s<l<t$; (3) $\nu_l\nu_s\geq 0$ for all $l<s$, and we call $\{s,t\}$ a \emph{$\nu$-pair}. Then delete $\nu_s,\nu_t$ from $(\nu_l)$, we get a sequence $(v'_l)$ of length less than $n^{\tau}(T,\gamma)$, and we do the same steps on $(\nu'_l)$ as on $(\nu_l)$. In particular, if $\{s,t\}$ does not exist in the first step, let the set of the $\nu$-pairs to be empty. It is easy to see the set of $\nu$-pairs equals to the set of $(-\nu)$-pairs.

\medskip

Let $P\in \mathcal P^{\tau}_{\nu}(G_{T,\gamma})$ for some $\nu$. When $\nu_s=-1$, the $s$-th $\tau$-equivalence class in $G_{T,\gamma}$ corresponds to a diagonal of a tile, denote as $G(p_{l_s})$, of $G_{T,\gamma}$. Similarly, denote by $G'(p_{l_t})$ the tile of $G_{T',\gamma}$ corresponds to the $t$-th $\tau'$-equivalence class.

\medskip

Let $\pi(P)$ be the subset of $\mathcal P(G_{T',\gamma})$ containing all $P'=(P'_i)$ such that $P'_i\in \psi_{\gamma_i}(P_i)$ and the edges in $P'\cap E(G'(p_{l_t}))$ and the edges in $P\cap E(G(p_{l_s}))$ have the same labels for any $\nu$-pair $\{s,t\}$ with $\nu_t=1$.

\medskip

Dually, change the roles of $\tau$ and $\tau'$, for any $P'\in \mathcal P^{\tau'}_{\nu}(G_{T',\gamma})$, let $\pi'(P')$ be the subset of $\mathcal P(G_{T,\gamma})$ containing all $P=(P_i)$ such that $P_i\in \psi'_{\gamma_i}(P'_i)$ and the edges in $P\cap E(G(p_{l_s}))$ and the edges in $P'\cap E(G'(p_{l_t}))$ have the same labels for any $\nu$-pair $\{s,t\}$ with $\nu_s=1$.

\medskip

We have the following characterization of $\pi(P)$ for $P\in \mathcal P^{\tau}_{\nu}(G_{T,\gamma})$.

\medskip

\begin{Proposition}\label{2^n}

Keep the foregoing notations. Assume $\tau'\neq \gamma \notin T$. Let $P$ be a perfect matching in $\mathcal P^{\tau}_{\nu}(G_{T,\gamma})$. Then $|\pi(P)|=2^{max\{0, \sum\nu_l\}}$. Precisely, let $S=\{s\in [1,\sum\nu_l]\mid \nu_s=1, s \text{ is not in a }\nu\text{-pair}\}$, for each $s\in S$, there are $2$ possibilities to choose the edges of $G'_{l_{s}}$ as $\tau'$-mutable pair in $P'$ for any $P'\in \pi(P)$. The other remaining edges are the same for any $P'\neq P''\in \pi(P)$.

\end{Proposition}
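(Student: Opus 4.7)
The strategy is to reduce the count to a local analysis: combine Proposition \ref{decompose} (decomposition $P=(P_i)$), Lemma \ref{2^m}(1) (local count of $|\psi_{\gamma_i}(P_i)|$), Lemma \ref{formglobal} (assembly of local $P'_i$'s into a global $P'$), and a combinatorial analysis of the $\nu$-pair algorithm, to reach the exponent $|S|=\max\{0,\sum_l\nu_l\}$.

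Step 1 (Decomposition and local counts). By Proposition \ref{decompose} write $P=(P_i)$ with $P_i\in\mathcal{P}^{\tau}_{\nu^i}(G_{T,\gamma_i})$, where $\nu^i$ is the truncation of $\nu$ to the $\tau$-equivalence classes meeting $\gamma_i$. By Lemma \ref{2^m}(1), $|\psi_{\gamma_i}(P_i)|=2^{\#\{l:\nu^i_l=1\}}$, and the two independent choices at each tile $G'_{l_t}$ with $\nu^i_t=1$ correspond to taking $\{a_{1_t},a_{3_t}\}$ or $\{a_{2_t},a_{4_t}\}$ as the $\tau'$-mutable pair in $P'_i$ (the remaining edges of $P'_i$ are then forced). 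By Lemma \ref{formglobal}, any tuple $(P'_i)$ with $P'_i\in\psi_{\gamma_i}(P_i)$ assembles to a perfect matching of $G_{T',\gamma}$, and distinct tuples yield distinct matchings. Hence, disregarding the $\nu$-pair constraint, there are $\prod_i 2^{\#\{l:\nu^i_l=1\}}=2^{\#\{l:\nu_l=1\}}$ candidate matchings $P'$.

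Step 2 ($\nu$-pair constraints and the identity for $|S|$). For each $\nu$-pair $\{s,t\}$ with $\nu_t=1$ (hence $\nu_s=-1$), the two edges of $P$ at the tile $G(p_{l_s})$ form a definite $\tau$-mutable pair---one of $\{a_{1_s},a_{3_s}\}$ or $\{a_{2_s},a_{4_s}\}$---and the defining condition of $\pi(P)$ forces the $\tau'$-mutable pair of $P'$ at $G'(p_{l_t})$ to carry the same labels. Lemma \ref{com}(1) guarantees this forced choice is actually realized inside $\psi_{\gamma_{i(t)}}(P_{i(t)})$, so imposing it cuts a factor of $2$ off the candidate count. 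The non-overlap of the tiles constrained by different $\nu$-pairs is immediate from the greedy, non-crossing structure encoded in conditions (2) and (3) of the $\nu$-pair algorithm. A short induction on the length of $\nu$ using these conditions shows that each iteration of the algorithm removes exactly one $+1$ together with one $-1$, and that the algorithm terminates only when all remaining nonzero entries share a single sign; their count is $|\sum_l\nu_l|$. Consequently $|S|=\max\{0,\sum_l\nu_l\}$, the number of $\nu$-pair constraints applied is $\#\{l:\nu_l=1\}-|S|$, and the residual count is $2^{|S|}$ with the free choices being precisely the $\tau'$-mutable pair selections at the tiles $G'_{l_s}$ for $s\in S$.

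The main obstacle is the bookkeeping in Step 2: proving that the constraints from distinct $\nu$-pairs act on disjoint tiles (so that each constraint independently contributes a factor $2^{-1}$), and that the free binary choices at positions $s\in S$ are not accidentally pinned down by any $\nu$-pair constraint. The first point follows from the fact that the indices $l_t$ picked out by distinct pairs are distinct (the pair-selection is greedy and disjoint), and the second point follows from the independence of local choices in Lemma \ref{2^m}(1): since the tiles $G'_{l_s}$ for $s\in S$ are distinct from all $G'_{l_t}$ for $t$ appearing in a pair with $\nu_t=1$, the product decomposition $\prod_i\psi_{\gamma_i}(P_i)$ isolates the $|S|$ free binary choices from the $\#\{l:\nu_l=1\}-|S|$ constrained ones, yielding exactly the claimed $2^{|S|}$ matchings.
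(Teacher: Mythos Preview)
Your argument is correct and is precisely an unpacking of the paper's one-line proof (``It follows by the construction of $\pi(P)$ and Lemma \ref{2^m}''). The appeal to Lemma \ref{com}(1) in Step~2 is superfluous: Lemma \ref{2^m}(1) already tells you that the two choices at each tile $G'(p_{l_t})$ with $\nu_t=1$ are exactly the label-pairs $\{a_1,a_3\}$ and $\{a_2,a_4\}$, and since $P\cap E(G(p_{l_s}))$ (with $\nu_s=-1$) is necessarily one of these, the $\nu$-pair constraint simply selects one of the two.
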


\begin{proof}

It follows by the construction of $\pi(P)$ and Lemma \ref{2^m}.
\end{proof}

\medskip

For any $P\in \mathcal P_{\nu}^{\tau}(G_{T,\gamma})$ with $\sum\nu_l\geq 0$. Denote $d=\sum\nu_l$. Choose $s_1,\cdots,s_d$ be the indices in order such that $\nu_{s_i}=1$ and $s_i$ is not in a $\nu$-pair for $i\in [1,d]$. By Proposition \ref{2^n}, for each $\lambda\in \{0,1\}^{\sum\nu_l}$, there uniquely exists $P'(\lambda)\in \pi(P)$ such that, for any $i\in [1,\sum\nu_l]$, the edges of $G'(p_{l_{s_i}})$ labeled $a_1,a_3$ are in $P'(\lambda)$ when $\lambda_i=1$ and the edges of $G'(p_{l_{s_i}})$ labeled $a_2,a_4$ are in $P'(\lambda)$ when $\lambda_i=0$, where $G'(p_{l_{s_i}})$ is the tile of $G_{T',\gamma}$ corresponding to the $s_i$-th $\tau'$-equivalence class. Dually, let $P'\in \mathcal P^{\tau'}_{\nu}(G_{T',\gamma})$ with $\sum \nu_l\geq 0$. Denote $d=\sum\nu_l$. Choose $s_1,\cdots,s_d$ be the indices in order such that $\nu_{s_i}=1$ and $s_i$ is not in a $\nu$-pair for $i\in [1,d]$. For each $\lambda\in \{0,1\}^d$, there uniquely exists $P(\lambda)\in \pi'(P')$ such that, for any $i\in [1,d]$, the edges of $G(p_{l_{s_i}})$ labeled $a_1,a_3$ are in $P(\lambda)$ when $\lambda_i=1$ and the edges of $G(p_{l_{s_i}})$ labeled $a_2,a_4$ are in $P(\lambda)$ when $\lambda_i=0$.

\medskip

Denote by $\mathcal P^{\tau}_{+}(G_{T,\gamma})$, $\mathcal P^{\tau}_{=0}(G_{T,\gamma})$ and $\mathcal P^{\tau}_{-}(G_{T,\gamma})$ the subset of $\mathcal P(G_{T,\gamma})$ which contain all $P\in \mathcal P^{\tau}_{\nu}(G_{T,\gamma})$ with $\sum \nu_l>0$, $\sum \nu_l=0$ and $\sum \nu_l<0$, respectively. Dually, we have the notions of $\mathcal P^{\tau'}_{+}(G_{T',\gamma})$, $\mathcal P^{\tau'}_{=0}(G_{T',\gamma})$ and $\mathcal P^{\tau'}_{-}(G_{T',\gamma})$. Thus $\pi(P)\subset \mathcal P^{\tau'}_{-}(G_{T',\gamma})$ for any $P\in \mathcal P^{\tau}_{+}(G_{T,\gamma})$, $\pi(P)\subset \mathcal P^{\tau'}_{=0}(G_{T',\gamma})$ for any $P\in \mathcal P^{\tau}_{=0}(G_{T,\gamma})$ and $\pi(P)\subset \mathcal P^{\tau'}_{+}(G_{T',\gamma})$ for any $P\in \mathcal P^{\tau}_{-}(G_{T,\gamma})$.

\medskip

\begin{Lemma}\label{retraction}

Keep the foregoing notations. Assume $\tau'\neq \gamma\notin T$. Let $P'=(P'_i)$ be a perfect matching in $\mathcal P(G_{T',\gamma})$. For any $P=(P_i)\in \pi'(P')$, we have $P'\in \pi(P)$.

\end{Lemma}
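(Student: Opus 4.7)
The plan is to unpack the definitions on both sides and show that the conditions characterizing $P \in \pi'(P')$ are exactly the conditions characterizing $P' \in \pi(P)$, once one notes the sign duality between $\tau$-sequences and $\tau'$-sequences. Let $\nu = (\nu_l)$ be the sequence such that $P' \in \mathcal{P}^{\tau'}_{\nu}(G_{T',\gamma})$, and write $\nu^1, \ldots, \nu^k$ for its truncation coming from the decomposition of $\gamma$ into $\gamma_1, \ldots, \gamma_k$, so that $P'_i \in \mathcal{P}^{\tau'}_{\nu^i}(G_{T',\gamma_i})$.

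First, I would verify that $P \in \mathcal{P}^{\tau}_{-\nu}(G_{T,\gamma})$. Since $P_i \in \psi'_{\gamma_i}(P'_i)$ by the definition of $\pi'(P')$, Proposition \ref{compare-loc} (2.a) gives $P'_i \in \psi_{\gamma_i}(P_i)$, and Proposition \ref{compare-loc} (2.b) (applied with the roles of $\tau, \tau'$ swapped) gives $P_i \in \mathcal{P}^{\tau}_{-\nu^i}(G_{T,\gamma_i})$. Concatenating these local sequences yields $P \in \mathcal{P}^{\tau}_{-\nu}(G_{T,\gamma})$, and the fact that $(\iota_i(P))_i$ assembles back into $P$ via Proposition \ref{decompose} is guaranteed by Lemma \ref{formglobal}. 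This already verifies one of the two conditions defining $\pi(P)$.

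Next, I would show the label-matching condition. By construction, the set of $(-\nu)$-pairs equals the set of $\nu$-pairs (this symmetry was recorded when pairs were defined). Fix a $\nu$-pair $\{s,t\}$, say with $\nu_s = 1$ and $\nu_t = -1$. On the $T'$-side, the tile $G'(p_{l_t})$ is the one whose diagonal corresponds to the $t$-th $\tau'$-equivalence class; on the $T$-side, $G(p_{l_s})$ is the tile whose diagonal corresponds to the $s$-th $\tau$-equivalence class. Under the sign reversal $\nu \mapsto -\nu$, this same pair now has $(-\nu)_t = 1$, and the defining condition for membership in $\pi(P)$ requires that, for such a pair, the labels of the edges in $P' \cap E(G'(p_{l_t}))$ coincide with those of $P \cap E(G(p_{l_s}))$. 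But this is precisely the condition already imposed by $P \in \pi'(P')$ for the same pair $\{s,t\}$ (now read with the roles of $\pi$ and $\pi'$ interchanged), so the two requirements coincide.

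The main (and only real) obstacle is bookkeeping: one must verify that the indexing of $\tau$-equivalence classes in $G_{T,\gamma}$ and of $\tau'$-equivalence classes in $G_{T',\gamma}$ is set up so that ``the $s$-th class'' refers to corresponding data on both sides, and that the local-to-global passage from $\psi_{\gamma_i}, \psi'_{\gamma_i}$ to $\pi, \pi'$ respects these indices. Once this is checked, the proof reduces to assembling the three ingredients above: (i) $P'_i \in \psi_{\gamma_i}(P_i)$ via Proposition \ref{compare-loc} (2.a); (ii) $P \in \mathcal{P}^{\tau}_{-\nu}(G_{T,\gamma})$ via Proposition \ref{compare-loc} (2.b); and (iii) the self-dual label-matching condition on $\nu$-pairs. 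Combining them yields $P' \in \pi(P)$, completing the proof.
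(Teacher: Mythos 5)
Your proof is correct and takes essentially the same route as the paper: both deduce $P'_i\in\psi_{\gamma_i}(P_i)$ from Proposition~\ref{compare-loc}~(2.a), note that $P\in\mathcal P^{\tau}_{-\nu}(G_{T,\gamma})$, and then use that the set of $\nu$-pairs coincides with the set of $(-\nu)$-pairs to see that the label-matching condition defining $\pi'(P')$ is identical to the one defining $\pi(P)$. Your version is somewhat more explicit about the local-to-global assembly and the indexing bookkeeping, but the key ideas are the same.
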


\begin{proof}

Assume $P'\in \mathcal P^{\tau'}_{\nu}(G_{T',\gamma})$ for some $\nu$, then $P\in \mathcal P^{\tau}_{-\nu}(G_{T,\gamma})$. By Proposition \ref{compare-loc} (2.b), $P'_i\in \psi_{\gamma_i}(P_i)$ for $i\in [1,k]$. Since $P=(P_i)\in \pi'(P')$, the edges in $P\cap E(G(p_{l_s}))$ and the edges in $P'\cap E(G'(p_{l_t}))$ have the same labels for any $\nu$-pair $\{s,t\}$ with $\nu_s=1$, equivalently, the edges in $P\cap E(G(p_{l_s}))$ and the edges in $P'\cap E(G'(p_{l_t}))$ have the same labels for any $(-\nu)$-pair $\{s,t\}$ with $-\nu_t=1$. Therefore, $P'\in \pi(P)$.
\end{proof}

\medskip

\begin{Proposition}\label{partition1}

Keep the foregoing notations. Assume $\tau'\neq \gamma \notin T$. We have $P\rightarrow \pi(P)$ gives partition bijections from $\mathcal P^{\tau}_{+}(G_{T,\gamma})$ to $\mathcal P^{\tau'}_{-}(G_{T',\gamma})$ and from $\mathcal P^{\tau}_{=0}(G_{T,\gamma})$ to $\mathcal P^{\tau'}_{=0}(G_{T',\gamma})$.

\end{Proposition}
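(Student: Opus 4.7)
The plan is to assemble Proposition \ref{compare-loc}, Lemma \ref{retraction}, Proposition \ref{2^n}, and their duals (obtained by swapping $T \leftrightarrow T'$ and $\nu \leftrightarrow -\nu$) around one key observation: for any $P' \in \mathcal P^{\tau'}_{\nu'}(G_{T',\gamma})$ with $\sum_l \nu'_l \leq 0$, the dual of Proposition \ref{2^n} forces $|\pi'(P')| = 2^{\max\{0,\sum_l \nu'_l\}} = 1$. Applied to $P' \in \mathcal P^{\tau'}_{-}(G_{T',\gamma}) \cup \mathcal P^{\tau'}_{=0}(G_{T',\gamma})$, this singleton property will yield both disjointness and surjectivity of the partition bijection at once.

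First I would record that $\pi$ sends $\mathcal P^{\tau}_{+}(G_{T,\gamma})$ into $\mathcal P^{\tau'}_{-}(G_{T',\gamma})$ and $\mathcal P^{\tau}_{=0}(G_{T,\gamma})$ into $\mathcal P^{\tau'}_{=0}(G_{T',\gamma})$: by Proposition \ref{compare-loc}(2.b), if $P \in \mathcal P^{\tau}_{\nu}(G_{T,\gamma})$ and $P' \in \pi(P)$, then componentwise the signature flips sign, so $P' \in \mathcal P^{\tau'}_{-\nu}(G_{T',\gamma})$ and $\sum_l (-\nu)_l = -\sum_l \nu_l$. Next I would establish the reciprocity $P' \in \pi(P) \iff P \in \pi'(P')$: one direction is exactly Lemma \ref{retraction}; the other is symmetric once one notes (i) by Proposition \ref{compare-loc}(2.a) the conditions "$P'_i \in \psi_{\gamma_i}(P_i)$" and "$P_i \in \psi'_{\gamma_i}(P'_i)$" are equivalent, and (ii) the set of $\nu$-pairs for $P$ coincides with the set of $(-\nu)$-pairs for $P'$, and within any such pair $\{s,t\}$ with $s<t$ and $\nu_s \nu_t = -1$ the selection condition "$\nu_t = 1$" used in defining $\pi(P)$ is equivalent to the condition "$(-\nu)_s = 1$" used in defining $\pi'(P')$, since both amount to $\nu_s = -1$ and $\nu_t = 1$.

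With these tools I would finish as follows. Non-emptiness of $\pi(P)$ for $P$ in either domain is immediate from Proposition \ref{2^n}. For surjectivity, given $P'$ in $\mathcal P^{\tau'}_{-}(G_{T',\gamma})$ or $\mathcal P^{\tau'}_{=0}(G_{T',\gamma})$, the dual of Proposition \ref{2^n} yields a unique $P \in \pi'(P')$, which lies in $\mathcal P^{\tau}_{+}(G_{T,\gamma})$ or $\mathcal P^{\tau}_{=0}(G_{T,\gamma})$ respectively, and reciprocity gives $P' \in \pi(P)$. For the partition property, suppose $P' \in \pi(P_1) \cap \pi(P_2)$; reciprocity places both $P_1$ and $P_2$ in $\pi'(P')$, which is a singleton by the key observation, so $P_1 = P_2$ and hence $\pi(P_1) = \pi(P_2)$. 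I do not anticipate a serious obstacle: the argument is essentially bookkeeping over the previously-established local statements, and the only delicate point is verifying the coincidence of the two labelling conditions defining $\pi$ and $\pi'$, which is a routine combinatorial check.
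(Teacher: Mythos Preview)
Your argument is correct and takes a genuinely different route from the paper's. The paper proves disjointness of the sets $\pi(P)$ head-on: for distinct $P,Q$ in $\mathcal P^{\tau}_{+}(G_{T,\gamma})\cup\mathcal P^{\tau}_{=0}(G_{T,\gamma})$ it splits into the cases $\nu\neq\nu'$ and $\nu=\nu'$, and in the latter case invokes Lemma~\ref{diff2} to produce either local disjointness of $\psi_{\gamma_i}$ or a tile with diagonal $\tau$ on which $P,Q$ differ, then uses the $\nu$-pair matching to force $\pi(P)\cap\pi(Q)=\emptyset$. Surjectivity is obtained from Lemma~\ref{retraction}. Your approach instead upgrades Lemma~\ref{retraction} to a two-sided reciprocity $P'\in\pi(P)\Longleftrightarrow P\in\pi'(P')$ (the missing direction is exactly the dual of Lemma~\ref{retraction}, which holds by the $T\leftrightarrow T'$ symmetry of the construction) and then reads off both disjointness and surjectivity from the single fact $|\pi'(P')|=1$ when $\sum\nu'_l\le 0$, which is the dual of Proposition~\ref{2^n}. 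This bypasses Lemma~\ref{diff2} entirely and is cleaner; the paper's case analysis, on the other hand, makes the mechanism of disjointness more visible and is independent of the cardinality count in Proposition~\ref{2^n}.

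One small remark on your justification of reciprocity: the pair notation $\{s,t\}$ in the paper is unordered, with $t$ (respectively $s$) naming whichever index carries $\nu$-value $1$ (respectively $-1$) in the clause ``$\nu_t=1$'' (respectively ``$\nu_s=1$'' in the dual). Your phrasing assumes $s<t$ throughout, which would only cover one orientation of pairs; the correct reading is that for every $\nu$-pair the constraint compares the $\tau$-tile of $G_{T,\gamma}$ at the index with $\nu$-value $-1$ against the $\tau'$-tile of $G_{T',\gamma}$ at the index with $\nu$-value $1$, and this description is manifestly invariant under $\nu\mapsto -\nu$. With that understood, your equivalence check goes through for all pairs and the argument is complete.
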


\begin{proof}

For any $P=(P_i), Q=(Q_i)\in \mathcal P^{\tau}_{+}(G_{T,\gamma})\cup \mathcal P^{\tau}_{=0}(G_{T,\gamma})$ with $P\neq Q$. Assume $P\in \mathcal P^{\tau}_{\nu}(G_{T,\gamma})$ and $Q\in \mathcal P^{\tau}_{\nu'}(G_{T,\gamma})$ for some $\nu$ and $\nu'$ with $\sum\nu_l\geq 0$ and $\sum\nu'_l\geq 0$.

If $\nu\neq \nu'$, then $\nu^i\neq \nu'^{i}$ for some $i\in [1,k]$, and hence $\psi_{\gamma_i}(P)\cap \psi_{\gamma_i}(Q)=\emptyset$ by using Proposition \ref{compare-loc} (2.b). Hence $\pi(P)\cap \pi(Q)=\emptyset$.

If $\nu=\nu'$, then $P_i\neq Q_i$ for some $i\in [1,k]$ since $P\neq Q$. By Lemma \ref{diff2}, either $\psi_{\gamma_i}(P_i)\cap \psi_{\gamma_i}(Q_i)=\emptyset$ or there exists a tile $G$ of $G_{T,\gamma_i}$ with diagonal labeled $\tau$ such that $P$ and $Q$ can twist on $G$ and $P\cap E(G)\neq Q\cap E(G)$.

In case $\psi_{\gamma_i}(P_i)\cap \psi_{\gamma_i}(Q_i)=\emptyset$, $\pi_{i}(P_i)\cap \pi_{i}(Q_i)=\emptyset$, and hence $\pi(P)\cap \pi(Q)=\emptyset$. In the latter case, we may assume $G$ corresponds to the $t$-th $\tau$-equivalence class. We have $\nu_t=-1$. Since $\sum\nu_l\geq 0$, there exists $s$ such that $\{s,t\}$ is a $\nu$-pair. Since $P\cap E(G)\neq Q\cap E(G)$, according to the constructions of $\pi(P)$ and $\pi(Q)$, $\pi(P)\cap \pi(Q)=\emptyset$.

Using Lemma \ref{retraction}, we have $$\textstyle\bigcup_{P\in \mathcal P^{\tau}_{+}(G_{T,\gamma})} \pi(P)=\mathcal P^{\tau'}_{-}(G_{T',\gamma}),\;\;\; \textstyle\bigcup_{P\in \mathcal P^{\tau}_{=0}(G_{T,\gamma})} \pi(P)=\mathcal P^{\tau'}_{=0}(G_{T',\gamma}).$$

Therefore, $P\rightarrow \pi(P)$ induces the required partition bijections.
\end{proof}

\medskip

Dually, change the roles of $\tau$ and $\tau'$, we have the following.

\medskip

\begin{Proposition}\label{partition2}

Keep the foregoing notations. Assume $\tau'\neq \gamma \notin T$. We have $P'\rightarrow \pi'(P')$ gives partition bijections from $\mathcal P^{\tau'}_{+}(G_{T',\gamma})$ to $\mathcal P^{\tau}_{-}(G_{T,\gamma})$ and from $\mathcal P^{\tau'}_{=0}(G_{T',\gamma})$ to $\mathcal P^{\tau}_{=0}(G_{T,\gamma})$.

\end{Proposition}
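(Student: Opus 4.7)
The plan is to mirror the argument given for Proposition \ref{partition1}, exploiting the built-in $\tau \leftrightarrow \tau'$ symmetry encoded in Proposition \ref{compare-loc}(2). The roles of $\psi_{\gamma_i}$ and $\psi'_{\gamma_i}$ get swapped, as do the roles of $\mathcal{P}^{\tau}_{\pm}(G_{T,\gamma})$ and $\mathcal{P}^{\tau'}_{\pm}(G_{T',\gamma})$, and the sequences $\nu$ get replaced by $-\nu$; since the set of $\nu$-pairs equals the set of $(-\nu)$-pairs, all index bookkeeping transfers verbatim.

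First I would establish the dual of Lemma \ref{retraction}: for any $P=(P_i)\in \mathcal{P}(G_{T,\gamma})$ and any $P'=(P'_i)\in \pi(P)$, one has $P\in \pi'(P')$. Granted $P\in \mathcal{P}^{\tau}_{\nu}(G_{T,\gamma})$ forces $P'\in \mathcal{P}^{\tau'}_{-\nu}(G_{T',\gamma})$ by Proposition \ref{compare-loc}(2.b), and $P'_i\in \psi_{\gamma_i}(P_i)$ if and only if $P_i\in \psi'_{\gamma_i}(P'_i)$ by Proposition \ref{compare-loc}(2.a). The condition on edge labels at the diagonal tiles is stated symmetrically in the definitions of $\pi$ and $\pi'$, so it transfers once we observe $\{s,t\}$ is a $\nu$-pair iff $\{s,t\}$ is a $(-\nu)$-pair with the roles of $s$ and $t$ interchanged.

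Next, disjointness: for $P'\neq Q'$ in $\mathcal{P}^{\tau'}_{+}(G_{T',\gamma})\cup \mathcal{P}^{\tau'}_{=0}(G_{T',\gamma})$, say $P'\in \mathcal{P}^{\tau'}_{\nu}$ and $Q'\in \mathcal{P}^{\tau'}_{\nu'}$ with $\sum \nu_l,\sum \nu'_l\geq 0$. If $\nu\neq \nu'$, there is an $i$ with $\nu^i\neq \nu'^{i}$, so $\psi'_{\gamma_i}(P'_i)\cap \psi'_{\gamma_i}(Q'_i)=\emptyset$ by Proposition \ref{compare-loc}(2.b), whence $\pi'(P')\cap \pi'(Q')=\emptyset$. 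If $\nu=\nu'$ but $P'_i\neq Q'_i$ for some $i$, apply the dual of Lemma \ref{diff2} (with the roles of $\psi_\rho$ and $\psi'_\rho$ interchanged) to conclude either $\psi'_{\gamma_i}(P'_i)\cap \psi'_{\gamma_i}(Q'_i)=\emptyset$ or there is a tile with diagonal $\tau'$ on which $P'$ and $Q'$ differ. In the latter subcase, the corresponding index $t$ satisfies $\nu_t=-1$, and since $\sum \nu_l\geq 0$ there is a $\nu$-pair $\{s,t\}$; the differing edge labels at $G'(p_{l_t})$ force $\pi'(P')\cap \pi'(Q')=\emptyset$ via the label-matching constraint built into $\pi'$.

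Finally, surjectivity onto $\mathcal{P}^{\tau}_{-}(G_{T,\gamma})$ and $\mathcal{P}^{\tau}_{=0}(G_{T,\gamma})$ follows from the dual of Lemma \ref{retraction} just proved: every $P$ in the target lies in $\pi'(P')$ for some $P'$ in the source. Combined with disjointness, this gives the two partition bijections. The only real obstacle is verifying that Lemma \ref{diff2} does admit a literal dual with $\psi_\rho$ replaced by $\psi'_\rho$; I expect this to be immediate by inspecting the same case analysis in Section \ref{compare}, since in each case that section defines $\psi_\rho$ and $\psi'_\rho$ symmetrically, but it must be checked rather than quoted.
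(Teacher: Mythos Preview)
Your proposal is correct and follows exactly the paper's approach: the paper simply states that Proposition \ref{partition2} is obtained from Proposition \ref{partition1} by ``dually, change the roles of $\tau$ and $\tau'$'', and you have spelled out precisely what that duality entails. Your observation that the dual of Lemma \ref{diff2} must be checked rather than quoted is a fair caveat, but as you anticipate it is immediate from the symmetric definitions of $\psi_\rho$ and $\psi'_\rho$ in Section \ref{compare}.
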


\medskip

By Lemma \ref{retraction}, clearly $\pi'\pi(P)=P$ for all $P\in \mathcal P^{\tau}_{=0}(G_{T,\gamma})$.

\medskip

With the preparations, we now give the proof of Theorem \ref{partition bi} (1).

\medskip

{\bf Proof of Theorem \ref{partition bi} (1):} If $\gamma\in T$ or $\gamma=\tau'$, the result clearly holds. If $\tau'\neq \gamma\notin T$, since $\mathcal P(G_{T,\gamma})=\mathcal P^{\tau}_{+}(G_{T,\gamma})\bigsqcup \mathcal P^{\tau}_{=0}(G_{T,\gamma})\bigsqcup\mathcal P^{\tau}_{-}(G_{T,\gamma})$ and $\mathcal P(G_{T',\gamma})=\mathcal P^{\tau'}_{-}(G_{T',\gamma})\bigsqcup \mathcal P^{\tau'}_{=0}(G_{T',\gamma})\bigsqcup\mathcal P^{\tau'}_{+}(G_{T',\gamma})$, by Propositions \ref{partition1} and \ref{partition2}, our result follows at once. \;\;\; \ \ \ \ \ \ \ \ \ \;\;\; \ \ \ \ \ \ \  \;\;\; \ \ \ \ \ \ \ \ \ \;\;\; \ \ \ \ \ \ \ \ \ \ \;\;\; \ \ \ \ \ \ \ \;\;\; \ \ \ \ \ \ \ \ \ \ \ \ \ \ \ \ \ \ \ \ \ \ \ \  $\square$

\medskip

\subsection{Proof of the Theorem \ref{partition bi} (2)}\label{main12}

In this subsection, we prove Theorem \ref{partition bi} (2).

\begin{Lemma}\label{basic1}

\begin{enumerate}[$(1)$]

  \item Let $Q\in \mathcal P_{\nu}^{\tau}(G_{T,\gamma})$ with $\sum\nu_l\geq 0$. Let $Q'\in \pi(Q)$ corresponding to some $\lambda\in \{0,1\}^{\sum\nu_l}$ as in Proposition \ref{2^n}.

  \begin{enumerate}[$(a)$]

    \item If $\lambda=(0,0,\cdots,0)$, then $\bigoplus_{P'\in \pi(Q)}y^{T'}(P')=y^{T'}(Q')\cdot(1\oplus y^{T'}_{\tau'})^{\sum\nu_l}$ in $\mathbb P$.

    \item If $\lambda=(1,1,\cdots,1)$, then $\bigoplus_{P'\in \pi(Q)}y^{T'}(P')=y^{T'}(Q')\cdot(1\oplus y^{T}_{\tau})^{\sum\nu_l}$ in $\mathbb P$.

  \end{enumerate}

  \item Let $Q'\in \mathcal P_{\nu}^{\tau'}(G_{T',\gamma})$ with $\sum\nu_l\geq 0$. Let $Q\in \pi'(Q')$ corresponding to some $\lambda\in \{0,1\}^{\sum\nu_l}$ as in Proposition \ref{2^n}.

  \begin{enumerate}[$(a)$]

    \item If $\lambda=(0,0,\cdots,0)$, then $\bigoplus_{P\in \pi'(Q')}y^{T}(P)=y^{T}(Q)\cdot(1\oplus y^{T'}_{\tau'})^{\sum\nu_l}$ in $\mathbb P$.

    \item If $\lambda=(1,1,\cdots,1)$, then $\bigoplus_{P\in \pi'(Q')}y^{T}(P)=y^{T}(Q)\cdot(1\oplus y^{T}_{\tau})^{\sum\nu_l}$ in $\mathbb P$.

  \end{enumerate}

\end{enumerate}

\end{Lemma}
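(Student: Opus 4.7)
The plan is to evaluate each summand $y^{T'}(P')$ explicitly as a product of factors supplied by iterated applications of Lemma \ref{brick1}, and then recognize the resulting sum as a formal expansion of $(1 \oplus y)^n$ in the semifield $\mathbb{P}$.

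First I would unpack Proposition \ref{2^n}: the matchings in $\pi(Q)$ are indexed by $\mu \in \{0,1\}^{\sum\nu_l}$, and I will write $P'(\mu)$ for the matching in which, at each of the $\sum\nu_l$ free tiles $G'(p_{l_{s_i}})$ (whose diagonals are labeled $\tau'$), the coordinate $\mu_i = 0$ selects the edge pair $a_2, a_4$ and $\mu_i = 1$ selects the pair $a_1, a_3$. Two matchings $P'(\mu)$ and $P'(\mu')$ differing only in the $i$-th coordinate are related by a twist on $G'(p_{l_{s_i}})$, so Lemma \ref{brick1} gives
$$\frac{y^{T'}(P'(\mu \text{ with } \mu_i = 1))}{y^{T'}(P'(\mu \text{ with } \mu_i = 0))} = y^{T'}_{\tau'}$$
(with the orientation conventions of Section \ref{compare} together with Lemma \ref{max-min}, one checks that in $T'$ the edges $a_2, a_4$ sit clockwise to $\tau'$, so the twist from $\mu_i = 0$ to $\mu_i = 1$ is the ``$u_1, u_3$ lie in $P$'' case of Lemma \ref{brick1}). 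Iterating, starting from $Q' = P'(\vec 0)$ in case (a), one gets $y^{T'}(P'(\mu)) = y^{T'}(Q') \cdot (y^{T'}_{\tau'})^{|\mu|}$.

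Summing over $\pi(Q)$ and distributing in $\mathbb{P}$,
$$\bigoplus_{P' \in \pi(Q)} y^{T'}(P') \;=\; y^{T'}(Q') \cdot \bigoplus_{\mu \in \{0,1\}^{\sum\nu_l}} (y^{T'}_{\tau'})^{|\mu|} \;=\; y^{T'}(Q') \cdot (1 \oplus y^{T'}_{\tau'})^{\sum\nu_l},$$
where the last equality is simply the formal expansion of $(1 \oplus y^{T'}_{\tau'})^n$ by distributivity. For case (b), $Q' = P'(\vec 1)$, so reaching $P'(\mu)$ requires $(\sum\nu_l - |\mu|)$ inverse twists; each contributes a factor of $(y^{T'}_{\tau'})^{-1} = y^{T}_{\tau}$ (coefficient mutation rule), and the same bookkeeping yields $y^{T'}(Q')(1 \oplus y^{T}_{\tau})^{\sum\nu_l}$. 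Consistency of the two expressions in (a) and (b) follows from the identity $y^{T'}_{\tau'}(1 \oplus y^{T}_{\tau}) = 1 \oplus y^{T'}_{\tau'}$ together with the fact that $y^{T'}(P'(\vec 1))/y^{T'}(P'(\vec 0)) = (y^{T'}_{\tau'})^{\sum\nu_l}$. Part (2) is the symmetric computation with the roles of $(T,\tau)$ and $(T',\tau')$ interchanged, which requires no new input beyond the dual of Lemma \ref{brick1}.

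The main obstacle is the sign check at the single displayed twist identity above: one must verify, from the local picture around the flipped arc and the induction on the tile index already used in Lemma \ref{max-min}, that in the tile $G'(p_{l_{s_i}}) \subset G_{T',\gamma}$ the edges labeled $a_2, a_4$ are indeed clockwise (rather than counterclockwise) to the diagonal $\tau'$, so that the twist enlarges the set of enclosed tiles by exactly $G'(p_{l_{s_i}})$ and contributes the factor $y^{T'}_{\tau'}$ in the direction claimed. Once that local geometric fact is pinned down, the rest of the proof is the routine semifield expansion described above.
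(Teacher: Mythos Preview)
Your proposal is correct and follows essentially the same route as the paper: apply Lemma \ref{brick1} coordinate-by-coordinate to write $y^{T'}(P'(\mu)) = y^{T'}(Q')\cdot (y^{T'}_{\tau'})^{|\mu|}$ (or with exponent $|\mu|-\sum\nu_l$ in case (b)), sum over $\mu\in\{0,1\}^{\sum\nu_l}$ using distributivity, and invoke $y^{T'}_{\tau'}=(y^{T}_{\tau})^{-1}$ to convert case (b). The paper is terser---it simply asserts the outcome of Lemma \ref{brick1} without pausing on the clockwise/counterclockwise check you flag---but your orientation claim (that $a_2,a_4$ are clockwise to $\tau'$ in $T'$) is correct and is exactly what makes the exponent come out with the right sign.
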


\begin{proof}

We shall only prove (1) because (2) can be proved dually. In case $\lambda=(0,0,\cdots,0)$, by Lemma \ref{brick1}, $y^{T'}(P')=y^{T'}(Q')\cdot(y^{T'}_{\tau'})^{\sum\lambda'_i}$ for any $P'\in \pi(Q)$ corresponding to $\lambda'$. Thus our result follows in this case. In case $\lambda=(1,1,\cdots,1)$, by Lemma \ref{brick1}, $y^{T'}(P')=y^{T'}(Q')\cdot(y^{T'}_{\tau'})^{\sum\lambda'_i-\sum\nu_l}$ for any $P'\in \pi(Q)$ corresponding to $\lambda'$. Thus $\bigoplus_{P'\in \pi(Q)}y^{T'}(P')=y^{T'}(Q')\cdot(1\oplus (y^{T'}_{\tau'})^{-1})^{\sum\nu_l}$. Since $y_{\tau'}^{T'}=(y^{T}_{\tau})^{-1}$, our result follows.
\end{proof}

\medskip

\begin{Lemma}\label{non-a-F}

Keep the foregoing notations. Assume $\tau'\neq \gamma\notin T$. Suppose that $P\in \mathcal P^{\tau}_{\nu}(G_{T,\gamma})$ can twist on a tile $G(p)$ with diagonal labeled $\varsigma$. Assume $S_1,S_2\in \mathfrak P$ such that $Q\in S_1$ and $\mu_{p}Q\in S_2$. If $\varsigma\neq a_1,a_2,a_3,a_4,\tau$, then, in $\mathbb P$, we have
$$\frac{\bigoplus_{P\in S_1}y^{T}(P)}{\bigoplus_{P'\in \pi(S_1)}y^{T'}(P')}=\frac{\bigoplus_{P\in S_2}y^{T}(P)}{\bigoplus_{P'\in \pi(S_2)}y^{T'}(P')}.$$

\end{Lemma}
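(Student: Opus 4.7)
The strategy is to exploit the assumption $\varsigma\notin\{a_1,a_2,a_3,a_4,\tau\}$, which forces $\varsigma$ to share no triangle with $\tau$ in $T$. Under this hypothesis the tile $G(p)$ survives the flip unchanged, the coefficient $y^T_\varsigma$ is fixed by the mutation at $\tau$, and the twist $\mu_p$ commutes with the partition bijection $\pi$ of Subsection~\ref{main11}. I would first verify that $y^{T'}_\varsigma = y^{T}_\varsigma$: since $\varsigma$ and $\tau$ have no common triangle, $b^{T}_{\tau\varsigma}=0$, and the coefficient mutation formula gives $y^{T'}_\varsigma = y^{T}_\varsigma(1\oplus y^{T}_\tau)^{-b^{T}_{\tau\varsigma}} = y^{T}_\varsigma$ in $\mathbb P$.

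Next I would argue that $G(p)$ corresponds to an identical tile $G'(p)$ in $G_{T',\gamma}$: its four edges $u_1,u_2,u_3,u_4$ keep their labels and their clockwise/counter\-clockwise positions relative to $\varsigma$, because neither the triangles containing $\varsigma$ nor the way $\gamma$ crosses them is affected by the flip at $\tau$. Note that none of $u_1,\dots,u_4$ can equal $\tau$, for otherwise $\varsigma$ would lie in $\{a_1,\dots,a_4\}$. Consequently $\mu_p$ acts on matchings of both graphs and preserves $\tau$- and $\tau'$-equivalence class counts, so it preserves each of $\mathcal P^{\tau}_{+}$, $\mathcal P^{\tau}_{=0}$, $\mathcal P^{\tau}_{-}$ and similarly on the $T'$ side.

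The central step is to show that $\mu_p$ commutes with $\pi$ at the level of blocks, i.e.\ $\mu_p(S_1)=S_2$ and the induced twist at $G'(p)$ sends $\pi(S_1)$ onto $\pi(S_2)$. I would prove this by inspecting the construction of $\pi$ in Subsection~\ref{main11}: $\pi$ is assembled from the local bijections $\psi_{\gamma_i}$ of Section~\ref{compare} together with the combinatorial choices described in Proposition~\ref{2^n}. The tile $G(p)$ lies inside some subcurve $\gamma_i$, and a case-by-case reading of Section~\ref{compare} shows that $\psi_{\gamma_i}$ leaves untouched the edges of any tile whose diagonal is neither $\tau$ nor in a triangle with $\tau$; the additional choices in Proposition~\ref{2^n} concern only tiles with diagonal $\tau$ or $\tau'$. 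Hence all $P\in S_1$ share the same edges at $G(p)$, and for every $P\in S_1$ and every $P'\in\pi(P)$, the edges at $G'(p)$ in $P'$ coincide with those at $G(p)$ in $P$. Set $\epsilon=+1$ if these common edges are labeled $u_1,u_3$ (clockwise to $\varsigma$) and $\epsilon=-1$ otherwise. Then Lemma~\ref{brick1} applied on both sides yields
\[
y^T(\mu_p P) = (y^T_\varsigma)^\epsilon y^T(P)\ \ (\forall P\in S_1), \qquad y^{T'}(\mu_p P') = (y^{T'}_\varsigma)^\epsilon y^{T'}(P')\ \ (\forall P'\in \pi(S_1)),
\]
with the same $\epsilon$. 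Distributivity of multiplication over $\oplus$ in $\mathbb P$ then gives
\[
\bigoplus_{P\in S_2} y^{T}(P) = (y^T_\varsigma)^\epsilon \bigoplus_{P\in S_1} y^{T}(P), \qquad \bigoplus_{P'\in \pi(S_2)} y^{T'}(P') = (y^{T'}_\varsigma)^\epsilon \bigoplus_{P'\in \pi(S_1)} y^{T'}(P'),
\]
and dividing while invoking $y^T_\varsigma=y^{T'}_\varsigma$ yields the asserted equality.

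The main obstacle is the commutation of $\mu_p$ with $\pi$: one must reread the case analysis of Section~\ref{compare} to confirm that in every scenario $\psi_{\gamma_i}$ acts as the identity on the edges of a tile whose diagonal shares no triangle with $\tau$, so that all matchings in a block $S_1$ agree at $G(p)$ and $G'(p)$. Once this local rigidity is recorded, the remainder of the proof is essentially a direct appeal to Lemma~\ref{brick1} and semifield distributivity.
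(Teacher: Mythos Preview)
Your argument is correct and close in spirit to the paper's, but the packaging differs in one respect worth noting. The paper splits into the cases $\sum_l\nu_l\ge 0$ and $\sum_l\nu_l<0$, so that one of $S_i$ or $\pi(S_i)$ is a singleton; on the non-singleton side it invokes Lemma~\ref{basic1} to collapse $\bigoplus_{P'\in\pi(S_i)}y^{T'}(P')$ to $y^{T'}(Q')\cdot(1\oplus y^{T'}_{\tau'})^{\sum\nu_l}$ for the representative $Q'$ corresponding to $(0,\dots,0)$, and then compares $Q$ with $\mu_pQ$ and $Q'$ with $\mu_pQ'$ via Lemma~\ref{brick1}. You instead establish a block-level bijection $\mu_p\colon S_1\to S_2$ and $\mu_p\colon\pi(S_1)\to\pi(S_2)$ which scales every height monomial by the same factor $(y_\varsigma)^\epsilon$, and then use semifield distributivity. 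Your route avoids Lemma~\ref{basic1} entirely and is arguably cleaner; the paper's route makes the later lemmas (\ref{tau-F}, \ref{special-a-F}) more uniform, since in those cases the representative-based computation is genuinely needed.

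One simplification you can make: your appeal to ``a case-by-case reading of Section~\ref{compare}'' is unnecessary. Because $\varsigma\notin\{a_1,a_2,a_3,a_4,\tau\}$, the subcurve $\gamma_j$ containing $p$ crosses no arc in a triangle with $\tau$, so (as remarked at the start of Section~\ref{compare}) $G_{T,\gamma_j}=G_{T',\gamma_j}$ and $\psi_{\gamma_j}$ is the identity. This immediately gives that every $P'\in\pi(S_1)$ agrees with every $P\in S_1$ at $G(p)=G'(p)$, and that $\mu_p$ commutes with $\pi$ on these blocks, without any case analysis.
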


\begin{proof}

Suppose first that $\sum\nu_l\geq 0$. Then $S_1=\{Q\}$ and $S_2=\{\mu_pQ\}$. Let $Q'\in \pi(Q)$ corresponding to $(0,0,\cdots,0)\in \{0,1\}^{\sum\nu_l}$. Then $\mu_pQ'\in \pi(\mu_pQ)$ corresponds to $(0,0,\cdots,0)\in \{0,1\}^{\sum\nu_l}$. By Lemma \ref{basic1},
$$\textstyle\bigoplus_{P'\in \pi(S_1)}y^{T'}(P')=y^{T'}(Q')\cdot(1\oplus y_{\tau'}^{T'})^{\sum\nu_l},$$
and
$$\textstyle\bigoplus_{P'\in \pi(S_2)}y^{T'}(P')=y^{T'}(\mu_{p}Q')\cdot(1\oplus y_{\tau'}^{T'})^{\sum\nu_l}.$$
By Lemma \ref{brick1}, $\frac{y^{T'}(\mu_pQ')}{y^{T'}(Q')}=(y^{T'}_{\varsigma})^{\pm 1}$ and $\frac{y^{T}(\mu_pQ)}{y^{T}(Q)}=(y^{T}_{\varsigma})^{\pm 1}$. Since $\varsigma\neq a_1,a_2,a_3,a_4,\tau$, $b^T_{\tau \varsigma}=0$ and hence $y^{T'}_{\varsigma}=y^{T}_{\varsigma}$. Our result follows in this case.

Now, suppose that $\sum\nu_l<0$. Assume that $\pi(S_1)=\{Q'\}$, then $\pi(S_2)=\{\mu_pQ'\}$. Change the roles of $T$ and $T'$, as the discussion for the case
$\sum\nu_l\geq 0$, our result follows in this case.
\end{proof}

\medskip

\begin{Lemma}\label{tau-F}

Keep the foregoing notations. Assume $\tau'\neq \gamma\notin T$. Suppose that $P\in \mathcal P^{\tau}_{\nu}(G_{T,\gamma})$ can twist on a tile $G(p_{l_t})$ with diagonal labeled $\tau$. Assume $S_1,S_2\in \mathfrak P$ such that $Q\in S_1$ and $\mu_{p_{l_t}}Q\in S_2$. Then, in $\mathbb P$, we have
$$\frac{\bigoplus_{P\in S_1}y^{T}(P)}{\bigoplus_{P'\in \pi(S_1)}y^{T'}(P')}=\frac{\bigoplus_{P\in S_2}y^{T}(P)}{\bigoplus_{P'\in \pi(S_2)}y^{T'}(P')}.$$

\end{Lemma}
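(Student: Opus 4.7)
The plan is to mirror the proof of Lemma~\ref{non-a-F} and split into cases on $\sum_l \nu_l$. The essential new ingredient is the mutation identity $y^{T'}_{\tau'} = (y^T_\tau)^{-1}$ together with the observation that the clockwise/counterclockwise convention around $\tau$ in $T$ is opposite to that around $\tau'$ in $T'$: since $b^{T'}_{\tau' a_i} = -b^T_{\tau a_i}$, the edge pair clockwise to $\tau$ in $T$ is counterclockwise to $\tau'$ in $T'$. Combined with Lemma~\ref{brick1}, this sign flip produces the cancellation that the diagonal case $\varsigma = \tau$ of Lemma~\ref{non-a-F} demands.

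In Case A ($\sum_l \nu_l \geq 0$), twisting on $G(p_{l_t})$ preserves $\nu$, so $S_1 = \{Q\}$ and $S_2 = \{\mu_{p_{l_t}}Q\}$. A bracket-matching argument shows that, because $\nu_t = -1$ and $\sum_l \nu_l \geq 0$, the index $t$ necessarily lies in a $\nu$-pair $\{s,t\}$ with $\nu_s = 1$. The defining constraint of $\pi$ forces every $P' \in \pi(Q)$ to carry the same edge labels on $G'(p_{l_s})$ as $Q$ does on $G(p_{l_t})$, so the twist $Q \mapsto \mu_{p_{l_t}} Q$ induces a canonical bijection $\pi(Q) \to \pi(\mu_{p_{l_t}} Q)$ whose restriction to each matching is a single twist on $G'(p_{l_s})$. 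Lemma~\ref{brick1} applied on the $T$-side gives $y^T(\mu_{p_{l_t}} Q)/y^T(Q) = (y^T_\tau)^{\epsilon}$ for some $\epsilon \in \{\pm 1\}$; the analogous application on the $T'$-side, combined with the clockwise/counterclockwise flip, gives $y^{T'}(\mu_{p_{l_s}} P')/y^{T'}(P') = (y^{T'}_{\tau'})^{-\epsilon} = (y^T_\tau)^{\epsilon}$ uniformly in $P'$. Summing and taking the ratio yields $\textrm{LHS}/\textrm{RHS} = 1$.

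In Case B ($\sum_l \nu_l < 0$), $Q \in \mathcal{P}^\tau_{-}(G_{T,\gamma})$, so $S_1 = \pi'(P')$ with $\pi(S_1) = \{P'\}$ for some $P' \in \mathcal{P}^{\tau'}_{+}(G_{T',\gamma})$. Set $\lambda = -\nu$, so $\lambda_t = 1$. If $\lambda_t$ is not in any $\lambda$-pair, then by Lemma~\ref{2^m}(2) the edges on $G(p_{l_t})$ are a free choice within $\pi'(P')$, hence $\{Q, \mu_{p_{l_t}} Q\} \subset \pi'(P')$, so $S_1 = S_2$ and the identity is trivial. If $\lambda_t$ is paired with some $\lambda_u = -1$, then Lemma~\ref{retraction} and the definition of $\pi'$ together force $S_2 = \pi'(\mu_{p_{l_u}} P')$; applying Lemma~\ref{basic1}(2)(a) to both $\bigoplus_{P \in S_1} y^T(P)$ and $\bigoplus_{P \in S_2} y^T(P)$ factors out a common term $(1 \oplus y^{T'}_{\tau'})^{\sum \lambda_l}$ that cancels in the ratio, reducing the desired equality to the same sign-flip cancellation as in Case A, now applied between the distinguished representatives of $\pi'(P')$ and $\pi'(\mu_{p_{l_u}} P')$ that are singled out by the constant choice $\lambda = (0,\dots,0)$ in Lemma~\ref{basic1}.

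The main obstacle I anticipate is the bookkeeping in the paired subcase of Case B: one must verify that the distinguished representatives of $\pi'(P')$ and $\pi'(\mu_{p_{l_u}} P')$ selected by Lemma~\ref{basic1}(2)(a) differ by exactly one twist on the tile $G(p_{l_t})$, and not by additional twists coming from other $\lambda$-pairs or free-choice tiles. This locality amounts to checking that the free-choice structure on the unpaired $+1$ indices of $\lambda$ is unaffected by replacing $P'$ with $\mu_{p_{l_u}} P'$, which should follow because $\mu_{p_{l_u}}$ is supported on a single tile of $G_{T',\gamma}$ and therefore alters only the constraint imposed by the single $\lambda$-pair $\{t, u\}$. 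Once this is established, the two applications of Lemma~\ref{brick1} complete the argument.
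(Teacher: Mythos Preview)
Your proposal is correct and follows essentially the same route as the paper. In Case~A you use the bijection $\pi(Q)\to\pi(\mu_{p_{l_t}}Q)$, $P'\mapsto\mu_{p_{l_s}}P'$, directly; the paper instead picks the single representative $Q'\in\pi(Q)$ corresponding to $(0,\dots,0)$ and invokes Lemma~\ref{basic1} to compute both $\bigoplus$-sums explicitly as $y^{T'}(Q')(1\oplus y^{T'}_{\tau'})^{\sum\nu_l}$ and $y^{T'}(\mu_{p_{l_s}}Q')(1\oplus y^{T'}_{\tau'})^{\sum\nu_l}$, after which the same Lemma~\ref{brick1} cancellation applies. Both arguments are equivalent.

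For Case~B, the paper is terser than you are: once it identifies $\pi(S_1)=\{Q'\}$ and (in the paired subcase) $\pi(S_2)=\{\mu_{p_{l_s}}Q'\}$, it simply says ``change the roles of $T$ and $T'$, as the discussion for the case $\sum\nu_l\geq 0$''. This symmetry shortcut reduces the paired subcase to an instance of Case~A with $T\leftrightarrow T'$ swapped, and thereby sidesteps entirely the bookkeeping obstacle you flag about whether the $(0,\dots,0)$ representatives of $\pi'(P')$ and $\pi'(\mu_{p_{l_u}}P')$ differ by a single twist. Your direct approach via Lemma~\ref{basic1}(2)(a) also works, and your locality concern does resolve as you anticipate (since $P'$ and $\mu_{p_{l_u}}P'$ lie in the same $\mathcal P^{\tau'}_{\lambda}$ and differ only on one tile, the $\psi'_{\gamma_i}$-sets and the free-choice structure on unpaired indices are unchanged), but the paper's symmetry argument is cleaner and you may prefer to adopt it.
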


\begin{proof}

Suppose first that $\sum\nu_l\geq 0$. Then $S_1=\{Q\}$ and $S_2=\{\mu_{p_{l_t}}Q\}$. Since $\sum\nu_l\geq 0$, there exists $s$ such that $\{s,t\}$ is a $\nu$-pair. Thus, $\nu_s=1$. Let $Q'\in \pi(Q)$ corresponding to $(0,0,\cdots,0)\in \{0,1\}^{\sum\nu_l}$. Since $Q'\in \mathcal P^{\tau'}_{-\nu}(G_{T',\gamma})$ and $-\nu_s=-1$, the $s$-th $\tau'$-equivalence class in $G_{T',\gamma}$ corresponds to the tile $G'(p_{l_s})$ with diagonal labeled $\tau'$. Then $Q'$ can twist on $G'_{p_{l_s}}$, and $\mu_{p_{l_s}}Q'\in \pi(\mu_{p_{l_t}}Q)$ corresponds to $(0,0,\cdots,0)\in \{0,1\}^{\sum\nu_l}$. By Lemma \ref{basic1},
$$\textstyle\bigoplus_{P'\in \pi(S_1)}y^{T'}(P')=y^{T'}(Q')\cdot(1\oplus y_{\tau'}^{T'})^{\sum\nu_l},$$
and
$$\textstyle\bigoplus_{P'\in \pi(S_2)}y^{T'}(P')=y^{T'}(\mu_{p_{l_s}}Q')\cdot(1\oplus y_{\tau'}^{T'})^{\sum\nu_l}.$$
By Lemma \ref{brick1}, $\frac{y^{T'}(\mu_{p_{l_s}}Q')}{y^{T'}(Q')}=(y^{T'}_{\tau'})^{\pm 1}$ and $\frac{y^{T}(\mu_{p_{l_t}}Q)}{y^{T}(Q)}=(y^{T}_{\tau})^{\mp 1}$. Since $y^{T'}_{\tau'}=(y^{T}_{\tau})^{-1}$. Our result follows in this case.

Now, suppose that $\sum\nu_l<0$. Assume that $\pi(S_1)=\{Q'\}$. If $t$ is not in a $\nu$-pair, $\mu_{p_{l_t}}Q\in S_1$ and $S_1=S_2$. Our result clearly holds in this case. If $t$ is in a $\nu$-pair $\{s,t\}$, then $\nu_s=1$. Since $Q'\in \mathcal P^{\tau'}_{-\nu}(G_{T',\gamma})$ and $-\nu_s=-1$, the $s$-th $\tau'$-equivalence class in $G_{T',\gamma}$ corresponds to the tile $G'(p_{l_s})$ with diagonal labeled $\tau'$. We have $S_2=\{\mu_{p_{l_s}}Q'\}$. Change the roles of $T$ and $T'$, as the discussion for the case $\sum\nu_l\geq 0$, our result follows in this case.
\end{proof}

\medskip

\begin{Lemma}\label{special-a-F}

Keep the foregoing notations. Assume $\tau'\neq \gamma\notin T$. Let $Q$ be a perfect matching of $G_{T,\gamma}$ which can twist on a tile $G(p)$ with diagonal labeled $a=a_q$ for $q=1,2,3,4$. Assume $S_1,S_2\in \mathfrak P$ such that $Q\in S_1$ and $\mu_{p}Q\in S_2$. If all the $\tau$-mutable edges pairs in $Q$ are labeled $a_{q-1},a_{q+1}$, then, in $\mathbb P$, we have
$$\frac{\bigoplus_{P\in S_1}y^{T}(P)}{\bigoplus_{P'\in \pi(S_1)}y^{T'}(P')}=\frac{\bigoplus_{P\in S_2}y^{T}(P)}{\bigoplus_{P'\in \pi(S_2)}y^{T'}(P')}.$$

\end{Lemma}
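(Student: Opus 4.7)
The overall strategy mirrors the proofs of Lemmas \ref{non-a-F} and \ref{tau-F}: lift the twist $Q \to \mu_p Q$ on the $T$-side to a compatible twist $Q' \to \mu_p Q'$ on the $T'$-side using Lemma \ref{com}, express both $\oplus$-sums via Lemma \ref{basic1}, and verify the resulting identity using Lemma \ref{brick1} together with the $y$-mutation rule $y^{T'}_{a_q} = y^T_{a_q}(y^T_\tau)^{[b^T_{\tau a_q}]_+}(1 \oplus y^T_\tau)^{-b^T_{\tau a_q}}$. I first consider the case $\sum \nu_l \geq 0$, where $S_1 = \{Q\}$ and (with $\mu_pQ \in \mathcal P^{\tau}_{\nu'}$) a parallel analysis applies to $S_2$. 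The case $\sum \nu_l < 0$ will follow by a symmetric argument, interchanging the roles of $T$ and $T'$ and using Lemma \ref{com}(2).

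By Lemma \ref{com}(1), applied to the component $\gamma_i$ containing $p$ and extended globally to other $\gamma_j$, I obtain $Q' \in \pi(Q)$ such that $Q'$ twists on $G'(p)$, $\mu_p Q' \in \pi(\mu_p Q)$, and all $\tau'$-mutable pairs of $Q'$ are labeled $\{a_{q-1}, a_{q+1}\}$. Under the parametrization of $\pi(Q)$ in Proposition \ref{2^n}, this $Q'$ corresponds to $\lambda = (0, \ldots, 0)$ when $q$ is odd (so $\{a_{q-1}, a_{q+1}\} = \{a_2, a_4\}$) and to $\lambda = (1, \ldots, 1)$ when $q$ is even. Applying Lemma \ref{basic1}(1)(a) (resp.\ (b)) then yields
$$\textstyle\bigoplus_{P' \in \pi(Q)} y^{T'}(P') = y^{T'}(Q') \cdot F^{\sum \nu_l},$$
where $F = (1 \oplus y^{T'}_{\tau'})$ for $q$ odd and $F = (1 \oplus y^T_\tau)$ for $q$ even, and similarly $\bigoplus_{P' \in \pi(\mu_p Q)} y^{T'}(P') = y^{T'}(\mu_p Q') \cdot F^{\sum \nu'_l}$ with the same $F$ (since $\mu_pQ'$ is still parametrized by the same $\lambda$).

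The crux is to handle the discrepancy between $\sum \nu_l$ and $\sum \nu'_l$. The mutation at $G(p)$ toggles the inclusion of the edge of $G(p)$ labeled $\tau$ (lying in the triangle of $T$ adjacent to $a_q$ containing $\tau$), which alters the count of the $\tau$-equivalence class containing it and thereby shifts $\nu$. Combining this shift with $y^T(\mu_p Q)/y^T(Q) = y^T_{a_q}^{\pm 1}$ and $y^{T'}(\mu_p Q')/y^{T'}(Q') = y^{T'}_{a_q}^{\pm 1}$ from Lemma \ref{brick1}, the target identity reduces to
$$y^T_{a_q}^{\epsilon} \;=\; y^{T'}_{a_q}^{\epsilon'} \cdot F^{\,\sum \nu_l - \sum \nu'_l}$$
for appropriate signs $\epsilon, \epsilon' \in \{\pm 1\}$ read off from the pairs of $Q, \mu_pQ$ on $G(p)$ and of $Q', \mu_pQ'$ on $G'(p)$. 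Using $y^{T'}_{a_q}/y^T_{a_q} = (y^T_\tau)^{[b^T_{\tau a_q}]_+}(1 \oplus y^T_\tau)^{-b^T_{\tau a_q}}$ with $b^T_{\tau a_q} = +1$ for $q$ odd and $-1$ for $q$ even, together with $y^{T'}_{\tau'} = (y^{T}_{\tau})^{-1}$ and the identity $1 \oplus y^{T'}_{\tau'} = (y^T_\tau)^{-1}(1 \oplus y^T_\tau)$ in $\mathbb P$, this algebraic identity can be verified directly in each of the four cases.

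The main obstacle is the fourfold case analysis $q \in \{1,2,3,4\}$, where one must carefully correlate: (i) whether the $\tau$-edge of $G(p)$ belongs to the pair in $Q$ or in $\mu_pQ$, which determines the sign $\epsilon$ and the direction in which $\sum \nu_l$ changes; (ii) the corresponding behavior on the $T'$-side, recalling that after the flip the clockwise/counterclockwise roles swap, so $\{a_1, a_3\}$ are clockwise to $\tau$ but counterclockwise to $\tau'$, and vice versa; and (iii) the precise change in the $\tau$-equivalence class structure under the mutation at $G(p)$, which is controlled by the hypothesis on the labeling of $\tau$-mutable pairs. Once these bookkeeping details are in place, plugging into the $y$-mutation formula yields the required equality. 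The case $\sum \nu_l < 0$ follows dually by applying Lemma \ref{com}(2) and Lemma \ref{basic1}(2) with $T$ and $T'$ interchanged.
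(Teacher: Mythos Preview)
Your approach is the same as the paper's: use Lemma \ref{com} to produce $Q'\in\pi(Q)$ with $\mu_pQ'\in\pi(\mu_pQ)$ and all $\tau'$-mutable pairs labeled $a_{q-1},a_{q+1}$, then apply Lemma \ref{basic1} to expand the $\oplus$-sums and finish with Lemma \ref{brick1} and the $y$-mutation formula for $y^{T'}_{a_q}$. The identification of $Q'$ with $\lambda=(0,\ldots,0)$ or $(1,\ldots,1)$ according to the parity of $q$, and the corresponding choice of $F$, is exactly what the paper does.

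There is, however, a genuine gap in your case split. You divide only according to the sign of $\sum\nu_l$, but the sign of $\sum\nu'_l$ (where $\mu_pQ\in\mathcal P^\tau_{\nu'}$) is independent: under the normalization that the $\tau$-edge of $G(p)$ lies in $Q$, one has $\sum\nu_l-\sum\nu'_l=|b^T_{\tau a_q}|\in\{1,2\}$, so the mixed case $\sum\nu_l\ge 0>\sum\nu'_l$ genuinely occurs. In that case $S_2$ is \emph{not} the singleton $\{\mu_pQ\}$; rather $\pi(S_2)=\{\mu_pQ'\}$ is the singleton, so your formula $\bigoplus_{P'\in\pi(S_2)}y^{T'}(P')=y^{T'}(\mu_pQ')\cdot F^{\sum\nu'_l}$ is wrong there---the left side is just $y^{T'}(\mu_pQ')$, while it is $\bigoplus_{P\in S_2}y^T(P)$ that acquires the factor, via Lemma \ref{basic1}(2) applied to $S_2=\pi'(\mu_pQ')$ on the $T$-side, giving $y^T(\mu_pQ)\cdot F^{-\sum\nu'_l}$. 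The paper therefore treats three sign patterns separately ($\sum\nu_l,\sum\nu'_l\ge 0$; $\sum\nu_l\ge 0>\sum\nu'_l$; $\sum\nu_l,\sum\nu'_l<0$), and in the mixed case the verification reduces to a slightly different algebraic identity than the one you wrote down. Once you add this middle case, your argument goes through exactly as in the paper.
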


\begin{proof}

We may assume the edge labeled $\tau$ of $G(p)$ is in $Q$. Write $Q$ as $(Q_i)$ and assume $G(p)$ is a tile of $G_{T,\gamma_j}$ for some $j$. Thus $\mu_pQ=(\mu_pQ_i)$ with $\mu_pQ_i=Q_i$ for $i\neq j$. By Lemma \ref{com}, there exists $Q'_j\in \psi_{\gamma_j}(Q_j)$ satisfies (a) $Q'_j$ can twist on $G'(p)$, (b) $\mu_pQ'_j\in \psi_{\gamma_j}(\mu_pQ_j)$, (c) all $\tau'$-mutable edges pairs in $Q'_j$ are labeled $a_{q-1},a_{q+1}$. For each $i\neq j$, since all the $\tau$-mutable edges pairs in $Q$ are labeled $a_{q-1},a_{q+1}$, by Lemma \ref{2^m}, we can choose $Q_i'\in \psi_{\gamma_i}(Q_i)$ such that all $\tau'$-mutation edges pairs in $Q'_i$ are labeled $a_{q-1},a_{q+1}$. Since all $\tau$-mutable edges pairs in $Q$ are labeled $a_{q-1},a_{q+1}$, all $\tau$-mutable edges pairs in $\mu_pQ$ are labeled $a_{q-1},a_{q+1}$. By the constructions of $\pi(Q)$ and $\pi(\mu_pQ)$, we have $Q'=(Q'_i)\in \pi(Q)$ and $Q''=(\mu_pQ'_i)\in \pi(\mu_pQ)$, here we mean $\mu_pQ'_i=Q'_i$ for $i\neq j$. Clearly, $\mu_{p}Q'=Q''$. Assume $\mu_pQ\in \mathcal P^{\tau}_{\nu'}(G_{T,\gamma})$. Clearly, $\sum\nu_l>\sum\nu'_l$ and $|b^{T}_{\tau a_q}|=\sum\nu_l-\sum\nu'_l$.

Suppose that $\sum\nu_l\geq 0$ and $\sum\nu'_l\geq 0$. Then $S_1=\{Q\}$ and $S_2=\{\mu_{p}Q\}$.

If $a=a_1,a_3$, since all mutation edges pairs in $Q'$ and $Q''$ are labeled $a_2,a_4$, $Q'$ and $Q''$ corresponds to $(0,0,\cdots,0)\in \{0,1\}^{\sum\nu_l}$ and $(0,0,\cdots,0)\in \{0,1\}^{\sum\nu'_l}$, respectively. By Lemma \ref{basic1},
$$\textstyle\bigoplus_{P'\in \pi(S_1)}y^{T'}(P')=y^{T'}(Q')\cdot(1\oplus y_{\tau'}^{T'})^{\sum\nu_l},$$
and
$$\textstyle\bigoplus_{P'\in \pi(S_2)}y^{T'}(P')=y^{T'}(Q'')\cdot(1\oplus y_{\tau'}^{T'})^{\sum\nu'_l}.$$
By Lemma \ref{brick1}, $\frac{y^{T'}(Q')}{y^{T'}(Q'')}=y^{T'}_{a_q}$ and $\frac{y^{T}(Q)}{y^{T}(\mu_p Q)}=y^{T}_{a_q}$. Since $y^{T'}_{\tau'}=(y^{T}_{\tau})^{-1}$ and
$$y^{T'}_{a_q}=y^{T}_{a_q}\cdot (y^T_{\tau})^{\sum\nu_l-\sum\nu'_l}\cdot(1\oplus y^{T}_{\tau})^{\sum\nu'_l-\sum\nu_l},$$
$y_{a_q}^{T}=y_{a_q}^{T'}\cdot(1\oplus y^{T'}_{\tau'})^{\sum\nu_l-\sum\nu'_l}$.
Our result follows in this case.

If $a=a_2,a_4$, since all mutation edges pairs in $Q'$ and $Q''$ are labeled $a_1,a_3$, $Q'$ and $Q''$ corresponds to $(1,1,\cdots,1)\in \{0,1\}^{\sum\nu_l}$ and $(1,1,\cdots,1)\in \{0,1\}^{\sum\nu'_l}$, respectively. By Lemma \ref{basic1},
$$\textstyle\bigoplus_{P'\in \pi(S_1)}y^{T'}(P')=y^{T'}(Q')\cdot(1\oplus y_{\tau}^{T})^{\sum\nu_l},$$
and
$$\textstyle\bigoplus_{P'\in \pi(S_2)}y^{T'}(P')=y^{T'}(Q'')\cdot(1\oplus y_{\tau}^{T})^{\sum\nu'_l}.$$
By Lemma \ref{brick1}, $\frac{y^{T'}(Q'')}{y^{T'}(Q')}=y^{T'}_{a_q}$ and $\frac{y^{T}(\mu_p Q)}{y^{T}(Q)}=y^{T}_{a_q}$. Since $y^{T'}_{a_q}=y^{T}_{a_q}\cdot(1\oplus y^{T}_{\tau})^{\sum\nu_l-\sum\nu'_l},$ our result follows in this case.

Suppose that $\sum\nu_l\geq 0$ and $\sum\nu'_l< 0$. Then $S_1=\{Q\}$ and $\pi(S_2)=\{Q''\}$.

If $q=1,3$, since all mutation edges pairs in $Q'$ and $\mu_pQ$ are labeled $a_2,a_4$, $Q'$ and $\mu_pQ$ corresponds to $(0,0,\cdots,0)\in \{0,1\}^{\sum\nu_l}$ and $(0,0,\cdots,0)\in \{0,1\}^{-\sum\nu'_l}$, respectively. By Lemma \ref{basic1},
$$\textstyle\bigoplus_{P'\in \pi(S_1)}y^{T'}(P')=y^{T'}(Q')\cdot(1\oplus y_{\tau'}^{T'})^{\sum\nu_l},$$
and
$$\textstyle\bigoplus_{P\in S_2}y^{T}(P)=y^{T}(\mu_pQ)\cdot(1\oplus y_{\tau'}^{T'})^{-\sum\nu'_l}.$$
By Lemma \ref{brick1}, $\frac{y^{T'}(Q')}{y^{T'}(Q'')}=y^{T'}_{a_q}$ and $\frac{y^{T}(Q)}{y^{T}(\mu_p Q)}=y^{T}_{a_q}$. Since $y^{T'}_{\tau'}=(y^{T}_{\tau})^{-1}$ and
$$y^{T'}_{a_q}=y^{T}_{a_q} \cdot(y^T_{\tau})^{\sum\nu_l-\sum\nu'_l}\cdot(1\oplus y^{T}_{\tau})^{\sum\nu'_l-\sum\nu_l},$$
$y_{a_q}^{T}=y_{a_q}^{T'}\cdot(1\oplus y^{T'}_{\tau'})^{\sum\nu_l-\sum\nu'_l}$, and hence
$$\frac{y^{T}(\mu_pQ)\cdot(1\oplus y_{\tau'}^{T'})^{-\sum\nu'_l}}{y^{T}(Q)}=\frac{y^{T'}(Q'')}{y^{T'}(Q')\cdot(1\oplus y^{T'}_{\tau'})^{\sum\nu_l}}.$$
Our result follows in this case.

If $q=2,4$, since all mutation edges pairs in $Q'$ and $\mu_pQ$ are labeled $a_1,a_3$, $Q'$ and $\mu_pQ$ corresponds to $(1,1,\cdots,1)\in \{0,1\}^{\sum\nu_l}$ and $(1,1,\cdots,1)\in \{0,1\}^{-\sum\nu'_l}$, respectively. By Lemma \ref{basic1},
$$\textstyle\bigoplus_{P'\in \pi(S_1)}y^{T'}(P')=y^{T'}(Q')\cdot(1\oplus y_{\tau}^{T})^{\sum\nu_l},$$
and
$$\textstyle\bigoplus_{P\in S_2}y^{T}(P)=y^{T}(\mu_pQ)\cdot(1\oplus y_{\tau}^{T})^{-\sum\nu'_l}.$$
By Lemma \ref{brick1}, $\frac{y^{T'}(Q'')}{y^{T'}(Q')}=y^{T'}_{a_q}$ and $\frac{y^{T}(\mu_p Q)}{y^{T}(Q)}=y^{T}_{a_q}$. Since $y^{T'}_{a_q}=y^{T}_{a_q}\cdot(1\oplus y^{T}_{\tau})^{\sum\nu_l-\sum\nu'_l},$ our result follows in this case.

Suppose that $\sum\nu_l< 0$ and $\sum\nu'_l< 0$. Change the roles of $T$ and $T'$, as the discussion for the case $\sum\nu_l,\sum\nu'_l\geq 0$, our result follows in this case.
\end{proof}

\medskip

\begin{Lemma}\label{all-a-F}

Keep the foregoing notations. Assume $\tau'\neq \gamma\notin T$. Let $Q$ be a perfect matching of $G_{T,\gamma}$ which can twist on a tile $G(p)$ with diagonal labeled $a_q$ for $q=1,2,3,4$. Assume $S_1,S_2\in \mathfrak P$ such that $Q\in S_1$ and $\mu_{p}Q\in S_2$. Then, in $\mathbb P$, we have
$$\frac{\bigoplus_{P\in S_1}y^{T}(P)}{\bigoplus_{P'\in \pi(S_1)}y^{T'}(P')}=\frac{\bigoplus_{P\in S_2}y^{T}(P)}{\bigoplus_{P'\in \pi(S_2)}y^{T'}(P')}.$$

\end{Lemma}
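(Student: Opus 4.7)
The plan is to proceed by induction on $n(Q)$, the number of $\tau$-mutable edges pairs in $Q$ that are \emph{not} labeled $a_{q-1}, a_{q+1}$; equivalently, those lying on the same side of $\tau$ as $a_q$. When $n(Q)=0$, every $\tau$-mutable pair in $Q$ is already labeled $a_{q-1}, a_{q+1}$, and Lemma \ref{special-a-F} applies directly.

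For the inductive step, suppose $n(Q)>0$ and fix an obstructive $\tau$-tile $G(p'')$ of $G_{T,\gamma}$; set $R:=\mu_{p''}Q$. The crucial geometric observation is that $G(p'')$ cannot be adjacent to $G(p)$ in the snake graph. Indeed, were $G(p'')$ adjacent to $G(p)=G(p_j)$, the shared edge $e$ would be labeled $\tau_{[\gamma_j]}$, the third edge of the triangle of $T$ containing both $a_q$ and $\tau$, so $e$ is labeled either $a_{q-1}$ or $a_{q+1}$. This label belongs to the twistable pair of the $\tau$-tile $G(p'')$ lying on the side opposite to $a_q$, so the obstructive same-side pair of $Q$ at $G(p'')$ does not contain $e$; that is, $e\notin Q$. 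But then the twistable pair of $G(p)$ in $Q$ is forced to be the other pair (not containing $e$), and this pair already covers all four vertices of $G(p)$, in particular the two endpoints of $e$. Meanwhile the obstructive pair at $G(p'')$ in $Q$ likewise covers all four vertices of $G(p'')$, including those same two endpoints, producing a double covering that contradicts $Q$ being a perfect matching.

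Hence $G(p'')$ and $G(p)$ share no edge, so the twist at $G(p'')$ leaves every edge of $G(p)$ unchanged: $G(p)$ is still twistable in $R$, and $n(R)=n(Q)-1$. Writing $R':=\mu_pR=\mu_{p''}(\mu_pQ)$ (twists at disjoint tiles commute), Lemma \ref{tau-F} applied to $Q\leftrightarrow R$ yields the claimed ratio identity between the classes of $Q$ and $R$; Lemma \ref{tau-F} applied to $\mu_pQ\leftrightarrow R'$ yields it between the classes of $\mu_pQ$ and $R'$; and the inductive hypothesis applied to $R$ at the $a_q$-tile $G(p)$ yields it between the classes of $R$ and $R'$. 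Chaining these three identities delivers the desired identity for $Q$ and $\mu_pQ$.

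The main obstacle is precisely the adjacency exclusion above: verifying that no obstructive $\tau$-tile can sit immediately next to $G(p)$ in the snake graph when $G(p)$ is twistable in $Q$. This requires the careful local matching argument at the two endpoints of the shared edge sketched above. Once that is in place, the induction closes cleanly via Lemmas \ref{tau-F} and \ref{special-a-F}.
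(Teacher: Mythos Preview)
Your proof is correct and matches the paper's approach: twist away the obstructive $\tau$-mutable pairs one by one, invoking Lemma \ref{tau-F} at each step, until Lemma \ref{special-a-F} applies directly. The non-adjacency argument you isolate---that no obstructive $\tau$-tile can sit next to the $a_q$-tile $G(p)$ when $Q$ is twistable at $G(p)$---is precisely what makes the reduction legitimate (it guarantees both that $R$ remains twistable at $G(p)$ and that $\mu_p\mu_{p''}=\mu_{p''}\mu_p$), and you spell it out more carefully than the paper, which performs all the $\tau$-twists at once and takes this point for granted.
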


\begin{proof}

After do twists from $Q$ on the tiles $G$ with diagonals labeled $\tau$ and the edges labeled $a_{q},a_{q+2}$ are in $Q$, we can obtain a perfect matching $R$ which satisfies the conditions of Lemma \ref{special-a-F}. Then our result follows by Lemmas \ref{special-a-F} and \ref{tau-F}.
\end{proof}

\medskip

In summary, we have the following proposition.

\medskip

\begin{Proposition}\label{all-F}

Keep the foregoing notations. Assume $\tau'\neq \gamma\notin T$. Let $Q$ be a perfect matching of $G_{T,\gamma}$ which can twist on a tile $G(p)$. Assume $S_1,S_2\in \mathfrak P$ such that $Q\in S_1$ and $\mu_{p}Q\in S_2$. Then, in $\mathbb P$, we have
$$\frac{\bigoplus_{P\in S_1}y^{T}(P)}{\bigoplus_{P'\in \pi(S_1)}y^{T'}(P')}=\frac{\bigoplus_{P\in S_2}y^{T}(P)}{\bigoplus_{P'\in \pi(S_2)}y^{T'}(P')}.$$

\end{Proposition}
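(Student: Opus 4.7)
The plan is a straightforward case analysis according to the label $\varsigma$ of the diagonal of the tile $G(p)$ on which $Q$ twists. The arcs of $T$ split into three classes with respect to $\tau$: the arc $\tau$ itself, the four arcs $a_1,a_2,a_3,a_4$ bordering the two triangles containing $\tau$ (with the convention that some of these may coincide when $a_1=a_3$ or $a_2=a_4$), and all remaining arcs. Since the construction of $\pi$ and the combinatorics of $y^{T'}(P')$ versus $y^{T}(P)$ only interact with $\tau$ through the matrix entries $b^T_{\tau\alpha}$, and these entries vanish precisely when $\alpha\notin\{a_1,a_2,a_3,a_4,\tau\}$, the analysis naturally breaks along this trichotomy.

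First, if $\varsigma\notin\{a_1,a_2,a_3,a_4,\tau\}$, then $b^T_{\tau\varsigma}=0$, so $y^{T'}_{\varsigma}=y^{T}_{\varsigma}$ and the exchange relations for $\tau$ do not affect $\varsigma$. This is exactly the content of Lemma \ref{non-a-F}, which I would invoke directly. Second, if $\varsigma=\tau$, then the twist corresponds under $\pi$ to a twist on a tile of $G_{T',\gamma}$ with diagonal labeled $\tau'$ (using the $\nu$-pair structure in $\mathfrak P$), and the identity $y^{T'}_{\tau'}=(y^T_\tau)^{-1}$ together with Lemma \ref{basic1} balances the $(1\oplus y^{T'}_{\tau'})$-factors; this is Lemma \ref{tau-F}.

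The remaining case is $\varsigma=a_q$ for some $q\in\{1,2,3,4\}$, which is Lemma \ref{all-a-F}. I would simply quote Lemma \ref{all-a-F} here: its proof already reduces to Lemma \ref{special-a-F} by first applying a sequence of $\tau$-twists (handled by Lemma \ref{tau-F}) to bring $Q$ into the normal form in which all $\tau$-mutable edges pairs are labeled $a_{q-1},a_{q+1}$.

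The only genuine verification needed is to notice that these three cases are jointly exhaustive for any possible label $\varsigma$ of the diagonal of $G(p)$, and that the invariant $F(S):=\bigl(\bigoplus_{P\in S}y^T(P)\bigr)/\bigl(\bigoplus_{P'\in \pi(S)}y^{T'}(P')\bigr)$ is well-defined in $\mathbb P$ (the denominator is nonzero because $\pi(S)$ is nonempty by Theorem \ref{partition bi} (1), already proved at this point). Granting these, the equality $F(S_1)=F(S_2)$ follows from the applicable lemma among \ref{non-a-F}, \ref{tau-F}, \ref{all-a-F}, and no new ideas are required. The main conceptual obstacle—reconciling the two-sided accounting between $y^T$ and $y^{T'}$ under the exchange of $\tau\leftrightarrow\tau'$—has been absorbed into Lemma \ref{special-a-F}, so the present proposition is essentially a clean packaging result.
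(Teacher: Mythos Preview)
Your proof is correct and follows exactly the same approach as the paper: the paper's proof is the single sentence ``This follows immediately by Lemma \ref{non-a-F}, Lemma \ref{tau-F} and Lemma \ref{all-a-F},'' which is precisely your case analysis on the label $\varsigma$ of the diagonal of $G(p)$. Your additional remarks about exhaustivity of the three cases and well-definedness of $F(S)$ are reasonable elaborations but not needed beyond what the paper assumes.
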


\begin{proof}

This follows immediately by Lemma \ref{non-a-F}, Lemma \ref{tau-F} and Lemma \ref{all-a-F}.
\end{proof}

\medskip

As a corollary of Proposition \ref{all-F}, we have the following theorem.

\medskip

\begin{Theorem}\label{F}

Keep the foregoing notations. Assume $\tau'\neq \gamma\notin T$.

\begin{enumerate}[$(1)$]

  \item For any $S\in \mathfrak P$, we have
  $$\frac{\bigoplus_{P\in S}y^{T}(P)}{\bigoplus_{P'\in \pi(S)}y^{T'}(P')}=\frac{\bigoplus_{P\in \mathcal P(G_{T,\gamma})}y^{T}(P)}{\bigoplus_{P'\in \mathcal P(G_{T',\gamma})}y^{T'}(P')}.$$

  \item For any $S'\in \mathfrak P'$, we have
  $$\frac{\bigoplus_{P\in \pi'(S')}y^{T}(P)}{\bigoplus_{P'\in S'}y^{T'}(P')}=\frac{\bigoplus_{P\in \mathcal P(G_{T,\gamma})}y^{T}(P)}{\bigoplus_{P'\in \mathcal P(G_{T',\gamma})}y^{T'}(P')}.$$

\end{enumerate}

\end{Theorem}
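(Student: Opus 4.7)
The plan is to derive Theorem \ref{F} from Proposition \ref{all-F} together with the transitivity of twists (Lemma \ref{transitive}). The key observation is that the equality in Proposition \ref{all-F} says the ratio
$$R(S) \;:=\; \frac{\bigoplus_{P\in S}y^{T}(P)}{\bigoplus_{P'\in \pi(S)}y^{T'}(P')}$$
is preserved whenever we pass between two parts $S_1,S_2\in\mathfrak{P}$ connected by a single twist. So the first step is to argue that $R(S)$ is constant on $\mathfrak{P}$. Given any $S,S^{*}\in\mathfrak{P}$, choose representatives $Q\in S$ and $Q^{*}\in S^{*}$; by Lemma \ref{transitive}, there is a sequence of twists $Q=Q_0,Q_1,\ldots,Q_k=Q^{*}$ with $Q_{i+1}=\mu_{p_i}Q_i$. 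Each $Q_i$ lies in some $S_i\in\mathfrak{P}$, and consecutive $S_i,S_{i+1}$ satisfy the hypothesis of Proposition \ref{all-F}, so $R(S_i)=R(S_{i+1})$ for every $i$. Chaining gives $R(S)=R(S^{*})$.

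Let $c$ denote the common value of $R(S)$. Since $\mathfrak{P}$ partitions $\mathcal{P}(G_{T,\gamma})$, while $\pi$ is a bijection from $\mathfrak{P}$ to $\mathfrak{P}'$ and $\mathfrak{P}'$ partitions $\mathcal{P}(G_{T',\gamma})$, the distributive law for the semifield $(\mathbb{P},\oplus,\cdot)$ gives
$$\bigoplus_{P\in\mathcal{P}(G_{T,\gamma})}y^{T}(P)
=\bigoplus_{S\in\mathfrak{P}}\bigoplus_{P\in S}y^{T}(P)
=\bigoplus_{S\in\mathfrak{P}}\!\Bigl(c\cdot\!\!\bigoplus_{P'\in\pi(S)}\!\!y^{T'}(P')\Bigr)
=c\cdot\!\!\!\bigoplus_{P'\in\mathcal{P}(G_{T',\gamma})}\!\!\!y^{T'}(P').$$
Dividing, the global ratio equals $c$, which proves part (1).

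For part (2), observe that the correspondence $S\mapsto\pi(S)$ is a bijection $\mathfrak{P}\to\mathfrak{P}'$, and its inverse is precisely $S'\mapsto\pi'(S')$ (by construction of $\pi'$ in Section \ref{compare}, and Lemma \ref{retraction}, which ensures $\pi'\pi=\mathrm{id}$). Thus every $S'\in\mathfrak{P}'$ is of the form $\pi(S)$ for a unique $S\in\mathfrak{P}$ with $\pi'(S')=S$, and (2) is just part (1) rewritten in the reverse direction.

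The main (minor) obstacle is verifying cleanly that the semifield identity $\bigoplus_S (c\cdot x_S)=c\cdot\bigoplus_S x_S$ is genuinely a consequence of the distributivity axiom (it is, applied iteratively), and that each twist step in the proof of constancy of $R(S)$ is truly of the form treated in Proposition \ref{all-F} (which covers arbitrary diagonals, as it unifies Lemmas \ref{non-a-F}, \ref{tau-F}, \ref{all-a-F}). Once these are in place, the argument is formal.
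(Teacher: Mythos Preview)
Your proposal is correct and follows essentially the same approach as the paper: use Lemma \ref{transitive} together with Proposition \ref{all-F} to show the ratio $R(S)$ is constant on $\mathfrak P$, then conclude by distributivity over the partition. Your write-up is in fact more explicit than the paper's, which leaves the distributivity step implicit; the only minor difference is that the paper handles part (2) ``dually'' (swapping the roles of $T$ and $T'$), whereas you reduce (2) to (1) via the bijection $\pi\leftrightarrow\pi'$---both are valid.
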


\begin{proof}

We shall only prove (1) because (2) can be proved dually. By Lemma \ref{transitive} and Proposition \ref{all-F}, $\frac{\bigoplus_{P\in S_1}y^{T}(P)}{\bigoplus_{P'\in \pi(S_1)}y^{T'}(P')}=\frac{\bigoplus_{P\in S_2}y^{T}(P)}{\bigoplus_{P'\in \pi(S_2)}y^{T'}(P')}$ for any $S_1,S_2\in \mathfrak P$. Therefore, our result follows.
\end{proof}

\medskip

{\bf Proof of Theorem \ref{partition bi} (2):}

If $\tau\neq\gamma\in T$, our result clearly holds since $x^T_{\gamma}=x^{T'}_{\gamma}$. If $\gamma=\tau,\tau'$, our result follows by $x^{T'}_{\tau'}=\frac{y^T_{\tau}\cdot x^T_{a_2}\cdot x^T_{a_4}+x^T_{a_1}\cdot x^T_{a_3}}{x^T_{\tau}\cdot(1\oplus y^T_{\tau})}$.

If $\tau'\neq \gamma \notin T$, we may assume that $|\pi(S)|=1$ and $\pi(S)=\{Q'\}$. Then $Q'\in \mathcal P^{\tau'}_{\nu}(G_{T',\gamma})$ for some $\nu$ with $\sum\nu_l\geq 0$. Denote $d=\sum\nu_l$. Then
$$x^{T'}(Q')=\frac{A\cdot(x^{T'}_{\tau'})^d\cdot y^{T'}(Q')}{\bigoplus_{P'\in \mathcal P(G_{T',\gamma})}y^{T'}(P')},$$ where $A$ is the cluster monomial corresponding to the edges not labeled $\tau'$ of $Q'$.

On the other hand, by Propositions \ref{compare-loc} (2.c), \ref{2^n} and Lemma \ref{brick1}, we have

$$
\begin{array}{rcl} & & \textstyle\sum_{P\in \pi'(Q')}x^T(P)\vspace{2pt}\\

& = & \frac{\textstyle\sum_{\lambda\in \{0,1\}^d}A\cdot (x^T_{\tau})^{-d}\cdot (x^T_{a_2}x^T_{a_4})^{d-\sum\lambda_i}\cdot(x^T_{a_1}x^T_{a_3})^{\sum\lambda_i}\cdot y^T(Q(\lambda))}{\bigoplus_{P\in \mathcal P(G_{T,\gamma})}y^T(P)}  \vspace{4pt}  \\

& = & A\cdot (x^T_{\tau})^{-d}\frac{\textstyle\sum_{\lambda\in \{0,1\}^d}y^T(Q_0)\cdot (y^T_{\tau})^{d-\sum\lambda_i}\cdot (x^T_{a_2}x^T_{a_4})^{d-\sum\lambda_i}\cdot(x^T_{a_1}x^T_{a_3})^{\sum\lambda_i}}{\bigoplus_{P\in \mathcal P(G_{T,\gamma})}y^T(P)}  \vspace{4pt}  \\

& = & A\cdot (x^T_{\tau})^{-d}\cdot y^T(Q_0) \frac{\textstyle\sum_{\lambda\in \{0,1\}^d}(y^T_{\tau}x^T_{a_2}x^T_{a_4})^{d-\sum\lambda_i}\cdot(x^T_{a_1}x^T_{a_3})^{\sum\lambda_i}}{\bigoplus_{P\in \mathcal P(G_{T,\gamma})}y^T(P)}  \vspace{4pt}  \\

& = & A\cdot (x^T_{\tau})^{-d}\cdot y^T(Q_0) \frac{(y^T_{\tau}x^T_{a_2}x^T_{a_4}+x^T_{a_1}x^T_{a_3})^d}{\bigoplus_{P\in \mathcal P(G_{T,\gamma})}y^T(P)},  \vspace{2pt}  \\
\end{array}$$
where $Q(\lambda)\in \pi'(Q')$ corresponds to $\lambda\in \{0,1\}^d$ and $Q_0\in \pi'(Q')$ corresponds to $(1,1,\cdots,1)\in \{0,1\}^d$.

Since $x^{T'}_{\tau'}=\frac{y^T_{\tau}\cdot x^T_{a_2}\cdot x^T_{a_4}+x^T_{a_1}\cdot x^T_{a_3}}{x^T_{\tau}\cdot(1\oplus y^T_{\tau})}$, $\sum_{P\in \pi'(Q')}x^T(P)= \frac{A\cdot (x^{T'}_{\tau'})^d \cdot y^T(Q_0)\cdot (1\oplus y^T_{\tau})^d}{\bigoplus_{P\in \mathcal P(G_{T,\gamma})}y^T(P)}$. Thus, by Theorem \ref{F},

$$
\begin{array}{rcl} \frac{\textstyle\sum_{P\in \pi'(Q')}x^T(P)}{x^{T'}(Q')}

& = & \frac{y^T(Q_0)\cdot(1\oplus y^T_{\tau})^d}{y^{T'}(Q')}\cdot \frac{\bigoplus_{P'\in \mathcal P(G_{T',\gamma})}y^{T'}(P')} {\bigoplus_{P\in \mathcal P(G_{T,\gamma})}y^T(P)}  \vspace{4pt}  \\

& = & \frac{y^T(Q_0)\cdot(1\oplus y^T_{\tau})^d}{y^{T'}(Q')}\cdot \frac{y^{T'}(Q')} {\bigoplus_{Q\in \pi'(Q')}y^T(Q)}  \vspace{4pt}  \\

& = & \frac{y^T(Q_0)\cdot(1\oplus y^T_{\tau})^d}{y^{T'}(Q')}\cdot \frac{y^{T'}(Q')} {y^T(Q_0)\cdot(1\oplus y^T_{\tau})^d}  \vspace{4pt}  \\

& = & 1. \vspace{2pt}
\end{array}$$
Our result follows. \;\;\; \ \ \ \ \ \ \ \ \ \;\;\; \ \ \ \ \ \ \  \;\;\; \ \ \ \ \ \ \ \ \ \;\;\; \ \ \ \ \ \ \ \ \ \ \;\;\; \ \ \ \ \ \ \ \;\;\; \ \ \ \ \ \ \ \ \ \ \ \ \ \ \ \ \ \ \ \ \ \ \ \  $\square$

\medskip

\section{Proof of Theorem \ref{mainthm}}\label{main2}

\subsection{Valuation maps on \textbf{$\mathcal P(G_{T,\gamma})$}}

Herein, two valuation maps $v_{+},v_{-}: \mathcal P(G_{T,\gamma})\rightarrow \mathbb Z$ are constructed. We will prove that $v_{+}$ coincide with $v_{-}$ and $v$ as in Theorem \ref{mainthm}. Throughout this section let $\mathcal O$ be an unpunctured surface and $T$ be a triangulation. Let $\gamma$ be an oriented arc in $\mathcal O$ crossing $T$ with points $p_1,\cdots, p_d$ in order. Assume that $p_1,\cdots,p_d$ belong to the arcs $\tau_{i_1},\cdots,\tau_{i_d}$, respectively in $T$.

\medskip

\begin{Lemma}\label{invo}

If $P\in \mathcal P(G_{T,\gamma})$ can twist on $G(p_s)$, then $\mu_{p_s}P$ can twist on $G(p_s)$, and $\mu_{p_s}\mu_{p_s}P=P$.

\end{Lemma}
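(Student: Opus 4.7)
The plan is to unwind the definition of twist and check the two claims directly, with the only substantive observation being that two edges of a tile that belong to a perfect matching must be the two opposite edges of that tile (since adjacent edges of a tile share a vertex, so cannot both lie in any perfect matching).

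First I would fix notation. Label the four boundary edges of the tile $G(p_s)$ cyclically as $e_1,e_2,e_3,e_4$ so that $e_1,e_3$ are opposite and $e_2,e_4$ are opposite. Since $P$ can twist on $G(p_s)$, exactly two of these edges, say $e_i$ and $e_j$, lie in $P$. Since $P$ is a perfect matching, $e_i$ and $e_j$ cannot share a vertex, so they must be an opposite pair; without loss of generality $\{e_i,e_j\}=\{e_1,e_3\}$, and the remaining two edges $\{e_2,e_4\}$ also form an opposite (hence non-incident) pair. Note that $e_1,e_3$ together cover all four vertices of $G(p_s)$ exactly once.

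Second, I would verify that $\mu_{p_s}P := (P\setminus\{e_1,e_3\})\cup\{e_2,e_4\}$ is a perfect matching of $G_{T,\gamma}$. Outside the four vertices of $G(p_s)$, no edge has been changed, so every such vertex is still covered exactly once. For the four vertices of $G(p_s)$: first, $P\setminus\{e_1,e_3\}$ has no edge incident to them, because any edge of $P$ incident to such a vertex and not in $\{e_1,e_3\}$ would have to lie in a neighboring tile while $e_1$ or $e_3$ lies in $G(p_s)$, and Lemma \ref{inone} forbids this configuration. Then $e_2$ and $e_4$ together cover all four vertices exactly once, completing the verification.

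Third, since $\mu_{p_s}P$ contains $e_2$ and $e_4$, both edges of $G(p_s)$, it can twist on $G(p_s)$ by definition. Finally, applying the twist once more: the two edges of $G(p_s)$ in $\mu_{p_s}P$ are precisely $e_2,e_4$, so $\mu_{p_s}\mu_{p_s}P=((\mu_{p_s}P)\setminus\{e_2,e_4\})\cup\{e_1,e_3\}$. Here we use that $e_1,e_3\notin P\setminus\{e_1,e_3\}$ trivially, and $e_2,e_4\notin P$ because each of them is incident to both $e_1$ and $e_3$, so their presence in $P$ would violate the perfect matching condition. Substituting gives $\mu_{p_s}\mu_{p_s}P = P$, as required.

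There is no real obstacle here; the argument is essentially combinatorial bookkeeping, and the only place that genuinely uses the ambient graph (rather than just the tile in isolation) is the appeal to Lemma \ref{inone} in the second step to guarantee that no edge of $P$ from a neighboring tile covers a vertex of $G(p_s)$ already covered by $e_1$ or $e_3$.
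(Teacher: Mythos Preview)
Your proof is correct and follows the same approach as the paper, which simply says that $\mu_{p_s}P$ has two edges of $G(p_s)$ by definition (so it can twist) and that $\mu_{p_s}\mu_{p_s}P=P$ is clear. Your version is more explicit, but the appeal to Lemma~\ref{inone} in your second step is unnecessary: since $P$ is a perfect matching and $e_1,e_3$ already cover all four vertices of $G(p_s)$, no other edge of $P$ whatsoever can be incident to those vertices, so $P\setminus\{e_1,e_3\}$ leaves them uncovered without any further argument.
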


\begin{proof}

Since $G(p_s)$ has two edges belong to $\mu_{p_s}P$ according to the definition of $\mu_{p_s}P$, $\mu_{p_s}P$ can twist on $G(p_s)$. $\mu_{p_s}\mu_{p_s}P=P$ is clear.
\end{proof}

\medskip

\begin{Lemma}\label{commuta}

If $P\in \mathcal P(G_{T,\gamma})$ can twist on $G(p_s)$ and $G(p_t)$ for some $s,t$ such that $|s-t|>1$, then
\begin{enumerate}[$(1)$]

  \item $\mu_{p_s}P$ and $\mu_{p_t}P$ can twist on $G(p_t)$ and $G(p_s)$, respectively, and $$\mu_{p_s}\mu_{p_t}P=\mu_{p_t}\mu_{p_s}P.$$

  \item $\Omega(p_s,P)+\Omega(p_t,\mu_{p_s}P)=\Omega(p_t,P)+\Omega(p_s,\mu_{p_t}P)$.

\end{enumerate}

\end{Lemma}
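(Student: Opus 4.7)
For (1), the plan is to exploit that $|s-t|>1$ forces $G(p_s)$ and $G(p_t)$ to share no edge (consecutive tiles in the snake graph share exactly one edge and non-consecutive ones share none), so the twist at $G(p_s)$ only affects the four edges of $G(p_s)$ and leaves every edge of $G(p_t)$ pointwise fixed. Thus $\mu_{p_s}P$ still has exactly the same two $G(p_t)$-edges in the matching and can be twisted at $p_t$; the symmetric statement gives the other direction, and since the two operations modify disjoint edge sets we obtain $\mu_{p_s}\mu_{p_t}P = \mu_{p_t}\mu_{p_s}P$ immediately.

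For (2), the strategy is to rewrite the target identity as
\[\Omega(p_t,\mu_{p_s}P)-\Omega(p_t,P) \;=\; \Omega(p_s,\mu_{p_t}P)-\Omega(p_s,P)\]
and then compute each side. The sign $\varepsilon_s\in\{\pm 1\}$ in $\Omega(p_s,\cdot)$ (recording whether the $P$-edges of $G(p_s)$ are the clockwise or counterclockwise pair relative to $\tau_{i_s}$) depends only on the $G(p_s)$-edges in the matching, hence is unchanged by a twist at $p_t$; similarly for $\varepsilon_t$. Taking $s<t$ without loss of generality, the edges of $G(p_t)$ sit strictly after those of $G(p_s)$ in the matching order, so $n_{p_s}^{-}(\tau_{i_s},\cdot)$ is unaffected by $\mu_{p_t}$, while a short case split on $\varepsilon_t$ yields
\[n_{p_s}^{+}(\tau_{i_s},\mu_{p_t}P)-n_{p_s}^{+}(\tau_{i_s},P)=\varepsilon_t(\alpha_{s,t}-\beta_{s,t}),\]
where $\alpha_{s,t}$ (resp.\ $\beta_{s,t}$) counts the edges of $G(p_t)$ labeled $\tau_{i_s}$ that lie clockwise (resp.\ counterclockwise) to $\tau_{i_t}$. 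A symmetric calculation picks up an extra minus sign from the $-n^-$ term in Definition \ref{omega} and gives $\Omega(p_t,\mu_{p_s}P)-\Omega(p_t,P) = -\varepsilon_s\varepsilon_t d^T(\alpha_{t,s}-\beta_{t,s})$.

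The identity then collapses to $(\alpha_{s,t}-\beta_{s,t})+(\alpha_{t,s}-\beta_{t,s})=0$, and the final step is to identify each summand with an entry of the signed adjacency matrix. The two triangles of $T$ incident to $\tau_{i_t}$ are precisely those contributing the four edges of $G(p_t)$, so straight from the definition $\alpha_{s,t}-\beta_{s,t} = b^T_{\tau_{i_t}\tau_{i_s}}$ and symmetrically $\alpha_{t,s}-\beta_{t,s}=b^T_{\tau_{i_s}\tau_{i_t}}$; the skew-symmetry of $B^T$ then closes the argument. The degenerate case $\tau_{i_s}=\tau_{i_t}$ is handled separately: since $\mathcal{O}$ is unpunctured, $T$ has no self-folded triangle, so $\tau_{i_t}$ cannot label any side of $G(p_t)$, forcing both $(\alpha_{s,t}-\beta_{s,t})$ and $(\alpha_{t,s}-\beta_{t,s})$ to vanish directly.

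The main obstacle I anticipate is the sign bookkeeping in the middle step: one must simultaneously track the two mutation signs $\varepsilon_s,\varepsilon_t$ and the asymmetry between $n^+$ and $n^-$ in $\Omega$, while verifying that a twist at one tile affects only one of these counts at the other tile. The potential subtlety that $G(p_s)$ and $G(p_t)$ may share a single vertex when $|s-t|=2$ and the snake graph turns there is harmless, since sharing a vertex is not sharing an edge and does not disrupt the linear order on matching edges used above.
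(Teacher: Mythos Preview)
Your argument is correct. Part (1) is essentially identical to the paper's proof: both observe that $|s-t|>1$ forces the tiles to have disjoint edge sets, so the two twists act on disjoint pieces of the matching and commute.

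For part (2) you take a genuinely different route from the paper. The paper first uses the antisymmetry $\Omega(p,\mu_pQ)=-\Omega(p,Q)$ to normalize both signs $\varepsilon_s,\varepsilon_t$ to $+1$, and then splits into two cases: (i) $\tau_{i_s}=\tau_{i_t}$ or the two arcs share no triangle, where the $n^\pm$ counts are literally unchanged by the opposite twist; and (ii) $\tau_{i_s}\neq\tau_{i_t}$ share a triangle, where a further normalization lets the paper compute the change in each $n^\pm$ as the same constant $c\in\{1,2\}$ on both sides, which then cancels. By contrast, you keep the general signs $\varepsilon_s,\varepsilon_t$, compute the two differences uniformly as $\pm\varepsilon_s\varepsilon_t d^T(\alpha-\beta)$, and then close the argument by recognizing $\alpha_{s,t}-\beta_{s,t}=b^T_{\tau_{i_t}\tau_{i_s}}$ and invoking the skew-symmetry of $B^T$. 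Your approach is more conceptual and avoids the case split entirely, making transparent that the identity in (2) is \emph{equivalent} to the skew-symmetry of the exchange matrix; the paper's approach is more hands-on and does not mention $B^T$ at all. Both are valid, and your sign bookkeeping (the $-n^-$ contribution producing the extra minus on the $(t,s)$ side) and your handling of the degenerate case $\tau_{i_s}=\tau_{i_t}$ via the absence of self-folded triangles are correct.
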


\begin{proof}

(1)~ Since $|s-t|>1$, the edges of $G(p_t)$ in $P$ are also in $\mu_{p_s}P$, and hence $\mu_{p_s}P$ can twist on $G(p_t)$. Similarly, $\mu_{p_t}P$ can twist on $G(p_s)$. By definition, $\mu_{p_s}\mu_{p_t}P$ and $\mu_{p_t}\mu_{p_s}P$ are obtained from $P$ via replacing the edges of $G(p_s)$ and $G(p_t)$ by the remaining edges simultaneously. Thus, $\mu_{p_s}\mu_{p_t}P=\mu_{p_t}\mu_{p_s}P$.

(2)~ Since $\Omega(p,Q)=-\Omega(p,\mu_{p}Q)$ for any $Q$ which can twist on $G(p)$, we may assume the edges of $G(p_s)$ and $G(p_t)$ which are labeled $a_{2_s},a_{4_s}$ and $a_{2_t},a_{4_t}$, respectively, in Definition \ref{omega} belong to $P$.

In case $\tau_{i_s}$ and $\tau_{i_t}$ are not in the same triangle in $T$ or $\tau_{i_s}=\tau_{i_t}$, we have $n^{\pm}_{p_s}(\tau_{i_s},P)=n^{\pm}_{p_s}(\tau_{i_s},\mu_{p_t}P)$, and $n^{\pm}_{p_t}(\tau_{i_t},P)=n^{\pm}_{p_t}(\tau_{i_t},\mu_{p_s}P)$.

Therefore,
$$
\begin{array}{rcl} && \Omega(p_s,P)+\Omega(p_t,\mu_{p_s}P) \vspace{2pt}  \\

& = & [n^{+}_{p_s}(\tau_{i_s},P)-m^{+}_{p_s}(\tau_{i_s},\gamma)-n^{-}_{p_s}(\tau_{i_s},P)+m^{-}_{p_s}(\tau_{i_s},\gamma)]d^T \vspace{2pt}  \\

& + & [n^{+}_{p_t}(\tau_{i_t},\mu_{p_s}P)-m^{+}_{p_t}(\tau_{i_t},\gamma)-n^{-}_{p_t}(\tau_{i_t},\mu_{p_s}P)+m^{-}_{p_t}(\tau_{i_t},\gamma)]d^T \vspace{2pt}  \\

& = & [n^{+}_{p_s}(\tau_{i_s},\mu_{p_t}P)-m^{+}_{p_s}(\tau_{i_s},\gamma)-n^{-}_{p_s}(\tau_{i_s},\mu_{p_t}P)+m^{-}_{p_s}(\tau_{i_s},\gamma)]d^T \vspace{2pt}  \\

& + & [n^{+}_{p_t}(\tau_{i_t},P)-m^{+}_{p_t}(\tau_{i_t},\gamma)-n^{-}_{p_t}(\tau_{i_t},P)+m^{-}_{p_t}(\tau_{i_t},\gamma)]d^T \vspace{2pt}  \\

& = & \Omega(p_s,\mu_{p_t}P)+\Omega(p_t,P).\vspace{2pt}\\
\end{array}$$

Since $\Omega(p,Q)=-\Omega(p,\mu_{p}Q)$ for any $Q$ which can twist on $G(p)$,
$$\Omega(p_s,P)+\Omega(p_t,\mu_{p_s}P)=\Omega(p_t,P)+\Omega(p_s,\mu_{p_t}P)$$ holds if and only if $$\Omega(p_s,\mu_{p_s}P)+\Omega(p_t,P)=\Omega(p_t,\mu_{p_s}P)+\Omega(p_s,\mu_{p_s}\mu_{p_t}P)$$ holds if and only if
$$\Omega(p_s,\mu_{p_t}P)+\Omega(p_t,\mu_{p_s}\mu_{p_t}P)=\Omega(p_t,\mu_{p_t}P)+\Omega(p_s,P)$$ holds.

In case $\tau_{i_s}$ and $\tau_{i_t}$ are in a same triangle in $T$, we therefore shall assume that the edges labeled $\tau_{i_t}$ and $\tau_{i_s}$ of $G(p_s)$ and $G(p_t)$, respectively are in $P$, since otherwise we can do twists for $P$ to make our assumption holds. Without loss of generality, assume that $s<t$ and $\tau_{i_t}$ is counterclockwise to $\tau_{i_s}$ in the triangle of $T$. Then we have $n_{p_s}^{+}(\tau_{i_s},\mu_{p_t}P)=n_{p_s}^{+}(\tau_{i_s},P)-c$, $n_{p_s}^{-}(\tau_{i_s},\mu_{p_t}P)=n_{p_s}^{-}(\tau_{i_s},P)$ and $n_{p_t}^{+}(\tau_{i_t},\mu_{p_s}P)=n_{p_t}^{+}(\tau_{i_t},P)$, $n_{p_t}^{-}(\tau_{i_t},\mu_{p_s}P)=n_{p_t}^{-}(\tau_{i_t},P)-c$, where $c=1$ or $2$.
Therefore, $$
\begin{array}{rcl} && \Omega(p_s,P)+\Omega(p_t,\mu_{p_s}P) \vspace{2pt}  \\

& = & [n^{+}_{p_s}(\tau_{i_s},P)-m^{+}_{p_s}(\tau_{i_s},\gamma)-n^{-}_{p_s}(\tau_{i_s},P)+m^{-}_{p_s}(\tau_{i_s},\gamma)]d^T \vspace{2pt}  \\

& - & [n^{+}_{p_t}(\tau_{i_t},\mu_{p_s}P)-m^{+}_{p_t}(\tau_{i_t},\gamma)-n^{-}_{p_t}(\tau_{i_t},\mu_{p_s}P)+m^{-}_{p_t}(\tau_{i_t},\gamma)]d^T \vspace{2pt}  \\

& = & [n^{+}_{p_s}(\tau_{i_s},\mu_{p_t}P)+c-m^{+}_{p_s}(\tau_{i_s},\gamma)-n^{-}_{p_s}(\tau_{i_s},\mu_{p_t}P)+m^{-}_{p_s}(\tau_{i_s},\gamma)]d^T \vspace{2pt}  \\

& - & [n^{+}_{p_t}(\tau_{i_t},P)-m^{+}_{p_t}(\tau_{i_t},\gamma)-n^{-}_{p_t}(\tau_{i_t},P)+c+m^{-}_{p_t}(\tau_{i_t},\gamma)]d^T \vspace{2pt}  \\

& = & \Omega(p_s,\mu_{p_t}P)+\Omega(p_t,P).\vspace{2pt}\\
\end{array}$$
\end{proof}

\medskip

\begin{Lemma}\label{reduction}

Let $P$ be a perfect matching of $G_{T,\gamma}$. Suppose that $(p_{i_1},\cdots,p_{i_r})$ is a sequence such that $\mu_{p_{i_{t-1}}}\cdots\mu_{p_{i_1}}$ can twist on $G(p_{i_t})$ for $1\leq t\leq r$ and $\mu_{p_{i_r}}\cdots\mu_{p_{i_1}}P=P$. Then

\begin{enumerate}[$(1)$]

  \item there exists $2\leq t\leq r$ such that $p_{i_t}=p_{i_1}$.

  \item There exist $t<t'$ such that $p_{i_{t'}}=p_{i_{t}}$ and $p_{i_{s'}}\neq p_{i_s}$ for any $s'\neq s$ with $t<s,s'<t'$. In this case, $|i_s-i_t|> 1$ for any $s$ satisfies $t<s<t'$.

\end{enumerate}

\end{Lemma}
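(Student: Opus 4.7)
For part (1), the proof is by contradiction. Each tile $G(p_{i_1})$ of the snake graph has four edges, at most two of which are shared with the neighboring tiles $G(p_{i_1 \pm 1})$, so at least two edges belong solely to $G(p_{i_1})$; call these its \emph{private} edges. A twist $\mu_{p_j}$ at a tile $p_j \ne p_{i_1}$ modifies only edges of $G(p_j)$ and therefore cannot touch a private edge of $G(p_{i_1})$. The twist $\mu_{p_{i_1}}$ at step~$1$ toggles all four edges of $G(p_{i_1})$, in particular toggling every private edge. If $p_{i_1} \ne p_{i_t}$ for all $t \ge 2$, those toggles are never reversed, so $P^{(r)}$ differs from $P$ on at least one private edge, contradicting $\mu_{p_{i_r}}\cdots\mu_{p_{i_1}}P=P$.

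For part (2), I would select a pair $(t, t')$ with $p_{i_t}=p_{i_{t'}}$ and $t'-t$ minimal among all such pairs. Minimality immediately yields the pairwise-distinctness assertion: any repetition $p_{i_s}=p_{i_{s'}}$ with $t<s<s'<t'$, or $p_{i_s}=p_{i_t}$ with $t<s<t'$, would produce a strictly shorter pair. For the non-adjacency inequality $|i_s-i_t|>1$, I would track the restriction of the matching to the $4$-cycle $G:=G(p_{i_t})$ throughout $[t, t'-1]$. After step~$t$ the restriction of $P^{(t)}$ to $G$ equals the matching $M_1$ of $G$ opposite to the one present in $P^{(t-1)}$. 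By the distinctness of intermediate tiles, between $t$ and $t'$ at most one twist occurs at $G(p_{i_t-1})$ and at most one at $G(p_{i_t+1})$; each such adjacent twist toggles precisely the shared edge of $G$ with its respective neighbor and leaves $G$'s private edges alone. Letting $a, b \in \{0,1\}$ record whether these intermediate twists occur, the restriction of $P^{(t'-1)}$ to $G$ is $M_1$ with the shared edges toggled according to $(a,b)$. For the twist at step~$t'$ to be performable, this restriction must consist of exactly two edges whose completion to the other matching on $G$ yields a perfect matching globally; a case analysis on the two possibilities for $M_1$, together with whether $G$ sits on a straight segment or a corner of the snake, shows that both requirements force $(a,b)=(0,0)$, which is precisely (c).

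The main obstacle will be the corner case, where the shared edges $e_-$ and $e_+$ of $G$ are adjacent rather than opposite in the $4$-cycle. In the straight case all $(a,b)\ne(0,0)$ configurations produce a restriction with $0$, $1$, $3$, or $4$ edges (never two), so the twist at $t'$ is impossible. In the corner case, however, a naive toggling count for $(a,b)=(1,1)$ gives a restriction with two edges, which superficially permits the twist; the subtlety is that those two edges then meet at the corner vertex of $G$ that lies on both $e_-$ and $e_+$, and since this vertex belongs only to $G$ and the two snake neighbors whose contributions are already encoded in the intermediate twists, no further tile can absorb the forced double-coverage, so the resulting edge set fails to be a perfect matching. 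Making this geometric observation precise, and verifying it remains valid for each configuration of the snake corner, is the heart of (c); the remaining subcases reduce to edge-counting.
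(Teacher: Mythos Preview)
Your argument for part~(1) is correct and is the same idea as the paper's, phrased more cleanly: the paper carries out a case analysis on the positions of the neighbouring tiles, but each case is precisely your ``private edge'' observation.

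For part~(2) you take a genuinely different route. The paper invokes the commutativity Lemma~\ref{commuta} to push $\mu_{p_{i_t}}$ through the intermediate twists until it sits immediately before the first adjacent twist $\mu_{p_{i_s}}$, and then argues locally that after $\mu_{p_{i_s}}\mu_{p_{i_t}}$ one must twist at $p_{i_s}$ again before $G(p_{i_t})$ can become twistable, contradicting the pairwise distinctness. You avoid the rearrangement entirely by tracking the restriction of the matching to the $4$-cycle $G(p_{i_t})$ directly; this is more elementary and does not rely on the earlier lemma.

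There is, however, an error in your corner-case analysis. In that case $M_1$ consists of one shared edge, say $e_-$, together with the private edge opposite to it; after the toggles $(a,b)=(1,1)$ the restriction becomes $\{e_+,\ \text{that private edge}\}$. These two edges are indeed adjacent, but they meet at the corner of $G$ \emph{diagonally opposite} to the vertex $w=e_-\cap e_+$, not at $w$ itself, so your discussion of which tiles contain $w$ is beside the point. The repair is much simpler than what you attempt: any two adjacent edges of a $4$-cycle share a vertex, and in a perfect matching each vertex is covered exactly once, so such a restriction is impossible outright---no case distinction on the geometry of the corner is needed. With this correction your argument goes through. (A minor related point: when the formal toggling yields three or more edges, the contradiction is that the adjacent twist creating that state was already invalid, not that the twist at step~$t'$ fails; the obstruction simply occurs earlier than you phrase it.)
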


\begin{proof}

(1)~ Denote the edges of $G(p_{i_1})$ by $b_1,b_2,b_3$ and $b_4$ as in the figure below. Since $P$ can twist on $G(p_{i_1})$, without loss of generality, assume that $b_1,b_3\in P$. In case $i_1=1$ or $i_1=d$,  $\mu_{p_{i_1}}P$ has to twist on $G(p_{i_1})$ to obtain $b_1$ or $b_3$. In case $G(p_{i_1-1})$ is on the left of $G(p_{i_1})$ and $G(p_{i_1+1})$ is on the right of $G(p_{i_1})$, $\mu_{p_{i_1}}P$ has to twist on $G(p_{i_1})$ to delete $b_2,b_4$. In case $G(p_{i_1-1})$ is on the left of $G(p_{i_1})$ and $G(p_{i_1+1})$ on top of $G(p_{i_1})$, $\mu_{p_{i_1}}P$ has to twist on $G(p_{i_1})$ to delete $b_4$. In case $G(p_{i_1-1})$ is under $G(p_{i_1})$ and $G(p_{i_1+1})$ is on the right of $G(p_{i_1})$, $\mu_{p_{i_1}}P$ has to twist on $G(p_{i_1})$ to delete $b_2$. In case $G(p_{i_1-1})$ is under $G(p_{i_1})$ and $G(p_{i_1+1})$ on top of $G(p_{i_1})$, $\mu_{p_{i_1}}P$ has to twist on $G(p_{i_1})$ to obtain $b_1,b_3$. Therefore in both cases there exists $2\leq t\leq r$ such that $p_{i_t}=p_{i_1}$.

\centerline{\begin{tikzpicture}
\draw[-] (1,0) -- (2,0);
\draw[-] (1,-1) -- (2,-1);
\node[above] at (1.5,-0.05) {$b_2$};
\node[left] at (1,-0.5) {$b_1$};
\node[below] at (1.5,-0.95) {$b_4$};
\node[right] at (2,-0.5) {$b_3$};
\node[right] at (1.2,-0.5) {$p_{i_1}$};
\draw [-] (1, 0) -- (1,-1);
\draw [-] (2, 0) -- (2,-1);
\draw [fill] (1,0) circle [radius=.05];
\draw [fill] (2,0) circle [radius=.05];
\draw [fill] (1,-1) circle [radius=.05];
\draw [fill] (2,-1) circle [radius=.05];
\draw[-] (4,0) -- (5,0);
\draw[-] (4,-1) -- (5,-1);
\node[above] at (4.5,-0.05) {$b_2$};
\node[left] at (4,-0.5) {$b_1$};
\node[below] at (4.5,-0.95) {$b_4$};
\node[right] at (5,-0.5) {$b_3$};
\node[right] at (4.2,-0.5) {$p_{i_t}$};
\draw [-] (4, 0) -- (4,-1);
\draw [-] (5, 0) -- (5,-1);
\draw [fill] (4,0) circle [radius=.05];
\draw [fill] (5,0) circle [radius=.05];
\draw [fill] (4,-1) circle [radius=.05];
\draw [fill] (5,-1) circle [radius=.05];
\end{tikzpicture}}

(2)~ The existence of $(t,t')$ follows immediately by (1). We may assume $i_s\neq i_t$ for all $s$ satisfies $t<s<t'$. Suppose that $|i_s-i_t|=1$ for some $s$ with $t<s<t'$. Choose the minimal $s$ satisfies such conditions. By the minimality of $s$ and Lemma \ref{commuta}, $\mu_{p_{i_{s-1}}}\cdots \mu_{p_{i_{t+1}}}\mu_{p_{i_{t}}}\cdots \mu_{p_{i_1}}P=\mu_{p_{i_{t}}}\mu_{p_{i_{s-1}}}\cdots \mu_{p_{i_{t+1}}}\mu_{p_{i_{t-1}}}\cdots \mu_{p_{i_1}}P$, denote this perfect matching by $Q$. Denote the edges of $G(p_{i_t})$ by $b_1,b_2,b_3$ and $b_4$ as in the figure below. Since $Q$ can twist on $G_{i_t}$, we may assume $b_1,b_3\in Q$. Since $Q$ can twist on $G(p_{i_s})$ and $|i_s-i_t|=1$, $G(i_s)$ is on the left or the right of $G(i_t)$. In case $G(i_s)$ is on the right of $G(i_t)$, then $b_1,b_3, b_5\in Q$, and thus $b_1,b_6,b_7\in \mu_{p_{i_s}}Q$. Since $\mu_{p_{i_{t'-1}}}\cdots \mu_{p_{i_{s+1}}}\mu_{p_{i_s}}Q$ can twist on $G(p_{i_{t'}})=G(p_{i_t})$, there exists $s'$ with $s+1\leq s'<t'$ such that $p_{i_{s'}}=p_{i_s}$, a contradiction. Similarly, one can prove such $s$ does not exist in case $G(i_s)$ is on the left of $G(i_t)$. Therefore, our result follows.
\end{proof}

\centerline{\begin{tikzpicture}
\draw[-] (1,0) -- (2,0);
\draw[-] (1,0) -- (1,-1);
\draw[-] (2,0) -- (2,-1);
\draw[-] (1,-1) -- (2,-1);
\draw[-] (3,0) -- (2,0);
\draw[-] (3,-1) -- (2,-1);
\draw[-] (3,0) -- (3,-1);
\node[above] at (1.5,-0.05) {$b_2$};
\node[left] at (1,-0.5) {$b_1$};
\node[below] at (2.5,-0.95) {$b_7$};
\node[below] at (1.5,-0.95) {$b_4$};
\node[above] at (2.5,-0.05) {$b_6$};
\node[right] at (2.2,-0.5) {$p_{i_s}$};
\node[right] at (1.75,-0.5) {$b_3$};
\node[right] at (3,-0.5) {$b_5$};
\node[right] at (1.2,-0.5) {$p_{i_t}$};
\draw [fill] (1,0) circle [radius=.05];
\draw [fill] (2,0) circle [radius=.05];
\draw [fill] (1,-1) circle [radius=.05];
\draw [fill] (2,-1) circle [radius=.05];
\draw [fill] (3,0) circle [radius=.05];
\draw [fill] (3,-1) circle [radius=.05];
\end{tikzpicture}}

\medskip

\begin{Theorem}\label{valumap}

Let $(\mathcal O,M)$ be an unpunctured surface and $\gamma$ be an oriented arc in $\mathcal O$. Suppose that $T$ is an indexed triangulation of $\mathcal O$.

\begin{enumerate}[$(1)$]

\item There uniquely exists a \emph{maximal valuation map} $v_{+}:\mathcal P(G_{T,\gamma})\rightarrow \mathbb Z$ such that

\begin{enumerate}[$(a)$]

  \item (initial condition) $v_{+}(P_{+}(G_{T,\gamma}))=0$.

  \item (iterated relation) If $P\in \mathcal P(G_{T,\gamma}))$ can twist on $G(p)$, then $$v_{+}(P)-v_{+}(\mu_pP)=\Omega(p,P).$$

\end{enumerate}

\item There uniquely exists a \emph{minimal valuation map} $v_{-}:\mathcal P(G_{T,\gamma})\rightarrow \mathbb Z$ such that

\begin{enumerate}[$(a)$]

  \item (initial condition) $v_{-}(P_{-}(G_{T,\gamma}))=0$.

  \item (iterated relation) If $P\in \mathcal P(G_{T,\gamma}))$ can twist on $G(p)$, then $$v_{-}(P)-v_{-}(\mu_pP)=\Omega(p,P).$$

\end{enumerate}

\end{enumerate}

\end{Theorem}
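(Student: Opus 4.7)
The plan is to establish both parts of Theorem \ref{valumap} in parallel, since they differ only in the choice of initial condition. By Lemma \ref{transitive}, any $P \in \mathcal P(G_{T,\gamma})$ is reachable from $P_{+}(G_{T,\gamma})$ (respectively $P_{-}(G_{T,\gamma})$) by some finite sequence of twists, so condition (b) forces the value of $v_{+}$ (respectively $v_{-}$) at $P$ to be the telescoping sum $\sum_s \Omega\bigl(p_{i_s}, \mu_{p_{i_{s-1}}}\cdots \mu_{p_{i_1}}P_{+}\bigr)$ along any such realizing sequence. Existence (and uniqueness) of the maps therefore reduces to showing that this sum is independent of the chosen sequence. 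Equivalently, I need the \emph{cocycle condition}: for every closed twist sequence $(p_{i_1},\cdots,p_{i_r})$ bringing a matching $Q$ back to itself,
\[
\sum_{s=1}^{r}\Omega\bigl(p_{i_s},\,\mu_{p_{i_{s-1}}}\cdots \mu_{p_{i_1}}Q\bigr)=0.
\]

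I will prove the cocycle condition by induction on the length $r$. The base case $r=0$ is trivial, and by Lemma \ref{invo} the case $r=2$ with $p_{i_1}=p_{i_2}$ follows from $\Omega(p,Q)+\Omega(p,\mu_pQ)=0$, noted right after Definition \ref{omega}. For the inductive step, apply Lemma \ref{reduction}(2) to a closed sequence to locate indices $t<t'$ with $p_{i_{t'}}=p_{i_t}$ and $|i_s-i_t|>1$ for all $t<s<t'$. Then iteratively apply Lemma \ref{commuta}: each use commutes $\mu_{p_{i_{t'}}}$ past the adjacent $\mu_{p_{i_{t'-1}}}$ on its left while preserving the partial sum of $\Omega$-values (part (2) of that lemma). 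After $t'-t-1$ such commutations, the $t'$-th twist sits directly after the $t$-th one, and $\mu_{p_{i_t}}\mu_{p_{i_t}}$ acts trivially on the intermediate matching by Lemma \ref{invo}, with the corresponding pair of $\Omega$'s summing to zero. Excising this pair produces a closed twist sequence of length $r-2$ with the same total sum, to which the inductive hypothesis applies.

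Once the cocycle condition is in hand, I define $v_{+}(P)$ (respectively $v_{-}(P)$) by the telescoping sum along any realizing twist sequence from $P_{+}(G_{T,\gamma})$ (respectively $P_{-}(G_{T,\gamma})$). The cocycle condition guarantees the sum depends only on $P$: if $(p_{i_1},\cdots,p_{i_r})$ and $(q_{j_1},\cdots,q_{j_{r'}})$ are two sequences both producing $P$ from the initial matching, then concatenating one with the reverse of the other yields a closed sequence; the sum of $\Omega$'s along the reverse equals the negative of the sum along the original by the relation $\Omega(p,\mu_pR)=-\Omega(p,R)$ and Lemma \ref{invo}, so the two telescoping sums agree. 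Conditions (a) and (b) then hold by construction.

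The main obstacle is the inductive application of Lemma \ref{commuta}. One must verify that every intermediate matching encountered while commuting $\mu_{p_{i_{t'}}}$ leftward actually admits a twist on the tile being moved; this is exactly what Lemma \ref{commuta}(1) guarantees, provided the indices satisfy $|i_s-i_{t'}|=|i_s-i_t|>1$, which is precisely the property supplied by Lemma \ref{reduction}(2). Tracking this hypothesis carefully through the commutations is the delicate bookkeeping step, but no new geometric input is needed beyond Lemmas \ref{invo}, \ref{commuta}, \ref{reduction}, and the antisymmetry of $\Omega$ under a single twist.
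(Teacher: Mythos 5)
Your proposal is correct and follows essentially the same route as the paper: reduce to the cocycle condition for closed twist sequences, induct on the length using Lemma \ref{reduction}(2) to locate an innermost repeated tile whose intermediate twists are all distance greater than one, and then commute via Lemma \ref{commuta} until the two occurrences are adjacent and their $\Omega$-contributions cancel by antisymmetry. The only cosmetic difference is that you slide $\mu_{p_{i_{t'}}}$ leftward while the paper slides $\mu_{p_{i_t}}$ rightward; these are mirror images of the same telescoping computation.
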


\begin{proof}

We shall only prove Statement (1). By Lemma \ref{transitive}, $v_{+}$ is unique if it exists. We have to show that $v_{+}$ is well-defined exist. That is, if $\mu_{p_{i_r}}\cdots\mu_{p_{i_1}}P=P$ for some sequence $(p_{i_1},\cdots,p_{i_r})$, then

$$\Omega(p_{i_1},p_{i_2},\cdots,p_{i_r};P):=\sum_{s=1}^{r}\Omega(p_{i_s},\mu_{p_{i_{s-1}}}\cdots \mu_{p_{i_1}}P)=0.$$

We prove $\Omega(p_{i_1},p_{i_2},\cdots,p_{i_r};P)=0$ by induction on $r$. When $r=0$, nothing need to be proved. It is clear that $r\neq 1$. When $r=2$, then $p_{i_1}=p_{i_2}$ and $\Omega(p_{i_1},p_{i_2};P)=0$ follows by definiton. Assume $\Omega(p_{i_1},p_{i_2},\cdots,p_{i_r};P)=0$ holds for $r<k$. In case $r=k$, according to Lemma \ref{reduction} (2), choose a pair $(t,t')$ with $1\leq t<t'\leq k$ such that $p_{i_{t'}}=p_{i_{t}}$ and $p_{i_{s'}}\neq p_{i_s}$ for any $s'\neq s$ with $t<s,s'<t'$. Thus the sequence of twists $\mu_{i_k}\cdots\mu_{i_{t'+1}}\mu_{i_{t'-1}}\cdots\mu_{i_{t+1}}\mu_{i_{t-1}}\cdots\mu_{i_1}$ is defined for $P$ and $\mu_{i_k}\cdots\mu_{i_{t'+1}}\mu_{i_{t'-1}}\cdots\mu_{i_{t+1}}\mu_{i_{t-1}}\cdots\mu_{i_1}P=P$. We claim that

$$\Omega(p_{i_1},p_{i_2},\cdots,p_{i_k};P)=\Omega(p_{i_1},\cdots,p_{i_{t-1}},p_{i_{t+1}},\cdots,p_{i_{t'-1}},p_{i_{t'+1}},\cdots, p_{i_k};P).$$

Once the claim is true, $\Omega(p_{i_1},\cdots,p_{i_{t-1}},p_{i_{t+1}},\cdots,p_{i_{t'-1}},p_{i_{t'+1}},\cdots, p_{i_k};P)=0$ by induction hypothesis, and thus $\Omega(p_{i_1},p_{i_2},\cdots,p_{i_k};P)=0$. Our result follows.

In the following, we prove our claim. Denote $Q=\mu_{i_{t-1}}\cdots\mu_{i_1}P$, to prove our claim, it suffices to prove that

$$\sum_{t\leq s\leq t'}\Omega(p_{i_s},\mu_{p_{i_{s-1}}}\cdots \mu_{p_{i_t}}Q)=\sum_{t< s< t'}\Omega(p_{i_s},\mu_{p_{i_{s-1}}}\cdots \mu_{p_{i_{t+1}}}Q).$$

By Lemma \ref{reduction} (2) and Lemma \ref{commuta} step by step, we have

$$
\begin{array}{rcl} && \sum_{t\leq s\leq t'}\Omega(p_{i_s},\mu_{p_{i_{s-1}}}\cdots \mu_{p_{i_t}}Q) \vspace{2pt}  \\

& = & \Omega(p_{i_t},Q)+\Omega(p_{i_{t+1}},\mu_{p_{i_t}}Q)+ \sum_{t+2\leq s\leq t'}\Omega(p_{i_s},\mu_{p_{i_{s-1}}}\cdots \mu_{p_{i_t}}Q) \vspace{2pt}  \\

& = & \Omega(p_{i_{t+1}},Q)+\Omega(p_{i_t},\mu_{p_{i_{t+1}}}Q)+ \sum_{t+2\leq s\leq t'}\Omega(p_{i_s},\mu_{p_{i_{s-1}}}\cdots \mu_{p_{i_t}}Q)\vspace{2pt} \\

& = & \Omega(p_{i_{t+1}},Q)+\Omega(p_{i_t},\mu_{p_{i_{t+1}}}Q)+\Omega(p_{i_{t+2}},\mu_{p_{i_{t+1}}}\mu_{p_{i_t}}Q)\vspace{2pt} \\

& & + \sum_{t+3\leq s\leq t'}\Omega(p_{i_s},\mu_{p_{i_{s-1}}}\cdots \mu_{p_{i_t}}Q)\vspace{2pt} \\

& = & \Omega(p_{i_{t+1}},Q)+\Omega(p_{i_t},\mu_{p_{i_{t+1}}}Q)+\Omega(p_{i_{t+2}},\mu_{p_{i_t}}\mu_{p_{i_{t+1}}}Q)\vspace{2pt} \\

& & + \sum_{t+3\leq s\leq t'}\Omega(p_{i_s},\mu_{p_{i_{s-1}}}\cdots \mu_{p_{i_t}}Q)\vspace{2pt} \\

& = & \Omega(p_{i_{t+1}},Q)+\Omega(p_{i_{t+2}},\mu_{p_{i_{t+1}}}Q)+\Omega(p_{i_t},\mu_{p_{i_{t+2}}}\mu_{p_{i_{t+1}}}Q)\vspace{2pt} \\

& & + \sum_{t+3\leq s\leq t'}\Omega(p_{i_s},\mu_{p_{i_{s-1}}}\cdots \mu_{p_{i_t}}Q)\vspace{2pt} \\

& = & \cdots\vspace{2pt} \\

& = & \sum_{t< s< t'}\Omega(p_{i_s},\mu_{p_{i_{s-1}}}\cdots \mu_{p_{i_{t+1}}}Q)\vspace{2pt} \\

& & + \Omega(p_{i_t},\mu_{p_{i_{t'-1}}}\cdots \mu_{p_{i_{t+1}}}Q)+\Omega(p_{i_{t'}},\mu_{p_{i_{t'-1}}}\cdots \mu_{p_{i_{t+1}}}\mu_{p_{i_t}}Q)\vspace{2pt} \\

& = & \sum_{t< s< t'}\Omega(p_{i_s},\mu_{p_{i_{s-1}}}\cdots \mu_{p_{i_{t+1}}}Q)\vspace{2pt} \\

& & + \Omega(p_{i_t},\mu_{p_{i_{t'-1}}}\cdots \mu_{p_{i_{t+1}}}Q)+\Omega(p_{i_t},\mu_{p_{i_t}}\mu_{p_{i_{t'-1}}}\cdots \mu_{p_{i_{t+1}}}Q)\vspace{2pt} \\

& = & \sum_{t< s< t'}\Omega(p_{i_s},\mu_{p_{i_{s-1}}}\cdots \mu_{p_{i_{t+1}}}Q).\vspace{2pt}\\
\end{array}$$

The last equality follows by definition. This verified our claim.
\end{proof}

\medskip

\subsection{$v_{+}=v_{-}$}

In this subsection, we prove $v_+=v_-$ and Theorem \ref{mainthm}. By definition, $X^{T'}_{\tau'}=(X^T)^{-e_{\tau}+(b^{T}_{\tau})_{+}}+(X^T)^{-e_{\tau}+(b^{T}_{\tau})_{-}}$. For convenient, denote $(X^T)^{-e_\tau+(b^{T}_{\tau})_{\pm}}$ by $\prod_{\pm}$. Let $d$ be a non-negative integer. For any sequence $\lambda=(\lambda_1,\cdots,\lambda_d)\in \{0,1\}^d$, denote by $n(\lambda)$ the integer such that
$$Z_1Z_2\cdots Z_d=q^{n(\lambda)/2}(X^T)^{-de_{\tau}+(\sum\lambda_i)(b^{T}_{\tau})_{-}+(d-\sum\lambda_i)(b^{T}_{\tau})_{+}},$$
where
  \[\begin{array}{ccl} Z_i &=&

         \left\{\begin{array}{ll}

             \prod_{-}, &\mbox{if $\lambda_i=1$}, \\

             \prod_{+}, &\mbox{if $\lambda_i=0$}.

         \end{array}\right.

 \end{array}\]
Clearly, $n(\lambda)=0$ if all $\lambda_i=1$ or all $\lambda_i=0$. Using the notation of $n(\lambda)$, we have
$$(X^{T'}_{\tau'})^d=\textstyle\sum_{\lambda}q^{n(\lambda)/2}(X^T)^{-de_{\tau}+(\sum\lambda_i)(b^{T}_{\tau})_{-}+(d-\sum\lambda_i)(b^{T}_{\tau})_{+}}.$$

\medskip

\begin{Lemma}\label{iterate}

Keep the foregoing notations. For any $i\in [1,d]$, if $\lambda$ and $\lambda'$ satisfies $\lambda_j=\lambda'_j$ for $j\neq i$ and $\lambda_i=1$, $\lambda'_i=0$, then $$n(\lambda)-n(\lambda')=(d-2i+1)d^T,$$
where $d^T$ is the integer determined by the compatibility of $(\widetilde B^T,\Lambda^T)$.

\end{Lemma}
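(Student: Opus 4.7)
The plan is to compute $n(\lambda)$ explicitly as a sum of $\Lambda^T$-pairings, and then exploit the compatibility $(\widetilde B^T)^t \Lambda^T = (d^T I \;\; 0)$ to evaluate the difference. Set $\alpha_s(\lambda) = -e_\tau + (b^T_\tau)_-$ if $\lambda_s = 1$ and $\alpha_s(\lambda) = -e_\tau + (b^T_\tau)_+$ if $\lambda_s = 0$, so that $Z_s = (X^T)^{\alpha_s(\lambda)}$. Iterating the quantum torus commutation rule $(X^T)^{\vec a}(X^T)^{\vec b} = q^{\Lambda^T(\vec a,\vec b)/2}(X^T)^{\vec a+\vec b}$ on the product $Z_1\cdots Z_d$ yields
\[
n(\lambda) \;=\; \sum_{1\le s<t\le d} \Lambda^T\!\bigl(\alpha_s(\lambda),\alpha_t(\lambda)\bigr).
\]

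Since $\lambda$ and $\lambda'$ differ only at position $i$, the only pairings affected are those involving index $i$, and for every $s\neq i$ the other factor $\alpha_s$ is the same in both sequences. Using bilinearity and the observation
\[
\alpha_i(\lambda)-\alpha_i(\lambda') \;=\; (b^T_\tau)_- - (b^T_\tau)_+ \;=\; -\,b^T_\tau,
\]
I get
\[
n(\lambda)-n(\lambda') \;=\; -\sum_{s<i}\Lambda^T(\alpha_s,b^T_\tau)\;-\;\sum_{s>i}\Lambda^T(b^T_\tau,\alpha_s).
\]
Skew-symmetry of $\Lambda^T$ lets me rewrite the first sum, giving
\[
n(\lambda)-n(\lambda') \;=\; \sum_{s<i}\Lambda^T(b^T_\tau,\alpha_s)\;-\;\sum_{s>i}\Lambda^T(b^T_\tau,\alpha_s).
\]

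Now I apply compatibility. The identity $(\widetilde B^T)^t\Lambda^T = (d^T I \;\; 0)$ means exactly that $\Lambda^T(b^T_\tau,e_k) = d^T\delta_{\tau k}$ for $k\in[1,n]$ and $0$ for $k\in[n+1,m]$; hence $\Lambda^T(b^T_\tau,\vec v) = d^T v_\tau$, where $v_\tau$ denotes the $\tau$-coordinate. Since $[b^T_{\tau\tau}]_\pm = 0$, the vector $\alpha_s = -e_\tau + (b^T_\tau)_\pm$ has $\tau$-coordinate $-1$, so $\Lambda^T(b^T_\tau,\alpha_s) = -d^T$ for every $s\neq i$. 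Substituting,
\[
n(\lambda)-n(\lambda') \;=\; -(i-1)d^T + (d-i)d^T \;=\; (d-2i+1)d^T,
\]
as claimed.

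The computation is essentially formal once the setup is right; the only subtle points are the order of the quantum factors when collecting the $q$-power (which fixes which of $\Lambda^T(\alpha_s,b^T_\tau)$ or $\Lambda^T(b^T_\tau,\alpha_s)$ appears for $s<i$ versus $s>i$) and the vanishing $[b^T_{\tau\tau}]_\pm = 0$ that forces $\Lambda^T(b^T_\tau,(b^T_\tau)_\pm) = 0$, so that only the $-e_\tau$ summand of $\alpha_s$ contributes to each pairing. No other obstacle is expected.
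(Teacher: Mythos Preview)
Your proof is correct and follows essentially the same approach as the paper: both compute the difference $n(\lambda)-n(\lambda')$ by isolating the contribution of the $i$-th factor, recognise that this contribution is $\Lambda^T(b^T_\tau,\,\cdot\,)$ applied to a vector whose only relevant coordinate is the $\tau$-coordinate, and finish via the compatibility relation. The paper groups the factors before and after position $i$ into single vectors $\vec a,\vec b$ and evaluates $\Lambda^T(b^T_\tau,\vec a-\vec b)$, while you keep the sum over individual $\alpha_s$'s; the arithmetic is identical.
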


\begin{proof}

Assume that $Z_1Z_2\cdots Z_{i-1}=q^{d_1/2}(X^T)^{\vec{a}}$ and $Z_{i+1}\cdots Z_{d}=q^{d_2/2}(X^T)^{\vec{b}}$ for some $d_1,d_2\in \mathbb Z$, where
  \[\begin{array}{ccl} Z_j &=&

         \left\{\begin{array}{ll}

             \prod_{-}, &\mbox{if $\lambda_j=1$}, \\

             \prod_{+}, &\mbox{if $\lambda_j=0$}.

         \end{array}\right.

 \end{array}\]
and we have $\vec{a}=-(i-1)e_{\tau}+(\sum_{j<i}\lambda_j)(b^{T}_{\tau})_{-}+(i-1-\sum_{j<i}\lambda_j)(b^{T}_{\tau})_{+},$ and
$\vec{b}=-(d-i)e_{\tau}+(\sum_{j>i}\lambda_j)(b^{T}_{\tau})_{-}+(d-i-\sum_{j>i}\lambda_j)(b^{T}_{\tau})_{+}$.

By the definitions of $n(\lambda)$ and $n(\lambda')$, we have
$$q^{d_1/2}(X^T)^{\vec{a}}\cdot \textstyle\prod_{-}\cdot q^{d_2/2}(X^T)^{\vec{b}}=q^{n(\lambda)/2}(X^T)^{-de_{\tau}+(\sum\lambda_i)(b^{T}_{\tau})_{-}+(d-\sum\lambda_i)(b^{T}_{\tau})_{+}},$$
$$q^{d_1/2}(X^T)^{\vec{a}}\cdot\textstyle\prod_{+}\cdot q^{d_2/2}(X^T)^{\vec{b}}=q^{n(\lambda')/2}(X^T)^{-de_{\tau}+(\sum\lambda'_i)(b^{T}_{\tau})_{-}+(d-\sum\lambda'_i)(b^{T}_{\tau})_{+}}.$$

Thus,
$$n(\lambda)=d_1+d_2+\Lambda^T(\vec{a}, \vec{b})+\Lambda^T(\vec{a}, (b^{T}_{\tau})_{-})+\Lambda^T((b^{T}_{\tau})_{-},\vec{b}),$$
and
$$n(\lambda')=d_1+d_2+\Lambda^T(\vec{a}, \vec{b})+\Lambda^{T}(\vec{a},(b^{T}_{\tau})_{+})+\Lambda^T((b^{T}_{\tau})_{+},\vec{b}).$$
Hence,
$$
\begin{array}{rcl} n(\lambda)-n(\lambda') & = & \Lambda^T(\vec{a}-\vec{b}, (b^{T}_{\tau})_{-}-(b^{T}_{\tau})_{+}) \vspace{2pt} \\

& = &  \Lambda^T(\vec{a}-\vec{b}, -b^{T}_{\tau})\vspace{2pt} \\

& = &  \Lambda^T(b^{T}_{\tau}, \vec{a}-\vec{b})\vspace{2pt} \\

& = &  [d-i-(i-1)]d^T = (d-2i+1)d^T.
\end{array}$$
here the last equality follows by the compatibility of $(\widetilde B^T,\Lambda^T)$.
\end{proof}

\medskip

\begin{Lemma}\label{minmax}

Keep the foregoing notations. Let $P=P_{\pm}(G_{T,\gamma})\in \mathcal P^{\tau}_{\nu}(G_{T,\gamma})$ for some $\nu$. If $\nu_i<0$ for some $i$, then $\nu_j\leq 0$ for any $j$.

\end{Lemma}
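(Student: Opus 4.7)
The plan is to argue for $P = P_+(G_{T,\gamma})$; the case $P = P_-$ is obtained by interchanging ``counterclockwise'' and ``clockwise'' throughout. The goal is to show: if some $\nu_i = -1$, then no $\nu_j = 1$.

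\emph{Reduction to type (I).} First I will show that every type (IV) class has $\nu_j = 0$ for $P_+$. The single $\tau$-edge of a type (IV) class, by definition, is not incident to any $\tau$-labeled diagonal in $G_{T,\gamma}$. However, every \emph{boundary} $\tau$-edge of a tile $G(p_m)$ arises from a triangle $\Delta_{m-1}$ (forcing $\tau_{i_{m-1}} = \tau$) or from $\Delta_m$ (forcing $\tau_{i_{m+1}} = \tau$), and is therefore incident, via the shared vertex, to the $\tau$-labeled diagonal of the neighboring tile $G(p_{m-1})$ or $G(p_{m+1})$. Consequently, a type (IV) $\tau$-edge must be a shared (interior) edge of the snake graph. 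Since $P_+$ contains only boundary edges, $\nu_j = 0$ for all type (IV) classes. Because types (II) and (III) already force $\nu_j \leq 0$, the only way the conclusion can fail is through a type (I) class with $\nu_j = 1$.

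\emph{Main argument.} Suppose toward contradiction both $\nu_i = -1$ at a class associated with a tile $G(p_k)$ (diagonal $\tau$) and $\nu_j = 1$ at a type (I) class associated with another tile $G(p_{k'})$ (diagonal $\tau$). By Lemma~\ref{max-min}, $P_+$ contains at $G(p_k)$ the twist pair on the ``correct'' side of the diagonal (counterclockwise for $k$ odd, clockwise for $k$ even), so both diagonal vertices of $G(p_k)$ are matched by edges of $G(p_k)$ itself and the neighboring boundary $\tau$-edges are not in $P_+$. Simultaneously, at $G(p_{k'})$ the two boundary $\tau$-edges in $G(p_{k'\pm 1})$ must lie in $P_+$ and match the diagonal vertices of $G(p_{k'})$, so $P_+$ contains no edge of $G(p_{k'})$. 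I would derive a contradiction by tracing $P_+$ along the snake graph between $G(p_k)$ and $G(p_{k'})$ and showing that the side of $\tau$ (in the global orientation of $T$) chosen by $P_+$ must be the same at both tiles, which in turn forces a twist pair at $G(p_{k'})$, contradicting $\nu_j = 1$.

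\emph{Main obstacle and strategy.} The principal technical difficulty is to establish that the ``global side'' of $\tau$ selected by $P_+$ is a well-defined invariant across all tiles with diagonal $\tau$. The strategy is induction on the number of intermediate tiles between $G(p_k)$ and $G(p_{k'})$: at each step, Lemma~\ref{inone} rules out incompatible pairs of incident edges in $P_+$, while Lemma~\ref{max-min} fixes the local side, so that the configuration at one tile propagates deterministically to the next through the shared edge (distinguishing horizontal versus vertical gluings). The key combinatorial insight is that the alternation of local tile orientation with the parity of $m$ (odd vs.\ even) combines with the alternation of the triangle ($\Delta_L$ or $\Delta_R$) through which $\gamma$ approaches $\tau$ at successive $\tau$-crossings, producing a single consistent sign that the $\nu_i = -1$ and $\nu_j = 1$ hypotheses would force to take opposite values, yielding the contradiction.
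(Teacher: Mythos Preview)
Your reduction step contains a concrete error: the claim that every type (IV) $\tau$-edge must be a shared interior edge of the snake graph fails at the first and last tiles. For a middle tile $G(p_m)$ your argument is fine, since its two boundary edges are labeled $\tau_{i_{m-1}}$ and $\tau_{i_{m+1}}$. But $G(p_1)$ has three boundary edges, two of which come from the starting triangle $\Delta_0$ and carry the labels of the two sides of $\Delta_0$ other than $\tau_{i_1}$; one of these can perfectly well be $\tau$ without any neighbouring tile having a $\tau$-diagonal. Concretely, if $\gamma$ starts at a vertex of the quadrilateral around $\tau$ and first crosses $a_2$ (say), then the first tile has a boundary edge labeled $\tau$ that is not incident to any $\tau$-diagonal, giving a type (IV) class with $\nu_j=1$ in $P_+$. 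So type (IV) cannot be dismissed.

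More importantly, the paper's proof takes a route quite different from your snake-graph propagation, and it is much shorter. Instead of staying inside $G_{T,\gamma}$, the paper uses Lemma~\ref{max-min} to translate each hypothesis back to the surface: $\nu_j=1$ of type (I) forces $\gamma$ to cross $a_2,\tau,a_4$ consecutively at that class, while $\nu_i=-1$ of type (I) forces the pattern $a_1,\tau,a_3$; type (III) with $\nu_i=-1$ forces $\gamma$ to start at a specific vertex $o_2$ or $o_3$ of the quadrilateral and then cross $\tau,a_1$ or $\tau,a_3$; and type (IV) with $\nu_j=1$ forces $\gamma$ to start at $o_4$ (or $o_1$) and first cross $a_2$ (or $a_4$). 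In every combination the two local pictures in the quadrilateral are incompatible: $\gamma$ is forced to cross itself, contradicting that it is an arc. Your ``global side of $\tau$'' invariant is really this geometric statement in disguise; the surface argument establishes it in a few lines, whereas a purely combinatorial propagation along the snake graph would have to reconstruct the same orientation data tile by tile. I would recommend switching to the geometric argument.
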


\begin{proof}

Without loss of generality, we may assume $P=P_{+}(G_{T,\gamma})$. Suppose that $\nu_j>0$ for some $j$. Then the $j$-th $\tau$-equivalence class is of type (I,IV). Since $\nu_i<0$ and the edges in $P$ are boundary edges, the $i$-th $\tau$-equivalence class is of type (I,III). Denote the endpoints of the two triangles containing $\tau$ by $o_1,o_2,o_3,o_4$. As shown in the following graph.

 \centerline{\begin{tikzpicture}
\draw[-] (1,0) -- (2,0);
\draw[-] (1,-1) -- (2,-1);
\draw[-] (1,0) -- (2,-1);
\node[right] at (1.4,-0.45) {$\tau$};
\node[above] at (1.5,-0.05) {$a_2$};
\node[left] at (1,-0.5) {$a_1$};
\node[left] at (1,0) {$o_1$};
\node[right] at (2,0) {$o_2$};
\node[left] at (1,-1) {$o_3$};
\node[right] at (2,-1) {$o_4$};
\node[below] at (1.5,-0.95) {$a_4$};
\node[right] at (2,-0.5) {$a_3$};
\draw [-] (1, 0) -- (1,-1);
\draw [-] (2, 0) -- (2,-1);
\draw [fill] (1,0) circle [radius=.05];
\draw [fill] (2,0) circle [radius=.05];
\draw [fill] (1,-1) circle [radius=.05];
\draw [fill] (2,-1) circle [radius=.05];
\end{tikzpicture}}

First, suppose that the $j$-th $\tau$-equivalence class is of type (I). Then $\gamma$ crosses $a_2,\tau,a_4$ consequently by Lemma \ref{max-min}.
If the $i$-th $\tau$-equivalence class is of type (I), then $\gamma$ crosses $a_1,\tau,a_3$ consequently. If $i$-th $\tau$-equivalence class is of type (III), then $\gamma$ starts from $o_3$ and crosses $\tau,a_3$ or starts from $o_2$ and $\tau,a_1$ consequently. In both cases, $\gamma$ crosses itself.

Now, suppose that the $j$-th $\tau$-equivalence class is of type (IV). Then the $j$-th $\tau$ equivalence class contains an edge of a tile $G$ with diagonal labeled $a_2$ or $a_4$ by Lemma \ref{max-min}. We may assume the diagonal is labeled $a_2$. Moreover, since the edge of $G$ labeled $\tau$ is a boundary edge, $\gamma$ starts from $o_4$ and crosses $a_2$. If the $i$-th $\tau$-equivalence classes is of type (I), then $\gamma$ crosses $a_1,\tau,a_3$ consequently. If $i$-th $\tau$-equivalence class is of type (III), then $\gamma$ starts from $o_3$ and crosses $\tau,a_3$ or starts from $o_2$ and $\tau,a_1$ consequently. In both cases, $\gamma$ crosses itself.

Our result follows.
\end{proof}

\medskip

\begin{Lemma}\label{maxtomax1}

Keep the foregoing notations. If $\tau'\neq \gamma\notin T$, then

\begin{enumerate}[$(1)$]

  \item $P_{\pm}(G_{T',\gamma})\subset \pi(P_{\pm}(G_{T,\gamma}))$.

  \item $P_{\pm}(G_{T,\gamma})\subset \pi'(P_{\pm}(G_{T',\gamma}))$

\end{enumerate}

\end{Lemma}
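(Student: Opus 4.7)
The plan is to reduce statement (1) to the local Lemma~\ref{maxtomax} via the decomposition machinery of Proposition~\ref{decompose}; statement (2) will then follow by interchanging the roles of $T$ and $T'$. First, I would use Lemma~\ref{minmax} to pin down the stratum of $\mathcal{P}(G_{T,\gamma})$ in which $P_\pm(G_{T,\gamma})$ lives: if $P_+(G_{T,\gamma}) \in \mathcal{P}^{\tau}_{\nu}(G_{T,\gamma})$, then all entries of $\nu$ have the same sign, and similarly for $P_-$. Hence each of $P_\pm(G_{T,\gamma})$ lies in either $\mathcal{P}^{\tau}_{+}(G_{T,\gamma}) \cup \mathcal{P}^{\tau}_{=0}(G_{T,\gamma})$ or $\mathcal{P}^{\tau}_{-}(G_{T,\gamma}) \cup \mathcal{P}^{\tau}_{=0}(G_{T,\gamma})$, so the intended map ($\pi$ or $\pi'$) is meaningful on it.

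Next I would decompose $P_\pm(G_{T,\gamma}) = (P_i)_i$ and $P_\pm(G_{T',\gamma}) = (P'_i)_i$ by Proposition~\ref{decompose} and its dual. Because $P_\pm(G_{T,\gamma})$ consists only of boundary edges of $G_{T,\gamma}$, each $P_i$ consists only of boundary edges of $G_{T,\gamma_i}$ together with possibly the gluing edges $u_{j-1}, u_j$ added by $\iota_j$; by Lemma~\ref{max-min} the clockwise/counter\-clockwise orientation of the chosen edges relative to the diagonals of the tiles is uniquely determined, and a short check shows that $P_i$ must coincide with $P_\pm(G_{T,\gamma_i})$. The analogous statement holds on the $T'$ side. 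Applying Lemma~\ref{maxtomax} to each subcurve $\gamma_i$ then gives $P'_i \in \psi_{\gamma_i}(P_i)$ for every $i$, which by Lemma~\ref{formglobal} already guarantees that $(P'_i)_i$ is a perfect matching of $G_{T',\gamma}$.

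The final step is to verify the $\nu$-pair label condition that defines $\pi(P_\pm(G_{T,\gamma}))$. For each $\nu$-pair $\{s,t\}$ with $\nu_t = 1$, I must show that the edges in $P_\pm(G_{T',\gamma}) \cap E(G'(p_{l_t}))$ carry the same labels as those in $P_\pm(G_{T,\gamma}) \cap E(G(p_{l_s}))$. Since both matchings consist entirely of boundary edges, Lemma~\ref{max-min} identifies the pair of labels on each relevant tile as either $\{a_1,a_3\}$ or $\{a_2,a_4\}$, purely according to the parity of the tile position and whether the matching is maximal or minimal. A direct case analysis against the constructions in Section~\ref{compare} then shows that the forced labels agree on the two sides of the flip.

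The main obstacle is the combinatorial bookkeeping in this last step: the $\nu$-pair condition is precisely the extra constraint that $\pi$ adds beyond the local condition $P'_i \in \psi_{\gamma_i}(P_i)$, and one must carefully traverse the various local configurations $(1)$--$(10)$ enumerated in Section~\ref{compare} to confirm that the maximal/minimal matchings of $G_{T,\gamma}$ and $G_{T',\gamma}$ select compatible label choices under the correspondence $\psi_{\gamma_i}$. This is mechanical rather than conceptual, but has to be done once; the sign-uniformity provided by Lemma~\ref{minmax} ensures that no ``mixed sign'' cases arise and keeps the verification manageable.
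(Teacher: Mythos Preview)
Your outline follows the paper's proof almost verbatim up to the last step: decompose via Proposition~\ref{decompose}, identify each local piece as $P_{\pm}(G_{T,\gamma_i})$ using Lemma~\ref{max-min}, apply Lemma~\ref{maxtomax} piecewise, and reassemble with Lemma~\ref{formglobal}. That part is fine.

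Where you diverge is in the final step. You correctly invoke Lemma~\ref{minmax} to say that all entries of $\nu$ have the same (weak) sign, but you then fail to draw the immediate consequence: since a $\nu$-pair $\{s,t\}$ requires $\nu_s\nu_t=-1$, sign-uniformity forces the set of $\nu$-pairs to be \emph{empty}. Hence the label-matching constraint in the definition of $\pi(P_{\pm}(G_{T,\gamma}))$ is vacuous, and $(P'_i)_i\in\pi(P_{\pm}(G_{T,\gamma}))$ follows at once from $P'_i\in\psi_{\gamma_i}(P_i)$. This is exactly how the paper closes the argument in one line. Your proposed ``mechanical case analysis'' of the configurations (1)--(10) is therefore unnecessary; what you call the main obstacle does not exist.
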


\begin{proof}

We shall only prove (1) and only consider the case $P_{+}(G_{T',\gamma})$. Write $P_{+}(G_{T,\gamma})$ as $(P_i)$, hence $P_i$ contains only boundary edges for $i\in [1,k]$. By Lemma \ref{max-min}, $P_i=P_{+}(G_{T,\gamma_i})$ for $i\in [1,k]$. By Lemma \ref{maxtomax}, we have $P_{+}(G_{T',\gamma_i})\in \psi_{\gamma_i}(P_i)$. Denote $P'_i=P_{+}(G_{T',\gamma_i})$ for $i\in [1,k]$. By Lemma \ref{formglobal}, $(P'_i)$ contains only boundary edges. Thus, $(P'_i)=P_{+}(G_{T,\gamma})$ by Lemma \ref{max-min}. Assume $P_{+}(G_{T,\gamma})\in \mathcal P^{\tau}_{\nu}(G_{T,\gamma})$ for some $\nu$. By Lemma \ref{minmax}, the set of $\nu$-pairs is empty. Thus, $(P'_i)=P_{+}(G_{T',\gamma})\in \pi(P_{+}(G_{T,\gamma}))$.
\end{proof}

\medskip

According to Lemma \ref{maxtomax1}, $P_{\pm}(G_{T',\gamma})\in \pi(P_{\pm}(G_{T,\gamma}))$. Assume $S_{\pm}\in \mathfrak P$ such that $P_{\pm}(G_{T,\gamma})\in S_{\pm}$ and $S'_{\pm}\in \mathfrak P'$ such that $P_{\pm}(G_{T',\gamma})\in S'_{\pm}$. Thus $\pi'(S'_{+})=S_{+}$ and $\pi'(S'_{-})=S_{-}$.

\medskip

Given $P'\in \mathcal P^{\tau'}_{\nu}(G_{T',\gamma})$ with $d=\sum \nu_l\geq 0$, let $P(\lambda)\in \pi'(P')$ corresponding to $\lambda\in \{0,1\}^d$ as Proposition \ref{2^n}. Assume $X^T(P(\lambda))=(X^T)^{\vec{a}(\lambda)}$ for some $\vec{a}(\lambda)\in \mathbb Z^m$. Particularly, the coordinate of $e_\tau$ of $\vec{a}(\lambda)$ is $-d$. For $\lambda,\lambda'\in \{0,1\}^d$ such that $\lambda_j=\lambda'_j$ for $j\neq i$ and $\lambda_i=1,\lambda'_i=0$, by Lemma \ref{brick1} and the constructions of $X^T(P(\lambda))$ and $X^T(P(\lambda'))$, we have $\vec{a}(\lambda')-\vec{a}(\lambda)=b^T_{\tau}$. Thus, if we assume $\vec{a}(1,1,\cdots,1)=\vec{a}_0-de_\tau+d(b^T_\tau)_{-}$ for some $\vec{a}_0\in \mathbb Z^m$, then $\vec{a}(\lambda)=\vec{a}_0-de_\tau+\sum\lambda_i(b^T_\tau)_{-}+(d-\sum\lambda_i)(b^T_{\tau})_{+}$. By Theorem \ref{partition bi} (2), we have $X^{T'}(P')=(X^{T'})^{\vec{a}_0+de_{\tau'}}$. In particular, the coordinates of $e_\tau$ and $e_{\tau'}$ of $\vec{a}_0$ are 0.

\medskip

Denote by $v'_{\pm}$ the maximal and minimal valuation maps on $\mathcal P(G_{T',\gamma})$, which exist by Theorem \ref{valumap}.

\medskip

\begin{Lemma}\label{induction}

Keep the foregoing notations. Let $P'\in \mathcal P^{\tau'}_{\nu}(G_{T',\gamma})$ with $\sum \nu_l\geq 0$.

\begin{enumerate}[$(1)$]

  \item The following are equivalent.

  \begin{enumerate}[$(a)$]

  \item $q^{v'_{+}(P')/2}X^{T'}(P')=\sum_{P\in \pi'(P')}q^{v_{+}(P)/2}X^{T}(P)$;

  \item $v'_{+}(P')=v_{+}(P(0,0,\cdots,0))$;

  \item $v'_{+}(P')=v_{+}(P(1,1,\cdots,1))$.

  \end{enumerate}

  \item The following are equivalent.

  \begin{enumerate}[$(a)$]

  \item $q^{v'_{-}(P')/2}X^{T'}(P')=\sum_{P\in \pi'(P')}q^{v_{-}(P)/2}X^{T}(P)$;

  \item $v'_{-}(P')=v_{-}(P(0,0,\cdots,0))$;

  \item $v'_{-}(P')=v_{-}(P(1,1,\cdots,1))$.

\end{enumerate}

\end{enumerate}

\end{Lemma}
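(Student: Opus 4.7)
The plan is to reduce both parts of the lemma to a single identity: that $\lambda \mapsto v_+(P(\lambda)) - c(\lambda)$ (resp.\ $v_-(P(\lambda)) - c(\lambda)$) is constant on $\{0,1\}^d$, where $d = \sum \nu_l$ and $c(\lambda) \in \mathbb Z$ is the scalar produced by expanding $X^{T'}(P')$ directly inside the quantum torus of $T$. Once this identity is proved, the three conditions in each part of the lemma all collapse to the same equation for $v'_+(P')$ (resp.\ $v'_-(P')$), and the equivalences follow.

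\textbf{Step 1 (quantum-torus expansion).} I would start from $X^{T'}(P') = (X^{T'})^{\vec a_0 + de_{\tau'}}$, pull out $(X^{T'}_{\tau'})^d$, and substitute $X^{T'}_{\tau'} = (X^T)^{-e_\tau + (b^T_\tau)_+} + (X^T)^{-e_\tau + (b^T_\tau)_-}$. Because the $\tau$-coordinate of $\vec a_0$ vanishes and frozen variables and non-$\tau$ cluster variables are shared between $T$ and $T'$, one has $(X^{T'})^{\vec a_0} = (X^T)^{\vec a_0}$. The $\Lambda$-mutation rule gives $\Lambda^{T'}(\vec a_0, e_{\tau'}) = \Lambda^T(\vec a_0, -e_\tau + (b^T_\tau)_+)$, and the compatibility $(\widetilde B^T)^t \Lambda^T = (d^T I \;\; 0)$ combined with the vanishing $\tau$-coordinate of $\vec a_0$ forces $\Lambda^T(\vec a_0, b^T_\tau) = 0$. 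Together with Lemma \ref{iterate} this yields
$$X^{T'}(P') = \sum_{\lambda \in \{0,1\}^d} q^{c(\lambda)/2}(X^T)^{\vec a(\lambda)},$$
with the increment $c(\lambda) - c(\lambda') = (d - 2i + 1)d^T$ whenever $\lambda, \lambda'$ differ only at position $i$ (with $\lambda_i = 1, \lambda'_i = 0$), and in particular $c(0,\dots,0) = c(1,\dots,1) = 0$.

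\textbf{Step 2 (matching the valuation increment).} By Proposition \ref{2^n}, $\mu_{p_{l_{s_i}}} P(\lambda) = P(\lambda')$ is a single twist on the tile of $G_{T,\gamma}$ whose diagonal is labeled $\tau$, so the iterated relation of Theorem \ref{valumap} gives $v_+(P(\lambda)) - v_+(P(\lambda')) = \Omega(p_{l_{s_i}}, P(\lambda))$. Since $P(\lambda)$ carries the $(a_1, a_3)$-pair at this tile, Definition \ref{omega} yields
$$\Omega(p_{l_{s_i}}, P(\lambda)) = -\bigl[(n^+ - n^-) - (m^+ - m^-)\bigr] d^T,$$
with $n^\pm = n^\pm_{p_{l_{s_i}}}(\tau, P(\lambda))$ and $m^\pm = m^\pm_{p_{l_{s_i}}}(\tau, \gamma)$. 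A crucial observation is that $n^\pm$ is \emph{independent of $\lambda$}: by Proposition \ref{2^n}, all edges of $P(\lambda)$ outside the $d$ controlled tiles $G(p_{l_{s_1}}), \dots, G(p_{l_{s_d}})$ are common to every element of $\pi'(P')$, and the edges inside those tiles are $(a_1, a_3)$ or $(a_2, a_4)$, none labeled $\tau$. The main obstacle is the combinatorial identity
$$(n^+ - n^-) - (m^+ - m^-) = 2i - 1 - d,$$
which forces $\Omega(p_{l_{s_i}}, P(\lambda)) = (d - 2i + 1)d^T$ to match the $c$-increment from Step 1. I plan to establish it by partitioning the $\tau$-diagonal tiles of $G_{T,\gamma}$ according to the $\nu$-pair structure: $\nu$-paired tiles contribute symmetrically to both $m^\pm$ and $n^\pm$ on either side of $G(p_{l_{s_i}})$ and thus cancel in the combination $(n^+ - n^-) - (m^+ - m^-)$, while the $d$ non-paired controlled tiles $G(p_{l_{s_1}}), \dots, G(p_{l_{s_d}})$ each contribute $+1$ to $m^+$ or $m^-$ and $0$ to $n^\pm$ according to their position relative to $G(p_{l_{s_i}})$, giving the required net $(i-1) - (d-i) = 2i - 1 - d$. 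The analogous argument with $v_-$ replacing $v_+$ handles part (2).

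\textbf{Step 3 (assembly).} With both increments matched, $\lambda \mapsto v_+(P(\lambda)) - c(\lambda)$ is a constant $w_+$ on $\{0,1\}^d$, and similarly $v_-(P(\lambda)) - c(\lambda) = w_-$. The vectors $\vec a(\lambda) = \vec a_0 - de_\tau + \sum\lambda_i (b^T_\tau)_- + (d - \sum\lambda_i)(b^T_\tau)_+$ depend on $\lambda$ only through $k = \sum \lambda_i \in \{0, 1, \dots, d\}$, yielding $d+1$ distinct monomials $(X^T)^{\vec a(k)}$ that are linearly independent over $\mathbb Z[q^{\pm 1/2}]$. Comparing coefficients of $(X^T)^{\vec a(k)}$ on both sides of condition (a) reduces it to the family of scalar equations $q^{v'_+(P')/2} \sum_{\sum \lambda_i = k} q^{c(\lambda)/2} = q^{w_+/2} \sum_{\sum \lambda_i = k} q^{c(\lambda)/2}$ for $k = 0, \dots, d$, so (a) is equivalent to $v'_+(P') = w_+$. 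Taking $k = 0$ with $c(0,\dots,0) = 0$ identifies (b) with $v'_+(P') = w_+$, and $k = d$ with $c(1,\dots,1) = 0$ identifies (c) with the same equation. All three conditions of part (1) are therefore equivalent; part (2) follows by the identical argument after replacing $v_+, v'_+$ with $v_-, v'_-$ throughout.
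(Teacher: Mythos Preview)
Your proposal is correct and follows essentially the same route as the paper's proof: both expand $X^{T'}(P')$ in the $T$-torus via the mutation formula, invoke Lemma~\ref{iterate} for the increment $n(\lambda)-n(\lambda')=(d-2i+1)d^T$, compute the matching increment of $v_\pm(P(\lambda))$ through $\Omega(p_{l_{s_i}},\cdot)$ using the $\nu$-pair structure, and then read off the equivalence from $n(0,\dots,0)=n(1,\dots,1)=0$. Your Step~3 is slightly more explicit than the paper's (you spell out the monomial grouping by $k=\sum\lambda_i$ and the linear-independence argument, where the paper simply asserts ``(a) holds if and only if $v'_+(P')+n(\lambda)=v_+(P(\lambda))$ for all $\lambda$''), and in Step~1 you isolate the auxiliary identity $\Lambda^T(\vec a_0,b^T_\tau)=0$ that the paper uses implicitly when invoking the $\Lambda$-mutation rule for general $\lambda$; these are clarifications rather than departures.
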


\begin{proof}

We shall only prove (1) since (2) can be proved similarly. Denote $\sum\nu_l$ by $d$. Since $X^{T'}(P')=(X^{T'})^{\vec{a}_0+de_{\tau'}}$,
we have
$$X^{T'}(P')=q^{-\Lambda^{T'}(\vec{a}_0,de_{\tau'})/2}(X^{T'})^{\vec{a}_0}\cdot(X^{T'})^{de_{\tau'}}.$$
Since the coordinates of $e_{\tau}$ and $e_{\tau'}$ in $\vec{a}_0$ are 0, $(X^{T'})^{\vec{a}_0}=(X^{T})^{\vec{a}_0}$. Hence
$$
\begin{array}{rcl} X^{T'}(P')

& = & q^{-\Lambda^{T'}(\vec{a}_0,de_{\tau'})/2}(X^{T'})^{\vec{a}_0}\cdot\textstyle\sum_{\lambda}
q^{n(\lambda)/2}(X^{T})^{-de_{\tau}+\sum\lambda_i(b^T_{\tau})_-+(d-\sum\lambda_i)(b^T_\tau)_+} \vspace{2pt} \\

& = & q^{-\Lambda^{T'}(\vec{a}_0,de_{\tau'})/2} \textstyle\sum_{\lambda}
q^{n(\lambda)/2}(X^{T})^{\vec{a}_0}\cdot (X^{T})^{-de_{\tau}+\sum\lambda_i(b^T_{\tau})_-+(d-\sum\lambda_i)(b^T_\tau)_+}.\\
\end{array}$$

Moreover, by mutation of $\Lambda^T$, we have
$$\Lambda^T(\vec{a}_0, -de_{\tau}+\textstyle\sum\lambda_i(b^T_{\tau})_-+(d-\sum\lambda_i)(b^T_\tau)_{+})=\Lambda^{T'}(\vec{a}_0,de_{\tau'})$$
Therefore,
$$X^{T'}(P')=\textstyle\sum_{\lambda}q^{n(\lambda)/2}(X^T)^{\vec{a}_0-de_{\tau}+\sum\lambda_i(b^T_{\tau})_-+(d-\sum\lambda_i)(b^T_\tau)_{+}}.$$

On the other hand, for any $\lambda\in \{0,1\}^d$,
$$X^T(P(\lambda))=(X^T)^{\vec{a}_0-de_{\tau}+\sum\lambda_i(b^T_{\tau})_-+(d-\sum\lambda_i)(b^T_\tau)_{+}}.$$

Therefore, (a) holds if and only if $v'_{+}(P')+n(\lambda)=v_{+}(P(\lambda))$ for all $\lambda\in \{0,1\}^{d}$.

For any $i\in [1,d]$, if $\lambda$ and $\lambda'$ satisfies $\lambda_j=\lambda_j$ for $j\neq i$ and $\lambda_i=1$, $\lambda'_i=0$, assume $i$ corresponds to the tile $G(p_{l_i})$, by the construction of $\nu$-pairs, we have $n_{p_{l_i}}^{+}(\tau,P(\lambda'))-m_{p_{l_i}}^{+}(\tau,P(\lambda'))=-(d-i)$ and $n_{p_{l_i}}^{-}(\tau,P(\lambda'))-m_{p_{l_i}}^{-}(\tau,P(\lambda'))=-(i-1)$.
Therefore,
$$v_{+}(P(\lambda))-v_{+}(P(\lambda'))=-\Omega(p_{l_i},P(\lambda'))=(d-2i+1)d^T.$$
Moreover, by Lemma \ref{iterate} and $n(0,\cdots,0)=n(1,\cdots,1)=0$, our result follows.
\end{proof}

\medskip

We have the following results as applications of Lemma \ref{induction}.

\medskip

\begin{Proposition}\label{initial}

Keep the foregoing notations. Assume $\tau'\neq\gamma\notin T$, then

\begin{enumerate}[$(1)$]

  \item $\sum_{P'\in S'_{+}}q^{v'_{+}(P')/2}X^{T'}(P')=\sum_{P\in S_{+}}q^{v_{+}(P)/2}X^T(P)$.

  \item $\sum_{P'\in S'_{-}}q^{v'_{-}(P')/2}X^{T'}(P')=\sum_{P\in S_{-}}q^{v_{-}(P)/2}X^T(P)$.

\end{enumerate}

\end{Proposition}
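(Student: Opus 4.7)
The plan is to reduce the identity to Lemma \ref{induction} by identifying $P_\pm(G_{T,\gamma})$ with the distinguished ``corner'' elements $P(0,\ldots,0)$ and $P(1,\ldots,1)$ in the parameterization of $\pi'(P_\pm(G_{T',\gamma}))$.

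The key geometric observation that I would establish first is: $P_+(G_{T,\gamma})$ uses the edges labeled $a_2, a_4$ on every tile of $G_{T,\gamma}$ whose diagonal is $\tau$, while $P_-(G_{T,\gamma})$ uniformly uses $a_1, a_3$. The argument combines Lemma \ref{max-min} with the convention that tiles at even positions have orientation opposite to $T$: since $a_2, a_4$ are counterclockwise from $\tau$ in $T$, they are counterclockwise from the diagonal in odd-indexed tiles and clockwise in even-indexed tiles; and Lemma \ref{max-min} says $P_+$ picks counterclockwise edges in odd tiles and clockwise in even tiles. Hence $P_+$ selects $a_2, a_4$ regardless of parity. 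The analogue for $P_-$ follows symmetrically.

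Next I would split into two cases. By Lemma \ref{minmax}, $P_+(G_{T,\gamma})$ lies either in $\mathcal P^{\tau}_{+}(G_{T,\gamma})\cup\mathcal P^{\tau}_{=0}(G_{T,\gamma})$ or in $\mathcal P^{\tau}_{-}(G_{T,\gamma})\cup\mathcal P^{\tau}_{=0}(G_{T,\gamma})$. Treat the case where $P_+(G_{T',\gamma})\in \mathcal P^{\tau'}_{+}(G_{T',\gamma})\cup\mathcal P^{\tau'}_{=0}(G_{T',\gamma})$ first; the other case is handled symmetrically by exchanging the roles of $T$ and $T'$ (using Lemma \ref{maxtomax1} to locate things). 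In this case the structure of the partition forces $S'_{+}=\{P_+(G_{T',\gamma})\}$ and $S_{+}=\pi'(P_+(G_{T',\gamma}))$, so Lemma \ref{induction}(1) is directly applicable to $P'=P_+(G_{T',\gamma})$. By the geometric step above, $P_+(G_{T,\gamma})$ is the element $P(0,0,\ldots,0)$ of $\pi'(P_+(G_{T',\gamma}))$. The initial conditions give $v_+(P_+(G_{T,\gamma}))=0=v'_+(P_+(G_{T',\gamma}))$, so condition (b) of Lemma \ref{induction}(1) holds, whence condition (a) is exactly the desired identity
$$q^{v'_+(P_+(G_{T',\gamma}))/2}\,X^{T'}(P_+(G_{T',\gamma}))=\sum_{P\in S_{+}}q^{v_+(P)/2}X^{T}(P).$$
The minimal case (2) is entirely analogous, using the identification $P_-(G_{T,\gamma})=P(1,1,\ldots,1)$ and condition (c) of Lemma \ref{induction}(2).

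The main obstacle will be the orientation bookkeeping in the first step: one must carefully verify that the alternating orientations of the tiles (controlled by the parity of the tile index) interact with the fixed labels $a_1,a_2,a_3,a_4$ (defined by the orientation of $T$) in exactly the way needed for $P_+$ and $P_-$ to uniformly pick the ``same'' pair of edges on every $\tau$-diagonal tile. Once that uniform identification is secured, everything else is a straightforward invocation of the equivalences in Lemma \ref{induction} together with the initial conditions $v_\pm(P_\pm)=0$ and $v'_\pm(P'_\pm)=0$.
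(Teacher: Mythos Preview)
Your approach is correct and essentially the same as the paper's: both invoke Lemma~\ref{maxtomax1} to place $P_{\pm}(G_{T,\gamma})$ inside $\pi'(P_{\pm}(G_{T',\gamma}))$, use Lemma~\ref{max-min} to identify it as a ``corner'' element $P(\lambda)$ with constant $\lambda$, and then apply Lemma~\ref{induction} together with the initial conditions $v_{\pm}(P_{\pm})=v'_{\pm}(P'_{\pm})=0$.

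The only difference is that you do extra orientation bookkeeping to pin down that $P_{+}(G_{T,\gamma})=P(0,\ldots,0)$ specifically (and $P_{-}=P(1,\ldots,1)$), whereas the paper observes that Lemma~\ref{max-min} alone forces $\lambda$ to be either $(0,\ldots,0)$ or $(1,\ldots,1)$, without deciding which. Since Lemma~\ref{induction} lists both (b) and (c) as equivalent to (a), either value of $\lambda$ suffices, so the paper's shortcut avoids your parity analysis entirely. Your more detailed argument is not wrong, just unnecessary.
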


\begin{proof}

We shall only prove (1) because (2) can be proved similarly. We may assume that $P_{+}(G_{T',\gamma})\in \mathcal P_{\nu}(G_{T',\gamma})$ with $\sum\nu_l\geq 0$. By Lemma \ref{maxtomax1} and the dual version of Proposition \ref{2^n}, there exists $\lambda$ such that $P_{+}(G_{T,\gamma})=P(\lambda)$. By Lemma \ref{max-min}, $\lambda=(0,0,\cdots,0)$ or $(1,1,\cdots,1)$. Since $v_{+}(P(\lambda))=v_{+}(P_{+}(G_{T,\gamma}))=0$, by Lemma \ref{induction}, our result follows.
\end{proof}

\medskip

\begin{Proposition}\label{non-a}

Keep the foregoing notations. Assume $\tau'\neq \gamma\notin T$. Let $Q$ be a perfect matching of $G_{T,\gamma}$ which can twist on a tile $G(p)$ with the diagonal is not labeled $a_1,a_2,a_3,a_4$. Assume $S_1,S_2\in \mathfrak P$ such that $Q\in S_1$ and $\mu_{p}Q\in S_2$. Then

\begin{enumerate}[$(1)$]

  \item $\sum_{P'\in \pi(S_1)}q^{v'_{+}(P')/2}X^{T'}(P')=\sum_{P\in S_1}q^{v_{+}(P)/2}X^{T}(P)$ holds if and only if\\
   $\sum_{P'\in \pi(S_2)}q^{v'_{+}(P')/2}X^{T'}(P')=\sum_{P\in S_2}q^{v_{+}(P)/2}X^{T}(P)$ holds.

  \item $\sum_{P'\in \pi(S_1)}q^{v'_{-}(P')/2}X^{T'}(P')=\sum_{P\in S_1}q^{v_{-}(P)/2}X^{T}(P)$ holds if and only if\\
   $\sum_{P'\in \pi(S_2)}q^{v'_{-}(P')/2}X^{T'}(P')=\sum_{P\in S_2}q^{v_{-}(P)/2}X^{T}(P)$ holds.

\end{enumerate}

\end{Proposition}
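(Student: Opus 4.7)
The plan is to mimic the commutative arguments of Lemmas~\ref{non-a-F} and~\ref{tau-F}, while carefully bookkeeping the $q$-exponents supplied by the valuation $v_+$; the proof for $v_-$ is entirely parallel and I would treat only $v_+$ below. By Lemma~\ref{retraction} together with Propositions~\ref{partition1}--\ref{partition2} I may interchange the roles of $T$ and $T'$ if necessary, so without loss of generality I assume $Q\in\mathcal{P}^{\tau}_{\nu}(G_{T,\gamma})$ with $d:=\sum\nu_l\geq 0$; in that case $S_1=\{Q\}$ and $S_2=\{\mu_pQ\}$. Since the hypothesis forces the four edges of $G(p)$ to carry no $\tau$-label (whether the diagonal is $\tau$ itself or an arc not adjacent to $\tau$), the twist at $G(p)$ preserves $\nu$, so $|\pi(Q)|=|\pi(\mu_pQ)|$.

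The heart of the argument is to construct, for each $P'\in\pi(Q)$, a matching $\phi(P')\in\pi(\mu_pQ)$ such that
$$
\frac{q^{v'_+(\phi(P'))/2}X^{T'}(\phi(P'))}{q^{v'_+(P')/2}X^{T'}(P')}
\;=\;
\frac{q^{v_+(\mu_pQ)/2}X^T(\mu_pQ)}{q^{v_+(Q)/2}X^T(Q)},
$$
and to verify that $\phi$ is a bijection onto $\pi(\mu_pQ)$; summing over $\pi(Q)$ then gives the desired equivalence. I would split on the label $\varsigma$ of the diagonal of $G(p)$. In Case~A ($\varsigma\ne\tau$), the tile $G(p)$ appears identically in $G_{T',\gamma}$ since $\varsigma\in T\cap T'$ and $b^T_{\tau\varsigma}=0$, and I would define $\phi$ as the twist at this shared tile; the $X$-ratios and the $\Omega$-contributions on both sides match by a direct count using Proposition~\ref{compare-loc}(2.c), so this parallels Lemma~\ref{non-a-F}. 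In Case~B ($\varsigma=\tau$), the $\tau$-class index $t$ of $G(p)$ satisfies $\nu_t=-1$, and the assumption $d\geq 0$ forces a $\nu$-pair $\{s,t\}$ with $\nu_s=1$; I would let $\phi$ twist each $P'\in\pi(Q)$ at the tile $G'(p_{l_s})$ of $G_{T',\gamma}$, whose diagonal is $\tau'$ since $-\nu_s=-1$. This case parallels Lemma~\ref{tau-F}.

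The main obstacle will be the $q$-exponent check in Case~B, where the twist sites on the $T$-side and the $T'$-side live on different tiles; one must show that their $\Omega$-contributions conspire with the $q$-shift produced by the quantum exchange relation to yield the same total. The technical key is Lemma~\ref{iterate}: expanding $X^{T'}(\phi(P'))$ and $X^{T'}(P')$ through the quantum exchange $X^{T'}_{\tau'}=\prod_{+}+\prod_{-}$ and commuting factors in the quantum torus produces exactly the exponent $(d-2i+1)d^T$ that balances $\Omega(p,Q)-\Omega(p_{l_s},P')$ via the defining iterated relation for $v_+$ and $v'_+$. Combining this with the specialization at $q=1$ (which is precisely Lemmas~\ref{non-a-F} and~\ref{tau-F}) to handle the commutative ratio of $X$-monomials then delivers the displayed ratio identity, and hence the ``if and only if'' of the proposition.
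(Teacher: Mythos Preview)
Your case split and the choice of bijection $\phi$ (twisting at the same tile in Case~A, at the paired tile $G'(p_{l_s})$ in Case~B) are correct and match the paper. However, the paper does \emph{not} argue via a termwise ratio over all of $\pi(Q)$. Instead it invokes Lemma~\ref{induction} (in its dual form): that lemma collapses the sum identity $\sum_{P'\in\pi(Q)}q^{v'_+(P')/2}X^{T'}(P')=q^{v_+(Q)/2}X^T(Q)$ to the single scalar equation $v'_+(Q')=v_+(Q)$, where $Q'\in\pi(Q)$ is the representative indexed by $(0,\ldots,0)$ (or $(1,\ldots,1)$). After this reduction, the proof is pure integer bookkeeping: one checks that $v_+(Q)-v_+(\mu_pQ)=\Omega(p,Q)$ equals $v'_+(Q')-v'_+(Q'')=\Omega(p,Q')$ in Case~A, and equals $\Omega(p_{l_s},Q')$ in Case~B, the latter via the $\nu$-pair identity $\sum_{l\in(s,t)}\nu_l=0$. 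Lemma~\ref{iterate} never appears here; it is already absorbed into Lemma~\ref{induction}.

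Your plan has a genuine gap at the ``technical key'' step. You propose to balance $\Omega(p,Q)-\Omega(p_{l_s},P')$ against the exponent $(d-2i+1)d^T$ from Lemma~\ref{iterate}, but in fact $\Omega(p,Q)=\Omega(p_{l_s},Q')$ exactly (no residual $q$-shift to absorb), and this equality comes from the combinatorics of $\nu$-pairs, not from Lemma~\ref{iterate}. Moreover, your ratio identity is stated as an equality of fractions in the quantum torus, but left-multiplication by the ``ratio'' $c$ that takes $q^{v'_+(P')/2}X^{T'}(P')$ to $q^{v'_+(\phi(P'))/2}X^{T'}(\phi(P'))$ a priori depends on the exponent vector $\vec a(P')$ through $\Lambda^{T'}$; you would need to argue separately that this dependence vanishes (it does, since $\vec a(P')$ varies only along $b^{T'}_{\tau'}$ and the compatibility of $(\widetilde B^{T'},\Lambda^{T'})$ kills the relevant pairing), but you do not address this. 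The paper's route through Lemma~\ref{induction} sidesteps both issues by reducing everything to one integer equation before any quantum-torus manipulation.
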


\begin{proof}

We shall only prove (1). Assume $Q\in \mathcal P^{\tau}_{\nu}(G_{T,\gamma})$ for some $\nu$, then $\mu_{p}Q\in \mathcal P^{\tau}_{\nu}(G_{T,\gamma})$. Denote the label of the diagonal of $G(p)$ by $a$.

We first suppose that $\sum\nu_l\geq 0$. Denote by $Q'$ and $Q''$ the perfect matchings corresponding to $(0,0,\cdots,0)\in \{0,1\}^{\sum \nu_l}$ in $\pi(Q)$ and $\pi(\mu_pQ)$, respectively.

In case $a\neq \tau$, $G(p)$ is a tile of $G_{T,\gamma_j}$ for some $\gamma_j$ which does not cross $a_1,a_2,a_3,a_4,\tau$. Write $Q$ as $(Q_i)$, then $\mu_{p}Q=(\mu_pQ_i)$, here $\mu_pQ_i=Q_i$ if $i\neq j$. Write $Q'$ as $(Q'_i)$ and $Q''$ as $(Q''_i)$. By the constructions of $Q'$ and $Q''$, $Q'_j=Q_j$, $Q''_j=\mu_pQ_j$ and $Q'_i=Q''_i$ for $i\neq j$. Thus, $Q''=\mu_pQ'$. By Proposition \ref{compare-loc} (2.c), the number of edges labeled $a$ in $Q_i$ is the same with that in $Q'_i$ for all $i\in [1,k]$. Further, by Proposition \ref{compare-loc} (2.d) and $Q_j=Q'_j$, we have $\Omega(p,Q)=\Omega(p, Q')$. Therefore,
$$v_{+}(Q)-v_{+}(\mu_{p}Q)=\Omega(p,Q)=\Omega(p,Q')=v'_{+}(Q')-v'_{+}(Q'').$$
Thus $v_{+}(Q)=v'_{+}(Q')$ holds if and only if $v_{+}(\mu_{p}Q)=v'_{+}(Q'')$ holds. Consequently, by the dual version of Lemma \ref{induction}, our result follows in this case.

In case $a=\tau$, assume $G(p)$ corresponds to the $t$-th $\tau$-equivalence class in $G_{T,\gamma}$, and hence $\nu_t=-1$. Since $\sum\nu_l\geq 0$, there exists $s\in [1,n^{\tau}(T,\gamma)]$ such that $\{s,t\}$ is a $\nu$-pair, and hence $\nu_s=1$. Since $Q'\in \mathcal P^{\tau'}_{-\nu}(G_{T',\gamma})$ and $-\nu_s=-1$, the $s$-th $\tau'$-equivalence class in $G_{T',\gamma}$ corresponds to the tile $G'(p_{l_s})$ with diagonal labeled $\tau'$. By the constructions of $Q'$ and $Q''$, we have $Q''=\mu_{p_{l_s}}Q'$. We may assume the edges of $G(p)$ labeled $a_1,a_3$ are in $Q$. Hence the edges labeled $a_1,a_3$ are in $Q'$. Since $a_1,a_3$ are counterclockwise/clockwise to $\tau'$/$\tau$, $\Omega(p_{l_s},Q')=d^T(\sum_{l> s}(-\nu_l)-\sum_{l<s}(-\nu_l))$ and $\Omega(p,Q)=-d^T(\sum_{l> t}\nu_l-\sum_{l<t}\nu_l)$. Since $\{s,t\}$ is a $\nu$-pair, $\sum_{l\in (s,t)}\nu_l=0$, where $(s,t)$ means the integers between $s$ and $t$. Thus, $\Omega(p,Q)=\Omega(p_{l_s},Q')$. Therefore,
$$v_{+}(Q)-v_{+}(\mu_{p}Q)=\Omega(p,Q)=\Omega(p_{l_s},Q')=v'_{+}(Q')-v'_{+}(Q'').$$
Thus $v_{+}(Q)=v'_{+}(Q')$ holds if and only if $v_{+}(\mu_{p}Q)=v'_{+}(Q'')$ holds. Consequently, by the dual version of Lemma \ref{induction}, our result follows in this case.

Now we suppose that $\sum\nu_l\leq 0$. Denote $\pi(Q)=\{Q'\}$ and $\pi(\mu_pQ)=\{Q''\}$.

In case $a\neq \tau$, by the same reason, we have $Q''=\mu_{p}Q'$ and $Q',Q''\in \mathcal P^{\tau'}_{-\nu}(G_{T',\gamma})$. Change the roles of $T$ and $T'$, as the discussion for the case $\sum\nu_l\geq 0$, our result follows in this case.

In case $a=\tau$, assume $G(p)$ corresponds to the $t$-th $\tau$-equivalence class in $G_{T,\gamma}$, and hence $\nu_t=-1$. If $t$ is not in a $\nu$-pair, then $Q'=Q''$, and hence $S_1=S_2$. Our result follows at once in this case. If $t$ is in a $\nu$-pair $\{s,t\}$, then $\nu_s=1$. Since $Q'\in \mathcal P^{\tau'}_{-\nu}(G_{T',\gamma})$ and $-\nu_s=-1$, the $s$-th $\tau'$-equivalence class in $G_{T',\gamma}$ corresponds to the tile $G'(p_{l_s})$ with diagonal labeled $\tau'$. By the same reason, we have $Q''=\mu_{p_{l_s}}Q'$. Change the roles of $T$ and $T'$, as the discussion for the case $\sum\nu_l\geq 0$, our result follows in this case.
\end{proof}

\medskip

\begin{Lemma}\label{special-a}

Keep the foregoing notations. Assume $\tau'\neq\gamma\notin T$. Let $Q$ be a perfect matching of $G_{T,\gamma}$ which can twist on a tile $G(p)$ with diagonal labeled $a=a_q$ for $q=1,2,3,4$. Assume $S_1,S_2\in \mathfrak P$ such that $Q\in S_1$ and $\mu_{p}Q\in S_2$. If all the $\tau$-mutable edges pairs in $Q$ are labeled $a_{q-1},a_{q+1}$ (addition in $\mathbb Z_4$), then

\begin{enumerate}[$(1)$]

  \item $\sum_{P'\in \pi(S_1)}q^{v'_{+}(P')/2}X^{T'}(P')=\sum_{P\in S_1}q^{v_{+}(P)/2}X^{T}(P)$ holds if and only if\\
   $\sum_{P'\in \pi(S_2)}q^{v'_{+}(P')/2}X^{T'}(P')=\sum_{P\in S_2}q^{v_{+}(P)/2}X^{T}(P)$ holds.

  \item $\sum_{P'\in \pi(S_1)}q^{v'_{-}(P')/2}X^{T'}(P')=\sum_{P\in S_1}q^{v_{-}(P)/2}X^{T}(P)$ holds if and only if\\
   $\sum_{P'\in \pi(S_2)}q^{v'_{-}(P')/2}X^{T'}(P')=\sum_{P\in S_2}q^{v_{-}(P)/2}X^{T}(P)$ holds.

\end{enumerate}

\end{Lemma}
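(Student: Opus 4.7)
The plan is to mirror the strategy of Lemma \ref{special-a-F} (the height-monomial analog) but working on the quantum side via the valuation maps $v_\pm, v'_\pm$, with the bridge supplied by Lemma \ref{induction} and Lemma \ref{com}. Assume without loss of generality that the edge of $G(p)$ labeled $\tau$ is in $Q$, so that the mutable-pair hypothesis is exactly the hypothesis of Lemma \ref{com}(1). Write $Q=(Q_i)$ with $G(p)$ a tile of $G_{T,\gamma_j}$. First I would use Lemma \ref{com}(1) to select $Q'_j\in\psi_{\gamma_j}(Q_j)$ which can twist on $G'(p)$, whose twist lies in $\psi_{\gamma_j}(\mu_p Q_j)$, whose $\tau'$-mutable edges pairs are all labeled $a_{q-1},a_{q+1}$, and which satisfies $n^{\pm}_p(a,Q_j)-m^{\pm}_p(a,\gamma_j)=n^{\pm}_p(a,Q'_j)-m^{\pm}_p(a,\gamma_j)$. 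For $i\neq j$, Lemma \ref{2^m} provides $Q'_i\in\psi_{\gamma_i}(Q_i)$ whose $\tau'$-mutable pairs are also labeled $a_{q-1},a_{q+1}$. Then $Q':=(Q'_i)\in\pi(Q)$ and $\mu_p Q'\in\pi(\mu_p Q)$ by the construction of $\pi$.

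Next I would set up the bookkeeping. Suppose $Q\in\mathcal P^{\tau}_\nu(G_{T,\gamma})$ and $\mu_pQ\in\mathcal P^{\tau}_{\nu'}(G_{T,\gamma})$; then $|\sum\nu_l-\sum\nu'_l|=|b^T_{\tau a_q}|$ with sign determined by $q$, exactly as in the proof of Lemma \ref{special-a-F}. I split into three cases: $\sum\nu_l,\sum\nu'_l\ge 0$; $\sum\nu_l\ge 0>\sum\nu'_l$ (or the reverse); and $\sum\nu_l,\sum\nu'_l<0$. The first and third cases are dual via swapping the roles of $T$ and $T'$, so it suffices to treat the first two. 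In the first case $S_1=\{Q\}$, $S_2=\{\mu_pQ\}$, $\pi(S_1)$ is parametrized by $\lambda\in\{0,1\}^{\sum\nu_l}$ and $\pi(S_2)$ by $\lambda\in\{0,1\}^{\sum\nu'_l}$; the choice of $Q'$ above corresponds to the extremal $\lambda$ (either all $0$'s or all $1$'s depending on whether $q\in\{1,3\}$ or $q\in\{2,4\}$), so Lemma \ref{induction} reduces both conditions in the statement to a single scalar equality $v_+(Q)=v'_+(Q')$ versus $v_+(\mu_pQ)=v'_+(\mu_pQ')$.

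The core computation is then: the iterated relation gives $v_+(Q)-v_+(\mu_p Q)=\Omega(p,Q)$ and $v'_+(Q')-v'_+(\mu_p Q')=\Omega(p,Q')$, and Lemma \ref{com}(1.d) combined with the diagonal label $a=a_q\neq\tau,\tau'$ yields $\Omega(p,Q)=\Omega(p,Q')$ (both sides record the same count of $\tau_{i_s}=a_q$ occurrences along $\gamma$ and inside $Q,Q'$). Therefore $v_+(Q)=v'_+(Q')$ holds if and only if $v_+(\mu_pQ)=v'_+(\mu_pQ')$ holds, which by Lemma \ref{induction} gives the claimed equivalence. The argument for $v_-$ is identical, using the minimal initial condition.

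The main obstacle will be the mixed-sign case $\sum\nu_l\ge 0>\sum\nu'_l$, where $\pi(S_1)$ is a $2^{\sum\nu_l}$-element family but $\pi(S_2)$ is a singleton $\{\mu_p Q'\}$. Here I must verify that the representative produced by Lemma \ref{com} on the $T'$-side is simultaneously the extremal representative demanded by Lemma \ref{induction} on the $T$-side, so that the shift factors $n(\lambda)$ from Lemma \ref{iterate} vanish at the chosen comparison point. This requires carefully matching which of the two triangles containing $\tau$ the edges $a_{q-1},a_{q+1}$ lie in, which determines whether the extremal $\lambda$ is $(0,\ldots,0)$ or $(1,\ldots,1)$. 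Once this matching is checked case by case for $q=1,2,3,4$, the identity $\Omega(p,Q)=\Omega(p,Q')$ together with $v'_{\tau'}=-v^{T}_\tau$ in the sign conventions of Definition \ref{omega} closes the argument.
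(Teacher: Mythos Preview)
Your proposal is correct and follows essentially the same route as the paper's proof: construct $Q'$ piecewise via Lemma~\ref{com} on $\gamma_j$ and Lemma~\ref{2^m} on the other pieces, show $\Omega(p,Q)=\Omega(p,Q')$, deduce $v_+(Q)=v'_+(Q')\Leftrightarrow v_+(\mu_pQ)=v'_+(\mu_pQ')$, and then invoke Lemma~\ref{induction} at the extremal $\lambda$ in each of the three sign cases. The only point to tighten is the justification of $\Omega(p,Q)=\Omega(p,Q')$: Lemma~\ref{com}(1.d) handles the piece $\gamma_j$, but for the remaining pieces you also need Proposition~\ref{compare-loc}(2.c) (together with the fact that all mutable pairs on both sides are labeled $a_{q-1},a_{q+1}$, hence contribute no $a_q$-edges) to conclude that the global $n^{\pm}_p(a_q,\cdot)$ counts agree; the paper cites exactly these two ingredients plus $d^T=d^{T'}$.
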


\begin{proof}

We shall only prove (1). We may assume the edge labeled $\tau$ of $G(p)$ is in $Q$. Write $Q$ as $(Q_i)$ and assume $G(p)$ is a tile of $G_{T,\gamma_j}$ for some $j$. Thus $\mu_pQ=(\mu_pQ_i)$ with $\mu_pQ_i=Q_i$ for $i\neq j$. By Lemma \ref{com}, there exists $Q'_j\in \psi_{\gamma_j}(Q_j)$ satisfies (a) $Q'_j$ can twist on $G'(p)$, (b) $\mu_pQ'_j\in \psi_{\gamma_j}(\mu_pQ_j)$, (c) all $\tau'$-mutable edges pairs in $Q'_j$ are labeled $a_{q-1},a_{q+1}$. For each $i\neq j$, choose $Q_i'\in \psi_{\gamma_i}(Q_i)$ such that all $\tau'$-mutation edges pairs in $Q'_i$ are labeled $a_{q-1},a_{q+1}$. Since all $\tau$-mutable edges pairs in $Q$ are labeled $a_{q-1},a_{q+1}$, all $\tau$-mutable edges pairs in $\mu_pQ$ are labeled $a_{q-1},a_{q+1}$. By the constructions of $\pi(Q)$ and $\pi(\mu_pQ)$, we have $Q'=(Q'_i)\in \pi(Q)$ and $Q''=(\mu_pQ'_i)\in \pi(\mu_pQ)$, here we mean $\mu_pQ'_i=Q'_i$ for $i\neq j$. Clearly, $\mu_{p}Q'=Q''$. Assume $\mu_pQ\in \mathcal P^{\tau}_{\nu'}(G_{T,\gamma})$. Clearly, $\sum\nu_l>\sum\nu'_l$.

Since $d^T=d^{T'}$, by Lemma \ref{com} (1.d) and Proposition \ref{compare-loc} (2.c), we have $\Omega(p,Q)=\Omega(p,Q')$. Therefore,
$$v_{+}(Q)-v_{+}(\mu_{p}Q)=\Omega(p,Q)=\Omega(p,Q')=v'_{+}(Q')-v'_{+}(\mu_pQ').$$
Thus $v_{+}(Q)=v'_{+}(Q')$ holds if and only if $v_{+}(\mu_{p}Q)=v'_{+}(\mu_pQ')$ holds.

Suppose that $\sum\nu_l\geq 0$ and $\sum\nu'_l\geq 0$. Then $S_1=\{Q\}$ and $S_2=\{\mu_{p}Q\}$. If $a=a_1,a_3$, since all mutation edges pairs in $Q'$ and $Q''$ are labeled $a_2,a_4$, $Q'$ and $Q''$ corresponds to $(0,0,\cdots,0)\in \{0,1\}^{\sum\nu_l}$ and $(0,0,\cdots,0)\in \{0,1\}^{\sum\nu'_l}$, respectively. By Lemma \ref{induction}, our result follows in this case. If $a=a_2,a_4$, since all mutation edges pairs in $Q'$ and $Q''$ are labeled $a_1,a_3$, $Q'$ and $Q''$ corresponds to $(1,1,\cdots,1)\in \{0,1\}^{\sum\nu_l}$ and $(1,1,\cdots,1)\in \{0,1\}^{\sum\nu'_l}$, respectively. By Lemma \ref{induction}, our result follows in this case.

Suppose that $\sum\nu_l\geq 0$ and $\sum\nu'_l< 0$. Then $S_1=\{Q\}$ and $\pi(S_2)=\{Q''\}$. If $a=a_1,a_3$, since all mutation edges pairs in $Q'$ and $\mu_pQ$ are labeled $a_2,a_4$, $Q'$ and $\mu_pQ$ corresponds to $(0,0,\cdots,0)\in \{0,1\}^{\sum\nu_l}$ and $(0,0,\cdots,0)\in \{0,1\}^{-\sum\nu'_l}$, respectively. By Lemma \ref{induction}, our result follows in this case. If $a=a_2,a_4$, since all mutation edges pairs in $Q'$ and $\mu_pQ$ are labeled $a_1,a_3$, $Q'$ and $\mu_pQ$ corresponds to $(1,1,\cdots,1)\in \{0,1\}^{\sum\nu_l}$ and $(1,1,\cdots,1)\in \{0,1\}^{-\sum\nu'_l}$, respectively. By Lemma \ref{induction}, our result follows in this case.

Suppose that $\sum\nu_l\geq 0$ and $\sum\nu'_l< 0$. Change the roles of $T$ and $T'$, as the discussion for the case $\sum\nu_l,\sum\nu'_l\geq 0$, our result follows in this case.
\end{proof}

\medskip

\begin{Proposition}\label{general-a}

Keep the foregoing notations. Assume $\tau'\neq\gamma\notin T$. Let $Q$ be a perfect matching of $G_{T,\gamma}$ which can twist on a tile $G(p)$ with diagonal labeled $a_q$ for $q=1,2,3,4$. Assume $S_1,S_2\in \mathfrak P$ such that $Q\in S_1$ and $\mu_{p}Q\in S_2$. Then

\begin{enumerate}[$(1)$]

  \item $\sum_{P'\in \pi(S_1)}q^{v'_{+}(P')/2}X^{T'}(P')=\sum_{P\in S_1}q^{v_{+}(P)/2}X^{T}(P)$ holds if and only if\\
   $\sum_{P'\in \pi(S_2)}q^{v'_{+}(P')/2}X^{T'}(P')=\sum_{P\in S_2}q^{v_{+}(P)/2}X^{T}(P)$ holds.

  \item $\sum_{P'\in \pi(S_1)}q^{v'_{-}(P')/2}X^{T'}(P')=\sum_{P\in S_1}q^{v_{-}(P)/2}X^{T}(P)$ holds if and only if\\
   $\sum_{P'\in \pi(S_2)}q^{v'_{-}(P')/2}X^{T'}(P')=\sum_{P\in S_2}q^{v_{-}(P)/2}X^{T}(P)$ holds.

\end{enumerate}

\end{Proposition}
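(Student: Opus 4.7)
The strategy mirrors that of Lemma \ref{all-a-F} in the commutative setting. Given the perfect matching $Q$ which can twist on $G(p)$ with diagonal labeled $a_q$, I would first construct an auxiliary perfect matching $R$ by performing, one at a time, a twist on every tile of $G_{T,\gamma}$ whose diagonal is labeled $\tau$ and whose pair of edges in the current matching is $\{a_q, a_{q+2}\}$; each such twist converts that pair to $\{a_{q-1}, a_{q+1}\}$. Once the process terminates, the resulting matching $R$ has the property that every $\tau$-mutable edge pair in $R$ is labeled $\{a_{q-1}, a_{q+1}\}$, which is exactly the hypothesis of Lemma \ref{special-a}.

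The key observation is that these $\tau$-twists commute with the twist on $G(p)$: since $G(p)$ has diagonal $a_q \neq \tau$, it is distinct from any of the tiles being twisted, and Lemma \ref{commuta} gives commutation in the non-adjacent case, while adjacent configurations can be checked directly from the snake-graph geometry. In particular, applying the same sequence of $\tau$-twists to $\mu_p Q$ produces $\mu_p R$, and $R$ itself can still twist on $G(p)$. Proposition \ref{non-a} (which applies whenever the diagonal is not labeled $a_1, a_2, a_3, a_4$, and in particular when the diagonal is $\tau$) then yields, by iteration along the chain of $\tau$-twists, the equivalence of the statement for $Q$ with the statement for $R$, and analogously the equivalence for $\mu_p Q$ and $\mu_p R$ (with respect to both $v_+$ and $v_-$).

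Finally, Lemma \ref{special-a} applied to $R$ and $\mu_p R$ gives the equivalence between the statements for $R$ and $\mu_p R$. Chaining the three equivalences $(Q \leftrightarrow R)$, $(R \leftrightarrow \mu_p R)$, and $(\mu_p R \leftrightarrow \mu_p Q)$ produces the desired equivalence for $(Q, \mu_p Q)$, completing the argument. The main technical hurdle I anticipate is confirming that the reduction $Q \rightsquigarrow R$ is well-defined and terminates, and that the commutation of $\tau$-twists with the $G(p)$-twist holds in the (few) adjacent configurations where Lemma \ref{commuta} does not directly apply; however these are precisely the same combinatorial points tacitly handled in Lemma \ref{all-a-F}, and the verification transfers verbatim because $\psi_\rho$ and $\psi'_\rho$ respect the relevant local structure by Proposition \ref{compare-loc} and Lemma \ref{com}.
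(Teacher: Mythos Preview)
Your proposal is correct and follows essentially the same route as the paper: reduce $Q$ to a matching $R$ with all $\tau$-mutable pairs labeled $a_{q-1},a_{q+1}$ by twisting on the offending $\tau$-tiles, invoke Proposition \ref{non-a} along the chain of $\tau$-twists (for both $Q$ and $\mu_pQ$), and finish with Lemma \ref{special-a}. Your caution about adjacent configurations is harmless but unnecessary: if a $\tau$-tile adjacent to $G(p)$ had the pair $a_q,a_{q+2}$ in $Q$, those two edges would already cover all four of its vertices, forcing (via the shared vertex) that $G(p)$ cannot carry a twistable pair---so every $\tau$-tile you touch is non-adjacent to $G(p)$ and Lemma \ref{commuta} applies directly, exactly as the paper uses it.
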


\begin{proof}

We shall only prove (1) since (2) can be proved similarly. After do twists from $Q$ on the tiles $G$ with diagonal labeled $\tau$ and the edges labeled $a_q,a_{q+2}$ are in $Q$, we obtain a perfect matching $R$ which satisfies the conditions of Lemma \ref{special-a}. By Lemma \ref{commuta} (1), $\mu_pR$ can be obtained by the same steps of twists from $\mu_pQ$. Assume $S'_1, S'_2\in \mathfrak P$ such that $R\in S'_1$ and $\mu_pR\in S'_2$. Applying Proposition \ref{non-a} step by step to $Q$ and $\mu_pQ$, we have the following
\begin{enumerate}[$(a)$]

   \item  $\sum_{P'\in \pi(S_1)}q^{v'_{+}(P')/2}X^{T'}(P')=\sum_{P\in S_1}q^{v_{+}(P)/2}X^{T}(P)$ holds if and only if\\ $\sum_{P'\in \pi(S'_1)}q^{v'_{+}(P')/2}X^{T'}(P')=\sum_{P\in S'_1}q^{v_{+}(P)/2}X^{T}(P)$ holds.

   \item  $\sum_{P'\in \pi(S_2)}q^{v'_{+}(P')/2}X^{T'}(P')=\sum_{P\in S_2}q^{v_{+}(P)/2}X^{T}(P)$ holds if and only if\\ $\sum_{P'\in \pi(S'_2)}q^{v'_{+}(P')/2}X^{T'}(P')=\sum_{P\in S'_2}q^{v_{+}(P)/2}X^{T}(P)$ holds.

\end{enumerate}
On the other hand, by Lemma \ref{special-a}, $\sum_{P'\in \pi(S'_1)}q^{v'_{+}(P')/2}X^{T'}(P')=\sum_{P\in S'_1}q^{v_{+}(P)/2}X^{T}(P)$ holds if and only if $\sum_{P'\in \pi(S'_2)}q^{v'_{+}(P')/2}X^{T'}(P')=\sum_{P\in S'_2}q^{v_{+}(P)/2}X^{T}(P)$ holds.

Therefore,
Our result follows.
\end{proof}

\medskip

Summaries Propositions \ref{non-a} and \ref{general-a}, we obtain.

\medskip

\begin{Proposition}\label{all}

Keep the foregoing notations. Assume $\tau'\neq \gamma\notin T$. Let $Q$ be a perfect matching of $G_{T,\gamma}$ which can twist on a tile $G(p)$. Assume $S_1,S_2\in \mathfrak P$ such that $Q\in S_1$ and $\mu_{p}Q\in S_2$. Then

\begin{enumerate}[$(1)$]

  \item $\sum_{P'\in \pi(S_1)}q^{v'_{+}(P')/2}X^{T'}(P')=\sum_{P\in S_1}q^{v_{+}(P)/2}X^{T}(P)$ holds if and only if\\
   $\sum_{P'\in \pi(S_2)}q^{v'_{+}(P')/2}X^{T'}(P')=\sum_{P\in S_2}q^{v_{+}(P)/2}X^{T}(P)$ holds.

  \item $\sum_{P'\in \pi(S_1)}q^{v'_{-}(P')/2}X^{T'}(P')=\sum_{P\in S_1}q^{v_{-}(P)/2}X^{T}(P)$ holds if and only if\\
   $\sum_{P'\in \pi(S_2)}q^{v'_{-}(P')/2}X^{T'}(P')=\sum_{P\in S_2}q^{v_{-}(P)/2}X^{T}(P)$ holds.

\end{enumerate}

\end{Proposition}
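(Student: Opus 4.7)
The plan is simply to observe that Proposition \ref{all} is the combination of the two preceding propositions, which together exhaust all possible labels for the diagonal of the tile $G(p)$. Specifically, let $\varsigma\in T$ denote the label of the diagonal of $G(p)$. Then exactly one of the following two situations occurs: either $\varsigma\notin\{a_1,a_2,a_3,a_4\}$ (which, crucially, includes the case $\varsigma=\tau$ itself), or $\varsigma=a_q$ for some $q\in\{1,2,3,4\}$.

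In the first case I invoke Proposition \ref{non-a} directly, which handles diagonals not labeled by any of $a_1,a_2,a_3,a_4$, to obtain the equivalence for both (1) and (2). In the second case, I invoke Proposition \ref{general-a}, which was itself proved by reducing along a sequence of $\tau$-diagonal twists (via Lemma \ref{commuta}) to a matching satisfying the hypothesis of Lemma \ref{special-a}, and then combining this with repeated applications of Proposition \ref{non-a}. Either way, the biconditional for $v_+$ (part (1)) and for $v_-$ (part (2)) follows.

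There is no main obstacle here: all the hard work has already been done in Propositions \ref{non-a} and \ref{general-a}. The only thing to verify is that the dichotomy $\varsigma\in\{a_1,a_2,a_3,a_4\}$ versus $\varsigma\notin\{a_1,a_2,a_3,a_4\}$ is exhaustive, which is tautological, and that nothing in the setup excludes the case $\varsigma=\tau$ from Proposition \ref{non-a} — inspection of its proof (which splits on whether $a=\tau$ or $a\neq\tau$) confirms this. Thus the proof of Proposition \ref{all} is a one-line case distinction citing the two previous results.
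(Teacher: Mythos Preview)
Your proposal is correct and matches the paper's approach exactly: the paper introduces Proposition \ref{all} with the single sentence ``Summaries Propositions \ref{non-a} and \ref{general-a}, we obtain'' and gives no further proof. Your observation that the case $\varsigma=\tau$ falls under Proposition \ref{non-a} (since its proof explicitly handles both $a=\tau$ and $a\neq\tau$) is the only point worth checking, and you have checked it.
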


\medskip

\begin{Theorem}\label{mainpre}

Keep the foregoing notations. For any $S\in \mathfrak P$,

\begin{enumerate}[$(1)$]

  \item $\sum_{P\in S}q^{v_{+}(P)/2}X^{T}(P)=\textstyle\sum_{P'\in \pi(S)}q^{v'_{+}(P')/2}X^{T'}(P')$.

  \item $\sum_{P\in S}q^{v_{-}(P)/2}X^{T}(P)=\textstyle\sum_{P'\in \pi(S)}q^{v'_{-}(P')/2}X^{T'}(P')$.

\end{enumerate}

\end{Theorem}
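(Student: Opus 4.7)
The plan is a direct induction-on-twists argument, using Proposition \ref{initial} as the base case and Proposition \ref{all} as the inductive step, with Lemma \ref{transitive} guaranteeing that the induction reaches every partition class.

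First, I would dispose of the trivial cases $\gamma\in T$ or $\gamma=\tau'$ exactly as was done in the proof of Theorem \ref{partition bi} (2): in these situations $\mathcal P(G_{T,\gamma})$ and $\mathcal P(G_{T',\gamma})$ either coincide or differ by a single exchange, and the valuation maps collapse accordingly, so both identities reduce to the definition of $X^{T'}_{\tau'}$.

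For the main case $\tau'\neq\gamma\notin T$, I would proceed as follows. Proposition \ref{initial} (1) asserts that the identity to be proved for $v_+$ already holds for the distinguished class $S_+\in\mathfrak P$ containing $P_+(G_{T,\gamma})$; this is the base of the induction. Now fix any $S\in\mathfrak P$ and pick $P\in S$. By Lemma \ref{transitive}, there is a finite sequence of twists $P_+(G_{T,\gamma})=Q_0,Q_1,\dots,Q_n=P$, where each $Q_{j+1}=\mu_{p_j}Q_j$ for some tile $G(p_j)$ on which $Q_j$ can twist. For each $j$, let $S^{(j)}\in\mathfrak P$ be the unique class containing $Q_j$, so $S^{(0)}=S_+$ and $S^{(n)}=S$. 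Proposition \ref{all} (1) states precisely that the desired identity holds for $S^{(j)}$ if and only if it holds for $S^{(j+1)}$. By induction on $j$, the identity therefore holds for $S^{(n)}=S$, proving (1).

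Statement (2) is proved identically, with the base case replaced by Proposition \ref{initial} (2) applied to $S_-$ and the inductive step by Proposition \ref{all} (2). There is no genuine obstacle left at this stage: all the delicate bookkeeping comparing $\Omega$-values, the labels of $\tau$-mutable pairs, the role of $\nu$-pairs, and the exchange relation between $(X^T)^{\vec a}$ and $(X^{T'})^{\vec b}$ has already been absorbed into Propositions \ref{initial} and \ref{all}. The only thing one must verify along the way is the bookkeeping consistency that the twist sequence from $P_+(G_{T,\gamma})$ to $P$ always stays inside $\mathcal P(G_{T,\gamma})$ (so that Proposition \ref{all} applies at each step) — but this is immediate from Lemma \ref{invo} and the definition of $\mathfrak P$.
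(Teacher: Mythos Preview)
Your proposal is correct and mirrors the paper's own proof almost exactly: the paper likewise handles the degenerate cases $\gamma=\tau,\tau'$ directly via the exchange relation, and for $\tau'\neq\gamma\notin T$ picks $P\in S$, invokes Lemma \ref{transitive} to connect it to $P_{+}(G_{T,\gamma})$ by twists, and then concludes by Proposition \ref{initial} (base case) together with Proposition \ref{all} (propagation along each twist). Your write-up is simply a more explicit unrolling of the same induction.
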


\begin{proof}

We shall only prove (1) because (2) can be proved similarly. First suppose that $\gamma\neq \tau,\tau'$. Choose $P\in S$, by Lemma \ref{transitive}, $P$ can be obtained from $P_{+}(G_{T,\gamma})$ by a sequence of twists. In this case, the equality follows from Proposition \ref{initial} and Proposition \ref{all}. Now suppose that $\gamma=\tau$ or $\tau'$. Then the equality becomes to
$$X^{T}_{\tau}=(X^{T'})^{-e_{\tau'}+(b^{T'}_{\tau'})_{+}}+(X^{T'})^{-e_{\tau'}+(b^{T'}_{\tau'})_{-}}\;\;\text{or}\;\; X^{T'}_{\tau'}=(X^{T})^{-e_{\tau}+(b^T_{\tau})_{+}}+(X^T)^{-e_{\tau}+(b^T_{\tau})_{-}},$$
which clearly holds. Our result follows.
\end{proof}

\medskip

Now we can give the proof of Theorem \ref{mainthm}.

\medskip

{\bf Proof of Theorem \ref{mainthm}:} By Theorem \ref{mainpre}, it suffices to show $v_{+}=v_{-}$. We prove this by induction on $N(\gamma,T)$, the number of crossing points of $\gamma$ with $T$. When $N(\gamma,T)=0$ or $1$, it is easy to see that $v_{+}=v_{-}=0$. Assume $v_{+}=v_{-}$ for all $N(\gamma,T)<d$. When $N(\gamma,T)=d>1$, there exists $\tau\in T$ such that the $N(\gamma,T')<d$. Denote by $v'_{\pm}$ the maximal and minimal valuation maps on $\mathcal P(G_{T',\gamma})$. By induction hypothesis, $v'_{+}=v'_{-}$. By Lemma \ref{maxtomax1}, $P_{+}(G_{T,\gamma})\in \pi'(P_{+}(G_{T',\gamma}))$. Assume $P_{+}(G_{T,\gamma})\in \mathcal P^{\tau}_{\nu}(G_{T,\gamma})$ for some $\nu$.

If $\sum\nu_l\leq 0$, by the dual version of Proposition \ref{2^n} and Lemma \ref{max-min}, $P_{+}(G_{T,\gamma})=P(\lambda)$ with $\lambda=(0,0,\cdots,0)$ or $(1,1,\cdots,1)\in \{0,1\}^{-\sum\nu_l}$, here $P(\lambda)$ means the perfect matching in $\pi'(P_{+}(G_{T',\gamma}))$ which is determined by $\lambda$. By Theorem \ref{mainpre} and Lemma \ref{induction} (2), we have $v_{-}(P_{+}(G_{T,\gamma}))=v'_{-}(P_{+}(G_{T',\gamma})).$ Since $v'_{+}=v'_{-}$, $v'_{-}(P_{+}(G_{T',\gamma}))=0$, and hence $v_{-}(P_{+}(G_{T,\gamma}))=0$. Consequently, $v_{+}=v_{-}$.

If $\sum\nu_l\geq 0$, by Proposition \ref{2^n} and Lemma \ref{max-min}, $P_{+}(G_{T',\gamma})=P'(\lambda)$ with $\lambda=(0,0,\cdots,0)$ or $(1,1,\cdots,1)\in \{0,1\}^{\sum\nu_l}$, here $P'(\lambda)$ means the perfect matching in $\pi(P_{+}(G_{T,\gamma}))$ which is determined by $\lambda$. By Theorem \ref{mainpre} and the dual version of Lemma \ref{induction} (2), we have $v_{-}(P_{+}(G_{T,\gamma}))=v'_{-}(P_{+}(G_{T',\gamma})).$ Since $v'_{+}=v'_{-}$, $v'_{-}(P_{+}(G_{T',\gamma}))=0$, and hence $v_{-}(P_{+}(G_{T,\gamma}))=0$. Consequently, $v_{+}=v_{-}$.

The proof of Theorem \ref{mainthm} is complete.

\medskip

\section{Remarks on the relation to the related works}

In \cite{S,MS,MSW}, by Fomin-Zelevinsky' separation formula \cite{fz4}, it suffices to give the Laurent cluster expansion formula for principal coefficients cluster algebras. However, there is no quantum version of Fomin-Zelevinsky' separation formula, we have to deal with the arbitrary coefficients but not just the principal coefficients. Our proof is different to \cite{S,MS,MSW}, in their proof, they fix the triangle $T$ and change the arc $\gamma$ in the induction step; in our proof, we fix the arc $\gamma$ and change the triangle $T$ in the induction step, this helps us to compare the Laurent expansion of the same cluster variable with respect to different clusters term by term, further, we can deal with the quantum case.

\medskip

\cite{CL} gives the explicit Laurent expansion formulas for quantum cluster algebras of type $A$ and Kronecker type with principal coefficients and principal quantization. We work on the quantum cluster algebras from unpunctured surfaces with arbitrary coefficients and quantization, however, we can not give the explicit formula of $q$-coefficients $v(P)$.

\medskip

We hope our method can be generalized to the quantum cluster algebras from surface with punctures.

\medskip

{\bf Acknowledgements:}\;  {\em The author is thankful to S. Liu, I. Assem, T. Br$\ddot{u}$stle and D. Smith for financial support. Part of the result is posted at the conference Cluster Algebras and Math Physics (East Lansing, May 7-May 12), the author
thanks the organizers for their hospitality. Thanks Ilke Canakci for pointing their
results on quantum Laurent expansion. The author is grateful to Peigen Cao for checking the proof. Last, the author would take the opportunity
to thanks his Ph.D. advisor Prof. Fang Li for continued encouragement these years.}

\end{document}